\let\oldmarginpar\marginpar
\renewcommand\marginpar[1]{\-\oldmarginpar[\raggedleft\footnotesize #1]%
{\raggedright\footnotesize #1}}
\theoremstyle{plain}
\newtheorem{thm}{Theorem}[section]
\newtheorem{lemma}[thm]{Lemma}
\newtheorem{prop}[thm]{Proposition}
\newtheorem{cor}[thm]{Corollary}
\theoremstyle{definition}
\newtheorem{recipe}[thm]{Recipe}
\newtheorem{app}[thm]{Applications}
\newtheorem{definition}[thm]{Definition}
\newtheorem{example}[thm]{Example}
\newtheorem{remark}[thm]{Remark}
\newtheorem{ex}[thm]{Example}
\theoremstyle{remark}
\numberwithin{equation}{section}
\renewcommand{\P}{\mathbb{P}}
\renewcommand{\L}{\mathbb{L}}
\newcommand{\D}{\mathbb{D}}
\newcommand{\N}{\mathbb{N}}
\newcommand{\Z}{\mathbb{Z}}
\newcommand{\R}{\mathbb{R}}
\newcommand{\C}{\mathbb{C}}
\newcommand{\SC}{\mathcal{C}}
\newcommand{\A}{\mathcal{A}}
\renewcommand{\a}{\alpha}
\newcommand{\La}{\Lambda}
\newcommand{\la}{\lambda}
\newcommand{\p}{\varphi}
\newcommand{\e}{\varepsilon}
\newcommand{\dd}{\partial}
\newcommand{\op}{\operatorname}
\newcommand{\sse}{\subseteq}
\newcommand{\lr}{\longrightarrow}
\newcommand{\x}{\times}
\newcommand{\sm}{\setminus}
\newcommand{\im}{\operatorname{im}}
\newcommand{\id}{\operatorname{id}}
\newcommand{\ob}{\operatorname{ob}}
\newcommand{\tb}{\operatorname{tb}}
\newcommand{\Symp}{\operatorname{Symp}}
\newcommand{\sh}{\operatorname{SH}}
\newcommand{\coh}{\operatorname{Coh}}
\newcommand{\wfuk}{\operatorname{WFuk}}
\newcommand{\wt}{\widetilde}
\newcommand{\ol}{\overline}
\newcommand{\lf}{\text{lf}}
\newcommand{\st}{\text{st}}
\newcommand{\std}{\text{st}}
\def\Op{{\mathcal O}{\it p}\,}
\begin{document}
\begin{abstract}
In this article we study Weinstein structures endowed with a Lefschetz fibration in terms of the Legendrian front projection. First we provide a systematic recipe for translating from a Weinstein Lefschetz bifibration to a Legendrian handlebody. Then we present several applications of this technique to symplectic topology. This includes the detection of flexibility and rigidity for several families of Weinstein manifolds and the existence of closed exact Lagrangian submanifolds. In addition, we prove that the Koras--Russell cubic is Stein deformation equivalent to $\C^3$ and verify the affine parts of the algebraic mirrors of two Weinstein $4$--manifolds.
\end{abstract}

\title{Legendrian Fronts for Affine Varieties}
\subjclass[2010]{Primary: 53D10. Secondary: 53D15, 57R17.}

\author{Roger Casals}
\address{Massachusetts Institute of Technology, Department of Mathematics, 77 Massachusetts Avenue Cambridge, MA 02139, USA}
\email{casals@mit.edu}

\author{Emmy Murphy}
\address{Northwestern University, Department of Mathematics, 2033 Sheridan Road Evanston, IL 60208, USA}
\email{e\_murphy@math.northwestern.edu}

\maketitle

\section{Introduction}\label{sec:intro}

In this article we study Weinstein structures in terms of Legendrian front projections.

First, we present a recipe translating from a Weinstein manifold endowed with a Weinstein Lefschetz fibration to a Legendrian handlebody decomposition; these Legendrian handlebodies are the symplectic topology counterpart to the 4--dimensional Kirby diagrams in smooth topology, with the added value that we work with Legendrian fronts in arbitrary dimensions. This recipe is explained in Sections \ref{sec:kirby} and \ref{sec:lef}, and its summary Recipe \ref{dictionary} is the main theorem of the paper, which we then use for a number of applications in Section \ref{sec:app}.\\

In short, Recipe \ref{dictionary} provides a systematic procedure for the understanding of the symplectic topology of affine varieties, which constitute a vast class of symplectic manifolds, and translates the computation of symplectic invariants to the Legendrian context. We present a selected number of applications which illustrate this, but the methods introduced in this article should open the way to the study of many more examples.\\

\begin{figure}[h!]
\centering
  \includegraphics[scale=0.43]{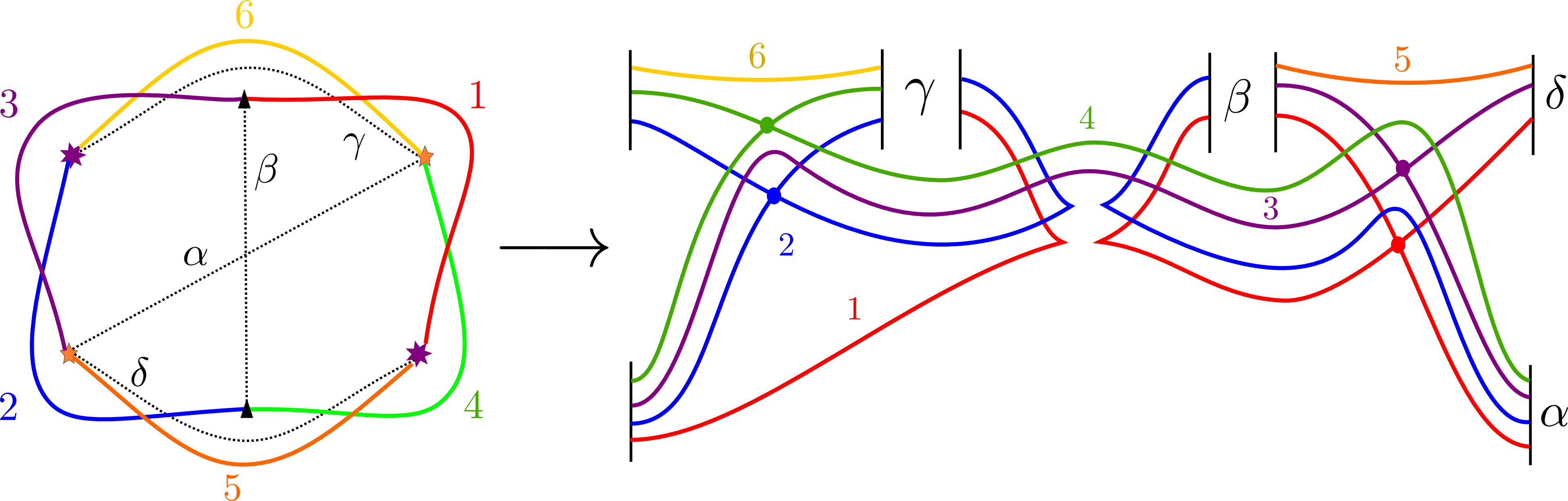}
  \caption{Legendrian Front for the Stein manifold $\{xy^2+w^3+z^2=1\}\sse\C^4$.}
  \label{fig:ex1dict}
\end{figure}

Recipe \ref{dictionary} is presented from the front--end perspective: the reader should be able to use its combinatorial description in an accessible manner, and be a useful tool in the study of specific examples. For instance, the reader might be interested in understanding a certain Stein $6$--manifold, say the affine hypersurface cut--out by the polynomial
$$X=\{(x,y,z,w):xy^2+w^3+z^2=1\}\sse\C^4.$$
Basic questions on the symplectic topology of $X$ are the existence of exact Lagrangian submanifolds and the structure of its symplectic cohomology. Recipe \ref{dictionary} answers both questions, and quite rapidly. There are two steps, in Section \ref{sec:lef} we explain how to translate from this polynomial equation to the Lefschetz bifibration diagram depicted leftmost in Figure \ref{fig:ex1dict}, and then we can directly apply the results in Section \ref{sec:kirby} to produce the Legendrian front depicted rightmost in Figure \ref{fig:ex1dict}.

From the Legendrian handlebody in Figure \ref{fig:ex1dict} we can extract many properties of the example Stein manifold $X$, say $\sh(X)=0$ in any coefficient field, and thus the non--existence of exact Lagrangians. The attentive reader should be able to do these transitions systematically after understanding the content of this article. In particular, the affine manifold in Figure \ref{fig:ex1dict} is intimately related to the Koras--Russell cubic \cite{Zai}, which we will discuss momentarily.

Second, we provide in Section \ref{sec:app} different applications of this dictionary to contact and symplectic topology; these hopefully illustrate the practicality and significant benefits of using Recipe \ref{dictionary} and working from the Legendrian viewpoint. For instance, we will prove the following result as an immediate consequence of the recipe:

\begin{thm}\label{thm:Xab1}
Consider the family of Stein manifolds
$$X^n_{a,b}=\{(x,y,\underline{z}):x^ay^b+\sum_{i=1}^{n-1}z_i^2=1\}\sse\C^{n+1},$$
where $a,b\in\N$ are two coprime integers with $1\leq a<b$.\\
Then we have that
\begin{itemize}
 \item[a.] $X^n_{1,b}$ are flexible for all $b\geq2$.
 \item[b.] $\sh^*(X^2_{a,b})\neq0$ for any $a\geq2$.
 \item[c.] $X^n_{2,b}$ contain exact Lagrangian Klein bottles $S^1\wt\times S^{n-1}$ for any odd $b \geq 3$.
\end{itemize}
\end{thm}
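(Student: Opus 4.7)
The plan is to apply Recipe \ref{dictionary} uniformly to the whole family and then read off the three statements from the resulting Legendrian handlebody. For each $(a,b)$ I would first write $X^n_{a,b}$ as a Weinstein Lefschetz bifibration by projecting $\pi_1(x,y,\underline z)=x^ay^b$ and then further fibering the generic fiber $\{x^ay^b=w\}\times\{\sum z_i^2=1-w\}$ over the curve component via a second Lefschetz projection on $\{x^ay^b=w\}\sse\C^2$. Section \ref{sec:lef} guarantees that this is a legitimate Weinstein Lefschetz bifibration, and the vanishing cycles in the $\underline z$--direction are the standard cotangent sphere $S^{n-1}\sse T^*S^{n-1}$, while those in the curve direction are labelled combinatorially by the pair $(a,b)$. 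Applying Recipe \ref{dictionary} to this bifibration produces an explicit Legendrian front $\Lambda_{a,b}^n\sse(\R^{2n-1},\xi_\std)$ whose handlebody is Weinstein homotopic to $X^n_{a,b}$.

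\textbf{Part (a): flexibility of $X^n_{1,b}$.} When $a=1$ the curve $\{xy^b=w\}\sse\C^2$ is isomorphic to $\C^*$ for every $w\neq0$, so after one application of the recipe the curve direction contributes only subcritical handles together with a cylindrical cusp edge coming from $y^b$. Hence the only critical index--$n$ handles in the resulting handlebody correspond to matching cycles in the $\underline z$--direction, and their attaching Legendrians acquire a stabilization chart inside the cuspidal cylinder produced by $y\mapsto y^b$. By the combinatorial criterion for looseness in Section \ref{sec:kirby}, this stabilization makes each top Legendrian sphere loose, so the entire Weinstein structure is flexible.

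\textbf{Part (b): non--vanishing $\sh^*(X^2_{a,b})$.} In the four--dimensional case the front $\Lambda_{a,b}^2$ lives in $\R^3$ and can be drawn explicitly. The plan is to exhibit inside it a closed exact Lagrangian torus $T^2\sse X^2_{a,b}$ as a matching cycle between two conjugate critical values produced by the curve $\{x^ay^b=w\}$, which for $a\geq2$ genuinely has non--trivial Lefschetz critical points beyond the $\underline z$--direction. Since both vanishing cycles intersecting the matching path are circles, the matching cycle is a torus, and exactness follows from being built from primitive Lagrangian thimbles of a Weinstein Lefschetz fibration. The Viterbo restriction morphism $\sh^*(X^2_{a,b})\to\sh^*(T^*T^2)$ is then a unital ring map with non--zero target, forcing $\sh^*(X^2_{a,b})\neq0$.

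\textbf{Part (c): Lagrangian Klein bottles.} With $a=2$ and $b$ odd, the projection $(x,y)\mapsto x^2y^b$ on the curve fiber has a $\Z/2$ deck transformation $(x,y)\mapsto(-x,\zeta y)$ with $\zeta^b=-1$. This deck transformation acts on the neighboring $\underline z$--quadric as $-\id$ on $S^{n-1}$ because $b$ is odd (the parity is precisely the sign of the induced monodromy on the vanishing cycle of $\sum z_i^2=1-w$ after transport around the branch locus). The matching cycle associated to the loop in the $w$--plane that encircles the corresponding branch point is therefore the mapping torus of the antipodal map on $S^{n-1}$, i.e.\ the twisted bundle $S^1\wt\times S^{n-1}$. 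Recipe \ref{dictionary} realizes this matching cycle as an embedded exact Lagrangian in $X^n_{2,b}$.

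\textbf{Main obstacle.} The most delicate step is (c): one must confirm that the monodromy coming from the $\Z/2$ permutation of the two $x$--branches of $x^2y^b=w$ really does descend to the antipodal involution on the $\underline z$--vanishing sphere, and that the resulting matching cycle is embedded rather than immersed. This requires tracking the interaction between the two levels of the bifibration carefully, and is where the combinatorial clarity of the front projection in Recipe \ref{dictionary} is most useful; parts (a) and (b), by contrast, follow essentially by inspection of the front once it is drawn.
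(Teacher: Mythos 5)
Your overall strategy (Recipe \ref{dictionary} applied to a Lefschetz bifibration) is the right framework, but the fibration you start from already breaks the argument: $\pi_1(x,y,\underline z)=x^ay^b$ is not a Weinstein Lefschetz fibration on $X^n_{a,b}$, since its fiber over $w=0$ is $\{x^ay^b=0\}\times\{\sum z_i^2=1\}$, which is singular along the positive--dimensional locus $\{x=y=0\}$ and in particular not of Morse type; Section \ref{sec:lef} only guarantees Lefschetz fibrations for \emph{generic linear} functions. The paper instead takes $\pi(x,y,\underline z)=ax+by$ with fiber $A^{2n-2}_{a+b-1}$ and auxiliary projection $\rho(x,\underline z)=x$, and the whole content of Theorem \ref{thm:Xab} is the resulting identification of $X^n_{a,b}$ with a single critical handle attached along a cusp $S^{n-2}$--spinning of the maximal Thurston--Bennequin Legendrian $(a,-b)$--torus knot $\Lambda_{a,b}$; all three parts of Theorem \ref{thm:Xab1} are then read off from that one front. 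Your part (a) gestures at the right phenomenon (a stabilization created by $y\mapsto y^b$) but never exhibits a loose chart satisfying the quantitative radius condition of Definition \ref{def:loose}; in the paper this is immediate because $\Lambda_{1,b}$ is identified as a stabilized unknot, so Proposition \ref{prop:loose slice} applies directly in the front.

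Parts (b) and (c) have more serious problems. For (b), a matching cycle over an embedded \emph{path} whose vanishing cycles are circles is a $2$--sphere (two Lagrangian disks glued along their common boundary circle), not a torus; to get a torus you would need a monodromy--invariant circle over a \emph{loop}, and you give no reason such a loop exists in $X^2_{a,b}$ for general coprime $a\geq 2$. The paper produces no closed exact Lagrangian here: it proves $\sh^*(X^2_{a,b})\neq 0$ by combining the surgery exact sequence of \cite{BEE} with the fact that $\Lambda_{a,b}$ has nontrivial Legendrian contact homology, since maximal $\tb$ torus knots admit augmentations or $2$--dimensional representations \cite{Siv}. For (c), your candidate deck transformation $(x,y)\mapsto(-x,\zeta y)$ with $\zeta^b=-1$ sends $x^2y^b$ to $-x^2y^b$, so it does not even preserve the fibers of $\pi_1$; moreover the mapping torus of the antipodal map on $S^{n-1}$ is the twisted bundle only when $n-1$ is even, so your construction cannot produce $S^1\wt\times S^{n-1}$ uniformly in $n$. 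The paper's route is different and concrete: $\Lambda_{2,b}$ bounds an exact Lagrangian M\"obius strip obtained from the disk filling of the unknot by one non--orientable Legendrian ambient surgery (Figure \ref{fig:mobfill}), its cusp spinning therefore bounds $(M\times S^{n-2})\cup h_{n-1}$ in the subcritical piece, and capping with the core of the critical handle yields the closed exact Lagrangian $S^1\wt\times S^{n-1}$.
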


Even better, the statement of Theorem \ref{thm:Xab1} above is a corollary of an explicit Legendrian handlebody, which in addition yields a direct description of the wrapped Fukaya category of the Weinstein manifolds $X_{a,b}$. Note that each of the three conclusions addresses an important aspect of the symplectic topology of these manifolds, and the three of them are direct consequences of the understanding of Legendrian fronts.

\begin{remark}\label{rmk:sh}
The symplectic cohomology $\sh^*(X^n_{a,b})$ for higher $n\geq3$ can be computed with the use of Morse--Bott flow trees, nevertheless this technique is not yet available in the literature and we will only prove the $\sh^*(X^2_{a,b})\neq0$ in this article: it is be apparent from that computation how to proceed for higher $n\geq3$.\hfill$\Box$
\end{remark}

In the complex $3$--dimensional case, Theorem \ref{thm:Xab1} strengthens the results on exotic affine $3$--spheres studied by algebraic geometers \cite{DF} and implies the computation of their symplectic homology in the article \cite{MS}, showing in addition that they are symplectomorphic.

\begin{cor}
The Stein $6$--manifolds $X^3_{1,b}$ are deformation equivalent to $X^3_{1,2}$ for any $b\geq2$.\\
In particular, $X^3_{1,b}$ are exotic affine complex structures on $T^*S^3$ for any $b\geq2$.
\end{cor}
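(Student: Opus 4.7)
The plan combines the flexibility statement of Theorem~\ref{thm:Xab1}(a) with the Cieliebak--Eliashberg flexibility theorem, which asserts that two flexible Weinstein structures on a common smooth manifold whose underlying almost complex structures are homotopic are Weinstein, and therefore Stein, deformation equivalent. Thus the corollary reduces to identifying the smooth type and the formal almost complex data of $X^3_{1,b}$.

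The first step is to pin down the smooth type. I would apply Recipe~\ref{dictionary} to the Stein manifold $X^3_{1,b}$ and read off its Legendrian handlebody: a subcritical skeleton built from $D^6$ together with one critical handle attached along a Legendrian $2$--sphere whose front is explicit. Increasing the exponent $b$ only changes the rotation class of this Legendrian --- combinatorially, the number of cusps on the front --- and leaves the underlying framed smooth isotopy class of the attaching sphere untouched. Hence the smooth handle decomposition is independent of $b\ges 2$ and coincides with the standard decomposition $D^6\cup h_3$ of $T^*S^3$, giving a common diffeomorphism $X^3_{1,b}\cong_{C^\infty} T^*S^3$ for every $b\ges 2$.

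Homotopy of almost complex structures is then automatic: $T^*S^3$ is parallelizable and $H^2(T^*S^3;\Z)=0$, so all almost complex structures are homotopic. Combined with the flexibility of $X^3_{1,b}$ (Theorem~\ref{thm:Xab1}(a)), the hypotheses of the Cieliebak--Eliashberg flexibility theorem are met, yielding Stein deformation equivalences $X^3_{1,b}\simeq X^3_{1,2}$ for all $b\ges 2$. For the "in particular" claim, the standard Stein structure on $T^*S^3$, realised as the affine quadric $\{\sum x_i^2=1\}\sse\C^4$, contains the zero section as an exact Lagrangian $S^3$ and is therefore not flexible; since flexibility is a Stein deformation invariant, each $X^3_{1,b}$ with $b\ges 2$ is a Stein structure on the smooth manifold $T^*S^3$ distinct from the standard one, consistent with the exotic affine structures constructed in \cite{DF,MS}.

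The principal obstacle is the smooth identification step, where one has to argue that no topologically essential information is lost when $b$ varies. Recipe~\ref{dictionary} turns this into a transparent combinatorial statement about rotation numbers in the Legendrian front, which is precisely the practical pay-off of the framework developed in Sections~\ref{sec:kirby} and~\ref{sec:lef}.
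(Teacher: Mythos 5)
Your proposal is correct and follows essentially the same route as the paper: flexibility of $X^3_{1,b}$ from Theorem \ref{thm:Xab1}(a), the smooth identification with $T^*S^3\cong S^3\times\R^3$ together with the (automatic) homotopy of almost complex structures, and then the Cieliebak--Eliashberg $h$--principle to conclude Stein deformation equivalence, with the exoticness following from the absence of exact Lagrangians in flexible manifolds versus the zero section in the standard structure. The only cosmetic difference is in the smooth identification, where the paper simply observes that $\R^3$--bundles over $S^3$ are trivial since the handlebody has a single critical handle, rather than tracking the front combinatorics as $b$ varies.
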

\begin{proof}
Theorem \ref{thm:Xab1} implies that the Stein manifolds $X^3_{1,b}$ are flexible, and since they are diffeomorphic to $S^3 \x \R^3$ with isomorphic complex tangent bundles, they must be Stein deformation equivalent by the $h$--principle \cite{CE}. In particular, the exact symplectic 6--folds $X^3_{1,b}$ contain no exact Lagrangian submanifold, and hence there is no symplectomorphism between $X^3_{1,2}$ and the standard cotangent bundle $X^3_{1,1}\cong(T^*S^3,\la_\st,\p_\st)$.
\end{proof}

\begin{app} Theorem \ref{thm:Xab1} illustrates three central uses of the machinery developed in this work, which we summarize as follows:

\begin{itemize}
 \item[A.] Computation of pseudoholomorphic invariants, such as symplectic homology and, more generally, a model for the wrapped Fukaya category. These invariants can be extracted from the Legendrian contact differential graded algebra \cite{Che,EES} via the generation criterion \cite{Ab} and the surgery exact sequences \cite{BEE,BEE2}.\\
 
 \item[B.] Finding exact Lagrangian submanifolds in Weinstein manifolds. Given the Legendrian handlebody decomposition of a Weinstein manifold we can use combinatorial arguments to construct Lagrangian cobordisms inside the Weinstein manifold \cite[Section 3.3]{ArnoldSing}, and in particular Lagrangian fillings of the attaching Legendrian link will provide closed exact Lagrangians in the Weinstein manifold \cite[Section 4]{BST}.\\
 
 \item[C.] Detection of flexibility \cite{CE} and subflexibility \cite{MuSi} of a Weinstein manifold. These properties are defined in terms of Weinstein handlebodies which we explicitly produce, from which the properties can be directly verified. In the case of flexibility, the $h$-principle produces novel symplectomorphisms between a priori distinct Weinstein manifolds.\\
\end{itemize}

To these three applications, we can add the more familiar arguments from Kirby calculus to the Weinstein setting such as deciding whether two given Weinstein manifolds are deformation equivalent. The front dictionary transforms this into a combinatorial problem of Legendrian isotopies and moves in the front projection, which is potentially more manageable.
\end{app}

\begin{remark}
The systematic computation of the Legendrian differential graded algebra in Application A is not a simple task. For Weinstein 4--folds the resulting Legendrian diagram consists of a Legendrian knot in the contact connected sum $\#^k(S^1\times S^2,\xi_\st)$, for some $k\in\N$, and the computation can be achieved combinatorially following the article \cite{EN}. In the case of a Weinstein $6$--manifold which can be described by a Legendrian surface link in $(S^5,\xi_\st)$, we can use the recent work on the cellular differential graded algebra \cite{RS,RS2}. In general, the computation requires T.~Ekholm's description of the pseudoholomorphic invariants in terms of Morse flow trees \cite{Ek}, combined with a high-dimensional version of \cite{EN}.
\end{remark}

\begin{remark}
Theorems \ref{thm:Xab1}, \ref{thm:torusIntro} and \ref{thm:mirror} illustrate Application B by exhibiting exact Lagrangian submanifolds of affine manifolds. In contrast, Theorem \ref{thm:kr} and Subsection \ref{ssec:subflex} obstruct the existence of exact Lagrangian submanifolds for the involved Weinstein manifolds. This article is also an open invitation to hunt for Lagrangian submanifolds in affine manifolds: for instance, we suggest hunting in the family of affine hypersurfaces presented in Subsection \ref{ssec:Tpqr}, where exact Lagrangian tori are known to exist by different methods \cite{Tpqr}. Subsection \ref{ssec:subflex} below discusses Application C, which will also be momentarily addressed.\hfill$\Box$
\end{remark}

The following theorem is another example of Application B, which is proved in Section \ref{ssec:mirror2}.

\begin{thm}\label{thm:torusIntro}
For any even integer $b\geq 2$ the Weinstein manifold
$$M^n_b = \left\lbrace (x,y,\underline{z}):x(xy^b - 1) + \sum_{i=1}^{n-1}z_i^2 = 0\right\rbrace \sse \C^{n+1}$$
contains an exact Lagrangian $S^1 \x S^{n-1}$.
\end{thm}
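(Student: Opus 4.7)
The plan is to apply Recipe \ref{dictionary} to $M^n_b$ and read the desired Lagrangian directly off the resulting Legendrian handlebody, in the spirit of Application B.

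I would begin by rewriting the defining equation as $\sum_{i=1}^{n-1} z_i^2 = x(1-xy^b)$, which exhibits $M^n_b$ as a conic bundle over $\C^2_{x,y}$ with discriminant the reducible curve $\Delta = \{x=0\} \cup \{xy^b = 1\}$. Composing with the projection $(x,y)\mapsto y$ gives a Weinstein Lefschetz bifibration: for a generic $y$, the two intersection points of $\Delta$ with the line $\{y=\const\}$ form a matching pair of critical values for the conic bundle on $\C_x$, so the regular fibers over $\C_y$ are the Milnor fibers $\cong T^*S^{n-1}$ carrying their matching sphere as zero section. Section \ref{sec:lef} translates this presentation into a Weinstein Lefschetz fibration in the paper's sense, and Recipe \ref{dictionary} then produces an explicit Legendrian handlebody for $M^n_b$ in which the fiberwise zero sections and the monodromy around $y=0$ are both visible.

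Next I would construct the Lagrangian $S^1\times S^{n-1}$ as the parallel transport of the zero section $S^{n-1}\sse T^*S^{n-1}$ along a simple loop $\gamma\sse \C^*_y$ encircling the origin. The resulting submanifold is the mapping torus of the symplectic monodromy acting on $S^{n-1}$; since the generalized Dehn twist of $T^*S^{n-1}$ restricts to the antipodal map on the zero section, this mapping torus is the trivial bundle $S^1\times S^{n-1}$ exactly when the total exponent of the Dehn twist in the monodromy is even. As $y$ traverses $\gamma$, the critical value $x=1/y^b$ winds $b$ times around the critical value $x=0$ in $\C_x$, contributing a braid whose associated Dehn twist exponent has the parity of $b$. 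Hence the hypothesis $b\in 2\Z$ produces the trivial bundle $S^1\times S^{n-1}$, while odd $b$ would instead produce the twisted bundle $S^1\wt\times S^{n-1}$ of Theorem \ref{thm:Xab1}(c). Exactness follows because the zero section is exact in each fiber and the monodromy is Hamiltonian, so the Liouville primitive on the total Lagrangian is well defined.

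The main obstacle I expect is the precise monodromy count — verifying that the total Dehn twist exponent around the origin in $\C_y$ really has the parity of $b$ — and translating this bifibration-theoretic construction into the concrete Legendrian front output by Recipe \ref{dictionary}. Once the recipe is in place, both tasks should reduce to a finite combinatorial argument on Legendrian isotopies in the front projection, in close parallel with the Klein-bottle construction used for Theorem \ref{thm:Xab1}(c).
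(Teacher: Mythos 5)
Your construction is genuinely different from the paper's: the paper applies Recipe \ref{dictionary} to the generic linear fibration $\pi(x,y,\underline z)=x+by$ (with fiber $A_b^{2n-2}$ and $b+1$ critical values), obtains the explicit front of Figure \ref{fig:Mb front}, and then observes that this front is produced from the Legendrian unknot by two ambient Legendrian surgeries of indices $1$ and $n-1$; Theorem \ref{thm:lagcob} then yields an \emph{exact} Lagrangian filling diffeomorphic to $(S^1\x S^{n-1})\sm D^n$, which is capped off by the core of the critical handle. Your route — parallel transport of the fiberwise vanishing sphere of the conic bundle around a loop in $\C^*_y$ — is a legitimate way to produce a \emph{smooth} Lagrangian $S^1\x S^{n-1}$, but as written it has two genuine gaps. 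First and most seriously, exactness is the entire content of the theorem and is not established by your argument: the swept submanifold $\bigcup_{t}L_{\gamma(t)}$ is Lagrangian, but the restriction of $\la$ to it has a period over the $S^1$-direction (a flux/action integral) that does not vanish just because each $L_{\gamma(t)}$ is exact in its fiber and the monodromy is exact. This obstruction is exactly why, say, parallel-transport tori in $\C^2$ are never exact. One would have to compute this period as a function of the radius of $\gamma$ and show it can be made zero; nothing in your sketch does this, whereas the paper's ambient-surgery construction gets exactness for free from Theorem \ref{thm:lagcob}.

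Second, the monodromy count that is supposed to locate the role of the hypothesis $b\in 2\Z$ is incorrect. The discriminant $\{x=0\}\cup\{xy^b=1\}$ has two \emph{distinct irreducible branches}, $x=0$ and $x=y^{-b}$, which are never interchanged as $y$ traverses a loop around the origin; the second simply winds $b$ full times around the first, and a full rotation of one puncture about another induces the \emph{square} of the Dehn twist. The total monodromy is therefore $\tau^{2b}$, whose exponent is even for every $b$, so its restriction to the zero section is the identity regardless of the parity of $b$ — your criterion would prove the statement for all $b$ and fails to explain the hypothesis. (Contrast this with $X^n_{2,b}=\{x^2y^b+\sum z_i^2=1\}$, where the two discriminant points $x=\pm y^{-b/2}$ \emph{are} interchanged when $b$ is odd; that is the mechanism behind the Klein bottle in Theorem \ref{thm:Xab1}(c), and it is not present here.) A smaller point: the projection to $y$ has no critical points on $M^n_b$ and its fiber over $y=0$ is $\C^{n-1}$ rather than $T^*S^{n-1}$, so it is not a Weinstein Lefschetz fibration in the sense of Section \ref{sec:lef} and Recipe \ref{dictionary} does not apply to it; the recipe, and the paper's proof, run through the linear projection $x+by$ instead.
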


Theorem \ref{thm:Xab1} above is also inviting from the perspective of complex singularities since the Weinstein manifolds $X_{a,b}$ arise as Milnor fibers of non--isolated singularities, whose symplectic topology has not been much studied. In this line, algebraic geometers have been proving a wealth of significant results on the algebraic isomorphism types of affine complex varieties \cite{DF,Ram,Zai} but the underlying Stein structures are not quite understood; see \cite{McLean,McLean2,SeSm} for progress in this direction. Being affine hypersurfaces, the results of the present article provide a systematic way of studying these Milnor fibers from the symplectic viewpoint.

In order to continue the study of the symplectic topology of affine manifolds, we focus on the most salient instance of an exotic affine manifold: the Koras--Russell cubic. It is the affine cubic hypersurface defined by
$$\SC:=\{(x,y,z,w):x+x^2y+w^3+z^2=0\}\sse\C^4,$$
and constitutes a wonderful example of an exotic affine 3--space: it is an affine algebraic variety diffeomorphic to $\C^3$ and yet not algebraically isomorphic to it; see the articles \cite{KR,Zai} for more details. As an application of the Legendrian front dictionary we will prove the following theorem:

\begin{thm}\label{thm:kr1}
The Koras--Russell cubic $\SC$ is Stein deformation equivalent to $(\C^3,\la_\st,\p_\st)$.
\end{thm}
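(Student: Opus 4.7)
The plan is to apply Recipe \ref{dictionary} to produce an explicit Legendrian handlebody for $\SC$, establish that this handlebody is flexible, and then invoke the Cieliebak--Eliashberg $h$--principle to upgrade the formal equivalence to a genuine Stein deformation equivalence with $(\C^3,\la_\st,\p_\st)$.

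First, I would exhibit $\SC$ as the total space of a Weinstein Lefschetz bifibration. Rewriting the defining equation as $x(1+xy) + w^3 + z^2 = 0$, the projection $(x,y,z,w)\mapsto w$ gives a Lefschetz fibration whose smooth fibers are affine surfaces $\{x(1+xy) + z^2 = -w^3\}$; these fibers in turn admit a natural Lefschetz fibration obtained by projecting to $z$, whose generic fiber is $\C^*$. The result is a Weinstein Lefschetz bifibration of the type handled by Section \ref{sec:lef}, structurally parallel to the bifibration underlying the hypersurface $\{xy^2+w^3+z^2=1\}$ of Figure \ref{fig:ex1dict}, to which $\SC$ is explicitly compared in the introduction.

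Next, I would feed this bifibration into Recipe \ref{dictionary} and read off the resulting Legendrian front for $\SC$ in $(\R^5,\xi_\st)$. The combinatorics are entirely dictated by the vanishing cycles and monodromies of the two--step fibration, so the diagram should be a mild modification of the one displayed in Figure \ref{fig:ex1dict}: a collection of subcritical handles together with a critical handle attached along a Legendrian $2$--sphere whose front carries the cusp structure coming from the cubic branching in the $w$--direction and the quadratic branching in $z$.

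The decisive step is to exhibit a loose chart on the critical attaching Legendrian $2$--sphere produced by the recipe, which amounts to a local inspection of one of the cusped regions of the front. Once flexibility of $\SC$ is established, the conclusion follows from the Cieliebak--Eliashberg $h$--principle \cite{CE}: $\SC$ is diffeomorphic to $\R^6$ by the work of Koras--Russell \cite{KR,Zai}, its complex tangent bundle is trivial as a hypersurface with trivial normal bundle in $\C^4$, and $(\C^3,\la_\st,\p_\st)$ is itself flexible (being subcritical). Since two flexible Weinstein structures in the same formal homotopy class are Stein deformation equivalent, this gives the theorem. The main obstacle will be the verification of the loose chart: it requires careful bookkeeping of cusps and Reeb chords introduced by Recipe \ref{dictionary} near the degeneration of the cubic factor $w^3$, and it is at this point that the handle structure of $\SC$ genuinely differs from a naive comparison with the standard Stein structure on $\C^3$.
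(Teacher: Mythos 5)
There is a genuine gap at the very first step: the coordinate projection $(x,y,z,w)\mapsto w$ has no critical points on $\SC$ at all. Writing $f=x+x^2y+w^3+z^2$, a critical point of $w|_{\SC}$ would require $\partial_y f=x^2=0$ and $\partial_x f=1+2xy=0$ simultaneously, which is impossible. So this projection is a submersion; it is not an adapted Weinstein Lefschetz fibration (it is not symplectically trivial at infinity), and Recipe \ref{dictionary} applied to it produces no vanishing cycles and hence the wrong manifold. This is exactly why the paper's proof of Theorem \ref{thm:kr} starts from a \emph{generic} linear projection $\pi(x,y,z,w)=-2x+y+2w$, which has four critical points and regular fiber of $D_4$--type, together with the auxiliary projection $\rho(x,y)=x+3.4\, y$ onto an $A_2$--bifiber. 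Relatedly, your claim that the resulting front is a mild modification of Figure \ref{fig:ex1dict} is not tenable: that figure is the handlebody of the \emph{generic} fiber $\{x^2y+w^3+z^2=1\}$ of the same polynomial, which has Euler characteristic $3$ and is not diffeomorphic to $\C^3$; the Koras--Russell cubic is the special fiber, its handlebody has a different number of critical handles (four rather than six), and extracting it requires the V--moves and the expression of the vanishing cycles in a $D_4$--basis that occupy the bulk of the paper's argument. Finally, flexibility in the sense of Definition \ref{def:flexible} requires a loose chart for \emph{each} component of the attaching link in the complement of the others, not a single loose chart on ``the'' critical attaching sphere; in the paper this is achieved only after a sequence of handle cancellations and Reidemeister moves, and the looseness itself is certified by Proposition \ref{prop:loose slice} rather than by a bare local inspection, since a stabilized two--dimensional slice alone does not guarantee the thick--neighborhood condition $\rho>1$.

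Your endgame is sound and matches the paper in spirit: once an explicitly flexible handlebody for $\SC$ is in hand, the diffeomorphism $\SC\cong\R^6$, the triviality of the complex tangent bundle, and Theorem \ref{thm:CE flex} (with $\C^3$ flexible because it is subcritical) yield the Stein deformation equivalence; the paper in fact goes slightly further and cancels all handles explicitly against the subcritical ones. But the construction of the handlebody itself, which is the substance of the proof, does not go through as proposed.
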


This opens the way to the study of affine varieties up to deformation equivalence via the study of explicit Legendrian links, and we encourage the readers to study the underlying Stein structures of many other interesting affine algebraic varieties \cite{Zai}. In particular, the symplectic topology of acyclic surfaces, which started with the article \cite{SeSm}, and other instances of exotic affine $\C^3$, can be the subject of exciting future work.

Third, Theorems \ref{thm:Xab1}, \ref{thm:torusIntro} and \ref{thm:kr1} are applications to symplectic topology, we believe that the methods developed in this article can also be useful in the study of homological mirror symmetry. In order to illustrate this, we present the following theorem :

\begin{thm}\label{thm:mirror}
Consider the symplectic manifolds
$$X=\{(x,y,z):x(xy^2-1)+z^2=0\}\sse\C^3,\qquad Y=\{(x,y,z):xyz+x+z+1=0\}\sse\C^3$$
endowed with their Weinstein structures as submanifolds of $(\C^3,\la_\st,\p_\st)$.\\
Then the algebraic mirrors $\check{X}$ and $\check{Y}$ are algebraically isomorphic to the affine varieties $X$ and $Y$ respectively, i.e. both $X$ and $Y$ are self--mirror manifolds. In addition, the Weinstein $4$--manifolds $X$ and $Y$ both contain exact Lagrangian tori.
\end{thm}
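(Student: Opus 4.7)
The plan is to apply Recipe \ref{dictionary} to both Weinstein $4$--manifolds $X$ and $Y$, thereby reducing the mirror identifications and the Lagrangian constructions to combinatorial manipulations of an explicit Legendrian handlebody. The Lagrangian torus in $X$ then comes essentially for free: rewriting $X$ as $M^2_2$ in the notation of Theorem \ref{thm:torusIntro} (the equation $x(xy^2-1)+z^2=0$ is the $n=2,b=2$ case), that theorem already supplies an exact Lagrangian $S^1\x S^1\sse X$. The task thus splits into (i) constructing a Lagrangian torus in $Y$, and (ii) computing $\wfuk(X)$ and $\wfuk(Y)$ and identifying each with the derived category of coherent sheaves on the target affine variety.

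For the handlebody constructions I would build Weinstein Lefschetz bifibrations by projecting to a carefully chosen coordinate: in $X$ the map $\pi_X(x,y,z)=x$ has as generic fiber a smooth affine conic in $(y,z)$, with singular fibers occurring over the critical values of the polynomial $x\mapsto x(1-xy^2)$ seen as a quadratic in $z$; for $Y$ the projection $\pi_Y(x,y,z)=y$ realizes $Y$ as a Lefschetz fibration whose fiber over $y_0\in\C$ is the affine conic $\{y_0 xz+x+z+1=0\}$ in $(x,z)$, generically smooth. Feeding these bifibrations into the dictionary of Section \ref{sec:lef} produces the Legendrian fronts, and the procedure of Section \ref{sec:kirby} then presents $X$ and $Y$ as Legendrian handlebodies whose attaching links lie in a contact connected sum $\#^k(S^1\x S^2,\xi_\std)$ for some small $k$.

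From the front of $Y$ I would locate an unknotted Legendrian circle in the attaching link and exhibit an embedded Lagrangian disk filling in the subcritical skeleton along the lines of Application B; attaching the corresponding critical $2$--handle closes this disk into a closed exact Lagrangian $S^1\x S^1\sse Y$. For the mirror identification, the Legendrian handlebodies yield a generating collection of cocores of $\wfuk(X)$ and $\wfuk(Y)$ whose endomorphism $A_\infty$--algebra is the Chekanov--Eliashberg DGA of the attaching link, computed combinatorially via \cite{EN}. Abouzaid's generation criterion together with the surgery formula of \cite{BEE,BEE2} then reduces the claim $\check{X}\cong X$ and $\check{Y}\cong Y$ to the identification of an explicit finitely presented DGA with the coordinate ring of the target affine variety, a check that can be performed by matching generators and relations directly.

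The main obstacle is the algebraic identification in that last step: the DGA produced by the recipe is presented in terms of Reeb chord generators with a differential counting holomorphic disks, while $\C[X]$ and $\C[Y]$ come with quite different polynomial presentations, so exhibiting an isomorphism requires computing the moduli of augmentations and comparing it as a scheme, not merely as a set of closed points, to $X$ or $Y$. The low embedding dimension of both varieties makes this feasible, but it is here that the bulk of the computational work of Section \ref{ssec:mirror2} lies, and where care is needed to verify that the grading on the Fukaya side matches the weight grading that makes the mirror identification an equivariant statement.
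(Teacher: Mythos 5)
Your overall strategy coincides with the paper's: apply Recipe \ref{dictionary} to get Legendrian handlebodies, obtain the tori from Lagrangian fillings of the attaching links, and identify $\check X$ and $\check Y$ with $\spec$ of the degree-zero Legendrian contact homology via the surgery formula; citing Theorem \ref{thm:torusIntro} for the torus in $X=M^2_2$ is exactly what the paper does. However, there are two concrete gaps. First, the coordinate projections you propose are not Weinstein Lefschetz fibrations. For $X$, the critical locus of $\pi_X(x,y,z)=x$ is cut out by $x^2y=0$, $z=0$ together with the defining equation, which contains the entire line $\{x=z=0\}$, and the fiber over $x=0$ is the non-reduced double line $\{z^2=0\}$. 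For $Y$, the map $\pi_Y(x,y,z)=y$ has a single interior critical point at $(-1,1,-1)$ while the conic fiber degenerates to a line over $y=0$, i.e.\ critical points escape to infinity; a one-critical-point presentation with fiber $\C^*$ would give $\chi(Y)=1$, whereas $\chi(Y)=2$ (and indeed the paper's generic linear section $\pi(x,y,z)=-3x-2z+y$ has five critical values, with fiber a thrice-punctured torus, ultimately yielding a single $2$--handle attached along the maximal--$\tb$ right-handed trefoil in $S^3$). The choice of a \emph{generic} linear section is not a cosmetic detail here; it is what guarantees Lefschetz nondegeneracy and controls the behaviour at infinity.

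Second, your construction of the torus in $Y$ fails: capping an embedded Lagrangian \emph{disk} filling of an unknotted attaching circle with the core of the corresponding $2$--handle produces a Lagrangian $S^2$, not $S^1\times S^1$. Moreover, after simplification the attaching link of $Y$ is a single Legendrian trefoil with $\tb=1$, which bounds no Lagrangian disk (an exact filling of genus $g$ has $\tb=2g-1$); its minimal-genus exact Lagrangian filling is a once-punctured torus, built from the unknot's disk by two ambient Legendrian index-$1$ surgeries as in Subsection \ref{ssec:lagcob}, and it is this genus-one filling capped by the handle core that yields the exact torus in $Y$. Your final step is essentially right in spirit, though the paper does not pass through the augmentation scheme: it identifies $\lch_0$ directly with a commutative coordinate ring ($\C[x,y,z]/(xyz+x+z+1)$ for the trefoil, and $\C[x_1,x_2,(x_1x_2+1)^{-1}]$ for the knot presenting $X$, via Lemma \ref{lem:LCH conic} and the Ekholm--Ng formalism), and it is explicit that the passage from $\lch_0$ to the mirror is interpretive rather than a fully rigorous verification.
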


Both symplectic manifolds $X$ and $Y$ have been studied prominently in the literature, and the statement of Theorem \ref{thm:mirror} is not new, only the method of proof. Indeed, the former variety $X$ is algebraically equivalent to the complement of the standard smooth affine conic $\{xy=1\}$ in the complex plane $\C^2[x,y]$, and has been regularly studied in mirror symmetry, see for instance \cite{Au,Pa} and \cite[Section 3]{Au2}. The latter symplectic $4$--manifold $Y$ features also in the study of constructible sheaves on the $(2,5)$--braid and the study of isomonodromic deformations of the Painlev\'e I differential equation \cite[Section 2.2]{PS}.

But as promised, the method of proof we present in this article is genuinely different from these techniques, and it provides an understanding of the mirror symmetry correspondence from the Legendrian viewpoint. In particular, we extract the maximal Thurston--Bennequin right--handed trefoil directly from the defining equation of $Y$ which, to the authors' knowledge, has not been done directly. The explicit nature of the Legendrian handlebody diagrams for $(X,\la,\p)$ and $(Y,\la,\p)$ directly yields Theorem \ref{thm:mirror}.

Finally, the article also contains material discussing higher--dimensional Reidemeister moves in Subsection \ref{ssec:highD}, the detection of looseness in front diagrams in Subsection \ref{ssec:loose}, and examples of non--flexible subflexible Legendrian fronts in Subsection \ref{ssec:subflex}. These constitute foundational material in the study of higher--dimensional Weinstein structures and the contact topology of their boundaries.

The arc of the work is organized as follows: Section \ref{sec:kirby} contains the material related to Legendrian front projections, Dehn twists, Legendrian isotopies, and loose charts. Then Section \ref{sec:lef} presents the basic material on Lefschetz bifibrations broadening the range of applications of Recipe \ref{dictionary}, and finally Section \ref{sec:app} includes the applications presented in this introduction and their proofs.

{\bf Note}: A properly embedded submanifold $X$ of a Stein manifold inherits a Stein structure, and this endows $X$ with a Weinstein structure $(X,\la,\p)$ unique up to Weinstein deformation. Furthermore, every Weinstein structure is realized in this way, unique up to Stein deformation \cite{CE}. In particular, affine submanifolds $X\sse\C^N$ carry canonical Weinstein structures coming from the induced standard Stein structure of complex affine space $\C^n$. Throughout this article we will freely pass from one perspective to the other.\hfill$\Box$

{\bf Notation}: Given a $(2n-2)$--dimensional Liouville manifold $(F,\la)$ and a compactly supported symplectomorphism $\phi\in\Symp^c(F,\la)$, we denote by
$$(Y,\xi)= \ob(F,\lambda;\phi)$$
the $(2n-1)$--dimensional contact manifold constructed as the contact open book with Liouville page $(F,\la)$ and symplectic monodromy $\phi$ \cite{Co2,Gi2}.

Consider an exact Lagrangian $L\sse(F,\la)$ whose potential functions for $\la|_L$ is $C^0$--bounded by a small amount $\e\in\R^+$. The argument projection of the open book $\ob(F,\lambda;\phi)$ assigns an angle $\theta\in S^1$ to each page $(F_\theta,\la_\theta)$, and we can consider the exact Lagrangians $L_\theta\sse(F_\theta,\la_\theta)$. Their Legendrian lifts $\La_{L_\theta}$ to the $\e$--contactization
$$(F_\theta\times[-\e,\e],\la_\theta+ds)$$
of $(F_\theta,\la_\theta)$ are uniquely defined, up to translation. It is however important to notice that the symplectic monodromy dictates the gluing of such contactizations and thus the Legendrian lifts $\La_{L_\theta}$ can a priori depend on the chosen page $(F_\theta,\la_\theta)$ where we consider our Lagrangian $L_\theta$. We use the notation $\La_L^{\theta}\sse\ob(F,\lambda;\phi)$ to indicate the Legendrian lift of $L_\theta\sse(F_{\delta},\la_{\delta})$.

We also use the shorthand notation $\underline{z}=(z_1,\ldots,z_{n-1})\in\C^{n-1}$.\hfill$\Box$

{\bf Acknowledgements}: We are grateful to Y.~Eliashberg, A.~Keating, M.~McLean, O.~Plamenevskaya and K.~Siegel for valuable discussions and their interest in this work. Special thanks go to L.~Starkston and U.~Varolg\"unes whose many good comments have improved the quality of this article. R.~Casals is supported by the NSF grant DMS-1608018 and E.~Murphy is partially supported by NSF grant DMS-1510305 and a Sloan Research Fellowship.

\section{Front Gallery}\label{sec:kirby}

In this section we introduce an algorithm for drawing fronts of Lagrangians acted on by Dehn twists, and explain the basic rules for the diagramatic calculus in the Legendrian front. These combinatorial rules constitute a major portion of Recipe \ref{dictionary}, and are required for the applications presented in Section \ref{sec:app}.

Each of the first six subsections contributes with an ingredient leading up to the Legendrian stacking, presented in Subsection \ref{ssec:stack}. Then the second part of Subsection \ref{ssec:stack} and Subsection \ref{ssec:Tpqr} serve as our first examples of these methods.

These first six subsections can be shortly described as follows: Subsection \ref{ssec:subcrit} describes the contact manifolds where the Legendrian submanifolds belong, Subsection \ref{ssec:loose} provides the tools to detect which Legendrians are loose, Subsection \ref{ssec:legsk} builds the front projections for these Legendrians, Subsection \ref{ssec:highD} discusses a set of Reidemeister moves in the front which give Legendrian isotopies, Subsection \ref{ssec:Wmoves} introduces the front representation of a Legendrian handleslide and handle cancellation, and finally Subsection \ref{ssec:lagcob} is concerned with the front representation of ambient Legendrian surgeries and their relation to Lagrangian cobordisms.

Let us start with the first building block of a Weinstein manifold, its subcritical skeleton.

\subsection{Subcritical Topology}\label{ssec:subcrit}
The subcritical smooth topology of a Weinstein manifold does not contain meaningful symplectic topology information; this is illustrated by M.~Gromov's contact h--principle on isotropic embeddings below the Legendrian dimension \cite{PDR} and K.~Cieliebak splitting principle \cite{CE} for subcritical Weinstein manifolds.

The subcritical topology of a Weinstein manifold can be quite arbitrary, and thus we will restrict ourselves to simple subcritical skeleta for pictorial purposes; the dictionary works with arbitrary Weinstein manifolds as long as there is an efficient manner to depict their subcritical topology. Focusing on $(2n-3)$--connected Weinstein $2n$--dimensional manifolds described by a tree plumbing will be enough for the presented applications, and we shall do so from this moment onwards.

Consider a tree graph $T=T^{(0)}\cup T^{(1)}$, with vertex 0--skeleton $T^{(0)}$ and edge 1--skeleton $T^{(1)}$, and the $T$--plumbing $(F_T,\lambda_T,\p_T)$ of spheres; this is the $(2n-2)$--dimensional Weinstein manifold $(F_T,\lambda_T,\p_T)$ obtained as the quotient of the disjoint union of $|T^{(0)}|$ copies of the standard disk cotangent bundle $(\mathbb{D}(T^*S^{n-1}),\la_0,\p_0)$, labeled by the vertices $T^{(0)}$, by the equivalence relation which identifies the cotangent fiber of the $i$-th copy of $\mathbb{D}(T^*S^{n-1})$ with an open Lagrangian disk in the zero section of the $j$-th copy according to whether the vertices $i,j\in T^{(0)}$ are adjacent in $T$; see \cite[Section 2]{AbSm} for more details.

The Weinstein manifolds $(F_T\times D^2,\lambda_T+\la_0,\varphi_T+\p_0)$ capture the subcritical topology for our $2n$--dimensional Weinstein manifolds. Due to the handlebody description of a Weinstein manifold, we are in fact interested in the contact boundaries $(Y_0,\xi_0) = \dd (F_T \x D^2,\la_T+\la_0)$, which can in turn be described by the adapted contact open book $(Y_0,\xi_0) = \op{ob}(F_T,\op{id})$; this is the open book associated to the Lefschetz fibration $F_T\x D^2\longrightarrow D^2$ given by the projection onto the second factor. Nevertheless, note that the contact boundary $(Y_0,\xi_0)$ does not depend on the graph structure of the tree $T$ or the symplectic topology of the Liouville page $F_T$, except for the number of vertices $|T^{(0)}|$. Indeed, the Weinstein manifold $W_0 := F_T \x D^2$ is subcritical, and therefore it can be uniquely described as
$$W_0 = D^{2n} \cup_k h^{2n}_{n-1}$$
up to symplectomorphism. The data determining the subcritical Weinstein manifold $W_0$ is
\begin{itemize}
\item[-] The smooth isotopy type of the embedded attaching link
$$\coprod_k S^{n-2} \sse S^{2n-1}=\dd D^{2n},$$
which is necessarily trivial being in codimension $n+1$, and thus it suffices to understand the local case $W_0 = T^*S^{n-1} \x D^2$.
\item[-] The diffeomorphism type, which is $T^*S^{n-1} \x D^2 \cong S^{n-1} \x D^{n+1}$ since spheres are stably parallelizable by the normal bundle of the round embedding.
\item[-] The homotopy type of its almost complex structure, which is given by the symplectomorphism $(T^*S^{n-1} \x D^2,\la_0,\p_0) \cong (T^*(\R^n \sm \{0\}),\la_0,\p_0)$.
\end{itemize}

In particular, the contact structure $(Y_0,\xi_0)$ can be obtained by starting with the standard contact space $\R^{2n-1}_\std\sse (S^{2n-1},\xi_0)$ and performing $|T^{(0)}|$ disjoint $0$--framed subcritical contact surgeries along isotropic spheres $S^{n-2} \sse \R^{2n-1}_\std$, which are boundaries of Legendrian disks, and then consider the smooth point--compactification of the resulting contact manifold. Therefore, the front projection of a Legendrian manifold in the contact boundary $(Y_0,\xi_0)$ is given as a standard Legendrian front in $\R^n$, a nowhere vertical hypersurface with Legendrian singularities \cite{ArnoldSing}, which is additionally allowed to pass through $|T^{(0)}|$ different $S^{n-2}$--wormholes, representing the boundary attaching spheres of the $|T^{(0)}|$ subcritical handles $h^{2n}_{n-1}$. This Legendrian picture has been studied in the knot case $n=2$ \cite{Gompf}, and this discussion allows us to generalize it from there by rotational symmetry.

Because our starting point will be a Lefschetz fibration with fiber $F_T$, it is important for us to keep track of the structure of the tree $T$ since we work directly with open books; even though, as noted, the contactomorphism type of $\dd (F_T \x D^2)$ does not depend on the edges $T^{(1)}$ of the tree $T$. This discussion will be expanded in Sections \ref{ssec:legsk} and \ref{ssec:Wmoves} below.

Before this, we discuss the main theorems about loose Legendrian embeddings which are needed in order to build Recipe \ref{dictionary} and prove Theorems \ref{thm:Xab1} and \ref{thm:kr1} above.

\subsection{Loose Legendrians and flexible Weinstein manifolds}\label{ssec:loose}

In this section we discuss loose Legendrians embeddings \cite{Mu}. The core idea that the reader should get out of this section is that we are able to detect looseness of a Legendrian submanifold in the front projections provided by Proposition \ref{prop:LF}, which in turn allows us to apply $h$--principles and construct symplectomorphisms between a priori distinct symplectic manifolds.

First, the definition of a loose Legendrian submanifold.

\begin{definition}\label{def:loose}
Consider the Legendrian arc $\Lambda_0 \sse (D^3(2), \ker\a_\st)$ depicted in Figure \ref{fig:stabLeg} and the open subset $V_\rho=\{|p| < \rho, |q| < \rho \}\sse (T^*\R^{n-2}(q,p),\la_\st)$, for any $n\geq 3$.

Note that the product submanifold $\La_0\times(V_\rho\cap\{p=0\})$ is a Legendrian submanifold of corresponding product contact manifold $(D^3\times V_\rho,\ker(\a_\st+\la_\st))$. We can now introduce the two following definitions:

\begin{itemize}
\item[-] The relative pair $(D^3\times V_\rho, \Lambda_0 \x (V_\rho\cap\{p=0\}))$ is said to be a {\bf loose chart} if the radius $\rho$ satisfies $\rho>1$.\\

\item[-] Let $\Lambda^{n-1} \sse (Y, \xi)$ be a connected Legendrian submanifold, then $\Lambda$ is said to be a {\bf loose Legendrian} if there exists an open set $U \sse Y$ such that the relative pair $(U, U \cap \Lambda)$ is contactomorphic to a loose chart.\hfill$\Box$
\end{itemize}
\end{definition}

\begin{figure}[h!]
\centering
  \includegraphics[scale=0.45]{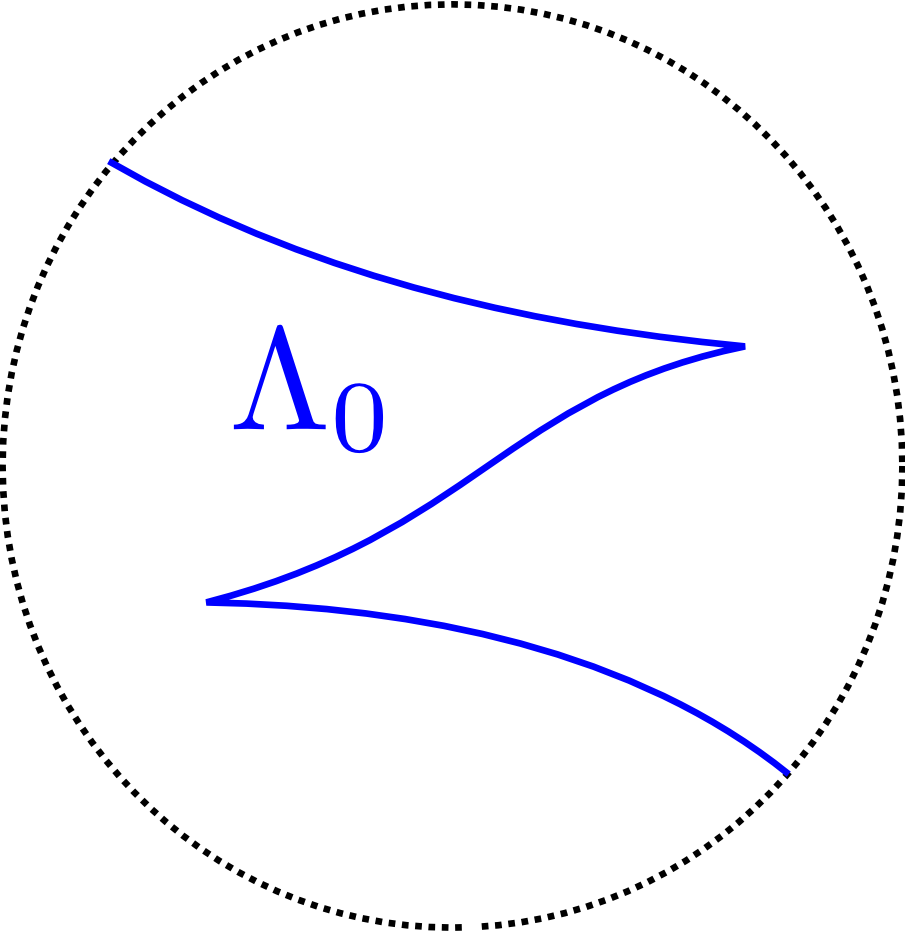}
  \caption{The Legendrian stabilized arc in Definition \ref{def:loose}.}
\label{fig:stabLeg}
\end{figure}

Observe that the open set $U\sse Y$ can be arbitrary, and the notion of a Legendrian submanifold being loose in $(Y,\xi)$ depends globally on the contact topology of $(Y,\xi)$. In particular, a given Legendrian $\Lambda$ might not be loose in a contact manifold $(Y,\xi)$ and become loose if we modify the contact structure $(Y,\xi)$ in a Darboux chart disjoint from an open neighborhood of $\Lambda$, see \cite{CM,CMP,Mu}.

Loose Legendrian submanifolds have been classified up to Legendrian isotopy in the article \cite{Mu}, where they were shown to satisfy an $h$--principle. The crucial property we shall use of loose Legendrian submanifolds is that their behaviour is (almost) only constrained to smooth topology; this is illustrated by Theorem \ref{thm: c0 loose}, for which we introduce the following preliminary definition.

\begin{definition}\label{def:formal}
Let $\Lambda \sse (Y, \xi)$ be a Legendrian manifold, then its normal bundle $\nu\Lambda$ in $(Y,\xi)$ admits a trivializing framing
$$F:\nu\Lambda \xrightarrow{\,\,\cong\,\,} T\Lambda \oplus \R.$$
This framing is canonical up to homotopy: for instance, obtained by the contractible choice of an almost complex structure on $\xi$ compatible with the canonical linear conformal symplectic structure. This framing is called the {\bf Legendrian normal framing}.

Now consider $f_0, f_1:\Lambda\lr (Y, \xi)$ two Legendrian submanifolds which are smoothly isotopic under a smooth isotopy $f_t:\Lambda\lr Y$. This isotopy defines an identification of the normal bundles $\nu f_0(\Lambda) \cong \nu f_1(\Lambda)$. If the Legendrian normal framings of both embedded Legendrian $f_0(\Lambda)$ and $f_1(\Lambda)$ agree under this identification, up to homotopy, the smooth isotopy $f_t$ is said to be a {\bf formal Legendrian isotopy}.\hfill$\Box$
\end{definition}

The following result makes precise the intuition that loose Legendrian submanifolds behave according to algebraic topological constraints:

\begin{thm}[\cite{Mu}]\label{thm: c0 loose}
Let $\Lambda\sse(Y,\xi)$ be a connected Legendrian submanifold and
$$f_0, f_1:\Lambda \longrightarrow (Y, \xi)$$
two Legendrian embeddings such that $f_0$ and $f_1$ are formally Legendrian isotopic.\\
In the case $\dim(Y)\geq 5$, suppose that both $f_0(\Lambda)$ and $f_1(\Lambda)$ are loose Legendrians, and for $\dim(Y) = 3$, suppose that each of $f_0(\Lambda)$ and $f_1(\Lambda)$ is Legendrian isotopic to full stabilization of itself. Then, the Legendrians $f_0(\Lambda)$ and $f_1(\Lambda)$ are Legendrian isotopic.
\end{thm}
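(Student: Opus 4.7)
The plan is to establish this h-principle by combining holonomic approximation with a local absorption lemma in the loose chart. First I would lift the hypothesized formal Legendrian isotopy $f_t$ to a family of formal Legendrian embeddings, regarded as a path in the space of sections of the appropriate jet bundle that is holonomic exactly at $t=0,1$. A relative version of the Eliashberg--Mishachev holonomic approximation theorem for wrinkled embeddings then produces a \emph{wrinkled Legendrian isotopy} $\wt f_t:\La\lr(Y,\xi)$ connecting $f_0$ and $f_1$ through genuine Legendrian embeddings, at the cost of introducing finitely many wrinkles that appear and disappear during the homotopy. Away from the wrinkle loci the family is already genuinely Legendrian and realizes the prescribed formal homotopy class; the wrinkles are the localized defects carrying the difference between the formal and the genuine worlds.

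Next I would transport all wrinkle loci through the isotopy so as to concentrate them inside the loose chart prescribed at $f_0(\La)$. Because the Legendrian normal framings of $f_0(\La)$ and $f_1(\La)$ agree under the formal isotopy, there is no framing obstruction to carrying a wrinkle along a parallel Legendrian arc, and the required sliding can be achieved by a sequence of ambient Legendrian isotopies supported in a tubular neighborhood of $\La$. The core local claim is then the \textbf{wrinkle absorption lemma}: inside the model loose chart $(D^3\x V_\rho,\,\La_0\x(V_\rho\cap\{p=0\}))$ with $\rho>1$, any wrinkled Legendrian that coincides with $\La_0\x(V_\rho\cap\{p=0\})$ near the boundary is Legendrian isotopic, rel boundary, to the trivial Legendrian. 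This lemma is the workhorse that converts the wrinkled isotopy into a genuine one: one applies it to each wrinkle in turn, obtaining a Legendrian isotopy $F_t$ from $f_0$ to $f_1$.

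The wrinkle absorption lemma is the main obstacle, and it is proved by an explicit front-projection calculation in the Darboux model of Figure \ref{fig:stabLeg}. The zig-zag of $\La_0$ combined with the margin $\rho>1$ furnishes exactly the geometric room to pair each wrinkle with a local stabilization, after which the two cancel through a Reidemeister-type move in the front followed by smoothing of the Legendrian singularities. A careful parametric analysis ensures that the intermediate fronts never develop illegal self-tangencies, so that the absorption is realized by an ambient contact isotopy supported in a neighborhood of the loose chart. In the three-dimensional case the loose chart hypothesis is replaced by the assumption that each $f_i(\La)$ is Legendrian isotopic to its full stabilization, and the absorption lemma is supplied instead by Fuchs--Tabachnikov's destabilization theorem for doubly stabilized Legendrian knots; the rest of the argument is unchanged.
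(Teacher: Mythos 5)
This theorem is not proved in the paper: it is imported wholesale from \cite{Mu}, and the only argument the authors supply is the remark covering the case $\dim(Y)=3$ via Fuchs--Tabachnikov \cite{FT} together with $\Lambda\cong s_-(s_+(\Lambda))$. Your final paragraph reproduces that remark correctly, and your high-dimensional outline is a recognizable sketch of the strategy actually used in \cite{Mu} (formal isotopy $\rightsquigarrow$ wrinkled Legendrian isotopy via a holonomic-approximation/wrinkling theorem, followed by resolving the wrinkles using the loose chart). So at the level of architecture you are following the cited source rather than inventing a new route. The caveat is that the two steps you isolate --- the existence of a wrinkled Legendrian isotopy realizing a given formal isotopy, and the ``wrinkle absorption lemma'' in the chart of Definition \ref{def:loose} --- are precisely where all of the content of \cite{Mu} lives; as written they are assertions, not proofs, so the proposal is an outline of the known proof rather than a self-contained argument.

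There is also one structural point that, taken literally, is wrong. You transport every wrinkle into the loose chart of $f_0(\Lambda)$ and absorb it there, never using the hypothesis that $f_1(\Lambda)$ is also loose. Any argument with that shape proves too much: it would show that a loose Legendrian is Legendrian isotopic to \emph{every} formally isotopic Legendrian, including non-loose ones. That is false --- looseness is a Legendrian isotopy invariant, and there exist non-loose Legendrians (distinguished, e.g., by Legendrian contact homology) in the formal class of a loose one; indeed Example \ref{ex:exampleX12} of this paper relies on exactly this failure. The symmetric hypothesis must enter: in \cite{Mu} the resolution of a wrinkled isotopy produces a genuine isotopy only after stabilizing \emph{both} endpoints, and one then needs that stabilization does not change the Legendrian isotopy class of either $f_0(\Lambda)$ or $f_1(\Lambda)$, which is where looseness of each end is used (in dimension three this is exactly the role of the ``full stabilization of itself'' hypothesis). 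Your sketch needs to be rearranged so that the absorption is performed at both ends, or so that both $f_0$ and $f_1$ are shown isotopic to a common stabilized model; as stated, the wrinkles that die in the interior of the family have nowhere consistent to go.
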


\begin{remark}
The 3--dimensional statement does not explicitly appear in the literature as far as the authors are aware, but the proof follows from the theorem of Fuchs--Tabachnikov \cite{FT}: any formal Legendrian isotopy between Legendrian knots can be realized by a Legendrian isotopy after sufficiently many stabilizations, and since $\Lambda \cong s_-(s_+(\Lambda))$, we can realize any number of stabilizations by a Legendrian isotopy.\hfill$\Box$
\end{remark}

The absolute symplectic counterparts of loose Legendrian submanifolds were subsequently introduced in \cite{CE}. These are symplectic manifolds whose symplectomorphism type is also constrained by strictly algebraic topological invariants. The definition reads as follows:

\begin{definition}\label{def:flexible}
Let $(W,\la,\p)$ be a Weinstein manifold of dimension $2n\geq6$.

Then $(W,\la,\p)$ is said to be {\bf explicitly flexible} if for each index $n$ critical point of the Morse function $\p$, the associated Legendrian attaching sphere is a loose Legendrian submanifold in its corresponding contact level set.

Since we are studying Weinstein manifolds up to deformation, we also consider the deformation invariant notion and say that a Weinstein manifold is {\bf flexible} if it is Weinstein homotopic to an explicitly flexible Weinstein structure.\hfill$\Box$
\end{definition}

The corresponding $h$--principle for flexible Weinstein manifolds states that their almost symplectic type determines their Weinstein deformation type. The precise result, analogous to Theorem \ref{thm: c0 loose} above, can be stated as follows:

\begin{thm}[\cite{CE}]\label{thm:CE flex}
Let $(W_1,\la_1,\p_1)$ and $(W_2,\la_2,\p_2)$ be two flexible Weinstein structures, and suppose that there exists a diffeomorphism $f: W_1 \lr W_2$ such that the symplectic vector bundle $f^*TW_2$ is isomorphic to the symplectic vector bundle $TW_1$. Then the diffeomorpshim $f$ is isotopic to a symplectomorphism.
\end{thm}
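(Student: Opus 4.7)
The plan is to reduce the statement to an h-principle for flexible Weinstein structures, and then extract the desired isotopy via a Moser-type argument. First, by Definition \ref{def:flexible}, after a preliminary Weinstein homotopy of each side we may assume that both $(W_1,\lambda_1,\varphi_1)$ and $(W_2,\lambda_2,\varphi_2)$ are \emph{explicitly} flexible, so that every index $n$ attaching sphere is a loose Legendrian in its contact level set. Using $f$ to pull back the target Weinstein data, we replace the problem with the following: on the single smooth manifold $W_1$, I have two explicitly flexible Weinstein structures $(\lambda_1,\varphi_1)$ and $(f^*\lambda_2, f^*\varphi_2)$ whose underlying almost symplectic structures are homotopic through almost symplectic forms (this homotopy is exactly the bundle isomorphism $f^*TW_2\cong TW_1$ provided by hypothesis). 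Together with an interpolation of Morse functions these data constitute a \emph{formal} Weinstein homotopy between the two structures.

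Second, I aim to upgrade this formal homotopy to a genuine Weinstein homotopy, handle by handle, using the Weinstein handle decomposition furnished by $\varphi_1$ and $f^*\varphi_2$. The subcritical handles (index $<n$) are handled by the Cieliebak splitting principle \cite{CE} and Gromov's isotropic h-principle: the subcritical skeleton of a Weinstein manifold is determined up to Weinstein homotopy by its almost complex homotopy class, so the two subcritical skeleta can be matched. At each subsequent step of attaching a critical handle, the attaching sphere of $(\lambda_1,\varphi_1)$ and of $(f^*\lambda_2, f^*\varphi_2)$ lives as a Legendrian in the same contact level set (after the inductive identification), both Legendrians are loose by explicit flexibility, and they are formally Legendrian isotopic because the almost symplectic data agree up to homotopy. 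At this point Theorem \ref{thm: c0 loose} produces a genuine Legendrian isotopy between them, which I extend to an ambient contact isotopy of the level set by Legendrian isotopy extension, and thereby realize the handle attachment of one structure as that of the other. Iterating upward through the Morse critical values produces a bona fide Weinstein homotopy $(\lambda_t,\varphi_t)_{t\in[0,1]}$ on $W_1$ interpolating between $(\lambda_1,\varphi_1)$ and $(f^*\lambda_2, f^*\varphi_2)$.

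Third, I integrate this Weinstein homotopy into an ambient diffeotopy. The time-derivative $\tfrac{d}{dt}\lambda_t$ is cohomologous to $0$ relative to the Liouville structure, so a standard Moser argument on the open Weinstein manifold $W_1$ produces an isotopy $\psi_t:W_1\to W_1$ with $\psi_0=\id$ and $\psi_1^*(f^*d\lambda_2)=d\lambda_1$. The composition $f\circ\psi_1:W_1\to W_2$ is then a symplectomorphism, and the path $f\circ\psi_t$ gives an isotopy from $f$ to $f\circ\psi_1$, which is the assertion.

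The main obstacle is the critical-handle induction of the second step. One must verify that the inductive identifications of contact level sets keep the relevant attaching sphere loose, that the formal Legendrian isotopy class is preserved by the almost symplectic homotopy chosen initially, and that the local Legendrian isotopies provided by Theorem \ref{thm: c0 loose} can actually be assembled into an ambient contact isotopy compatible with all remaining handle data. Each of these points is delicate: looseness is a \emph{global} contact condition (as emphasized after Definition \ref{def:loose}), and propagating it across a long sequence of surgeries is precisely where the parametric and relative versions of the loose Legendrian h-principle must be invoked carefully, following \cite{CE,Mu}.
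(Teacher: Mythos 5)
The paper offers no proof of this statement: it is imported verbatim from \cite{CE}, so there is no internal argument to compare yours against. With that caveat, your sketch is a faithful reconstruction of the strategy of the original proof: reduce to two explicitly flexible structures on a single manifold joined by a formal Weinstein homotopy, match the subcritical skeleta by the subcritical h--principle, match the critical attaching spheres one at a time via the loose Legendrian h--principle (Theorem \ref{thm: c0 loose}), and then integrate the resulting Weinstein homotopy to a symplectomorphism. Two points deserve more care than you give them. First, the final Moser step is not a routine application of Moser's trick on an open manifold: one must pass to the completed Liouville structures and use the completeness of the Liouville vector fields to show that a homotopy of complete Liouville structures integrates to a diffeotopy intertwining them; this occupies a separate chapter of \cite{CE} and is precisely where the conclusion that $f$ is \emph{isotopic} to a symplectomorphism comes from, so writing it off as ``a standard Moser argument'' hides real content. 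Second, in the handle induction you need the attaching spheres to be matched as \emph{parametrized} Legendrians, so that the Weinstein handle attachments and not merely the attaching loci agree; this requires the parametric form of the loose h--principle, and it is a point distinct from the (genuine) difficulties you do flag about propagating looseness and assembling ambient isotopies. Neither issue is a wrong turn --- both are resolved in \cite{CE} --- so your proposal is a correct outline of the cited proof rather than a complete argument.
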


Note that the main difficulty about Definition \ref{def:flexible} is that it requires a front description of the Legendrian attaching link which, in addition, exhibits a loose chart for each component lying in the complement of the other components. For this reason, it is difficult to tell in general if a Weinstein manifold presented as a Lefschetz fibration is flexible. Detecting flexibility of Weinstein manifolds which are not presented as surgery diagrams, such as affine varieties and explicit Lefschetz fibrations, is a major motivation of this work.

Let us now continue the discussion on loose charts, since both Definition \ref{def:flexible} and Theorem \ref{thm:CE flex} are ultimately based on their understanding. Suppose that we are given a Legendrian $\Lambda$ in a contact manifold $(Y,\xi)$ and we are aiming to prove that $\Lambda$ is a loose Legendrian: according to Definition \ref{def:loose}, we first need to find a $3$--dimensional slice of $\Lambda$ with the stabilized arc from Figure \ref{fig:stabLeg}. It is important to remark that this is not sufficient: it is simple to exhibit such 3--dimensional slices for any higher dimensional Legendrian unknot, and thus for any Legendrian, even 	in a Darboux chart \cite{CM,CMP}. Thus the main difficulty is finding a sufficiently thick product neighborhood of this slice in order to satisfy the radius condition $\rho > 1$. One of the extremely useful properties of the front projections from Proposition \ref{prop:LF} is that we are always able to ensure this condition:

\begin{prop}\label{prop:loose slice}
Consider a Legendrian $\Lambda \sse (\R^{2n-1},\xi_\std)$ and the front projection
$$\pi:(\R^{2n-1},\xi_\st)\lr\R^n.$$
Suppose that there exists a smooth $2$--disk $D^2 \sse \R^n$ parallel to the vertical direction which intersects the Legendrian front $\pi(\Lambda)$ transversely and such that the intersection $D^2 \cap \pi(\Lambda)$ is diffeomorphic to the arc in Figure \ref{fig:stabLeg} as a curve on the disk. Then the Legendrian $\Lambda$ is loose.
\end{prop}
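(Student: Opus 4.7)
The plan is to build the loose chart inside an arbitrarily small tubular neighborhood of $D^2$ by first showing that the front has a product decomposition in that neighborhood, and then applying a contact rescaling in the transverse directions to force the size condition $\rho > 1$. The rescaling will be a strict contactomorphism of $(\R^{2n-1}, \xi_\std)$, so the loose chart produced in the rescaled picture pulls back to a loose chart around the original $\La$.

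First, after a translation and an orthogonal change of coordinates compatible with the front projection $J^1(\R^{n-1}) \lr \R^n$, arrange that $\R^n = \R^{n-1}_q \x \R_z$, that the 2--disk $D^2$ lies in the plane $\{q_2 = \cdots = q_{n-1} = 0\}$, and that $D^2 \cap \pi(\La)$ is precisely the stabilized arc of Figure \ref{fig:stabLeg} drawn in the $(q_1,z)$--plane. Since $D^2$ is vertical and $\pi(\La)$ meets $D^2$ transversely, the implicit function theorem produces a product tubular neighborhood $D^2 \x B^{n-2}(\delta) \sse \R^n$ in which $\pi(\La)$ is a smooth $(n-1)$--submanifold fibering over $B^{n-2}(\delta)$, with each fiber $C^1$--close to the stabilized arc. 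Lifting back to $J^1(\R^{n-1})$, the corresponding piece of $\La$ is then $C^1$--close to the product Legendrian $\La_0 \x \{(q,p) \in T^*B^{n-2}(\delta) : p = 0\}$.

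Next, apply the strict contactomorphism $\psi_\la:(q_1,p_1,q_i,p_i,z)\mapsto(q_1,p_1,\la q_i,\la^{-1}p_i,z)$ for $i\ges 2$, which preserves the contact form $dz - \sum p_j\, dq_j$ exactly. For $\la > 2/\delta$, the image of the tubular neighborhood under $\psi_\la$ contains a full polydisk of radius larger than $2$ in the transverse $(q_i,p_i)$--directions. Simultaneously, $\psi_\la(\La)$ has $p_i$--coordinates of order $\la^{-1}$, so it is $C^1$--close to the standard product Legendrian $\La_0 \x (V_\rho \cap \{p=0\})$ for some $\rho > 1$. Increasing $\la$ improves both the size $\rho$ of the transverse polydisk and the $C^1$--smallness of the deviation from the product model.

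Finally, a relative contact Weinstein neighborhood theorem, applied with parameters $(q_i,p_i)$ and fixing the $\La_0$ factor, produces a contactomorphism from a neighborhood of $\psi_\la(\La)$ onto the standard loose chart $(D^3(2) \x V_\rho, \La_0 \x (V_\rho \cap \{p=0\}))$; pulling back by $\psi_\la^{-1}$ exhibits the desired loose chart around $\La$, and Definition \ref{def:loose} is satisfied. The main obstacle is this last step: one must check that $\psi_\la(\La)$ is close enough to the product model in $C^1$ over the entire polydisk to be absorbed by a Moser isotopy that does not disturb the loose chart structure. This reduces to quantifying how the transverse derivatives of $\La$, combined with the rescaling factor $\la^{-1}$, dominate the tolerance required by the parametric Weinstein neighborhood argument, which is always achievable by taking $\la$ sufficiently large.
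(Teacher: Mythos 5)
Your strategy is essentially the one the paper uses: exploit transversality to get a product neighborhood of the slice that is unconstrained in the conjugate momenta, and then apply an anisotropic rescaling of the transverse $T^*\R^{n-2}$--factor ($q\mapsto\la q$, $p\mapsto\la^{-1}p$) to blow the $q$--width past the threshold $\rho>1$; the paper's map $f$ does exactly this, with the cosmetic difference that it first normalizes everything by an explicit Legendrian isotopy so that the final contactomorphism can be written in closed form, whereas you defer the identification to a $C^1$--approximation plus a Moser/Weinstein--neighborhood argument at the end. That trade-off is legitimate, and your observation that the rescaling preserves $dz-\sum p_j\,dq_j$ while the neighborhood is unconstrained in $p$ (so only the $q$--extent needs to grow) is the correct key point.

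One step as written is wrong, though it is fixable. A translation and an orthogonal change of coordinates cannot turn an arbitrary arc that is merely \emph{diffeomorphic} to the stabilized arc of Figure \ref{fig:stabLeg} into the standard arc $\pi(\La_0)$: linear maps do not normalize plane curves. This is precisely where the hypothesis ``diffeomorphic as a curve on the disk'' must be spent, and the paper spends it by choosing a compactly supported isotopy of $D^2$ carrying the slice arc to a rescaled copy of $\pi(\La_0)$, extending it over the tubular neighborhood, and lifting it to a Legendrian isotopy $\p_t$ of $\La$. Your final Weinstein--neighborhood step, as you set it up, explicitly \emph{fixes} the $\La_0$--factor, so it cannot absorb this discrepancy either. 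To repair the argument, either insert the normalizing front isotopy before the rescaling (as the paper does), or weaken the last step so that the contactomorphism onto the model chart is allowed to move the $D^3$--slice, using that the slice arc is Legendrian isotopic to $\La_0$ inside $D^3$ and that compactly supported Legendrian isotopies extend to ambient contact isotopies.
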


\begin{proof}
The statement has no assumptions on the specific size and shape of the contact neighborhood of the intersection $D^2 \cap \pi(\Lambda)$, and we must then exhibit a loose chart according to the requirements of Definition \ref{def:loose}. First, since the 2--disk $D^2$ is transverse to the front $\pi(\Lambda)$, there exists a neighborhood $U\sse \R^n$ of $D^2$ such that the intersection $U \cap \pi(\Lambda)$ is diffeomorphic to $(D^2 \cap \Lambda) \x D^{n-1}(\e) \sse D^2 \x D^{n-2}(\e) \cong U$ for some $\e\in\R^+$. Let us first construct an isotopy which will allow us now to find a loose chart for $\Lambda$.

Choose a compactly supported isotopy of the disk $D^2$ sending the intersection arc $D^2 \cap \Lambda$ to a rescaling $\delta\cdot\pi(\Lambda_0)$ of the front of $\Lambda_0$, where $\delta\in\R^+$ is chosen such that the rescaling $\delta\cdot\pi(\Lambda_0)$ sits inside the given 2--disk $D^2$. Extend this smooth isotopy to an isotopy of the neighborhood $U$, such that the piece of the front $\pi(\Lambda) \cap D^2 \x D^{n-2}(\e/2)$ is sent to $\delta\cdot\pi(\Lambda_0) \x D^{n-2}(\e/2)$, and it is cutting the isotopy off in the radial direction. This isotopy then extends to a Legendrian isotopy $(\p_t)_{t\in[0,1]}$ of the initial embedded Legendrian $\Lambda \sse (\R^{2n-1},\xi_\std)$.

Second, let us use the isotopy $\p_1$ to exhibit the loose chart. Consider the set
$$V = \{(x,y,z) : (x, z) \in D^2 \x D^{n-2}(\e/2) \}\sse(\R^{2n-1},\xi_\std),$$
such that the intersection $\p_1(\Lambda) \cap V$ is isomorphic to $\delta\cdot\Lambda_0 \x Z$, where we have identified $V$ with $(\R \x D^2) \x T^*D^{n-2}$ and denoted by $Z \sse T^*D^{n-2}$ the zero section. Now, the subset $V$ is equipped with the contact form $\alpha_\std + \lambda_\std$, where $\alpha_\std = dz - \wt y d\wt x$ is the standard contact form on $\R \x B^2 \sse \R^3$, and $\lambda_\std$ is the tautological $1$-form on $T^*D^{n-2}(\e/2)$. Then taking the canonical cotangent coordinates $(q,p)\in T^*D^{n-2}(\e/2)$, we can consider the map
$$(\wt x, \wt y, q, p, z) \longmapsto f(\wt x, \wt y, q, p, z)=\left(\wt x, \frac{\wt y}\delta, \frac{4q}{\e}, \frac{\e p}{4\delta}, \frac z \delta\right).$$
By construction, the map $f$ is a contactomorphism which sends the relative pair $(V, V \cap \p_1(\Lambda))$ to a set containing a loose chart for $\La$, thus proving that $\Lambda$ is a loose Legendrian.
\end{proof}

Proposition \ref{prop:loose slice} allows us to detect loose charts for Legendrian submanifolds in the front projection. There are two particular situations in which looseness can be deduced which will be useful to us in Section \ref{sec:app}, this is the content of the following two propositions.

\begin{prop}\label{prop:cc loose}
Let $(Y,\xi)$ be a contact $(2n-1)$--dimensional manifold and $(\wt Y,\wt\xi)$ the contact manifold resulting from a Weinstein $(n-2)$--surgery with a belt $n$--sphere $C \sse \wt Y$.

In the case $n\geq 3$, any Legendrian submanifold $\Lambda \sse (\wt Y,\wt\xi)$ which intersects the belt sphere $C$ transversely at one point is a loose Legendrian submanifold of $(\wt Y,\wt\xi)$.

For $n=2$, any Legendrian knot $\Lambda \sse (\wt Y,\wt\xi)$ intersecting $C$ transversely in one point is a full stabilization of itself.
\end{prop}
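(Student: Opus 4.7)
The plan is to reduce the problem to a standard contact neighborhood of the intersection point $p = \Lambda \cap C$ determined by the subcritical handle structure, and then to exhibit either the loose chart (for $n \geq 3$) or the required stabilization (for $n = 2$) explicitly inside this neighborhood.

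First, I would set up the standard model. By the Weinstein $(n-1)$--handle decomposition recalled in Subsection \ref{ssec:subcrit}, a neighborhood $\mathcal{U} \sse \wt Y$ of the belt sphere $C$ is contactomorphic to a standard contact neighborhood of the sphere $C \cong S^n$ sitting inside the contact boundary of a subcritical $(n-1)$--handle attached to a Darboux ball. In the Legendrian front projection of Subsection \ref{ssec:legsk}, this handle appears as an $S^{n-2}$--wormhole in $\R^n$, with the belt sphere $C$ corresponding to its neck. Since $\Lambda$ meets $C$ transversely in the single point $p$, the Legendrian neighborhood theorem lets me place $\Lambda \cap \mathcal{U}$ into a standard form: a small Legendrian disk whose front traverses the wormhole exactly once.

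For $n \geq 3$, I would then build the loose chart by applying Proposition \ref{prop:loose slice}. The strategy is to perform a small Legendrian isotopy supported in $\mathcal{U}$ which, using the extra $S^n$ of room provided by the belt sphere, creates a zigzag in the front of $\Lambda$ on one side of the wormhole. Concretely, I will exhibit a $2$--disk $D^2 \sse \R^n$ parallel to the vertical direction and transverse to the isotoped Legendrian front such that the intersection with the front is diffeomorphic to the stabilized arc of Figure \ref{fig:stabLeg}. The symplectic rescaling symmetry of the subcritical handle allows me to dilate the transverse cotangent directions of the cocore, so the radius condition $\rho > 1$ of Definition \ref{def:loose} is satisfied. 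Proposition \ref{prop:loose slice} then concludes that $\Lambda$ is loose.

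For $n = 2$, the analogous isotopy through the contact $1$--handle threads the knot across the wormhole and inserts a pair of opposite cusps in the front near the neck, realizing a full stabilization of $\Lambda$ Legendrian isotopic to the original knot. The hard part in both cases will be verifying rigorously that the transverse slice produced by the isotopy meets the size requirement $\rho > 1$ in Definition \ref{def:loose}; this reduces to tracking the symplectic rescaling constants through the standard Weinstein handle model and constructing a compactly supported contactomorphism of $\mathcal{U}$ that normalizes the slice into the loose--chart form of Definition \ref{def:loose}.
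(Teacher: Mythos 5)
Your proposal is correct and follows essentially the same route as the paper: reduce to the standard model of the subcritical handle near the unique intersection point, perform a Legendrian isotopy through the handle that introduces the stabilized zigzag (this is exactly the content of Figure \ref{fig:legtrick}, spun $S^{n-2}$--symmetrically for $n\geq 3$), and detect the loose chart via Proposition \ref{prop:loose slice}. Note only that your worry about the radius condition $\rho>1$ is already absorbed by Proposition \ref{prop:loose slice}, whose whole point is that a transverse $2$--disk slice exhibiting the stabilized arc suffices regardless of the size of the ambient neighborhood.
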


\begin{proof}
This follows from the Legendrian isotopy depicted in Figure \ref{fig:legtrick} for the case $n=2$, and the argument extends to higher dimensions by a symmetric rotation. Figure \ref{fig:legtrick} depicts one of the two components of the attaching $0$--sphere, indicated with $h$, and the creation of a loose chart for any Legendrian submanifold which intersects transversely the belt sphere of the $1$--handle once.
\begin{figure}[h!]
\centering
  \includegraphics[scale=0.7]{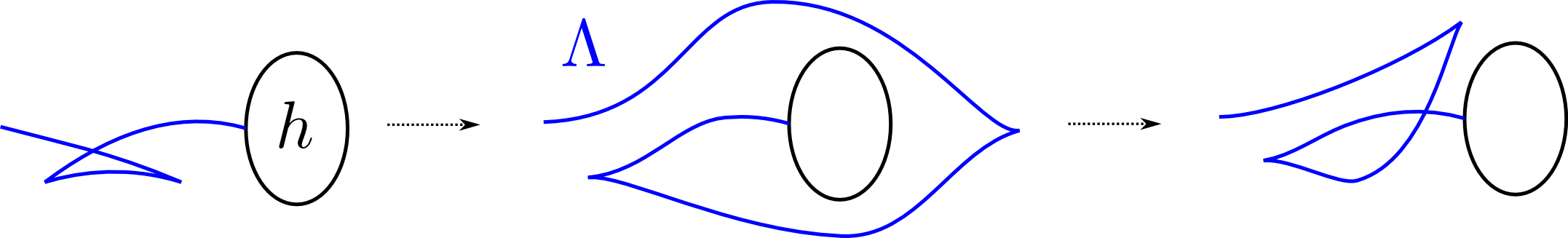}
  \caption{The Legendrian $\Lambda$ and its loose chart.}
\label{fig:legtrick}
\end{figure}
\end{proof}

Proposition \ref{prop:cc loose} is useful to detect looseness in a subcritical scenario, as described by the following:

\begin{prop}\label{prop:ob loose}
Let $(\Sigma,\la)$ be a Liouville structure, $L\sse(\Sigma,\la)$ an exact Lagrangian, and $(Y,\xi)= \op{ob}(\Sigma, \op{id})$ the contact manifold associated to the open book with trivial monodromy. Suppose that there exists a Lagrangian disk $D^{n-1} \sse (\Sigma,\la)$ with Legendrian boundary which intersects $L$ in one point. Then the Legendrian lift $\Lambda \sse (Y,\xi)$ of the exact Lagrangian $L\sse(\Sigma,\la)$ is a loose Legendrian or, in the $3$--dimensional case, a full stabilization of itself.
\end{prop}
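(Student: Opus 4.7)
\emph{Proof plan.} The idea is to exhibit a subcritical Weinstein $(n-1)$--handle inside the Weinstein filling $W=\Sigma\x D^2$ of the contact open book $(Y,\xi)=\op{ob}(\Sigma,\op{id})$ whose belt $n$--sphere $C\sse Y$ meets the Legendrian lift $\Lambda$ transversely in a single point; Proposition \ref{prop:cc loose} then yields the conclusion directly.

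By the Weinstein handle normal form, an exact Lagrangian disk $D^{n-1}\sse(\Sigma,\la)$ with Legendrian boundary is the cocore of a subcritical Weinstein $(n-1)$--handle $h\cong D^{n-1}\x D^{n-1}$, so up to Liouville homotopy we may write $\Sigma=\Sigma'\cup h$ for a smaller Liouville domain $\Sigma'$. Multiplying by the unit disk $D^2\sse\C$ with its standard Liouville structure, the Weinstein $2n$--manifold $W$ acquires a subcritical Weinstein $(n-1)$--handle $H=h\x D^2\cong D^{n-1}\x(D^{n-1}\x D^2)$ attached to $\Sigma'\x D^2$ along an isotropic $(n-2)$--sphere in $\op{ob}(\Sigma',\op{id})$. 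Its belt sphere is the boundary of the cocore, namely
$$C=\dd(\{0\}\x D^{n-1}\x D^2)=(\dd D^{n-1}\x D^2)\cup(D^{n-1}\x S^1)\sse(\dd\Sigma\x D^2)\cup(\Sigma\x\dd D^2)=Y,$$
an $n$--sphere whose first piece sits in a tubular neighborhood of the binding $\dd\Sigma$ and whose second piece is the Lagrangian $D^{n-1}$ swept through every page of the open book.

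Since the monodromy of $\op{ob}(\Sigma,\op{id})$ is the identity, the Legendrian lift $\Lambda$ of $L$ is unambiguously placed in a single page, which under the standard identification corresponds to $L\x\{\theta_0\}\sse\Sigma\x\dd D^2$ for some $\theta_0\in S^1$. Consequently
$$\Lambda\cap C=(L\x\{\theta_0\})\cap(D^{n-1}\x S^1)=(L\cap D^{n-1})\x\{\theta_0\},$$
a single transverse point, since $L\pitchfork D^{n-1}$ inside $\Sigma$ by hypothesis and $\{\theta_0\}$ cuts $S^1$ transversely. Because $H$ realizes a subcritical contact $(n-2)$--surgery on $\op{ob}(\Sigma',\op{id})$ with belt sphere $C$, Proposition \ref{prop:cc loose} concludes that $\Lambda$ is a loose Legendrian when $n\geq 3$, and is Legendrian isotopic to a full stabilization of itself when $n=2$.

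The principal technical point to verify carefully is that the product handle decomposition of $W=\Sigma\x D^2$ is honestly compatible with the product Liouville form $\la+\la_0$, and that the sphere $C$ written above is indeed the Weinstein belt sphere of $H$ \emph{inside the given open book} $\op{ob}(\Sigma,\op{id})$, rather than inside some abstractly equivalent but differently positioned open book decomposition of $Y$ obtained after a handleslide; once this bookkeeping is settled the rest is straightforward.
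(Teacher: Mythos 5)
Your proposal is correct and follows essentially the same route as the paper: carve out the Lagrangian disk so that it becomes the cocore of an index $(n-1)$ handle of $\Sigma$, cross with $D^2$ to make that handle subcritical in $\Sigma\times D^2$, observe that the belt sphere $\dd(D^{n-1}\times D^2)$ meets $\Lambda$ in the single point $(L\cap D^{n-1})\times\{\theta_0\}$, and invoke Proposition \ref{prop:cc loose}. The only quibble is terminological: the handle $h\cong D^{n-1}\times D^{n-1}$ is \emph{critical} inside the $(2n-2)$--dimensional $\Sigma$ and only becomes subcritical after taking the product with $D^2$, which is exactly the point of the construction.
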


\begin{proof}
By construction, the contact structure of the subcritically fillable manifold $Y$ is the one induced as the contact boundary $(Y,\xi)=\dd(\Sigma \x D^2,\la+\la_\std)$. By carving out the disk $D^n$, we see that the Liouville manifold $(\Sigma,\la)$ can be obtained from $\Sigma \sm \Op(D^n)$ by a one index $(n-1)$ Weinstein attachment whose co-core is precisely $D^{n-1}$. In consequence, we can obtain the product $\Sigma \x D^2$ from $(\Sigma \sm \Op(D^n))\times D^2$ by an index $(n-1)$ handle attachment, whose co-core is the thickened disk $D^{n-1} \x D^2 \sse \Sigma \x D^2$. Then the statement follows from Proposition \ref{prop:cc loose} since the belt sphere $C = \dd D^{n-1} \x D^2 \sse (Y,\xi)$ intersects the Legendrian $\Lambda$ transversely in one point.
\end{proof}

These propositions detecting loose charts for Legendrian submanifolds will be both used in this Section and subsequently in the proofs of Theorems \ref{thm:Xab} and \ref{thm:kr}. This also concludes the required ingredients for Recipe \ref{dictionary} concerning loose Legendrians, and we now move to describe the front projections that we will be using, where in particular the above propositions can be applied.

\begin{remark}
It might be interesting to note that the existence of loose charts for the Legendrian unknot implies overtwistedness of the ambient contact manifold \cite{CMP}: this is not further explored in this article but since the contents of this section apply to Legendrians in contact surgery diagrams they can be used to efficiently detect overtwisted disks.
\end{remark}

\subsection{Legendrian skeleton}\label{ssec:legsk}

In order to draw Legendrian handlebodies for Lefschetz fibrations, a crucial ingredient is a front projection for the open book $\op{ob}(F, \lambda, \op{id})$ of the contact boundary. There are several possible front projections in a Darboux chart $(\R^{2n-1},\xi_0)$, and similarly we have different choices for a front projection in $\op{ob}(F, \lambda, \op{id})$; the relevant fact is to ensure that the diagrams we are using are indeed front projections: this is part the content of the following proposition, which we also use to describe the Weinstein handlebody induced by a Lefschetz fibration.

Before stating the proposition, we introduce the following definition which captures the basic building block of our front projections.

\begin{definition}\label{def:legsk}
Let $T$ be a tree and consider the Weinstein $T$--plumbing $(F_T,\la_T,\p_T)$.

A {\bf Lagrangian $T$--skeleton} of the Weinstein manifold $(F_T,\la_T,\p_T)$ is the union of the zero sections of cotangent bundles $\mathbb{D}T^*S^{2n-1}$ which constitute the plumbing $F_T$, i.e.~ a Lagrangian $T$--skeleton is a Lagrangian consisting of $T^{(0)}$ Lagrangian spheres, intersecting transversely according to their adjacencies in the tree $T$.

A {\bf Legendrian $T$--skeleton} is a \emph{connected} Legendrian lift of a Lagrangian $T$--skeleton to the contact manifold $\op{ob}(F_T, \op{id}) \cong \#^k(S^{n-1} \x  S^n, \xi_\std)$, i.e.~ a Legendrian $T$--skeleton consists of $T^{(0)}$ Legendrian spheres which intersect according to their adjacencies in $T$ and such that at the intersection points their tangent spaces together span the contact plane.
\end{definition}

\begin{figure}[h!]
\centering
  \includegraphics[scale=0.75]{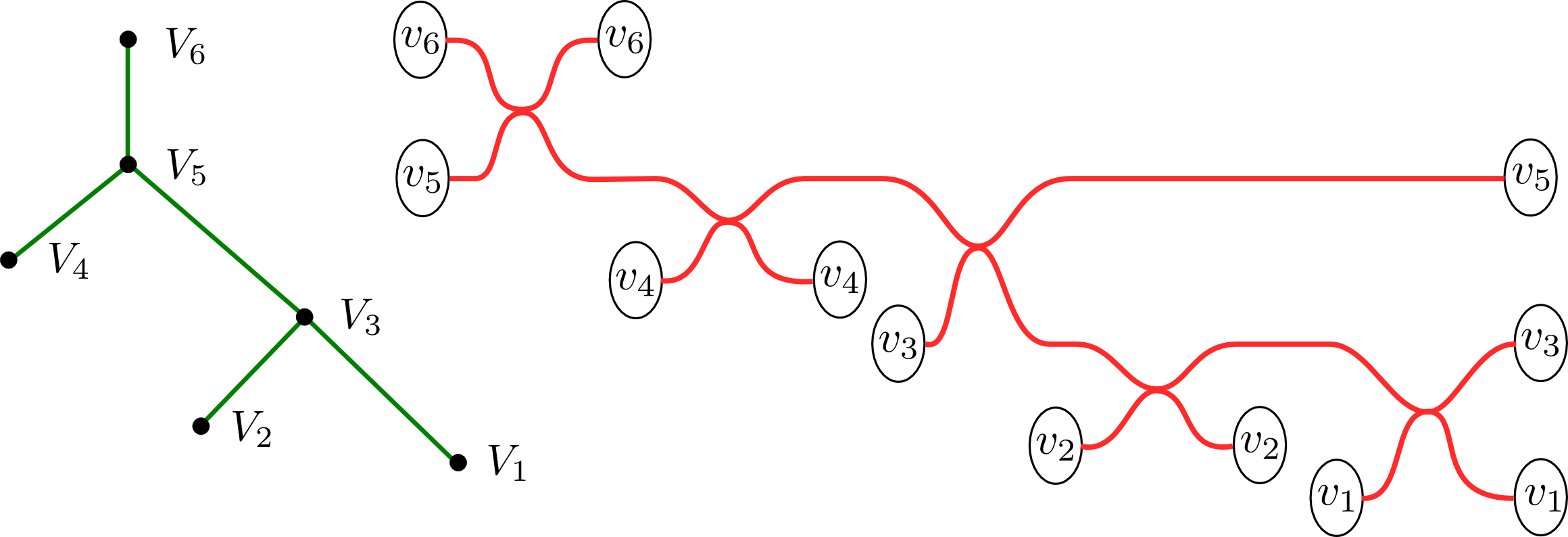}
  \caption{An example of a tree, and its corresponding Legendrian $T$--skeleton.}
\label{fig:tree}
\end{figure}

Figure \ref{fig:tree} depicts the Legendrian $T$--skeleton for a tree. These Legendrian $T$--skeleta will appear constantly from this point onwards, for they provide the geometric information required in order to meaningfully apply the calculus of Legendrian fronts. The following proposition constructs a standard Legendrian skeleton for the contact manifolds appearing as the boundary of the subcritical Weinstein manifolds we are considering:

\begin{prop}\label{prop:LF}
Consider the contact manifold $\#^k(S^{n-1} \x  S^n, \xi_\std)$, a tree $T$ and the $T^{(0)}$ Legendrian spheres $\La_1,\ldots,\La_{T^{(0)}}$, where $\La_i$ is the attaching sphere of a critical handle cancelling the $i^{\text{th}}$ subcritical handle attachment of $\#^k(S^{n-1} \x  S^n, \xi_\std)$, i.e.~ $\Lambda_i = S^{n-1} \x \{\mbox{pt}\}$ in the $i^{\text{th}}$ term of the connect sum.

Legendrian homotope the Legendrians $\Lambda_i$ such that the resulting Legendrians intersect according to the adjacencies in the given tree $T$, and intersect transversely inside the contact planes at the intersection points. Then the union $\Lambda$ of the images of this homotopies is a Legendrian $T$--skeleton.
\end{prop}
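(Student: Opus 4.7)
The plan is to move into the adapted open book $\op{ob}(F_T,\op{id})$ and identify each $\Lambda_i$, up to Legendrian isotopy, with the Legendrian lift of the zero section of the $i$-th cotangent factor of the plumbing. The prescribed Legendrian homotopy then amounts to arranging these lifts so that their projections to a page meet as the tree $T$ requires.

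First, I would invoke the contactomorphism $\#^k(S^{n-1}\x S^n,\xi_\std)\cong\op{ob}(F_T,\op{id})$ from Subsection \ref{ssec:subcrit}, which holds because both contact manifolds arise as the boundary of the subcritical Weinstein manifold $(F_T\x D^2,\la_T+\la_0)$ with $k=|T^{(0)}|$ subcritical handles of index $n-1$. Write $\widetilde{S^{n-1}_i}\sse\op{ob}(F_T,\op{id})$ for the Legendrian lift of the zero section $S^{n-1}_i\sse T_i^*S^{n-1}\sse F_T$ in the $i$-th cotangent factor of the plumbing. The key step is to show that $\Lambda_i$ and $\widetilde{S^{n-1}_i}$ are Legendrian isotopic: both are Legendrian $(n-1)$-spheres whose attachment as a critical $n$-handle cancels the $i$-th subcritical handle of $F_T\x D^2$. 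For $\Lambda_i=S^{n-1}\x\{\pt\}$ this is built into the connect-sum description, while for $\widetilde{S^{n-1}_i}$ it follows from the contact Weinstein cancellation criterion, since in the handle model $T_i^*S^{n-1}\x D^2$ of the $i$-th subcritical handle the zero section $S^{n-1}_i$ meets the co-core $\{\pt\}\x D^2$ transversely in exactly one point. Uniqueness of the Legendrian attaching sphere cancelling a specified subcritical handle then yields a Legendrian isotopy $\Lambda_i\simeq\widetilde{S^{n-1}_i}$, so we may assume $\Lambda_i=\widetilde{S^{n-1}_i}$ from the outset.

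Second, since the plumbing $F_T$ is built so that $S^{n-1}_i$ and $S^{n-1}_j$ intersect transversely at a single point precisely when $i$ and $j$ are adjacent in $T$, the configuration prescribed by $T$ is already realized by the Lagrangian $T$-skeleton $\bigcup_i S^{n-1}_i\sse F_T$. The Legendrian homotopy in the statement can then be chosen to project to a Lagrangian isotopy of the zero sections supported within a page, arranged so that the final Lagrangian projections agree with $\bigcup_i S^{n-1}_i$. At each intersection point, transversality within the contact plane is equivalent to transversality of $S^{n-1}_i$ and $S^{n-1}_j$ inside the page $F_T$ and hence is automatic. Connectivity of $\Lambda$ follows from connectivity of the tree $T$ combined with the fact that the Legendrian lift is single-valued at each transverse intersection, so $\Lambda$ is by construction the Legendrian lift of a Lagrangian $T$-skeleton and satisfies Definition \ref{def:legsk}.

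The main obstacle is the uniqueness claim in the first step: showing that two Legendrian $(n-1)$-spheres which both cancel the same subcritical handle are Legendrian isotopic and not merely formally equivalent. This rests on the contact handle cancellation lemma together with the fact that, in a subcritical ambient Weinstein manifold, the critical attaching sphere is determined up to Legendrian isotopy by the subcritical handle it cancels; one recovers this by comparing the Weinstein homotopies on either side of the cancelling pair and applying Weinstein deformation isotopy extension.
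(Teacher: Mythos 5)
Your reduction to the one--handle model is the right first move, and it parallels the final paragraph of the paper's own argument, where the Legendrian lift of the zero section in $\dd(T^*S^{n-1}\times\C)$ is identified with $S^{n-1}\times\{\mathrm{pt}\}$ by an explicit coordinate computation. But the step you yourself flag as the main obstacle is a genuine gap, and the tool you propose does not close it. There is no ``uniqueness of the Legendrian attaching sphere cancelling a specified subcritical handle'': the cancellation criterion (Proposition \ref{prop:basic cancel}) tells you that each of $\Lambda_i$ and $\widetilde{S^{n-1}_i}$ yields a trivial Weinstein cobordism, but two Legendrians producing Weinstein--homotopic cobordisms need not be Legendrian isotopic, and a Weinstein homotopy between the two presentations does not transport one attaching sphere to the other by an ambient contact isotopy. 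The mechanism that actually makes the identification work in this setting is the loose $h$--principle: any Legendrian meeting the belt sphere of a subcritical handle transversely in one point is loose, or a full stabilization of itself when $\dim(Y)=3$ (Proposition \ref{prop:cc loose}); the lift of a zero section in the plumbing is loose in the complement of the other components by Proposition \ref{prop:ob loose}; and Theorem \ref{thm: c0 loose} then upgrades a \emph{formal} Legendrian isotopy to a genuine one. This forces you to verify agreement of the formal isotopy classes, which your proposal never does; the paper does it by the explicit computation exhibiting $\wt\Lambda=\Lambda$ in coordinates on $\dd(T^*S^{n-1}\times\C)$.

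A second, smaller issue: the proposition asserts that the union $\Lambda$ is a Legendrian $T$--skeleton for \emph{any} Legendrian homotopy achieving the prescribed transverse intersection pattern, whereas you construct one particular homotopy whose result projects to the Lagrangian skeleton of a page. The paper's argument compares the arbitrary configuration $\Lambda$ with a fixed Legendrian $T$--skeleton $\wt\Lambda$: it first normalizes neighborhoods of the intersection points by a contact isotopy (all pairwise transverse intersections inside the contact planes are locally equivalent), and then runs the looseness--plus--formal--isotopy argument on the complementary punctured spheres rel those neighborhoods. If you want to keep your strategy, you would still need this comparison step to handle an arbitrary homotopy, at which point you have essentially reproduced the paper's proof.
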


\begin{proof}
Let $\wt\Lambda \sse Y := \#^k(S^{n-1} \x  S^n, \xi_\std)$ be any given Legendrian $T$--skeleton. Both sets $\Lambda$ and $\wt\Lambda$ consist of Legendrian spheres which pass through the subcritical handles and intersect according to adjacency in the same tree $T$. In order to construct a Legendrian isotopy between them we use the $h$--principle from Theorem \ref{thm: c0 loose}, for which Propositions \ref{prop:cc loose} and \ref{prop:ob loose} will provide the loose charts.

First, we can find a contact isotopy which sends neighborhoods of the intersection points of $\Lambda$ to the corresponding points of $\wt\Lambda$: this isotopy exists because intersections between Legendrians which are transverse in the contact plane are all locally equivalent, and contact isotopies can be chosen to take any finite collection of small Darboux balls to any other such collection. Let $U \sse Y$ be the union of these small neighborhoods at the intersection points.

In the complement of $U$, both Legendrians $\Lambda$ and $\wt\Lambda$ consist of a collection of $T^{(0)}$ disjoint Legendrian punctured spheres: let us compare the Legendrian type of the complements $\La\sm U$ and $\wt\La\sm U$. On the one hand, according to Proposition \ref{prop:cc loose} each component of the complement $\Lambda \sm U$ is a loose Legendrian, or a full stabilization of itself if $\dim(Y) = 3$, in the complement of all the other components of $\Lambda\sm U$. On the other, each component of $\wt\Lambda \sm U$ is also loose, or a full stabilization of itself, in the complement of the other components: in this case this is because each component of the complement $\wt\Lambda \sm U$ is the Legendrian lift of a zero section in a plumbing of spheres, and the Lagrangian cofiber satisfies the hypothesis of Proposition \ref{prop:ob loose}. Therefore, it remains to show that $\wt\Lambda$ is formally Legendrian isotopic to $\Lambda$, by an isotopy fixed on $U$, and Theorem \ref{thm: c0 loose} will apply.

Up to formal Legendrian isotopy, each component is contained in an independent component of the connect sum, so it suffices to show the result for the case of the one vertex tree $T^{(0)}=1$: i.e.~ that the Legendrian lift $\wt\Lambda$ of the zero section in $\dd(T^*S^{n-1} \x \C)$, is formally Legendrian isotopic to the Legendrian $\Lambda = S^{n-1} \x \{\mbox{pt}\} \sse (S^{n-1} \x S^n, \xi_\st)$. In fact they are equal: the contact structure on the boundary $\dd(T^*S^{n-1} \x \C)$ is given by the hyperplane $\ker(r^2d\theta + pdq)$, where $(r, \theta)$ are polar coordinates on $\C$ and $(p,q)$ are standard cotangent bundle coordinates, and $\dd(T^*S^{n-1} \x \C) = \{r^2 + |p|^2 = 1\}$. Now, the coordinates on the $(n-1)$--sphere $S^{n-1}$ correspond to the $q$--coordinates, and the coordinates on the $n$--sphere $S^n$ are the coordinates $(p, r, \theta)$, and consequently the Legendrian lift is the Legendrian
$$\wt\Lambda = \{(r,\theta,p,q):p=0,\,\, r = 1,\,\, \theta = \theta_0\} = \Lambda,$$
for some constant value $\theta_0$. Thus the candidate Legendrian $T$--skeleton $\La$ is Legendrian isotopic to the Legendrian $T$--skeleton $\wt\La$.
\end{proof}

The contact neighborhood of the standard Legendrian $T$--skeleton provided by Proposition \ref{prop:LF} is the ambient space in which we will be constructing Legendrian submanifolds and performing Legendrian calculus. Let us now discuss the Legendrian fronts that represent these Legendrians.


\subsection{High--dimensional Legendrian fronts}\label{ssec:highD}

The complete description of generic fronts of arbitrary Legendrian submanifolds in high dimensions is not combinatorial, in contrast to the 3--dimensional case of Legendrian knots. The reason is that the space of Legendrian singularities, and therefore the space of distinct Reidemeister moves, becomes uncountable and even infinite dimensional for large dimensions \cite{ArnoldSing}. However the Weinstein manifolds we often study are constructed from standard pieces with spherical symmetry, and thus we can still describe many interesting manifolds in arbitrary dimensions with a reduced number of high--dimensional moves.

In our case, a tree $T$ is the combinatorial data that yields the front projection that we use in order to depict our Legendrian submanifolds; this is the content of Proposition \ref{prop:LF} above, which provides the ambient space and the front projection where the Legendrian calculus occurs. Let us now discuss the Legendrian singularities and corresponding Reidemeister moves that feature in these front projections and are used in this first edition of the dictionary.

\begin{figure}[h!]
\centering
  \includegraphics[scale=0.8]{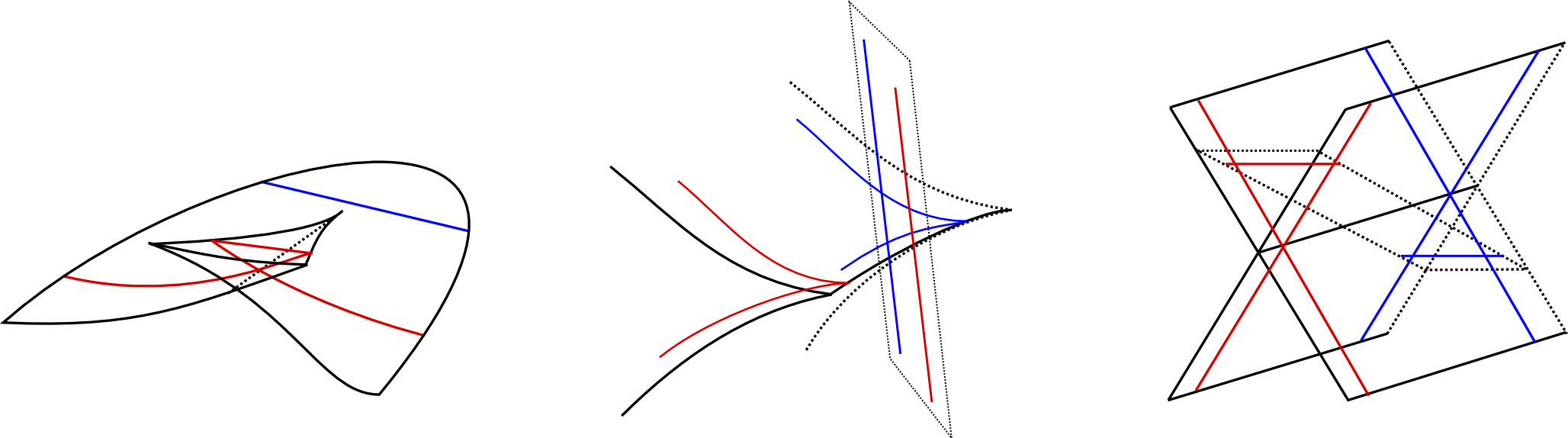}
  \caption{The 1--dimensional Reidemeister moves for Legendrian knots; we have depicted them as Legendrian surface singularities. The red front lifts to a Legendrian curve which is Legendrian isotopic to the blue front.}
  \label{fig:1DReid}
\end{figure}

There are two generic types of singularities of Legendrian fronts in $\R^2$, cusps and transverse double points, corresponding to $A_2$ and $A_1^2$ wavefronts, and the three sufficient Legendrian Reidemeister moves shown in Figure \ref{fig:1DReid}: these correspond to the surface Legendrian singularities $A_3$, $A_2A_1$ and $A_1^3$. In this article we encounter only three types of high--dimensional generic Legendrian singularities:
\begin{itemize}
\item[-] The product of a 1--dimensional cusp singularity with a smooth manifold; locally this is the product with $\R^{n-2}$.
\item[-] The transverse intersection of two smooth $\R^{n-1}$.
\item[-] The $S^{n-2}$--symmetric rotation of a transverse intersection of curves along an axis through the intersection point, which we refer to as the {\bf cone singularity}.
\end{itemize}
See Figure \ref{fig:sing} for pictures of these singularities in the case of Legendrian fronts in $\R^3$, i.e.~ Legendrian submanifolds of $\R^5_\st$. In the list above, the cone singularity is unusual in that it is not a generic Legendrian singularity. However, we restrain ourselves from generically perturbing it in order to preserve the spherical symmetry, which allows us to make arguments independent of dimension.

\begin{figure}[h!]
\centering
  \includegraphics[scale=0.8]{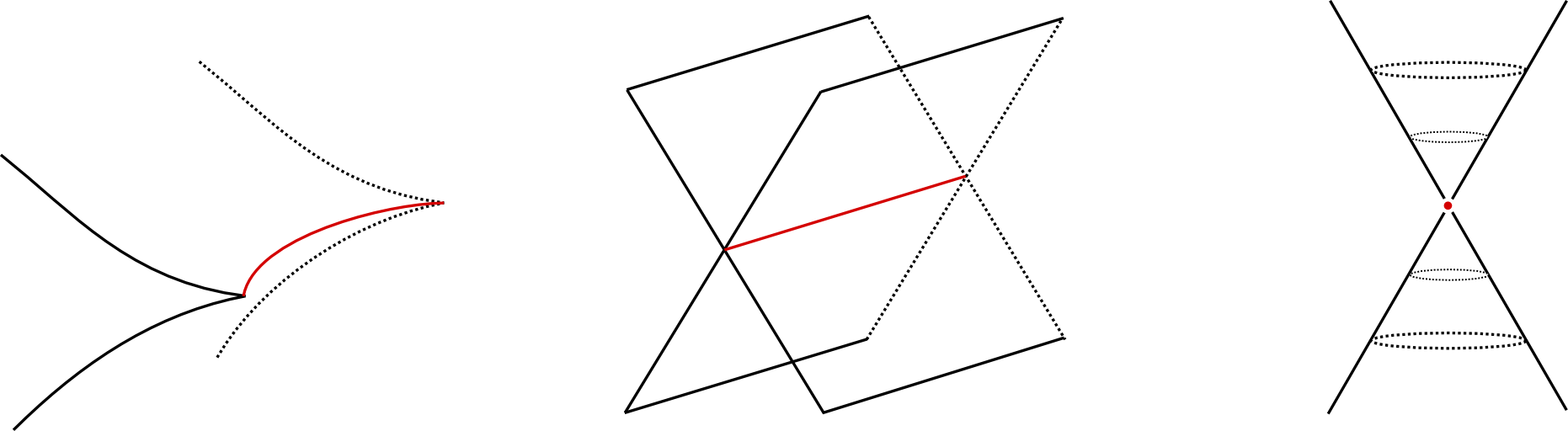}
  \caption{The three types of high--dimensional Legendrian singularities that appear in the applications of Section \ref{sec:app}.}
  \label{fig:sing}
\end{figure}

In order to use Recipe \ref{dictionary} in practice, we need an effective method for drawing Legendrian fronts in arbitrary dimensions. This is done by describing high--dimensional fronts by drawing Legendrian curves, which are then be extended to an actual high--dimensional front by having a number of local $S^{n-2}$--spherical symmetries. Let us start with an example.

\begin{figure}[h!]
\centering
  \includegraphics[scale=0.4]{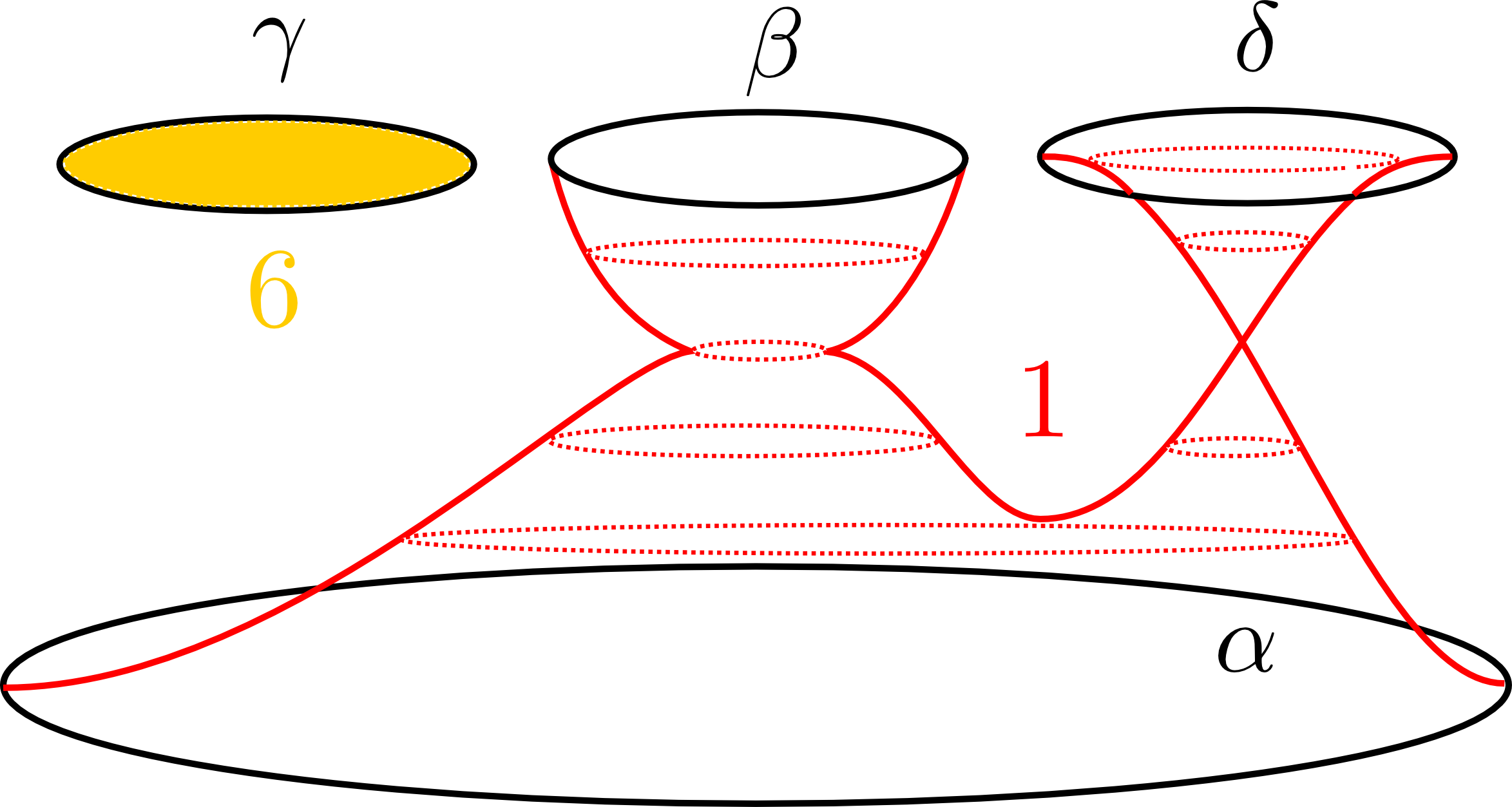}
  \caption{The actual Legendrian surface front of the Legendrians 1, in red, and 6, in yellow, from the central slice depicted in Figure \ref{fig:ex1dict}. The unknotted circles $\{\alpha,\beta,\delta,\gamma\}$ are the attaching spheres of four different subcritical $2$--handles; the Legendrian 2--spheres 1 and 6 are attaching spheres for two different critical Weinstein handles.}
  \label{fig:ex1dictreal}
\end{figure}

\begin{example}
Consider the Legendrian surface front in the right hand side of Figure \ref{fig:ex1dict}: it actually depicts a six--component link of Legendrian $2$-spheres $S^2$ in the 5--dimensional contact manifold $\#^4(S^2 \x S^3,\xi_0)$. The subcritical 2--handles $\alpha, \beta, \gamma,$ and $\delta$ represent subcritical circles in $\R^3$ obtained by spinning in the transverse direction, see Figure \ref{fig:ex1dictreal}; similarly the Legendrian 2--sphere labelled by $1$ in Figure \ref{fig:ex1dict} is drawn in its actual 2--dimensional form in Figure \ref{fig:ex1dictreal}. Notice that the Legendrian surface is not globally $S^1$--symmetric but it is determined by the curve we draw and a number of local $S^1$--rotations.

Note that, in fact for any $n$, the right hand side of Figure \ref{fig:ex1dict} depicts a six--component link of Legendrian $(n-1)$--spheres in the contact manifold $\#^4(S^{n-1} \x S^n, \xi_0)$. Once we have a picture of the central slice and we know the axes of the $S^{n-2}$--symmetries, we have a concretely defined front in any dimension.\hfill$\Box$
\end{example}

In the following list, we gather the conventions that are used throughout the article for drawing high--dimensional fronts as curves:

\begin{itemize}
\item[-] The Legendrian fronts $\Lambda^{n-1} \sse \R^n$ which arise in Recipe \ref{dictionary} are described by choosing a central slice $\R^2 \sse \R^n$ which is parallel to the vertical direction, and drawing the Legendrian curve $\Lambda \cap \R^2\sse\R^2$.\\
\item[-] In the transverse directions, Legendrians are extended by local $S^{n-2}$--symmetry, whose axes are always contained in the central slice.\\
\item[-] Subcritical handles, defined by isotropic $(n-2)$--spheres, are drawn as an $S^0$ in the central slice, and similarly defined by rotation around a vertical axis. As much as possible, an axis for a symmetry of the Legendrian front will coincide with the axis of symmetry for a subcritial handle.\\
\item[-] All Reidemeister moves respect the above symmetries.\\
\item[-] Cone singularities are marked with a {\bf thick} dot, indicating that the Legendrian is extended by a rotation through an axis through that point.\\
\item[-] Intersections of the curve which do not have a thick dot are genuine transverse intersections of the Legendrian front, and in this case the Legendrian is extended in transverse directions by a rotation whose axis does not pass through the intersection.\\
\item[-] Ideally, the location of the axes of the local spherical $S^{n-2}$--rotations are implicit from the local $S^0$--symmetries of the curve $\La\cap\R^2$ in the central slice. For example, in the Legendrian surface from Figure \ref{fig:ex1dict} the cusps and intersections come in pairs determined by reflection through vertical axes. Thus all cones pass through these axes, and the gluing map for the subcritical surgeries are also defined by these reflections.
\end{itemize}

This list concerns drawing a Legendrian front in the front projection provided by Proposition \ref{prop:LF}. but it is oftentimes possible to further simplify the front via Legendrian isotopies, which we now discuss.

The set of Reidemeister moves in higher dimensions is genuinely rich and their understanding is crucial for the development of higher--dimensional contact topology; nevertheless the subset of Reidemeister moves obtained from lower dimensional moves already provides new insights. In particular, any Reidemeister move of Legendrian curves yields a higher--dimensional Reidemeister move by spinning around any sphere $S^{n-2}$; these are particularly well--suited to us due to above drawing convention.

There are two interesting cases to consider depending on the location of the axis along which we spin. If the axis of rotation is generic, i.e.~disjoint from the Darboux chart in which the Reidemeister move occurs, the spherical spinning gives analogous Legendrian isotopies which can be immediately pictured in higher fronts. If instead the axis of rotation does intersect the Legendrian submanifold we obtain the two moves depicted in Figure \ref{fig:highDReid}, which are indeed Legendrian isotopies. The upper move states that two cusps passing through each other $S^{n-2}$--symmetrically are Legendrian isotopic to their disjoint union. The lower move in Figure \ref{fig:highDReid} tells us that a smooth plane $\R^{n-1}$ is allowed to pass through a cone singularity if it transversely intersects a neighborhood of the cone in a sphere $S^{n-2}$.

\begin{figure}[h!]
\centering
  \includegraphics[scale=0.9]{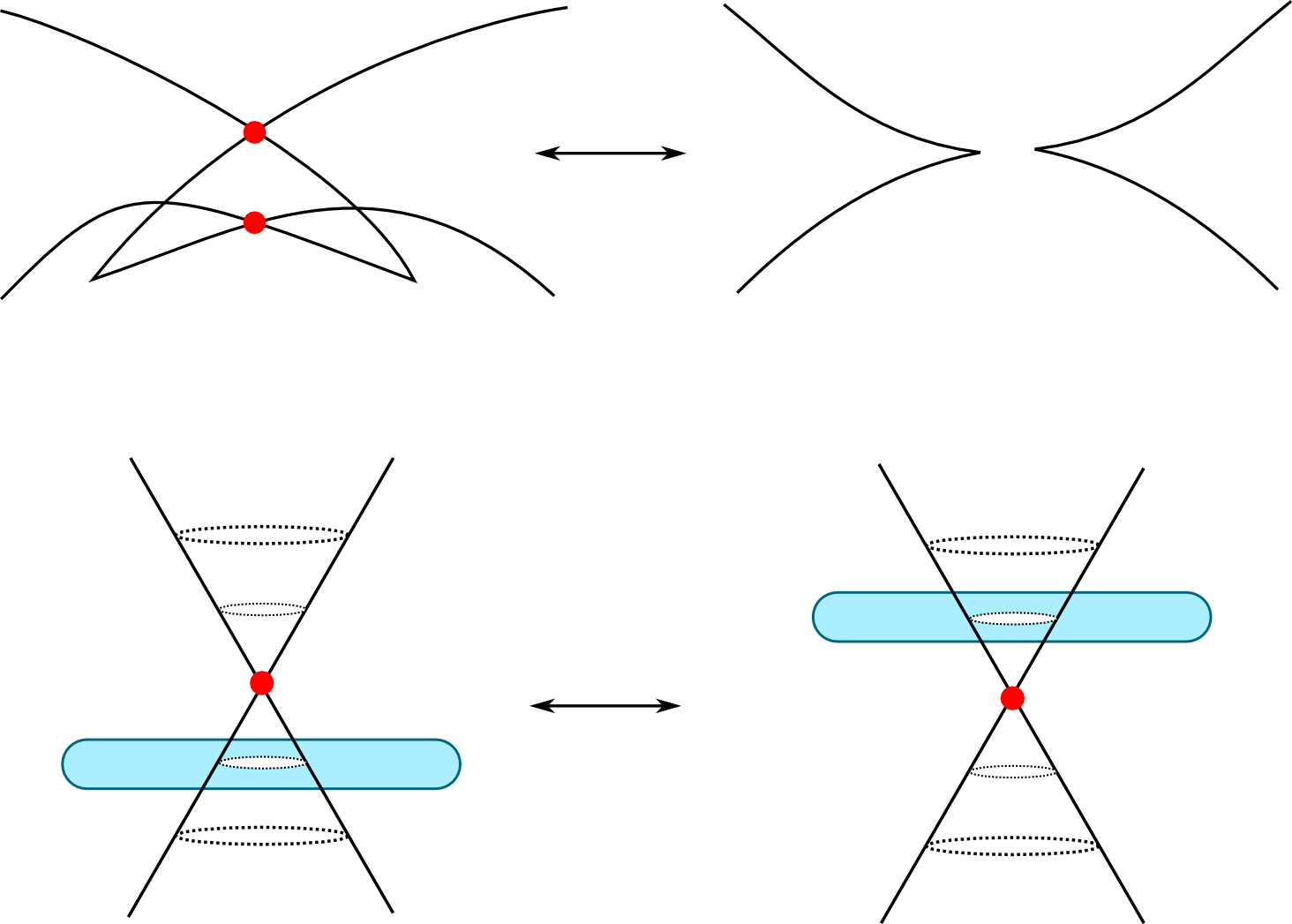}
  \caption{Two higher--dimensional Reidemeister moves featuring the non--generic Legendrian cone singularity, indicated by a thick dot as indicated in the above list of conventions.}
  \label{fig:highDReid}
\end{figure}

These rotational moves shall suffice for the applications in this article, and since they are induced from 1--dimensional moves it is tempting to assume that it suffices to understand 1--dimensional Legendrians in order to manipulate higher--dimensional fronts with these. This is not the case, and although we draw in this dimension we would like to make the reader aware that this is only a convention, and one needs to have a higher--dimensional understanding of the moves and keep track of the symmetries in order to be sure that a certain high--dimensional Reidemeister move can be applied in a given Legendrian front.

\subsection{Legendrian Kirby calculus}\label{ssec:Wmoves}

Given a contact ($\pm1$)-surgery presentation of a contact manifold, we now discuss the Legendrian front moves representing Legendrian handle slides and handle cancellations. This is the fifth ingredient in our understanding of Legendrian fronts and it has a fundamental role in Legendrian calculus.

Reidemeister moves are local diagramatic exchanges performed in the front of a Legendrian $\Lambda$ such that the Legendrian submanifold represented by the resulting (different) Legendrian front is Legendrian isotopic to $\Lambda$. In contrast, a Legendrian handle slide is an isotopy that occurs in a surgered manifold, which does not come from a Legendrian isotopy in the non--surgered manifold. Similarly, a handle cancellation is not an isotopy, but instead a move that passes between different handle decompositions of a single Weinstein manifold.

The following Proposition describes how to perform a Legendrian handle slide of a Legendrian submanifold along a Legendrian ($\pm1$)--surgery sphere. We remark that in the $3$--dimensional case this result was obtained in \cite{DG} using strictly 3--dimensional different methods.

It is important to emphasize that $(\pm 1)$--surgeries are not on equal footing from the perspective of Weinstein manifolds: a Weinstein handle attachment induces a $(-1)$--contact surgery on the boundary contact manifold, whereas a $(+1)$--surgery is the operation defined by a \emph{downward} Weinstein cobordism, or alternatively by \emph{carving} a Weinstein handle out from an existing Weinstein manifold. Both are needed throughout the paper.

\begin{prop}[Legendrian Handle Slides]\label{prop:slide}
Let $(Y,\xi)$ be a contact manifold, $\Lambda,\Sigma\sse (Y,\xi)$ two disjoint Legendrian submanifolds such that $\Lambda$ is a sphere. Then
\begin{itemize}
 \item[A.] The Legendrians $\Sigma$ and $h_\Lambda(\Sigma)$ presented in Figure \ref{fig:HSPos} are Legendrian isotopic in the surgered contact manifold $Y_\Lambda(-1)$.
 
\begin{figure}[h!]
\centering
  \includegraphics[scale=0.6]{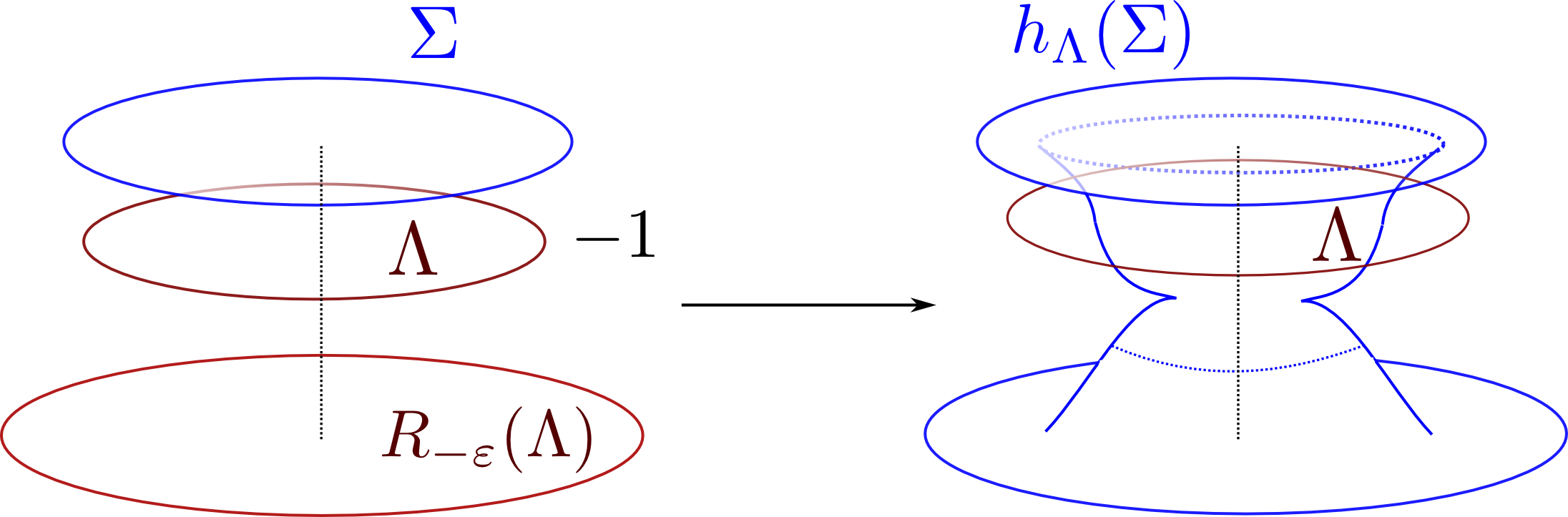}
  \caption{Handleslide of $\Sigma$ along the ($-1$)--Legendrian $\Lambda$.}
\label{fig:HSPos}
\end{figure}
 
 \item[B.] The Legendrians $\Sigma$ and $h_\Lambda(\Sigma)$ presented in Figure \ref{fig:HSNeg} are Legendrian isotopic in the surgered contact manifold $Y_\Lambda(+1)$.

 \begin{figure}[h!]
\centering
  \includegraphics[scale=0.6]{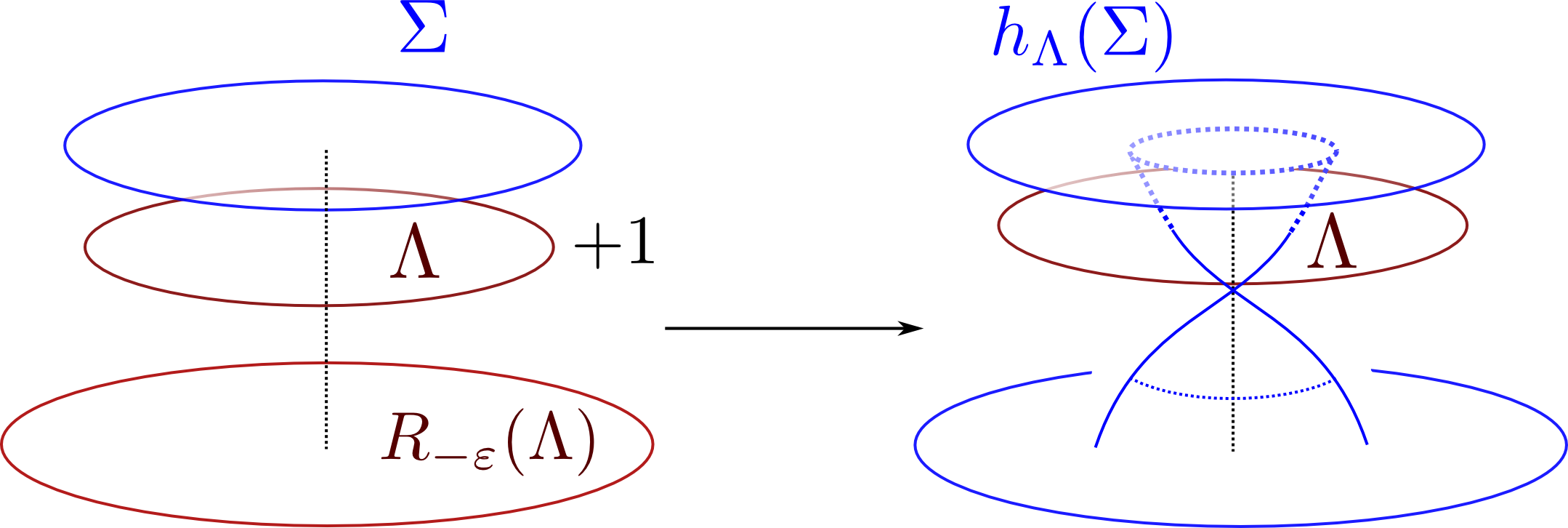}
  \caption{Handleslide of $\Sigma$ along the ($-1$)--Legendrian $\Lambda$.}
\label{fig:HSNeg}
\end{figure}
\end{itemize}
The Legendrians $h_\Lambda(\Sigma)$ are referred to as Legendrian handleslides of $\Sigma$ along $\Lambda$.
\end{prop}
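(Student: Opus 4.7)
The plan is to construct the Legendrian isotopy explicitly inside a standard local model for the surgery, deriving both parts from essentially the same geometric picture. For part A, I would first reduce to a neighborhood of $\Lambda$. By the Weinstein neighborhood theorem for Legendrian spheres, a tubular neighborhood of $\Lambda\sse(Y,\xi)$ is contactomorphic to a neighborhood of the zero section in $J^1(S^{n-1})$. Since $\Sigma$ is disjoint from $\Lambda$, the portion of $\Sigma$ drawn in Figure \ref{fig:HSPos} can be modeled, after a Legendrian perturbation, as a finite collection of standard Legendrian sheets passing close to but avoiding the zero section inside this chart. The $(-1)$--contact surgery then replaces this neighborhood by the standard model $N'$ of a Weinstein $n$--handle attached along $\Lambda$; inside $N'$ one has a Lagrangian co--core disk $D$ whose boundary is the belt sphere $B$ of the surgery.

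Inside $N'$, the Legendrian $\Sigma$ appears exactly as before the surgery, while $h_\Lambda(\Sigma)$ is obtained by taking a small arc of $\Sigma$ near $\Lambda$ and re--routing it so that it connect--sums with a parallel copy of $\Lambda$ through a neighborhood of $D$. To produce the isotopy, I would select a compactly supported contact Hamiltonian $H_t:N'\lr\R$ whose time--$1$ map $\phi_1$ sends $\Sigma$ to $h_\Lambda(\Sigma)$. Geometrically $H_t$ must drag the chosen arc of $\Sigma$ across the handle in the direction transverse to the core: after a sweep around a parallel of the zero section, the arc exits the handle having acquired the parallel push--off of $\Lambda$. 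The $S^{n-2}$--rotational symmetry of the model allows us to define $H_t$ by first specifying an isotopy of curves in the central slice from Subsection \ref{ssec:highD} and then rotating, so that the axes of rotation coincide with those prescribed by the subcritical drawing conventions.

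For part B, I would argue by duality: the $(+1)$--contact surgery on $\Lambda$ is inverse to a $(-1)$--contact surgery along a dual Legendrian sphere $\Lambda'\subset Y_\Lambda(+1)$, so that $Y$ is recovered from $Y_\Lambda(+1)$ by $(-1)$--surgery along $\Lambda'$. Applying part A in $Y_\Lambda(+1)$ with $\Lambda'$ playing the role of the surgery sphere then yields the handleslide of $\Sigma$ depicted in Figure \ref{fig:HSNeg}. Equivalently, one can repeat the Hamiltonian construction above inside the local model for the carving operation, where the co--core $D$ is replaced by a Lagrangian disk whose boundary bounds the attaching sphere of the removed handle and the same flow produces the desired isotopy.

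The main obstacle is to verify that the time--$1$ map $\phi_1$ literally sends $\Sigma$ to the front labelled $h_\Lambda(\Sigma)$ as drawn. This amounts to a careful bookkeeping of the contact framing along the arc being dragged through the handle. In dimension three this matches the strictly $3$--dimensional argument of Ding--Geiges \cite{DG}; in higher dimensions the $S^{n-2}$--symmetric extension of the construction is compatible with the rotational conventions of Subsection \ref{ssec:highD} provided $H_t$ is chosen to be invariant under the relevant spherical symmetry, which can be arranged by averaging. Once the explicit isotopy is built in the local model $N'$, composing with the ambient contactomorphism embedding $N'$ into $Y_\Lambda(\pm 1)$ extends it to the required global Legendrian isotopy.
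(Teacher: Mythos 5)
Your overall skeleton --- localize to the standard Weinstein handle model and realize the handleslide as a Legendrian isotopy across the belt sphere, transported back to the attaching region --- is the same as the paper's, but the proposal stops exactly where the actual content of the proposition begins. The statement is not merely that \emph{some} isotopy exists in $Y_\Lambda(\pm1)$; it is that the resulting Legendrian has the specific front of Figure \ref{fig:HSPos} (a spherical cusp edge along a pushoff of $\Lambda$) in the $(-1)$ case and that of Figure \ref{fig:HSNeg} (a cone singularity) in the $(+1)$ case. Positing a compactly supported contact Hamiltonian ``whose time--$1$ map sends $\Sigma$ to $h_\Lambda(\Sigma)$'' assumes this conclusion: every Legendrian isotopy is generated by some contact Hamiltonian, so the entire work lies in exhibiting the isotopy and computing its endpoint in the front projection. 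The paper does this by writing the handle as a bidisk with an explicit Liouville form, parametrizing both boundary pieces as $J^1S^{n-1}$, writing down an explicit one--parameter family of Legendrian disks $i_c$ crossing the belt sphere, and pushing it through the explicitly computed Liouville flow to read off parametrized front formulas in the attaching region; this is what produces the spherical cuspidal edge, and the cusp--versus--cone dichotomy is precisely the sign--sensitive output later recorded in Table \ref{table}. Your proposal flags this computation as ``the main obstacle'' and defers it to framing bookkeeping and to \cite{DG} in dimension three, but never performs it, so the proposition is not established in any dimension.

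The reduction of part B to part A by duality is also incorrect as stated. If $\Lambda'\subseteq Y_\Lambda(+1)$ denotes the dual sphere with $(Y_\Lambda(+1))_{\Lambda'}(-1)=Y$, then applying part A with $\Lambda'$ in the role of the surgery sphere produces a Legendrian isotopy in $Y$, not in $Y_\Lambda(+1)$, whereas part B asserts an isotopy in $Y_\Lambda(+1)$ itself; the ambient manifolds do not match. The paper instead repeats the same explicit computation with the Liouville vector field reversed, i.e.\ composes with the inverse of the flow--induced contactomorphism before projecting to the front. Your alternative suggestion of redoing the construction in the carving model points in the right direction, but again carries no computation and therefore does not determine the front of Figure \ref{fig:HSNeg}.
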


\begin{remark}
Both Figures \ref{fig:HSPos} and \ref{fig:HSNeg} occur in the front projection of a contact Darboux chart $U \sse (Y,\xi)$. In the complement $Y\setminus U$ of this Darboux chart the resulting Legendrian $h_\Lambda(\Sigma)$ is equal to either $\Sigma$ or a small Reeb push--off of $\Lambda$; this Reeb pushoff $R_{\pm \e}(\Lambda)$ of the surgery sphere $\Lambda$ must be chosen such that $R_{\pm\e}(\Lambda)$ belongs to the opposite side of $\Sigma$ with respect to $\Lambda$.

Observe that the notation $h_\Lambda(\Sigma)$ is mildly inaccurate because for a fixed pair of Legendrians $\Sigma$ and $\Lambda$ there are many possible Legendrian handle slides: we will be indicating the Legendrian handleslides when performed and thus this will not be an issue.\hfill$\Box$
\end{remark}

\begin{proof}
First, we study the geometry of a critical Weinstein handle in $\R^{2n}(q_1,\ldots,q_n,p_1,\ldots,p_n)$; in this case of a critical index, the local model is described by the following Liouville form and Liouville vector field:
$$\la=\sum_{i=1}^n p_idq_i+2q_idp_i,\quad X_\la=\sum_{i=1}^n 2q_i\dd_{q_i}-p_i\dd_{p_i}.$$
The region we work with is the bidisk $H=(\{(q,p)\in\R^{2n}:|q|\leq1,|p|\leq1\},\la)$, whose boundary consists of the two contact pieces
$$(\dd^+H,\la)=(\{(q,p)\in H:|q|=1\},\la),\quad (\dd^-H,\la)=(\{(q,p)\in H:|p|=1\},\la).$$
The attaching sphere is the Legendrian sphere
$$\La_a=\{(q,p)\in \dd^-H:p=0\}\sse(\dd^-H,\la),$$
which corresponds to $\La$ in the statement of the Proposition, and the belt sphere is
$$\La_b=\{(q,p)\in \dd^+H:q=0\}\sse(\dd^+H,\la).$$
The time--$t$ contact flow $\p_t$ of the Liouville vector field $X_\la$ is the decoupled diffeomorphism
$$(q,p)\longmapsto (e^{2t}q,e^{-t}p)$$
and thus maps the region $(\dd^-H,\la)\setminus\La_a$ onto $(\dd^+H,\la)\setminus\La_b$ at time $t=-\ln|q|^{1/2}$. As it should, it collapses the Legendrian sphere $\La_a$ to the fixed point at the origin $0\in H$ in infinite time, from which the belt sphere $\La_b$ is also born, also in infinite time. We can compute the map $\p_{\ln|p|}: \dd^+H \sm \La_b \longrightarrow \dd^-H \sm \La_a$ in coordinates and see that it is given by
$$\p_{\ln|p|}(q, p) = (|p|^2q, |p|^{-1}p).$$ 

The two contact pieces of the boundaries are neighborhoods of the Legendrian sphere $\La_a$ and $\La_b$, thus contactomorphic to the 1--jet spaces of these spheres. In order to describe the Legendrian handleslide explicitly, we parametrize each of these boundaries as follows:
$$\phi:(J^1S^{n-1},\a^+)\lr(\dd^+H,\la),\quad q_i(x,y,z)=x_i,\quad p_i(x,y,z)=zx_i/2+y_i,\quad i=1,\ldots n,$$

where we endow $J^1S^{n-1}$ with the 1--jet coordinates of $J^1\R^{n}$ induced by the round unit inclusion $S^{n-1}\sse\R^n$ and its contact form. Hence the 1--jet space of the sphere is the contact manifold
$$(J^1S^{n-1},\alpha^+)=(\{(x,y,z)\in J^1\R^n:|x|=1,x\cdot y=0\},dz-y\cdot dx)$$
It is readily verified that the diffeomorphism $\phi$ is a contactomorphism.

The remaining piece requires a minor modification of the above parametrization:
$$\psi:(J^1S^{n-1},\a^-)\lr(\dd^-H,\la),\quad q_i(u,v,s)=su_i-v_i,\quad p_i(u,v,s)=u_i,\quad i=1,\ldots n,$$
where as above, $(u,v,s)$ are coordinates in $(J^1\R^n,ds-v\cdot du)$. The inverse of the contactomorphism $\psi$ is also needed in the upcoming computation; it is provided by
$$u_i(q,p)=p_i,\quad v_i=(q\cdot p)p_i-q_i,\quad s=q\cdot p.$$

This setup above now allows to describe a relative Legendrian isotopy of a Legendrian disk crossing the belt sphere $\La_b\sse (\dd^+H,\la)$ once, from the perspective of a neighborhood of the attaching sphere $\La_a\sse (\dd^-H,\la)$, i.e.~a Legendrian handleslide. We consider the 1--parametric family of Legendrian disks
$$i_c:D^n\lr(J^1 S^{n-1},\a^+),\quad x_n(t)=\nu(t),\quad y_n(t)=-2\|t\|^2\nu(t),\quad z(t)=\|t\|^2+c,$$
$$\quad x_i(t)=t_i,\quad y_i(t)=2t_i\nu(t)^2,\quad 1\leq i\leq n,\quad c\in[-1,1],$$
where the disk has coordinates $D^n(t_1,\ldots,t_{n-1})$ and we have denoted $\displaystyle\nu(t)=\left(1-\|t\|^2\right)^{1/2}$. This takes place in a neighborhood of the belt sphere $\La_b\sse(\dd^+H,\la)$, and the Legendrian disks $i_1(D^n)$ and $i_{-1}(D^n)$ are Legendrian isotopic; note that the Legendrian embeddings $i_c$ can readily be modified to have compact support. Therefore comparing to the notation in the statement of the Proposition, we have
$$\Sigma = \psi^{-1}\circ \p_{\ln|p|} \circ\phi\circ i_{-1}D^n,\quad h_\Lambda(\Sigma) = \psi^{-1}\circ \p_{\ln|p|} \circ \phi \circ i_1D^n.$$
It therefore remains to show that $h_\Lambda(\Sigma)$ has the front projection in Figure \ref{fig:HSNeg} as claimed. The contactomorphism
$$\psi^{-1}\circ \p_{\ln|p|}\circ\phi:(J^1S^{n-1},\a_+)\lr(J^1S^{n-1},\a_-)$$
is given in these coordinates by
$$s(x,y,z)=z\eta(x,y,z)/2,\quad u_i(x,y,z)=(zx_i/2+y_i)/\eta(x,y,z),\quad 1\leq i\leq n,$$
where
$$\eta:J^1S^{n-1}\lr\R,\quad \eta(x,y,z)=\sqrt{z^2/4+\|y\|^2}.$$
The advantage of parametrizing $(J^1S^{n-1},\a_+)$ is just being able to describe explicitly the family of disks $i_c(D^n)$, but that is not strictly necessary; nevertheless, the parametrization of $(J^1S^{n-1},\a_-)$ is crucial, since it provides the front projection $\pi(u,v,s)=(u,v)$. For this reason, we only need the coordinates $u(x,y,z)$ and $s(x,y,z)$. Precomposing with the inclusion $i_c$ and projection $\pi$, we obtain the $c$--family of Legendrian fronts
$$\pi\circ\psi^{-1}\circ \p_{\ln|p|}\circ\phi\circ i_c:D^n\lr(J^1S^{n-1},\a^-),\quad 1\leq i\leq n-1,$$
$$s(t)=\frac{(\|t\|^2+c)\eta(t)}{2},\quad u_n(t)=\frac{(\|t\|^2+c)\nu(t)-4\|t\|^2\nu^2(t)}{2\eta(t)},\quad u_i(t)=\frac{(\|t\|^2+c)t_i+4t_i\nu(t)^2}{2\eta(t)}.$$
These functions parametrize the spherical cuspidal edge depicted in Figure \ref{fig:HSPos} in the statement of Proposition \ref{prop:slide}, thus concluding Part A. The proof for the front description of a handleslide along a (+1)--surgery Legendrian is identical except for the fact that the Liouville vector field must be considered in the reversed direction, so instead consider the family of fronts 
$$\pi \circ \left(\psi^{-1}\circ \p_{\ln|p|}\circ\phi\right)^{-1}\circ i_c: D^n \lr (J^1S^{n-1}, \alpha^+).$$
The corresponding computations yield the front projection shown in Figure \ref{fig:HSNeg}.
\end{proof}

\begin{ex}
It is a good exercise to verify that a Legendrian ($\pm1$)--handleslide can be undone with another Legendrian ($\pm1$)--handleslide in another location. This is illustrated in Figure \ref{fig:CancelPosHS} in the case of a $(-1)$--handleslide; an analogous sequence shows that a $(+1)$--handleslide can be undone by performing an appropriate $(+1)$--handleslide.
\begin{figure}[h!]
\centering
  \includegraphics[scale=0.8]{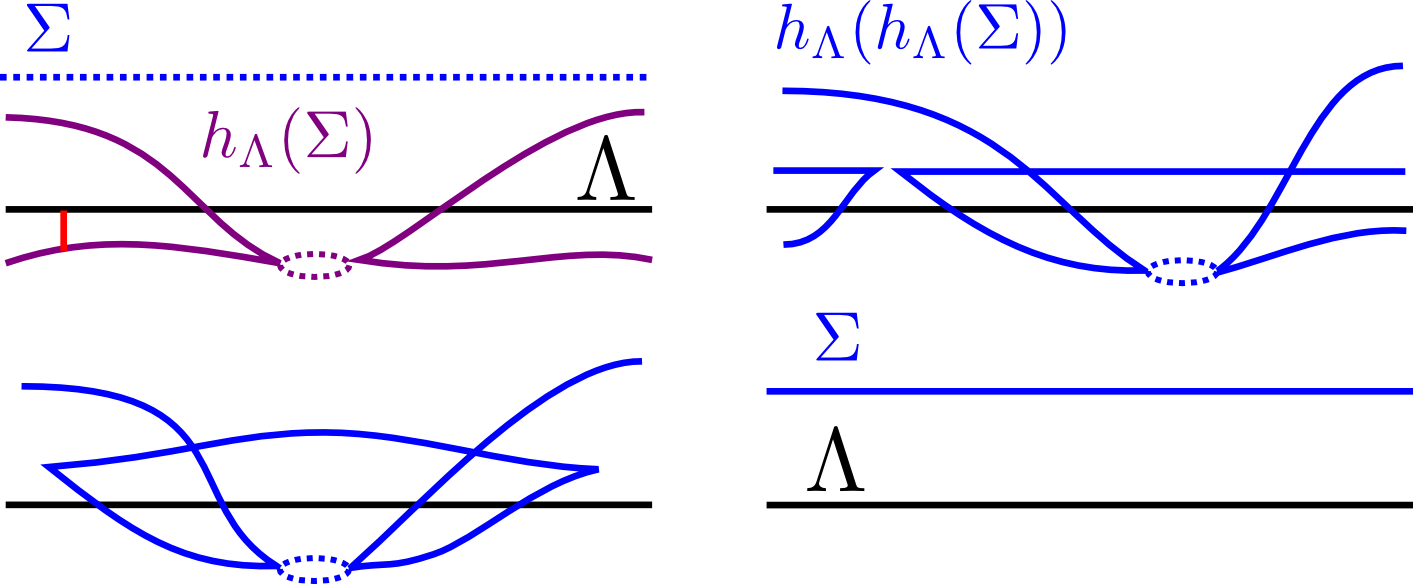}
  \caption{In this picture $\Lambda$ is a $(-1)$--Legendrian sphere, the sequence depicts a $(-1)$--handleslide along $\Lambda$ undoing a previous $(-1)$--handleslide along $\Lambda$: the Legendrian $\Sigma$ is isotopic to $h_\Lambda(h_\Lambda(\Sigma))$.}
\label{fig:CancelPosHS}
\end{figure}
\end{ex}

The second operation after Legendrian handle slides are handle cancellations, which we now address in the following two propositions. The first of them is the Weinstein equivalent to the cancellation of a cancelling pair of Morse critical points:

\begin{prop}[{\cite[Theorem 10.12]{CE}}]\label{prop:basic cancel}
Let $(W, \lambda, \p)$ be a $2n$--dimensional Weinstein cobordism with exactly two critical points $p$ and $q$ such that $\mbox{ind}(p)=n-1$, $\mbox{ind}(q)=n$, and $\p(p) < \p(q)$. Consider a value $c\in\R$ with $\p(p) < c < \p(q)$, and let $(Y,\xi) = (\p^{-1}(c),\la)$ be its contact level set.

Suppose that the attaching Legendrian sphere in $(Y,\xi)$ of the critical point $q$ intersects the belt sphere of $p$, a coisotropic sphere in $(Y,\xi)$, transversely in a single point. Then the Weinstein cobordism $(W, \lambda, \p)$ is Weinstein homotopic to the symplectization
$$([0,1] \x  Y_0, e^r\alpha, (r, y) \mapsto r)$$
of the contact boundary $(Y_0, \alpha) \cong \dd_-W \cong \dd_+W$.
\end{prop}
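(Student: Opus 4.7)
The plan is to explicitly cancel the pair $\{p,q\}$ by a Weinstein homotopy supported in a neighborhood of the unique gradient trajectory $\gamma$ connecting $p$ to $q$. The transversality hypothesis, namely that the attaching Legendrian sphere of $q$ meets the belt coisotropic sphere of $p$ at exactly one point, guarantees that there is precisely one gradient flow line of $\p$ from $p$ to $q$. This is the geometric mechanism behind smooth handle cancellation, and the task is to realize it through Weinstein structures rather than merely through smooth Morse theory.

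First I would construct a standard local model. On $\R^{2n}$ equipped with the standard Liouville form $\la_\st$, write down an explicit Weinstein function $\p_0$ having exactly two Morse critical points of indices $n-1$ and $n$ whose attaching and belt spheres intersect transversely in a single point, and such that the Liouville vector field $X_{\la_\st}$ is gradient--like for $\p_0$. By interpolating between the standard quadratic normal forms for the two handles along the connecting trajectory, as carried out in \cite[Chapter 9]{CE}, one verifies that the sublevel cobordism of $\p_0$ containing both critical points is Weinstein homotopic to a trivial symplectization of its negative boundary.

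Second, I would produce a Weinstein embedding of a small neighborhood of $\gamma$, together with Weinstein normal-form neighborhoods of $p$ and $q$, into this standard local model. Uniqueness of Weinstein normal forms at Morse critical points, combined with the fact that the attaching sphere of $q$ is determined up to Legendrian isotopy near its unique transverse intersection with the belt sphere of $p$, makes this local embedding canonical up to Weinstein homotopy fixed near the boundary.

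Finally, I would invoke the Weinstein homotopy extension property from \cite{CE} to extend the trivial--symplectization identification inside the local model to a Weinstein homotopy of $(W, \la, \p)$ relative to $\dd_\pm W$, yielding the claimed symplectization $([0,1]\times Y_0, e^r\alpha, (r,y)\mapsto r)$. The principal obstacle lies in the first step: producing a local model requires simultaneously controlling the Liouville vector field and the Morse function so that $X_\la$ remains gradient--like throughout the interpolation and outward--pointing at the two contact level sets, while the handle pair cancels smoothly along $\gamma$. Ensuring compatibility of the Liouville flow with the cancellation trajectory, so that no spurious zeros of $X_\la$ appear during the homotopy, is the computational core of \cite[Theorem 10.12]{CE}.
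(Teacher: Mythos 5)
The paper offers no proof of this statement: it is imported verbatim from Cieliebak--Eliashberg as \cite[Theorem 10.12]{CE} and used as a black box, so there is no in-paper argument to compare yours against. Your outline is a faithful summary of the strategy of the cited proof: the single transverse intersection of the attaching sphere of $q$ with the belt sphere of $p$ in the level set $(Y,\xi)$ corresponds to a unique Liouville trajectory $\gamma$ from $p$ to $q$; one builds a standard cancelling local model, embeds a neighborhood of $\gamma$ together with the normal-form neighborhoods of $p$ and $q$ into it, and extends the resulting trivialization to all of $W$ by the homotopy extension properties of Weinstein structures. As you acknowledge, all of the substance is concentrated in the first two steps --- constructing a local model in which $X_\la$ stays gradient-like and nonvanishing away from $\{p,q\}$ throughout the interpolation, and showing the embedding of $\Op(\gamma)$ is canonical up to Weinstein homotopy --- and for these you point back to \cite{CE}. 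Since the paper itself does exactly that, I would not call this a gap relative to the paper; but as a self-contained proof it remains an outline, because the normal-form and holonomy analysis along $\gamma$ (which in the Weinstein category cannot be achieved by a generic perturbation of the gradient-like vector field, the Liouville field being part of the structure) is precisely the content of the cited theorem rather than something your sketch supplies.
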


\begin{figure}[h!]
\centering
  \includegraphics[scale=1]{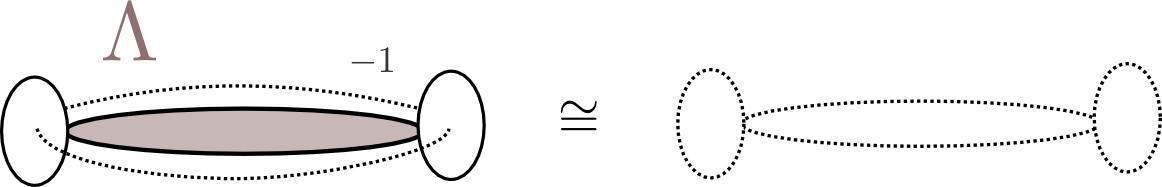}
  \caption{Cancellation Move I: Proposition \ref{prop:basic cancel} states that the Weinstein cobordism obtained by attaching a handle along the isotropic sphere, represented by a 1--sphere, and a second handle along the grey Legendrian 2--sphere is Weinstein homotopic to the symplectization, where no Weinstein handles are attached.}
\label{fig:CancelMove2}
\end{figure}

We can use Proposition \ref{prop:basic cancel} in order to describe handle cancellations in the front projection. The simplest instance is the equivalence depicted in Figure \ref{fig:CancelMove2}, where the cancelling pair of isotropic and Legendrian spheres disappear. This holds at the level of Weinstein cobordisms, and thus can be considered as a statement about contact surgeries.

In a cancellation pair, there might be relevant Legendrian submanifolds that interact with the subcritical handle and the direct cancellation depicted in Figure \ref{fig:CancelMove2} cannot be applied; the following proposition explains how to modify these Legendrians in the front and still achieve cancellation.

\begin{prop}\label{prop:handle cancel}
Let $(W, \lambda, \p)$ be a Weinstein domain represented by a handlebody diagram: i.e.~ we assume that there is a $c \in [0, \infty)$ so that all critical values of subcritical index are below $c$, and all of critical index are above than $c$, and we represent $(W, \lambda, \p)$ as a Legendrian link $\Lambda \sse (Y_0,\la)$ with a front projection. Here we have denoted $(Y_0,\xi) := (\p^{-1}(c),\la)$ and $\Lambda$ is the collection of attaching spheres of all critical points of critical index.

Let $S^n \sse (Y_0,\xi)$ be the belt sphere of a \emph{zero-framed} $(n-1)$--handle, and let $\Lambda_0$ be a component of $\Lambda$ which intersects this belt sphere $S^n$ transversely in a single point. Then the following cancellation move is allowed to the Legendrian handlebody diagram:
\begin{itemize}
\item[-] First, erase the subcritical handle $S^n$.\\

\item[-] Then, near any of the points $p \in \Lambda \cap S^n$ of the intersection which are not in the cancelling component $\Lambda_0$, we cap off $\Lambda \sm \{p\}$ with a parallel copy of $\Lambda_0$.\\
This capping depicted in Figure \ref{fig:cancel}.\\

\item[-] Finally, erase the Legendrian $\Lambda_0$.
\end{itemize}
The resulting diagram describes a Weinstein structure $(W,\wt\lambda,\wt\p)$ on $W$ which is Weinstein equivalent to the original $(W,\lambda,\p)$.
\end{prop}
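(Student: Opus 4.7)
The plan is to reduce this to Proposition \ref{prop:basic cancel} by first using Legendrian handle slides (Proposition \ref{prop:slide}) to eliminate the intersections of the components of $\Lambda\sm\Lambda_0$ with the belt sphere $S^n$, so that $\Lambda_0$ is the only critical attaching sphere meeting $S^n$, and it meets it transversely in a single point.

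First, for each intersection point $p\in \Lambda_i\cap S^n$ with $\Lambda_i$ a component of $\Lambda$ distinct from $\Lambda_0$, I will perform a Legendrian handle slide of $\Lambda_i$ along $\Lambda_0$. Proposition \ref{prop:slide} describes the front effect of such a handle slide: a small arc of $\Lambda_i$ near $p$ is replaced by a cuspidal cap built from a Reeb push-off $R_\e(\Lambda_0)$ of the sliding sphere. Because the subcritical $(n-1)$--handle is assumed to be zero--framed, I may choose the push-off to run alongside $\Lambda_0$ through the subcritical handle and then back, without producing any new transverse intersections with $S^n$. This is precisely the \emph{capping off with a parallel copy of} $\Lambda_0$ operation pictured in Figure \ref{fig:cancel}, and it decreases $|\Lambda_i\cap S^n|$ by exactly one: the intersection point $p$ is excised, and the parallel copy of $\Lambda_0$ added in its place contributes zero transverse intersections with $S^n$.

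Iterating this procedure finitely many times I arrive at a new Weinstein handlebody diagram $(W,\wt\lambda,\wt\p)$, Weinstein equivalent to $(W,\lambda,\p)$ because it is obtained by a sequence of Legendrian isotopies in the post--surgery contact manifold. In this new diagram, the only component of the critical link meeting the belt sphere $S^n$ is $\Lambda_0$, meeting it transversely in a single point. Consequently the Weinstein cobordism consisting of the subcritical $(n-1)$--handle paired with the critical handle attached along $\Lambda_0$ satisfies the hypothesis of Proposition \ref{prop:basic cancel}. Applying that proposition, this cobordism is Weinstein homotopic to a symplectization, meaning that both handles can be erased from the diagram without changing the Weinstein equivalence class. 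What remains is exactly the recipe described in the statement: erase $S^n$, cap off using $\Lambda_0$ near each extra intersection, then erase $\Lambda_0$.

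The main obstacle is the bookkeeping in the first step: I must verify that the handleslide move of Proposition \ref{prop:slide}, read in the front projection provided by Proposition \ref{prop:LF}, really does amount to capping off with a parallel copy of $\Lambda_0$ as depicted in Figure \ref{fig:cancel}, and in particular that the parallel copy can be arranged to be disjoint from $S^n$. This uses the zero--framing hypothesis in an essential way, since with a nontrivial framing the Reeb push--off of $\Lambda_0$ would be forced to re-intersect the belt sphere and would change the intersection count with $S^n$ by zero rather than by one, preventing the reduction to Proposition \ref{prop:basic cancel}.
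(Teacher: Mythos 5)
Your argument is correct and is essentially the paper's own proof: slide every other strand of $\Lambda$ meeting the belt sphere over $\Lambda_0$ via Proposition \ref{prop:slide}, observe that this realizes the capping of Figure \ref{fig:cancel} and leaves $\Lambda_0$ as the unique component meeting $S^n$ transversely in one point, and then invoke Proposition \ref{prop:basic cancel} to erase the cancelling pair. The only caveat is the phrasing ``run alongside $\Lambda_0$ through the subcritical handle and then back'': the capping arc should be understood as turning the strand around so that it no longer traverses the handle at all, which is exactly why each slide drops the intersection count with $S^n$ by one.
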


\begin{figure}[h!]
\centering
  \includegraphics[scale=0.6]{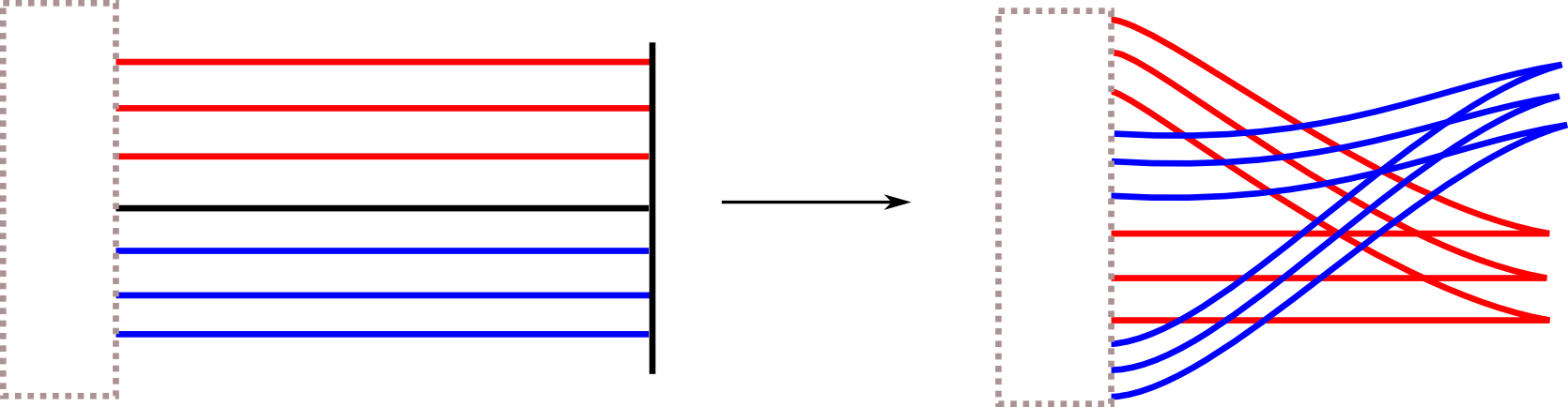}
  \caption{A handle cancellation between the black Legendrian and the subcritical handle.}
\label{fig:cancel}
\end{figure}

\begin{figure}[h!]
\centering
  \includegraphics[scale=0.6]{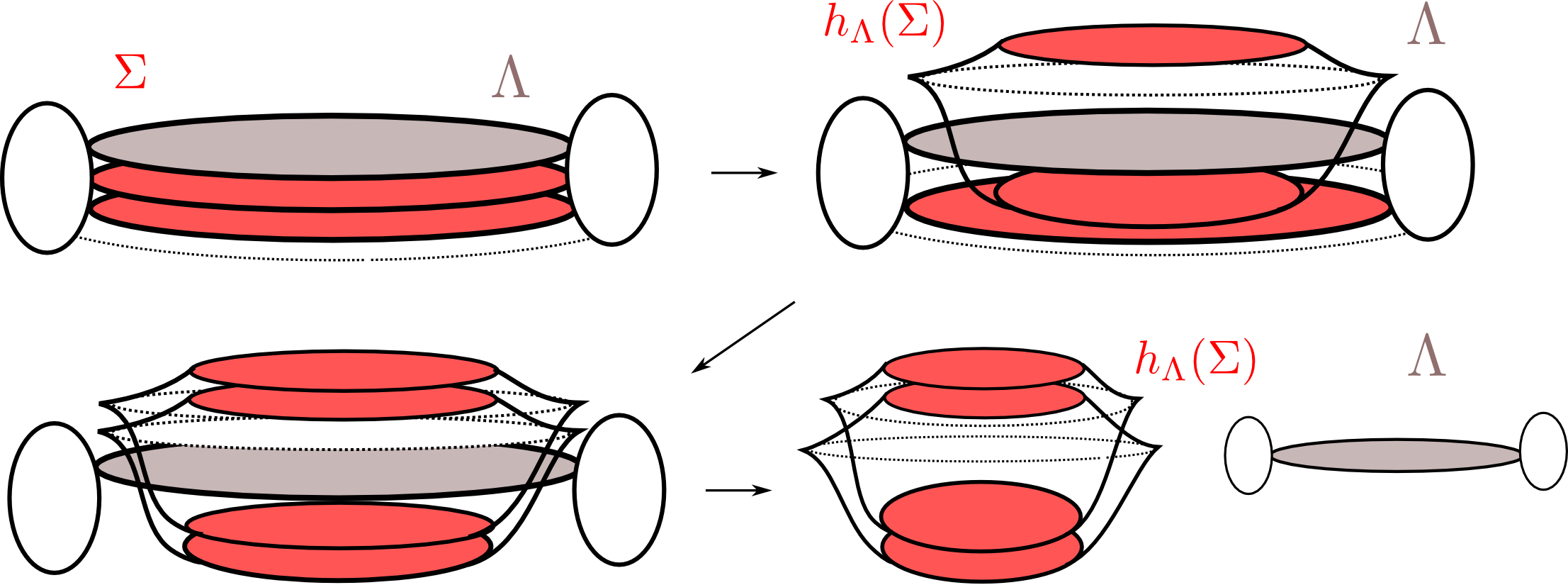}
  \caption{Cancellation Move II: the 2--handle represented by its attaching isotropic 1--sphere is cancelled with the grey Legendrian 2--sphere, which corresponds to a Weinstein 3--handle attachment. In order to proceed the two red Legendrian 2--spheres must be slid off the 2--handle, this is the sequence depicted in this figure. In the fourth step the remaining Legendrian grey 2--sphere and the isotropic 1--sphere can be erased as in Figure \ref{fig:CancelMove2}.}
\label{fig:CancelMove}
\end{figure}

\begin{proof}
Except for $\Lambda_0$, for every piece of the Legendrian $\Lambda$ which passes through $S^n$ we perform a handle slide over $\Lambda_0$. This displaces these pieces from a neighborhood of the sphere $S^n$, and it is depicted in Figure \ref{fig:CancelMove}. Once $\Lambda\sm\Lambda_0$ has been Legendrian slid disjoint from the sphere $S^n$, the Legendrian $\Lambda$ intersects $S^n$ at a unique intersection point in $\Lambda_0$ and thus we can erase the two handles by using Proposition \ref{prop:basic cancel}. These are the three steps listed in the statement of the Proposition, and thus the sequence realizes an equivalence of Legendrian handlebodies.\end{proof}

Legendrian handle slides and Weinstein handle cancellation constitute the basic set of moves in the front. In Section \ref{ssec:stack} we will be using them to obtain Legendrian lifts of exact Lagrangians described as words in Dehn twists, however there is an additional operation that we can perform to a Legendrian front which we now address.

\subsection{Lagrangian cobordisms in the front}\label{ssec:lagcob}
Consider a Weinstein manifold $(W, \lambda, \p)$ presented as Legendrian handlebody, in this subsection we discuss methods of constructing exact Lagrangian submanifolds $\ol L\sse (W,\la)$ combinatorially based on the front projection of the Legendrian handle attaching maps. Let us fix notation and suppose that the Legendrian handle decomposition for $(W,\la,\p)$ is given by a subcritical Weinstein domain $(W_0,\la_0,\p_0)$ together with a Legendrian link of spheres $\Lambda \sse (\dd W_0,\la_0)$ such that by attaching critical handles to $(W_0,\la_0,\p_0)$ along $\Lambda$ we obtain $(W,\la,\p)$.

In order to construct closed exact Lagrangians $\ol L\sse (W,\la)$, the first observation is that an exact Lagrangian $L \sse (W_0,\la_0)$ such that $\dd L = \Lambda$, or some subset of the components of the Legendrian link $\Lambda$, defines a closed exact Lagrangian $\ol L \sse W$. This Lagrangian $\ol L$ is constructed by taking the union of the exact Lagrangian $L$ with the Lagrangian cores of the handles attached to their boundary $\dd L$. Thus, constructing exact Lagrangian fillings of $\La$ yield closed exact Lagrangians in $(W,\la)$. Fortunately, there is a set of combinatorial moves on Legendrian fronts which are induced by exact Lagrangian cobordisms.

\begin{definition}[Legendrian Surgery]\label{def:legsurgery}
Let $\Lambda \sse (Y^{2n-1}, \xi)$ be a Legendrian in a contact manifold, and suppose that there is a Darboux chart $U \sse (Y,\xi)$ such that the front projection of the Legendrian piece $\Lambda \cap U$ contains a subset of cusp singularities whose topology is that of a sphere $S^k \sse \Lambda \cap U$, for some index $1\leq k\leq n-2$. Suppose also that there exists an isotropic disk $D^{k+1} \sse U$ with a smooth front projection which is disjoint from the complement $\Lambda\sm S^k$, bounds the sphere of cusps $S^k = \dd D^{k+1}$ and intersects it approaching from the outside of the cusp. Then define the Legendrian surgery $\wt\Lambda\sse (Y^{2n-1},\xi)$ of $\La$ along $D^{k+1}$ as follows.

Consider a small neighborhood of the isotropic disk $D^{k+1}$ and extend it to a Legendrian ribbon $R = D^{k+1} \x D^{n-k-2}$ such that the Legendrian ribbon $R$ also has a smooth front projection, the piece of the boundary $S^k \x D^{n-k-2} \sse \dd R$ is completely contained in the cusp singular set of $\Lambda$, and anywhere else the ribbon $R$ is disjoint from $\Lambda$. This Legendrian ribbon is part of a Legendrian tube $\wt R$, diffeomorphic to $D^{k+1} \x S^{n-k-2}$, which is formed by gluing together $R$ and a small vertical pushoff of itself along the piece $D^k \x \dd D^{n-k-2}$ forming cusp--edge singularities. Then the Legendrian $\wt\Lambda$ obtained by deleting a small neighborhood of $S^k \x D^{n-k-2}$ from $\Lambda$, and gluing the resulting boundary to $\wt R$.\hfill$\Box$
\end{definition}

\begin{figure}[h!]
\centering
  \includegraphics[scale=0.8]{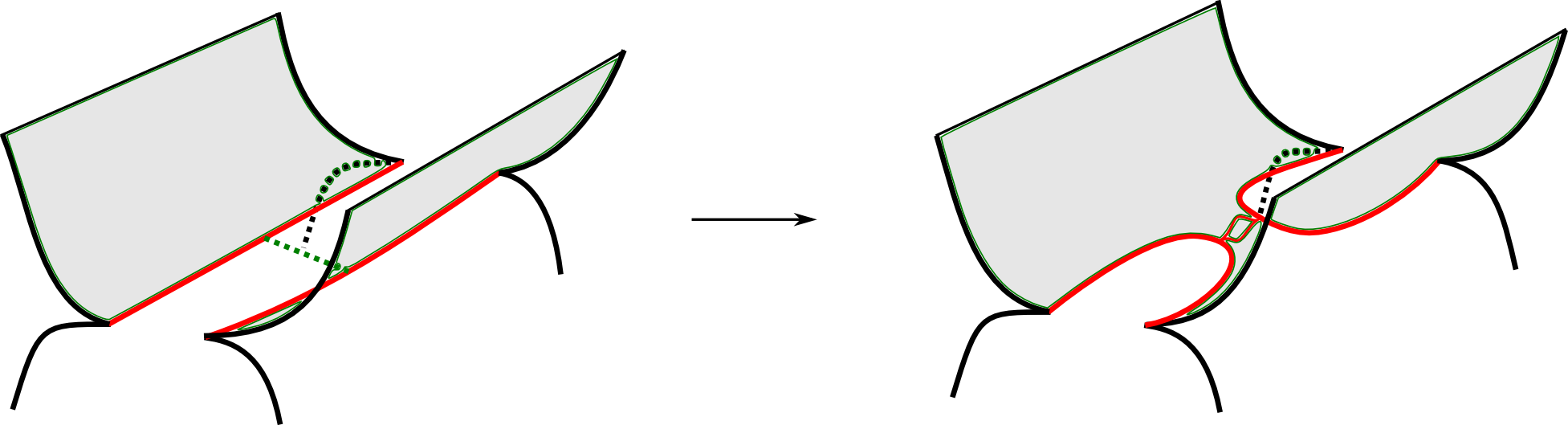}
  \caption{Legendrian surgery depicted in the case of $\Lambda$ being a Legendrian surface in a Darboux ball and $k=0$. The Legendrian $S^0\times D^2$ is modified by surgery along the green path to a Legendrian cylinder $S^1\times D^1$.}
\label{fig:legsurgery}
\end{figure}

The above definition might be elaborate to read, but the geometric idea is simple: it is the Legendrian analogue of a smooth surgery where vertical tangencies are substituted by cusp edges; see Figure \ref{fig:legsurgery} for an example and the discussions in \cite{BST} for more details. Since the ambient manifold $(Y,\xi)$ remains unchanged, this operation is also referred to as ambient Legendrian surgery. The following result can be proven with elementary means in contact topology, generating functions being one of them: it tells us that Legendrian ambient surgeries can be realized by Lagrangian cobordisms:

\begin{thm}\label{thm:lagcob}
The following three statements hold:
\begin{itemize}
 \item[1.] Legendrian isotopies are induced by Lagrangian cobordisms, whose smooth topology is that of a cylinder $[0,1] \x \Lambda$ \cite{EG}.
 \item[2.] Legendrian ambient surgeries are induced by upward Lagrangian cobordisms, whose smooth type is that of a single handle attachment of the corresponding index \cite{BST,DR}.
 \item[3.] The standard Legendrian unknot is the boundary of a Lagrangian disk.\hfill$\Box$
\end{itemize}
\end{thm}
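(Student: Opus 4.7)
The three items are essentially independent and admit local constructions, so the plan is to treat them in turn. For item 1, the plan is to work in the symplectization $(\R_s\x Y, d(e^s\a))$. Given a Legendrian isotopy $\La_t\sse(Y,\xi)$ for $t\in[0,1]$, the Legendrian isotopy extension theorem produces a compactly supported contact isotopy $\psi_t\in\Cont(Y,\xi)$ with $\psi_t(\La_0)=\La_t$, generated by a contact Hamiltonian $H_t:Y\to\R$. I would lift $\psi_t$ to a Hamiltonian flow on the symplectization via the canonical formula $\wh H_t(s,y) = e^s H_t(y)$, choose a monotone smooth cutoff $\chi:\R\to[0,1]$ equal to $0$ for $s\ll 0$ and to $1$ for $s\gg 0$, and take the trace of $\La_0$ under the $s$-dependent reparametrized lift. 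A direct check shows that the resulting submanifold is an exact Lagrangian cylinder with cylindrical ends $\R\x\La_0$ at $s=-\infty$ and $\R\x\La_1$ at $s=+\infty$; exactness of $e^s\a|_L$ follows from the Legendrian condition on each slice.

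For item 2, the argument is local to the Legendrian ribbon $\wt R$ of Definition \ref{def:legsurgery}. In a Weinstein neighborhood of $\wt R\sse(Y,\xi)$, the plan is to exhibit an explicit Lagrangian $(k+1)$--handle in the symplectization, modeled on the quadric hypersurface $\{|q|^2-|p|^2 = -\e\}$ in suitable Darboux coordinates on $(\R^{2n},d\la_\std)$. This local handle is arranged so that its intersection with a sufficiently negative level set of $s$ is identified with the symplectization of a neighborhood of $S^k\x D^{n-k-2}\sse\La$, while its intersection with a sufficiently positive level set is identified with the symplectization of the corresponding piece of $\wt\La$ produced by the Legendrian ambient surgery. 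Gluing this local handle to the trivial Lagrangian cylinder of item 1 over the complement $\La\sm(S^k\x D^{n-k-2})$ yields the desired exact Lagrangian cobordism, which is smoothly a single $(k+1)$--handle attachment.

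For item 3, the standard Legendrian unknot in $(\R^{2n-1},\xi_\std)$ may be realized as the ideal Legendrian boundary of the flat Lagrangian disk $L=\{p=0,\,|q|\les 1\}\sse(\R^{2n},d\la_\std)$, via the usual identification of the contact boundary of a Darboux ball with $(S^{2n-1},\xi_\std)$; since $\la_\std|_L$ vanishes identically, this disk is tautologically exact, providing the required filling. The main obstacle is item 2: while the final cobordism is a standard Lagrangian handle, the nontrivial content is to match the combinatorial Definition \ref{def:legsurgery}, phrased in terms of cusp--edge surgeries in the front projection, with the intrinsic symplectic local model, so that the cusp--edge boundary of $\wt R$ corresponds precisely to the attaching sphere of the Lagrangian handle. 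Once this identification is set up, exactness and the correct handle topology follow from the explicit local model, and the detailed verification is that carried out in \cite{BST,DR}.
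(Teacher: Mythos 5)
The paper offers no proof of this theorem: it is stated as a summary of results from the literature, with item 1 attributed to \cite{EG}, item 2 to \cite{BST,DR}, and item 3 left as standard, so there is no internal argument to compare yours against. Your sketch follows the standard proofs from those references, and the overall architecture (suspension of an isotopy for item 1, a local Lagrangian handle glued to a trivial cylinder for item 2, the flat disk for item 3) is the right one. Two details deserve more care than ``a direct check shows.'' First, in item 1 the naive trace $\{(s,\psi_{\chi(s)}(x))\}$ of a Legendrian under a suspended contact isotopy is \emph{not} Lagrangian in $(\R_s\times Y, d(e^s\a))$; one must either add the standard correction term in the Reeb direction determined by the contact Hamiltonian (the Lagrangian suspension trick) or reparametrize so the isotopy is sufficiently slow relative to the cutoff, and exactness of the resulting cylinder then has to be checked against the conformal factor $e^s$ rather than read off slice by slice. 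Second, in item 2 the set $\{|q|^2-|p|^2=-\e\}\sse\R^{2n}$ is a hypersurface, not a Lagrangian, so it cannot be the local model as written; the actual handle is the Lagrangian of a suitable generating family (as in \cite{BST}) or the explicit ambient-surgery model of \cite{DR}, and matching its attaching region with the cusp-edge sphere of Definition \ref{def:legsurgery} is, as you correctly identify, the real content. With those two points repaired or delegated honestly to the cited references, the argument is the intended one.
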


\begin{remark}
We emphasize that Lagrangian cobordisms and Legendrian ambient surgeries carry an important directionality: the inverse of a Legendrian ambient surgery is not a Legendrian ambient surgery, and inverse surgeries are not generally induced by {\it upwards} Lagrangian cobordisms. Instead, inverse surgeries are induced by \emph{downward} cobordisms.\hfill$\Box$
\end{remark}

Note that, while a Lagrangian filling of a Legendrian $\Lambda$ builds a closed Lagrangian $\ol L$ inside the manifold $(W,\la)$ obtained by attaching handle(s) to $\Lambda$, a Lagrangian filling of $\Lambda$ together with a Legendrian pushoff does not. Therefore, every closed Lagrangian $\ol L$ built using this method will have the property that $[\ol L] \in H_n(W)$ is a primitive class.

The Lagrangian cobordisms described in Theorem \ref{thm:lagcob} suffice for our purposes, and they can be used when trying to construct exact Lagrangian submanifolds as, for instance, in Theorems \ref{thm:Xab1} and \ref{thm:torusIntro}.

\begin{remark}
There are two main open questions concerning whether the above constructions are sufficient to construct all exact Lagrangians.

First, there is a question of whether every closed exact Lagrangian $\ol L$ in a Weinstein manifold $(W,\la,\p)$ can be built out of a Lagrangian filling of the Legendrian $\Lambda \sse (W_0,\la_0)$. This can be phrased either in terms of a fixed handle presentation of $(W,\la,\p)$ given by $(W_0,\la_0,\p_0)$ and $\Lambda$, or varying over all possible presentations; both questions are open. As explained above, every Lagrangian constructed in this way has a primitive fundamental class in $H_n(W)$ and it is also an open question whether this holds for all closed exact Lagrangians in Weinstein manifolds.

Second, a separate open question is whether every exact Lagrangian filling of a Legendrian $\Lambda \sse (W_0,\la_0)$ can be built from Legendrian isotopies and ambient surgeries. Lagrangians which can be built in this way are exactly those Lagrangians which are \emph{regular} in the sense of \cite{EGL}. It is an open question whether every Lagrangian in a Weinstein manifold is regular, and this is (if true) stronger than all previous questions: regular Lagrangians are exactly those which can be built from isotopies, ambient surgeries, and cores of ambient handles.

Finally we remark that all of these questions are known to have counterexamples if we allow either Lagrangians with concave boundary inside the boundary of a Darboux ball, or closed Lagrangians inside Weinstein cobordisms with nonempty concave contact boundary \cite{EM}. Therefore, if it is the case that all exact Lagrangians are of the various forms described above, it must be for an essentially global reason.\hfill$\Box$
\end{remark}

This concludes our discussion on Lagrangian cobordisms, and completes the basic set of ingredients required to perform Legendrian calculus once the Legendrian handlebody of a Weinstein manifold is obtained. It is thus time to explain how to obtain the fronts of the Legendrian attaching spheres of Weinstein manifolds.

\subsection{Legendrian stacking}\label{ssec:stack}

It is now our aim to translate the picture of Weinstein Lefschetz fibrations, and bifibrations, into the language of Weinstein handlebodies. Both Lefschetz fibrations \cite{GP,McLean,Se08} and bifibrations \cite{May,MS,Se15} have been thoroughly studied in the literature but to the authors' knowledge the transition towards explicit Legendrian handlebodies remained unexplored. We find such connection fundamental, and we develop it here.

The Legendrian attaching spheres in the Legendrian handlebody will appear as Legendrian lifts of exact Lagrangian spheres, which belong to a regular fiber $F_\pi$ of a given Weinstein Lefschetz fibration $\pi:W\lr\C$. In the vast majority of known cases, these Lagrangian spheres appear as words in symplectic Dehn twists on a finite set of known Lagrangian spheres, and it is of central interest to understand the Legendrian lifts of a Lagrangian presented in this manner. The following proposition addresses a core instance of such question.

\begin{prop}\label{prop:stacking}
Let $(Y,\xi)= \ob(F,\lambda;\phi)$ be a contact manifold, and $S,L \sse (F,\la)$ two exact Lagrangian submanifolds such that $S$ is diffeomorphic to a sphere. Suppose that the potential functions for $\lambda|_S$ and $\lambda|_L$ are $C^0$--bounded by a small enough $\e\in\R^+$, and consider the contact manifold $(\wt Y,\wt\xi)$ obtained by performing $(+1)$--surgery along $\Lambda_S^\e$ and $(-1)$--surgery along $\Lambda_S^{5\e}$. Then
\begin{itemize}
 \item[-] There exists a canonical contact identification $(Y,\xi)\cong(\wt Y,\wt\xi)$.
 \item[-] The Legendrian $\Lambda^{3\e}_L \sse (\wt Y,\wt\xi)$ is Legendrian isotopic to $\Lambda^0_{\tau_SL} \sse (Y,\xi)$.
\end{itemize}

In an analogous manner, performing contact $(-1)$ and $(+1)$--surgeries along $\Lambda_S^\e$ and $\Lambda_S^{5\e}$ in $(Y,\xi)$ results in a contact manifold $(\overline Y,\overline\xi)$ with a contact identification $(\overline Y,\overline\xi)\cong(Y,\xi)$ under which $\Lambda_L^{3\e} \sse(\overline Y,\overline\xi)$ is Legendrian isotopic to $\Lambda_{\tau_S^{-1}L} \sse (Y,\xi)$.
\end{prop}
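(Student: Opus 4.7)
The plan is to deduce both claims from the standard correspondence between contact $(\pm 1)$--surgery on a Legendrian lift of a Lagrangian sphere in an open book and the insertion of a Dehn twist into the open book monodromy. Concretely, I would invoke the statement that $(-1)$--surgery on $\La_S^\theta \sse \ob(F,\la;\phi)$ yields $\ob(F,\la;\psi)$, where $\psi$ agrees with $\phi$ away from a neighborhood of $\theta$ and composes with a right--handed Dehn twist $\tau_S$ at the angle $\theta$, while $(+1)$--surgery inserts $\tau_S^{-1}$ instead. This can be proved by presenting the local surgery cobordism as a Lefschetz fibration with vanishing cycle $S$, compatibly with the material in Section \ref{sec:lef}.

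Granting this, the two surgeries in the statement insert $\tau_S^{-1}$ at angle $\e$ and $\tau_S$ at angle $5\e$. Composing around the $S^1$--direction of the open book these twists cancel, so the new monodromy is $\tau_S \circ \tau_S^{-1} \circ \phi = \phi$ and the surgered open book is $\ob(F,\la;\phi)$. This furnishes the canonical contactomorphism $(\wt Y,\wt\xi) \cong (Y,\xi)$ asserted in the first bullet.

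To track $\Lambda_L^{3\e}$ under this identification, I would observe that in the modified open book the page at angle $3\e$ sits after the $\tau_S^{-1}$--twist at $\e$ but before the compensating $\tau_S$--twist at $5\e$. Re--identifying pages of $\wt Y$ with pages of $Y$ in the unique gauge under which the total monodromy is literally $\phi$ matches the page $\wt F_{3\e}$ with $F_{3\e}$ via the Dehn twist $\tau_S$. Consequently $L\sse\wt F_{3\e}$ corresponds to $\tau_S L\sse F_{3\e}$, and after taking Legendrian lifts and rotating the page angle to $0$ (a Legendrian isotopy because the $C^0$--bound of the potentials is small), we obtain $\Lambda_L^{3\e} \cong \Lambda_{\tau_S L}^{0}$. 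The companion statement for $\tau_S^{-1} L$ follows from the mirror argument with the roles of $(+1)$ and $(-1)$ exchanged.

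The main obstacle will be keeping sign and gauge conventions consistent: namely, that $(-1)$--surgery inserts the right--handed $\tau_S$ and not its inverse, that the two Dehn twists are composed in the correct $S^1$--direction, and that the re--identification between the two open books is chosen so that $\Lambda_L^{3\e}$ really appears as $\Lambda_{\tau_S L}^0$ rather than as $\Lambda_{\tau_S^{-1} L}^0$. One must also verify that the bound $\e$ is small enough that the three Legendrians $\Lambda_S^\e$, $\Lambda_L^{3\e}$, $\Lambda_S^{5\e}$ live in disjoint contactification slabs, so that the two contact surgeries take place in Darboux neighborhoods away from $\Lambda_L^{3\e}$ and from each other, but still within the range in which the Dehn--twist interpretation of contact surgery is strictly valid.
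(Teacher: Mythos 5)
Your proposal is correct and follows essentially the same route as the paper: the authors also present the surgered manifold as the open book with monodromy $(\p\circ\tau_S)\circ\tau_S^{-1}$ (built as a quotient of $F\times([0,1]\cup[2,3])$ with gluings by $\tau_S^{-1}$ and $\p\circ\tau_S$), obtain the contactomorphism from the cancellation of the $(\pm1)$--surgery pair, and then track the Legendrian by translating it in the Reeb direction across the $\tau_S^{-1}$--gluing, which is exactly your ``page sitting between the two twist insertions'' argument. The sign and gauge bookkeeping you flag as the main obstacle is precisely what the paper's explicit contactization model is designed to pin down.
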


\begin{remark}
Note that the hypothesis on the bound of the potential functions on the Lagrangians can be arranged with a Hamiltonian isotopy.
\end{remark}

\begin{proof}
The contactomorphism $(Y,\xi)\cong(\wt Y,\wt\xi)$ is a consequence of Proposition \ref{prop:basic cancel} above, the interesting statement is the existence of the Legendrian isotopy between $\Lambda^{3\e}_L$ and $\Lambda^0_{\tau_SL}$, which we now prove. Consider the contact open book presentation
$$(\wt Y,\wt\xi) = \op{ob}(F, \lambda, (\p \circ \tau_S)\circ \tau_S^{-1}),$$
where we are building $(\wt Y,\wt\xi)$ as a quotient of the contactization $F \x ([0,1] \cup [2,3])$, where $F \x \{1\}$ is identified with $F \x \{2\}$ by the symplectomorphism $\tau_S^{-1}$, and $F \x \{3\}$ is identified with $F \x \{0\}$ by the symplectomorphism $\p \circ \tau_S$, and then gluing the standard contact neighborhood $\partial F\times D^2$ of the binding.

In this presentation, the Legendrian $\Lambda_{\tau_SL}$ is Legendrian isotopic to the Legendrian lift
$$\tau_S(L) \x \left\{\frac12 + h_{\tau_S(L)}\right\}\sse \op{ob}(F, \lambda, (\p \circ \tau_S)\circ \tau_S^{-1}),$$
where $h_K: K \lr (-\e, \e)$ denotes the potential function of an exact Lagrangian $K \sse (F,\la)$. By translating in the positive Reeb direction, we also realize the Legendrian isotopy
$$\tau_S(L) \x \left\{\frac12 + h_{\tau_SL}\right\}\simeq L \x\left\{\frac52 + h_L\right\}.$$
The Legendrian in the right hand side of the isotopy is itself Legendrian isotopic to $\Lambda_L^{3\e}$, since it lies both above the region where the gluing with the symplectomorphism $\tau_S^{-1}$ occurs and below the region where the gluing with $\tau_S$ occurs. This proves the required Legendrian isotopy $\Lambda^{3\e}_L\simeq\Lambda^0_{\tau_SL}$.
\end{proof}

\begin{remark}
Proposition \ref{prop:stacking} provides enough tools to prove new results, as illustrated by the applications in Section \ref{sec:app}, and it suffices for the purposes of the present article. However, we want to remark that the following general question is also of interest. Let $S$ and $L$ be exact Lagrangians in a Liouville manifold $(F, \lambda)$ where $S$ is a sphere, and let $(Y,\xi)$ be a contact open book with Liouville page $(F,\la)$. Suppose that we understand the Legendrian lifts of $S$ and $L$, then how do we understand the Legendrian lift of $\tau^{\pm 1}_S(L)$ ?

In the theory of Legendrian handlebodies understanding a Legendrian means being able to draw a picture of its front projection, since from there we can isotope it freely and compute its invariants. However, to begin with the front projection is only defined for $(\R^{2n-1},\xi_\st)$, or particular contact manifolds such as 1--jet spaces $(J^1(Q),\xi_\st)$, and thus constructing the front projection for a contact manifold $(Y,\xi)=\ob(F,\la;\phi)$, even if presented with an open book, takes work. In the $3$--dimensional case, a general systematic method for making open books, front projections, and Legendrian lifts is discussed in the article \cite{GL}.\hfill$\Box$
\end{remark}

Proposition \ref{prop:stacking} being proven, we can use Proposition \ref{prop:slide} in order to obtain the Legendrian lift $\La_L\sse\ob(F,\la;\id)$ of an exact Lagrangian $L\sse(F,\la)$ which is Hamiltonian isotopic to
$$\tau^{\e_1}_{S_1}\circ\tau^{\e_2}_{S_2}\circ\cdots\circ\tau^{\e_n}_{S_n}(S_{n+1})$$
where $S_i$ are Lagrangian spheres in $(F,\la)$, possibly equal, and $\e_i=\pm1$ for $1\leq i\leq n$. In the reminder of this section and the subsequent Subsection \ref{ssec:Tpqr} we provide several explicit examples of such Legendrian lifts that might serve the reader as a good gauge for his understanding of the material presented thus far.

\begin{figure}[h!]
\centering
  \includegraphics[scale=0.7]{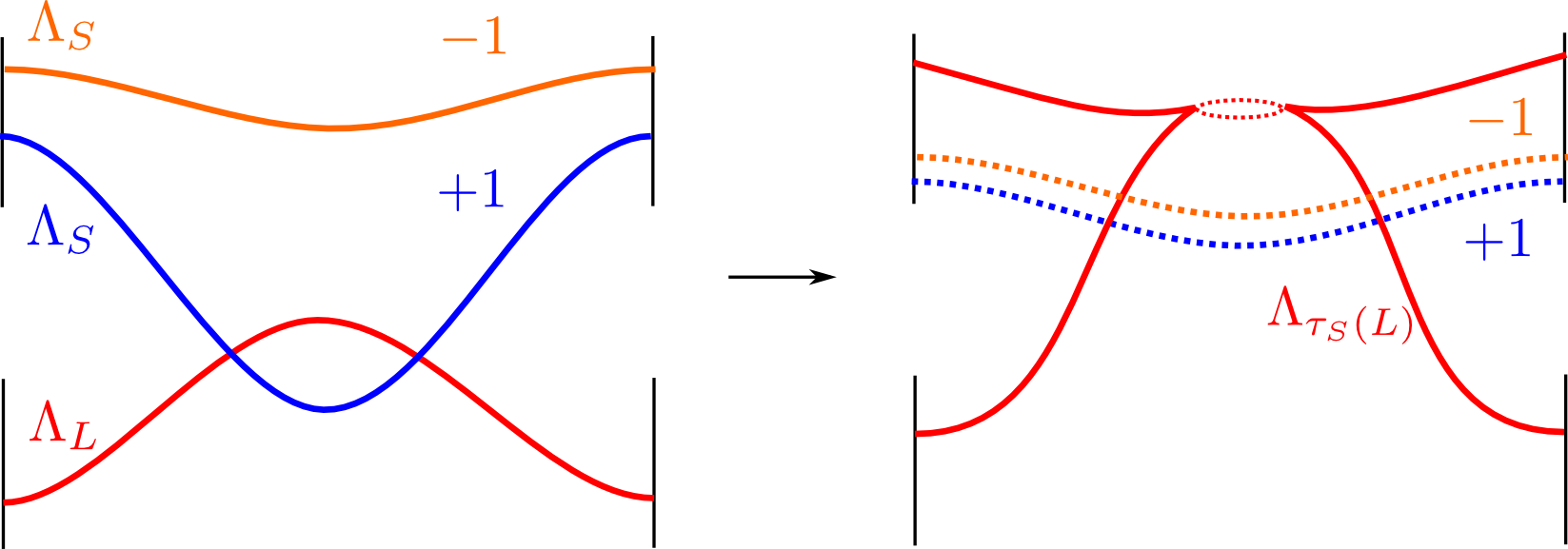}
  \caption{The Legendrian lift of $\tau_S(L)$.}
\label{fig:cusp}
\end{figure}

\begin{figure}[h!]
\centering
  \includegraphics[scale=0.7]{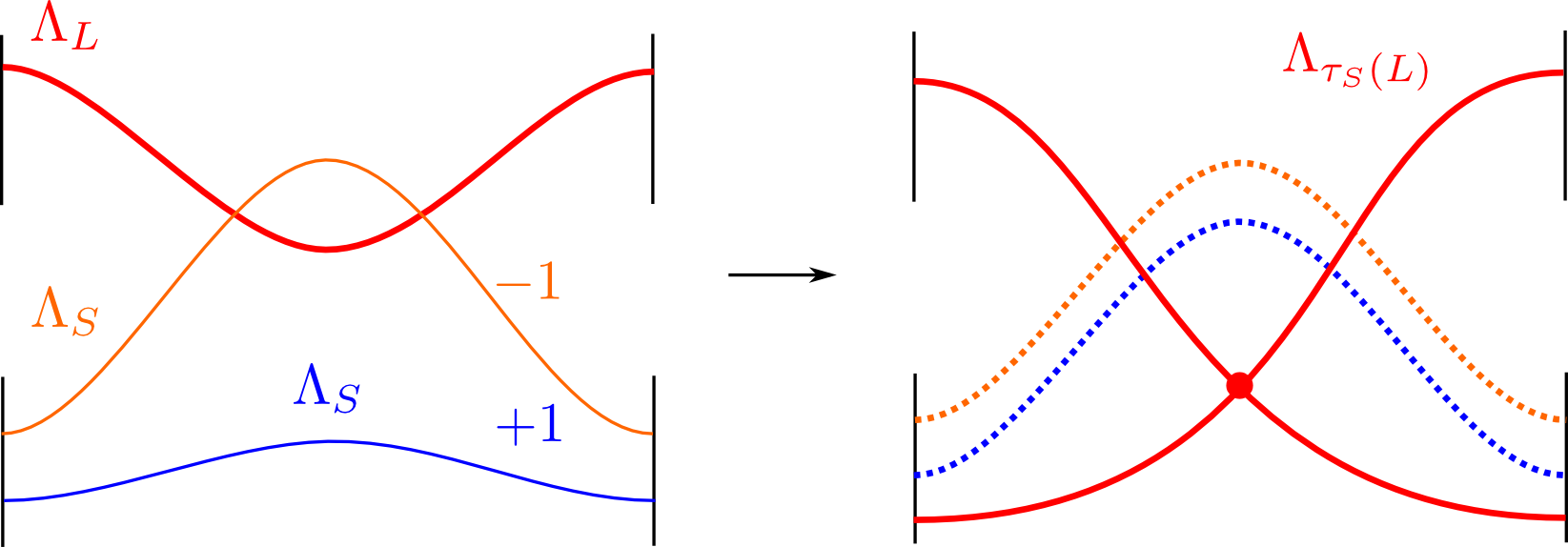}
  \caption{Another Legendrian lift of $\tau_S(L)$.}
\label{fig:cone}
\end{figure}

\begin{ex}\label{ex:most basic}
Consider two exact Lagrangians $S, L \sse (F,\la)$, with $S$ diffeomorphic to a sphere and intersecting transversely in a single point. Suppose we choose Legendrian lifts $\Lambda_S$ and $\Lambda_L$ of $S$ and $L$ respectively, so that the height of $\Lambda_S$ is everywhere larger than the height of $\Lambda_L$, except at the intersection point where they coincide. Then we apply Proposition \ref{prop:slide} and Proposition \ref{prop:stacking} to conclude that the Legendrian lift of $\tau_S(L)$ has a Legendrian front as depicted in Figure \ref{fig:cone}. This Legendrian lift is obtained by connecting the Legendrian lift $\Lambda_S$ to the Legendrian $\Lambda_L$ by the tube, diffeomorphic to the product of a $\dim(S)$--sphere and an interval $D^1$, with a cusp--edge along the central sphere $\dim(S)$--sphere.

Alternatively, suppose that the height of $\Lambda_S$ is everywhere less than the height of $\Lambda_L$, except at the intersection point. Then Proposition \ref{prop:slide} and Proposition \ref{prop:stacking} imply that the Legendrian lift of $\tau_S(L)$ is as depicted in Figure \ref{fig:cone}. This Legendrian lift consists of connecting the Legendrian sphere $\Lambda_S$ to the Legendrian $\Lambda_L$ by using a tube whose front contains a cone singularity.

Thus we have learnt how to draw the front projection of $\tau_S(L)$ depending on the initial conditions dictated the location of the Legendrians $\La_S$ and $\La_L$. Note also that in the case that the exact Lagrangian $L$ is also a sphere, we have the following Lagrangian isotopy $\tau_S^{-1}(L) \simeq \tau_L(S)$. Therefore the Legendrian lift of $\tau^{-1}_S(L)$ also corresponds to connecting the Legendrians $\Lambda_S$ and $\Lambda_L$ with tubes either through cusps--edges or a cone singularity, but in this case with the opposite convention regarding the $z$--height of the Legendrian lifts. These local models are summarized in the following table:
\end{ex}

\begin{table}[h!]
\begin{center}
  \begin{tabular}{| c | c | c |}
    \hline
       & $z(\Lambda_S) > z(\Lambda_L)$ & $z(\Lambda_S) < z(\Lambda_L)$ \\ \hline
    $\tau_S(L)$ & cusp & cone \\ \hline
    $\tau_S^{-1}(L)$ & cone & cusp \\
    \hline
  \end{tabular}
  \caption{Legendrian lift of $\tau_S^{\pm 1}(L)$.}\label{table}
\end{center}
\end{table}

\begin{remark}
Despite the subtlety of signs inherent in Table \ref{table}, we emphasize that Legendrian handle slides do not depend on relative height: as stated in Proposition \ref{prop:slide} a Legendrian handle slide over a $(-1)$--surgery Legendrian always involves a spherical cusp--edge, and a Legendrian handle slide over a $(+1)$--surgery Legendrian always yields a cone singularity.\hfill$\Box$
\end{remark}

Being comfortable with the Legendrian lifts of the Lagrangians $\tau_S^{\pm}(L)$, we now proceed to the understanding of the Legendrian lifts of $\tau_S^{2}(L)$.

\begin{ex}\label{ex:square}
Suppose again that $S,L\sse (F,\la)$ are Lagrangian spheres intersecting transversely in a single point, and assume as well that coordinates are chosen such that the Legendrian lift $\Lambda_S$ of the Lagrangian sphere $S$ is everywhere above the Legendrian lift $\Lambda_L$ of the Lagrangian $L$. Then using Proposition \ref{prop:stacking} iteratively obtain the computation for the Legendrian lift of the Lagrangian $\tau^2_S(L)$ shown in Figure \ref{fig:square}.

Analogously, by iteratively using Proposition \ref{prop:stacking} the reader should now be able to depict the Legendrian fronts of the Legendrian lifts of the exact Lagrangians $\tau^{\pm k}_S(L)$ for any $k\in\N$; this computation is not required in this article but it is an instructive exercise.\hfill$\Box$
\end{ex}

\begin{figure}[h!]
\centering
  \includegraphics[scale=0.7]{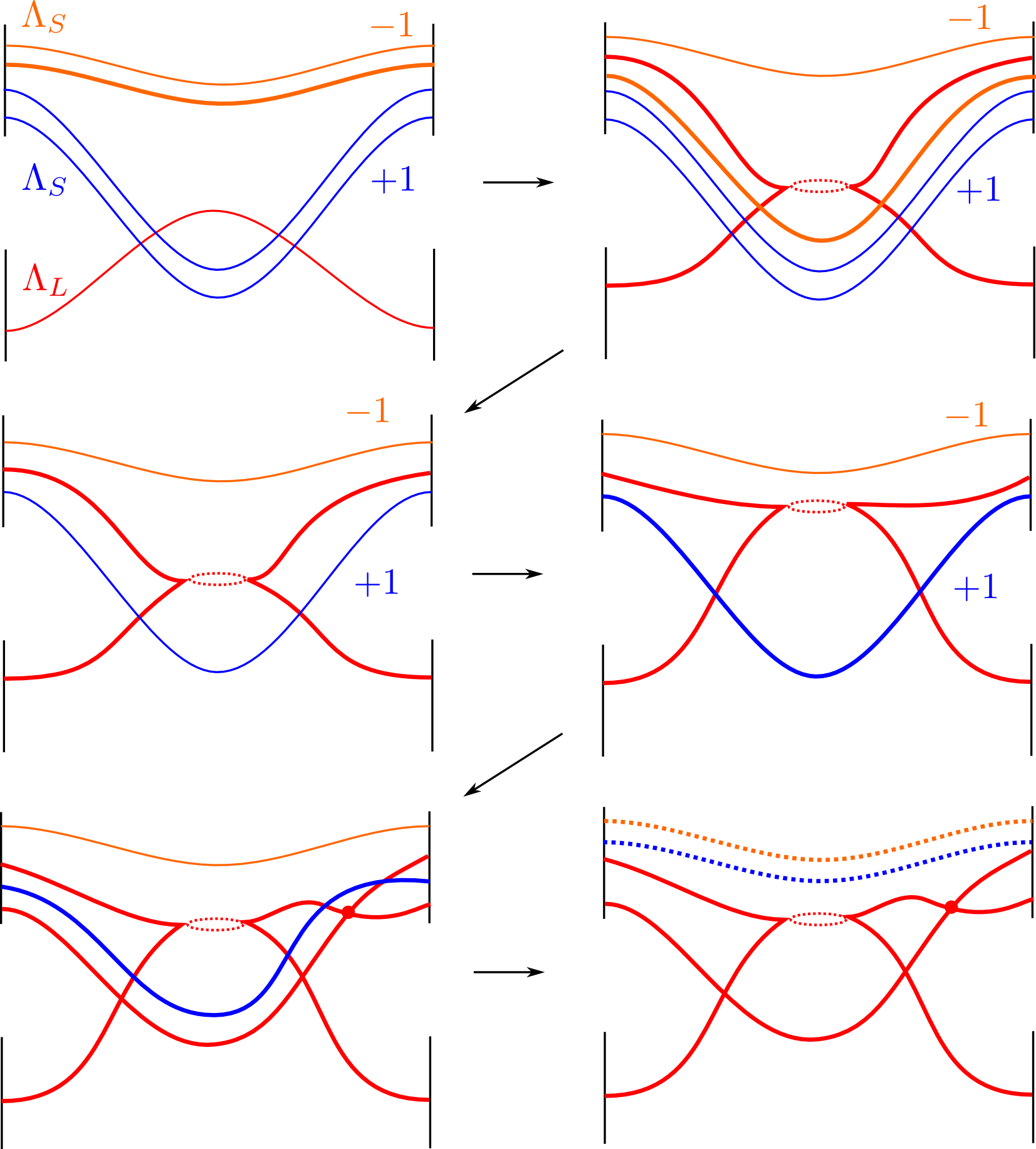}
  \caption{The Legendrian lift of $\tau^2_S(L)$.}
\label{fig:square}
\end{figure}

The previous two examples constitute local models for the Legendrian fronts, let us start using this for the Weinstein manifolds discussed in Subsection \ref{ssec:subcrit}: suppose that the fiber $(F,\la)= (F_T,\la_T)$ is a $T$--plumbing of standard cotangent bundles $(T^*S^{n-1},\la_\st)$. By iterating Proposition \ref{prop:stacking}, we are able to draw a front for the Legendrian lift of any Lagrangian sphere in the Weinstein manifold $(F_T,\la_T)$ expressed as a sequence of Dehn twists along the various zero sections of its $T$--plumbing structure. Note that as explained in section \ref{ssec:subcrit} the subcritical Weinstein manifold $(F_T\times D^2,\la_T+\la_\st)$ will be the subcritical skeleton of the Weinstein manifolds we are interested in, Section \ref{sec:app} contains many instances of this.

The next example we consider is the plumbing of only two cotangent bundles, which also serves as a local model for any pair of exact Lagrangian spheres intersecting at a point.

\begin{ex}\label{ex:exampleX12}
Consider the tree $T=A_2$ consisting of two Lagrangian spheres $S$ and $L$ intersecting transversely at a point, and let us construct the Weinstein manifold obtained by attaching three critical Weinstein handles to the subcritical piece $(F_T\times D^2,\la_T+\la_\st)$. We choose to fix two of the three critical handles to be attached along the Legendrian lifts $\La_S,\La_L\sse\dd(F_T\times D^2)$ of the Lagrangian spheres $S,L\sse (F_T,\la_T)$, and let us declare the third handle to be attached along the Legendrian lift $\La_{\tau_S(L)}$ of the Lagrangian sphere $\tau_S(L)\sse (F_T,\la_T)$. The unordered list of Legendrian attaching spheres is thus $\{\La_S,\La_L,\La_{\tau_S(L)}\}$, and we will see now how the choice of order crucially affects the resulting Weinstein structure.

First, consider the case in which the cyclically ordered set of attachments is $(\La_S,\La_L,\La_{\tau_S(L)})$. Figure \ref{fig:TS2} depicts a sequence, the first stage of which is a Legendrian front representing these three critical Weinstein attachments. The subsequent three stages are the result of applying the Legendrian front calculus described in Subsections \ref{ssec:highD} and \ref{ssec:Wmoves}. These stages consist respectively in a handle slide of the red Legendrian $\La_{\tau_S(L)}$ along the blue Legendrian $\La_L$, performed in order to be able to cancel $\La_L$ with the unique subcritical handle that it intersects, two Reidemeister moves pulling the sphere of cusps and a handle slide of $\La_{\tau_S(L)}$ along the yellow Legendrian $\La_S$. The Legendrian front in the fourth step describes the Weinstein manifold $(T^*S^n,\la_\st,\p_\st)$ obtained by attaching a Weinstein handle along the Legendrian unknot, since the two subcritical handles can be cancelled respectively with the Legendrians $\La_S$ and $\La_L$ using Proposition \ref{prop:basic cancel}.

\begin{figure}[h!]
\centering
  \includegraphics[scale=0.7]{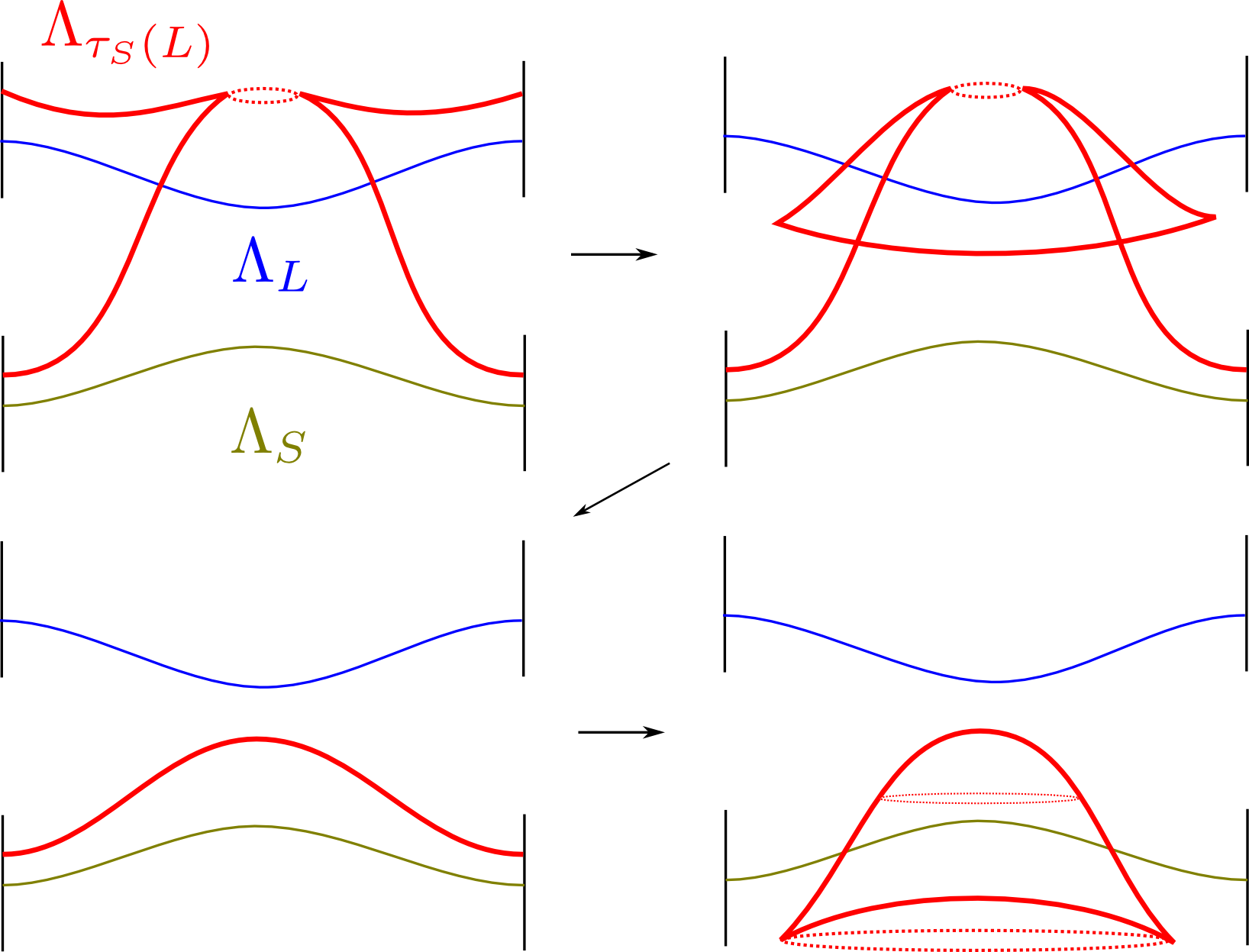}
  \caption{The Weinstein manifold obtained by critical handle attachments to $\dd(F_{A_2},\la_{A_2})$ along the ordered triple $(\La_S,\La_L,\La_{\tau_S(L)})$.}
\label{fig:TS2}
\end{figure}

Second, consider the alternative case where the order is given by the cycle $(\La_L,\La_S,\La_{\tau_S(L)})$. The handle attachment is illustrated in Figure \ref{fig:X12} at the beginning of the sequence. Performing handle slides and Reidemeister moves yields the four--stage sequence resulting in a Weinstein manifold described by a unique handle attachment along the red Legendrian sphere. Though this Legendrian is smoothly unknotted, it is not Legendrian isotopic to the Legendrian unknot thanks to our discussion in Subsection \ref{ssec:loose}; instead, the visible loose chart implies that the Weinstein structure we obtain in the smooth manifold $T^*S^n$ is a flexible Weinstein structure and in particular contains no Lagrangian spheres. In particular, it is not symplectomorphic to the standard cotangent bundle $(T^*S^n,\la_\st)$.

\begin{figure}[h!]
\centering
  \includegraphics[scale=0.7]{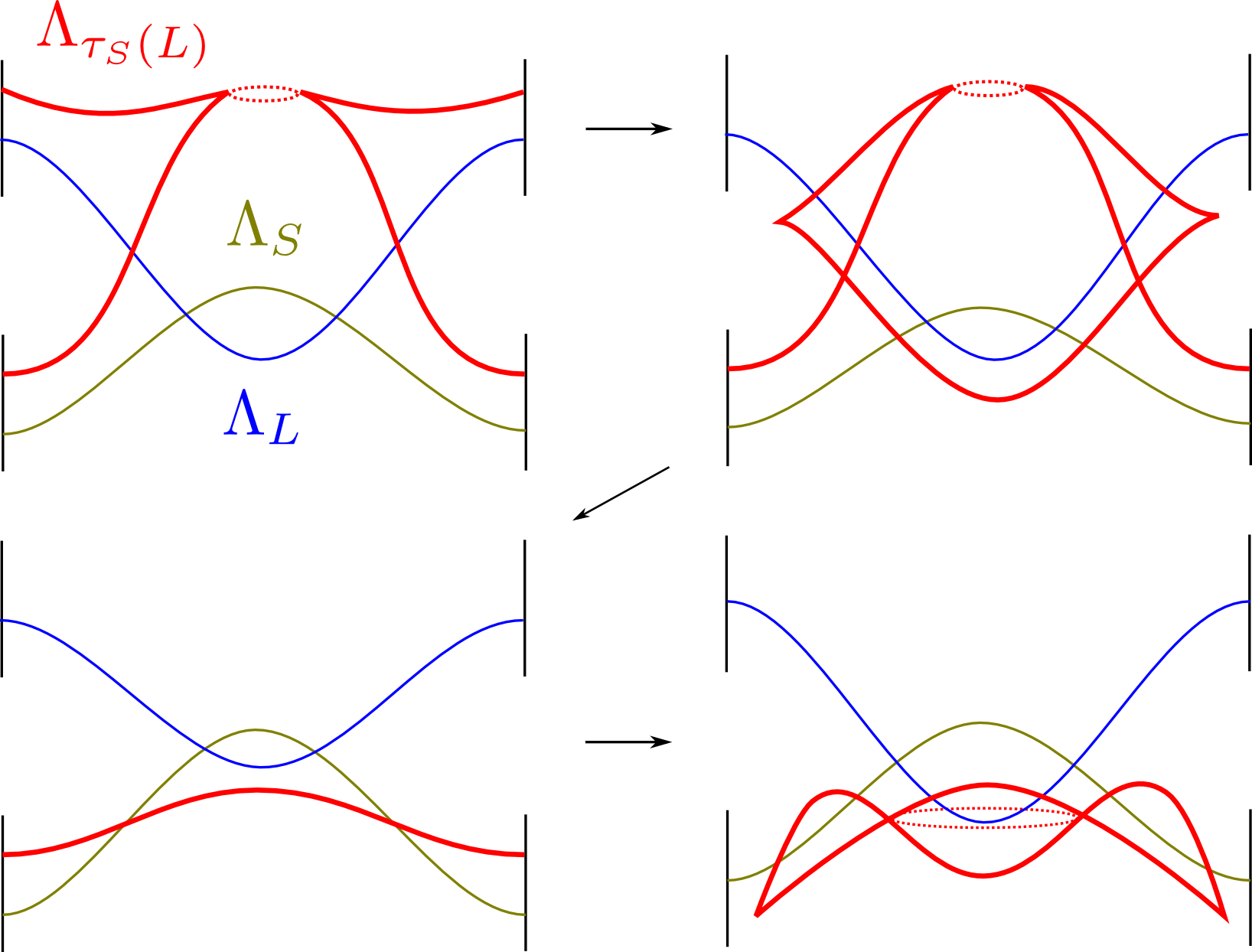}
  \caption{The Weinstein manifold obtained by critical handle attachments to $\dd(F_{A_2},\la_{A_2})$ along the ordered triple $(\La_L,\La_S,\La_{\tau_S(L)})$.}
\label{fig:X12}
\end{figure}

Following this two examples the reader is encouraged to consider the following simple generalization where $T=A_k$ is a linear tree with $|T^{(0)}|=k$ vertices, the subcritical skeleton $(F_T,\la_T)$ is the linear plumbing of $k$ Lagrangian spheres $\{L_i\}$, $1\leq i\leq k$, and the Weinstein handles are attached either as the cycle
$$(L_1,L_2,\ldots,L_{k-1},L_k,\tau_{L_k}\circ\tau_{L_{k-1}}\circ\cdots\circ\tau_{L_3}\circ\tau_{L_2}(L_1))$$
or the reversed cycle
$$(L_k,L_{k-1},\ldots,L_{2},L_1,\tau_{L_k}\circ\tau_{L_{k-1}}\circ\cdots\circ\tau_{L_3}\circ\tau_{L_2}(L_1)).$$
The Weinstein manifold associated to the first cycle is again the standard $(T^*S^n,\la_\st,\p_\st)$, whereas the second cycle yields other interesting flexible Weinstein structures. The latter case is studied later in this article under the name of $X_{1,b}$, which has already appeared in Theorem \ref{thm:Xab1}. \hfill$\Box$
\end{ex}

Proposition \ref{prop:stacking} is one of two central ingredients in Recipe \ref{dictionary}, along with the Lefschetz bifibrations featuring in Section \ref{sec:lef}, and thus it is used constantly in the applications of Section \ref{sec:app}. We now complete our discussion on Legendrian fronts with an additional family of examples, explaining how to draw front diagrams for the Milnor fibers of the $T_{p,q,r}$--singularities. The following Section \ref{sec:lef} shall then present a discussion on Lefschetz bifibrations to which the contents of this current Section \ref{sec:kirby} will be applied in Section \ref{sec:app}.

\subsection{Legendrian front for the $T_{p,q,r}$ Milnor fiber}\label{ssec:Tpqr}
The applications of Section \ref{sec:app} require understanding of Section \ref{sec:lef}, and thus we present this subsection as an application of Proposition \ref{prop:stacking} where the reader does not require the background from Section \ref{sec:lef}. This particular example owes its existence to discussions of the first author with A.~Keating, to whom the authors are very grateful.

Consider the family of Weinstein 4--folds
$$T_{p,q,r}=\{(x,y,z):x^p+y^q+z^r+xyz=1\}\sse\C^3,$$
these are known as the Milnor fibers of the $T_{p,q,r}$--singularities, which constitute one of the modality one families of isolated singularities \cite{AGV,Tpqr}. There exists a Weinstein Lefschetz fibration
$$\pi:T_{p,q,r}\lr\C$$
with $0\in\C$ as a regular value, regular fiber $(F_\pi,\la,\p)=(\pi^{-1}(0),\la_\st,\p_\st)$ symplectomorphic to the thrice punctured torus and $(p+q+r+3)$ distinct critical values.

\begin{figure}[h!]
\centering
\includegraphics[scale=0.5]{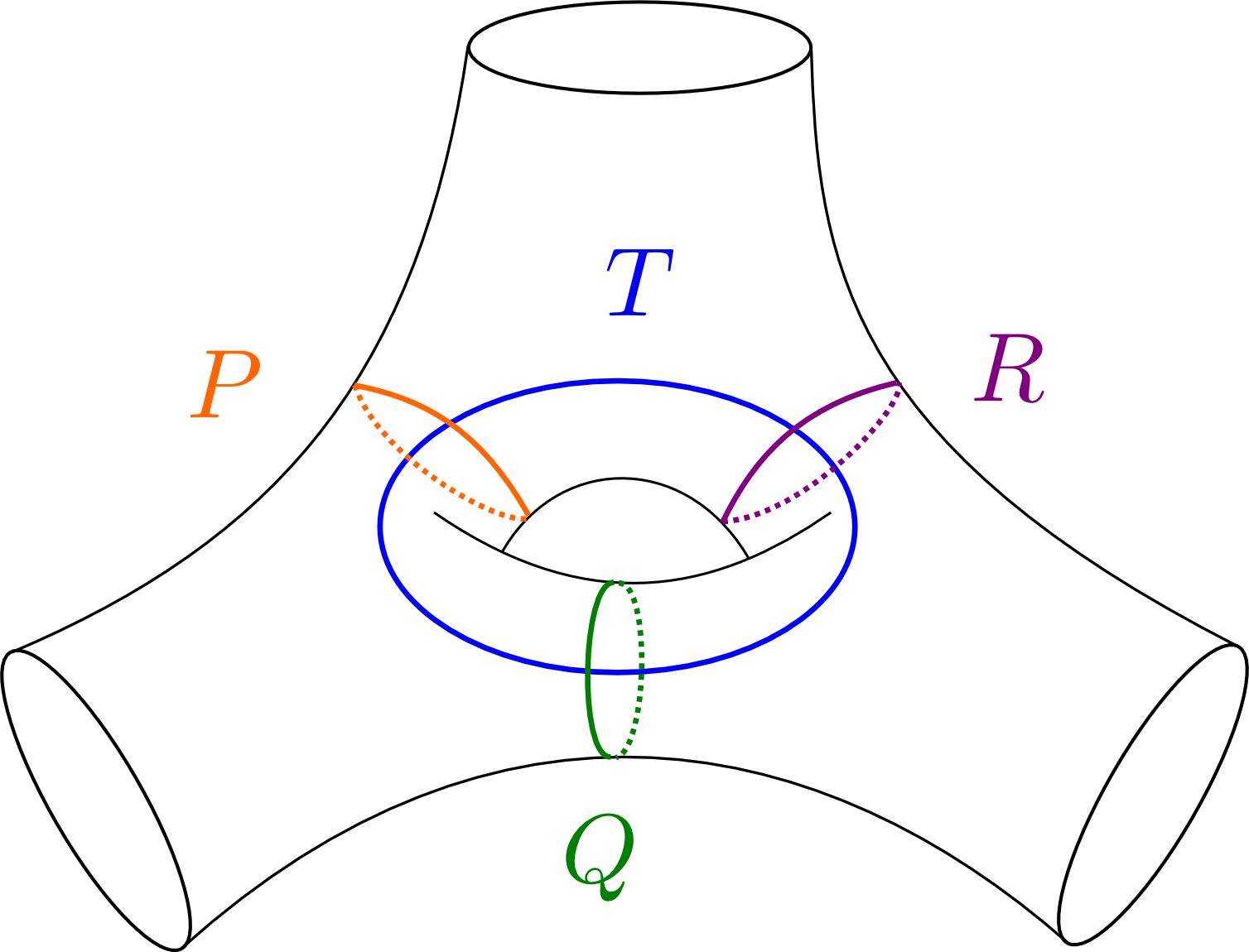}
\caption{The regular fiber and a $\Z$--basis $H_1(\Sigma,\Z)=\langle T,P,Q,R\rangle$.}
\label{fig:Tpqr}
\end{figure}

Consider the four curves $T,P,Q,R\sse F_\pi$ depicted in Figure \ref{fig:Tpqr}, which constitute a $1$--skeleton for the surface $F_\pi$ and we can use them to describe the set of $(p+q+r+3)$ vanishing cycles
$$V_\pi=\{V_{-2},V_{-1},V_{0},V_1,\ldots,V_{p+q+r}\}.$$
The vanishing cycles are the following words in Dehn twists:
$$V_{-2}=T,\quad V_{-1}=\tau^2_P\tau^2_Q\tau^2_R(T),\quad V_0=\tau_P\tau_Q\tau_R(T),\quad V_1=\ldots=V_p=P,$$
$$V_{p+1}=\ldots=V_{p+q-1}=Q,\quad V_{p+q}=\ldots=V_{p+q+r}=R.$$
Once we are given this description, we can use Proposition \ref{prop:stacking} to draw a handle decomposition of the Milnor fiber $T_{p,q,r}$. It is obtained by attaching $(p+q+r+3)$ 2--handles to the subcritical skeleton $(F_\pi\times D^2,\la+\la_\st)$, attached along the Legendrian lifts of the exact Lagrangians vanishing cycles in the set $V_\pi$ in the contact boundary $(F_\pi\times S^1,\la+\la_\st)$. Proposition \ref{prop:stacking} allows us to draw the correct Legendrian fronts for these Legendrian lifts and depict the Legendrian handlebody for $T_{p,q,r}$: we have drawn the first three vanishing cycles $\{V_{-2},V_{-1},V_0\}\sse V_\pi$ in Figure \ref{fig:TpqrFront}. The reader might appreciate here the relevance of Examples \ref{ex:most basic} and \ref{ex:square}, from which this Legendrian fronts are built.

\begin{figure}[h!]
\centering
\includegraphics[scale=0.55]{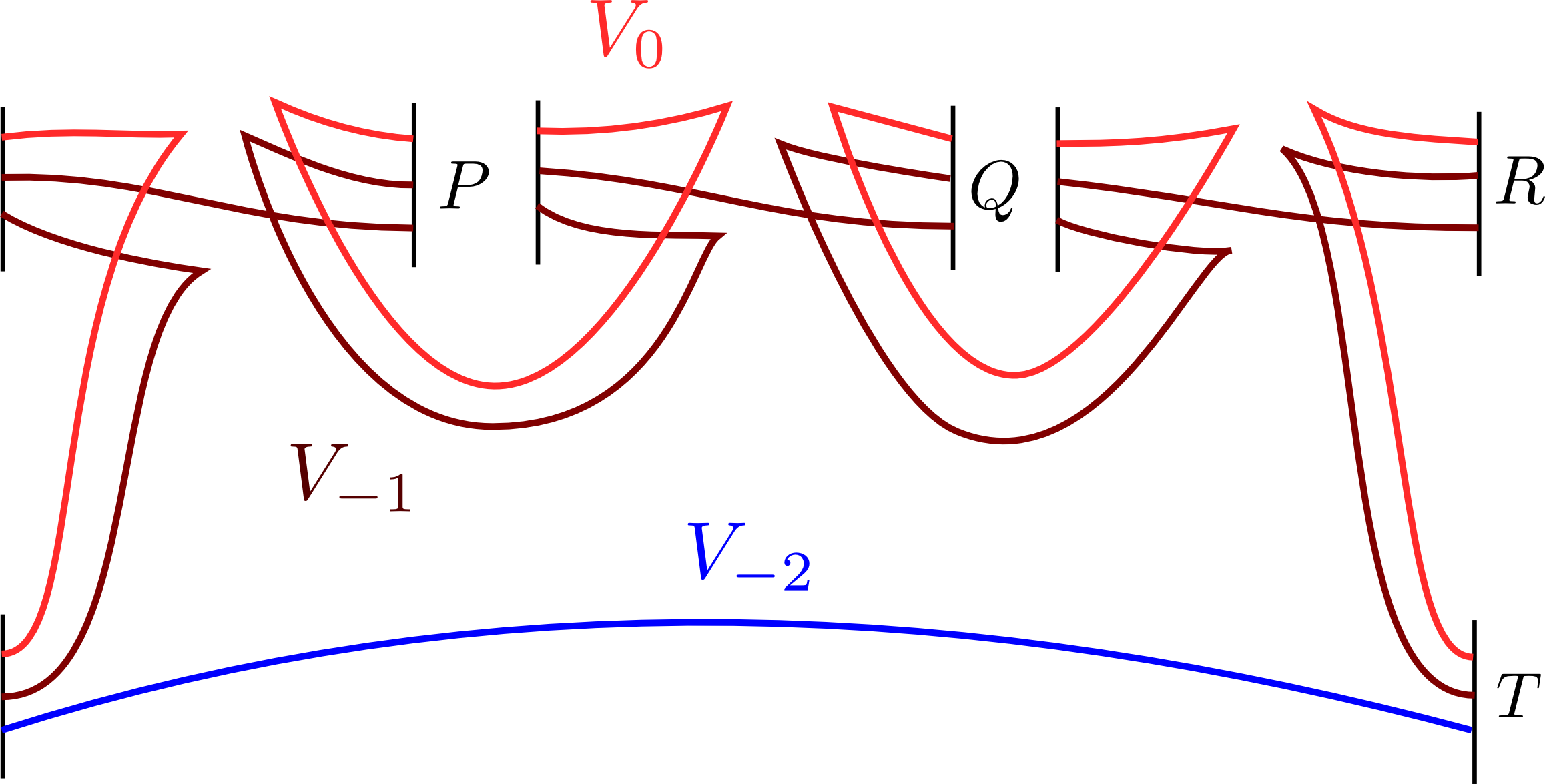}
\caption{The components corresponding to $\{V_{-2},V_{-1},V_0\}$ in the Legendrian front for the Weinstein 4--fold $T_{p,q,r}$ drawn by using Proposition \ref{prop:stacking}.}
\label{fig:TpqrFront}
\end{figure}

Figure \ref{fig:TpqrFront} uses the front projection from Proposition \ref{prop:LF} and the drawing conventions explained in this Subsection \ref{ssec:highD} except for the fact that we have depicted the neighborhood of each attaching $0$--sphere by two walls instead of two circles in order to simplify the presentation. It is direct to draw the remaining $p+q+r$ vanishing cycles in the set $V_\pi$ from the Legendrian front in Figure \ref{fig:TpqrFront}: above the existing Legendrian link, first insert $p$ parallel horizontal lines connecting the $0$--sphere corresponding to $P$, $q$ parallel horizontal lines connecting the $0$--sphere corresponding to $Q$ and then $r$ parallel horizontal lines connecting the $0$--sphere corresponding to $R$.

\begin{remark}
In the case of $p=q=r=0$, the underlying smooth Kirby diagram of the Legendrian front in Figure \ref{fig:TpqrFront} describes a $D^2$--bundle over the 2--torus $T^2$ with Euler class $e=0$. Indeed, from the defining equation it follows that $T_{0,0,0}$ is Weinstein equivalent to the cotangent bundle $(T^*T^2,\la_\st,\p_\st)$. The attentive reader is invited to simplify the front and compare it to the Legendrian diagram presented in \cite{Gompf}, which uses a unique $2$--handle in $\#^2(S^1\times S^2)$ instead of three $2$--handles in the contact boundary $\#^4(S^1\times S^2)$.
\end{remark}

This concludes the material on Legendrian fronts, which allows us to understand a Weinstein manifold $(W,\la,\p)$ presented as an abstract handlebody whose Legendrian attaching spheres are lifts of exact Lagrangian spheres in the subcritical skeleton $(W_0\times D^2,\la+\la_\st,\p+\p_\st)$ that can be expressed as words in Dehn twists. In order to complete our process and establish Recipe \ref{dictionary} we must also discuss how to obtain this data from the Weinstein manifolds $(W,\la,\p)$ we are interested in. This is the goal of the upcoming Section \ref{sec:lef}.

\section{Lefschetz Bifibrations}\label{sec:lef}
Given a Weinstein manifold $F=(F_T,\la_T,\p_T)$ which is a $T$--plumbing of Lagrangian spheres $\{L_1,\ldots,L_k\}$, for a given tree $T$ with $k=|T^{(0)}|$ vertices, Proposition \ref{prop:LF} provides a description of the front projection on the contact manifold $(\dd (F \x \C),\la_T)$. Combining this with Proposition \ref{prop:stacking}, we have an algorithm to draw the front of the Legendrian lift of any Lagrangian $L \sse \Sigma$ which can be expressed as a word in Dehn twists involving the Lagrangian spheres $\{L_1,\ldots,L_k\}$. In particular, such a procedure allows us to obtain explicit Legendrian handlebody decompositions of Weinstein manifolds, which are built from $F \x \C$ by attaching critical handles along such Legendrian spheres.

In a working environment, Weinstein manifolds $(W,\la,\p)$ often appear as complete intersections, or geometric modifications, of affine varieties: this is particularly the case in mirror symmetry and in the study of the symplectic topology of algebraic varieties \cite{JH,McLean,McLean2}. Due to their explicit nature and ubiquity, the examples we present shall be Weinstein manifolds given as affine hypersurfaces; the results also apply to affine complete intersections but the core geometric ideas are already present in the hypersurface case and we thus restrict to it.

Thanks to Section \ref{sec:kirby}, in order to draw a Legendrian handlebody we only need to extract from $(W,\la,\p)$ the data of attaching spheres in terms of the algebra of Dehn twists: the use of matching paths is well suited for this purpose, and the goal of this section is to introduce the necessary material on Weinstein Lefschetz bifibrations to make the dictionary translating from a Lefschetz fibration to a Legendrian front a practical machinery.

The two following subsections suffice for our purpose. Subsections \ref{ssec:lef} and \ref{ssec:match} focus on the abstract understanding of Weinstein Lefschetz fibrations, Subsection \ref{ssec:recipe} explains the main Recipe, and Subsection \ref{ssec:AD_bifiber} provides two explicit instances of them.

The general technique of working with Lefschetz bifibrations has been used in the literature for a number of years, see \cite{Au00, AS, May, MS, Se08, SeMem, Se15}. The reader familiar with \cite[Part III]{Se08} can move directly to Subsection \ref{ssec:recipe}.

\subsection{Lefschetz Fibrations}\label{ssec:lef}
Let us consider a generic Weinstein Lefschetz fibration
$$\pi:(W^{2n},\la,\p)\lr\C$$
with regular fiber $F_\pi=\pi^{-1}(0)$ and critical values $S_\pi=\{v_1,\ldots,v_s\}\in\D^2(1)$. We will always assume that the critical points have distinct critical values and the main result in the article of Giroux--Pardon \cite[Theorem 1.9]{GP} establishes that any Weinstein manifold $(W,\la,\p)$ admits such a Lefschetz fibration $\pi$. Nevertheless, a strong point in favour of the study of affine varieties is that explicit Lefschetz fibrations can be constructed by using generic hyperplane sections \cite{GH}. In short, any generic linear function on an affine variety has complex non--degenerate Morse singularities, and will therefore be a Lefschetz fibration up to isotopy \cite{McLeanThe}.

Given $(W,\la,\p)$ and the Lefschetz fibration $\pi$, the set of vanishing cycles $V_\pi=\{V_1,\ldots,V_s\}$ is a cyclically ordered set of exact Lagrangian spheres in the fiber $\Sigma_\pi$, obtained as follows. A vanishing cycle $V_\gamma$ is the boundary of an embedded Lagrangian disk $\Delta_\gamma\cong D^n\sse (W,\la,\p)$ whose image under the projection $\pi$ is an embedded path
$$\gamma_j:([0,1],\partial[0,1])\lr (D^2,\{0,v_j\}).$$
This embedded Lagrangian disk $\Delta_\gamma$ is unique, up to a contractible choice of Hamiltonian isotopy, if the plane path $\gamma$ is fixed, and it is called the vanishing thimble of $\gamma$. The disk $\Delta_\gamma$ is defined as being the set of all points in $W$ which are sent to the critical point corresponding to the critical value $v$ by symplectic parallel transport along $\gamma$. The vanishing cycles $V_\pi$ for the Lefschetz fibration $\pi$ are defined as $V_j = \Delta_{\gamma_j} \cap F_\pi$. We can assume that the critical points have distinct arguments, in which case we can canonically choose vanishing cycles associated to the linear paths $$\gamma_j(t)=tv_j,\quad t\in[0,1]\quad j=1,\ldots,s.$$
This set is cyclically ordered by the argument of the critical values in the counterclockwise direction. The Weinstein submanifold $(F_\pi,\la|_{F_\pi},\p|_{F_\pi})$ and the set of vanishing cycles $V_\pi$ determine the Weinstein manifold $(W,\la,\p)$ up to Weinstein deformation equivalence; indeed, attaching a critical Weinstein handle to the Weinstein domain $F_\pi\times D^2$ in the contact boundary $F_\pi\times S^1\sse\partial(F_\pi\times D^2)$ along each of the Legendrian lifts of the exact Lagrangians in the set $V_\pi$ results in a Weinstein manifold deformation equivalent to $(W,\la,\p)$ \cite{CE,GP}.

\begin{remark}
Technically, for either Lefschetz fibrations or Weinstein handle attachments, each vanishing cycles is endowed with a smooth parametrization induced by $V_j = \dd \Delta_{\gamma_j}$. Implicitly we equip all vanishing cycles and Weinstein attaching maps with these parametrizations throughout the paper.\hfill$\Box$
\end{remark}

Let us denote this relation between the total space $(W,\la,\p)$ and the data of the Weinstein fiber $(F_\pi,\la|_{F_\pi})$ plus the vanishing cycles $V_\pi$ of a Weinstein Lefschetz fibration $\pi$ by the equality
$$(W,\la,\p)=\lf(F_\pi;V_\pi).$$
The construction of $(W,\la,\p)$ from $(F_\pi;V_\pi)$ only requires $F_\pi$ to be a Weinstein manifold $(F,\la_F,\p_F)$ and $V_\pi$ be a cyclically ordered set $V$ of exact Lagrangian spheres in $F$, thence we can also use the notation $$(W,\la,\p)=\lf(F;V)$$ without specifying the Weinstein Lefschetz fibration $\pi$, whenever $V$ is a finite collection of Lagrangian spheres in $(F, \lambda_F)$, equipped with a cyclic ordering.\\

Then two Weinstein manifolds $\lf(F;V)$ and $\lf(F';V')$ are Weinstein deformation equivalent if $F'$ is Weinstein deformation equivalent to $F$ and the vanishing cycles of $V'$ are Hamiltonian isotopic to those in $V$ in an ordered manner, up to a cyclic shift \cite{CE}. In addition, there are two local modifications of the Weinstein structure of $(W,\la,\p)$ which establish the following two equalities:
\begin{itemize}
 \item[1.] (Hurwitz Moves) corresponding to a different choice of path $\gamma_i$ for the same Lefschetz fibration, we have
 $$\lf(F;\{L_1,\ldots,L_i,L_{i+1},\ldots,L_s\})=\lf(F;\{L_1,\ldots,L_{i+1},\tau_{L_{i+1}}(L_i),\ldots,L_s\}),$$
 $$\lf(F;\{L_1,\ldots,L_i,L_{i+1},\ldots,L_s\})=\lf(F;\{L_1,\ldots,\tau^{-1}_{L_i}(L_{i+1}),L_i,\ldots,L_s\}),$$
 for any index $i$ modulo $s$. Since we have the symplectic isotopy
 $$\tau_{f(L)}\simeq f^{-1}\tau_Lf$$
 for any compactly supported symplectomorphism $f\in\Symp^c(F)$, the three sets of vanishing cycles have the same global monodromy $\tau_{L_1}\circ\cdots\circ \tau_{L_i}\circ\tau_{L_{i+1}}\circ\cdots\tau_{L_s}$. In this Weinstein handlebody interpretation, these moves correspond to handle slides, as in Proposition \ref{prop:slide}.\\
 
 \item[2.] (Stabilization) Consider a Lagrangian disk $(D,\partial D)\sse(\Sigma,\partial F)$ with Legendrian boundary and attach a critical handle along $\partial D$; denote the resulting manifold $F(D)$. It contains the exact Lagrangian sphere $D\cup_\partial C$ consisting of the Lagrangian disk $D\sse F$ and the Lagrangian core $C$ of the critical handle glued along their common boundary. Then we have
$$\lf(F;V)=\lf(F(D);V\cup\{D\cup_\partial C\}).$$
As Weinstein handlebodies, this corresponds to introducing a canceling pair of handles, of consecutive indices $n-1$ and $n$, as in Proposition \ref{prop:handle cancel}. In this article we have chosen Lefschetz fibrations which do not destabilize, but in general both Lefschetz fibrations appearing in affine algebraic geometry and those obtained with asymptotically holomorphic techniques are often stabilized.
\end{itemize}

Notice that the two moves above have different character. A Hurwitz move states that for a fixed Lefschetz fibration $\pi:W\lr\C$ we can have different sets of vanishing cycles corresponding to different choices of vanishing paths $\gamma$. By contrast, a stabilization is choosing a different map $\pi:W\lr\C$ for a fixed total space $W$.

There is a third equivalence of another sort: different vanishing paths, with endpoints in different critical values, can define Lagrangian thimbles $\Delta$ which are Hamiltonian isotopic, and in particular they will have the same vanishing cycles. One such relation is based on the binary braid relation $\tau_SL =\tau_L^{-1}S$, where $S$ and $L$ are Lagrangian spheres intersecting in a single point, see \cite[Appendix A]{Se99} and \cite[Section 2]{Ke}, especially \cite[Figure 9]{Ke}. Now, consider two critical values $v$ and $v'$ with associated vanishing cycles $V$ and $V'$ such that their intersection $V\cap V'$ is transverse and consists of a unique point; these two critical points $v$ and $v'$ are depicted in Figure \ref{fig:Vmove} with a triangle and a four--pointed star respectively. Then the vanishing thimbles $\Delta_\gamma$ and $\Delta_{\gamma'}$ associated to the embedded paths $\gamma$ and $\gamma'$ from Figure \ref{fig:Vmove} are two Lagrangian disks which are Lagrangian isotopic in the total Weinstein manifold relative to their common boundary $\partial\Delta_\gamma=\partial\Delta_{\gamma'}\sse F$, which lies in the fiber over the white point in Figure \ref{fig:Vmove}.

\begin{figure}[h!]
\centering
  \includegraphics[scale=0.35]{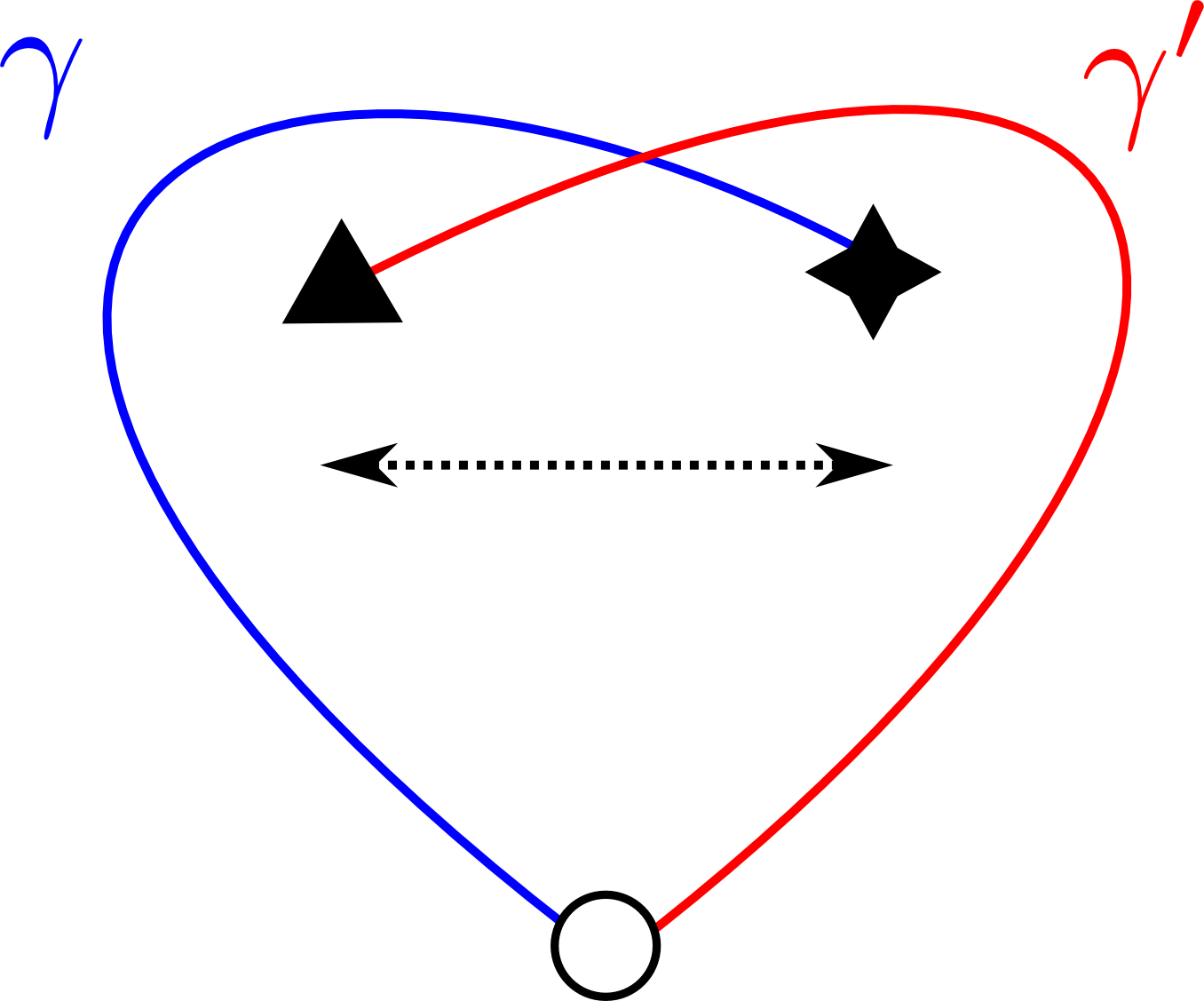}
  \caption{Exchange $\Delta_\gamma\longleftrightarrow\Delta_{\gamma'}$.}
  \label{fig:Vmove}
\end{figure}

Due to the heart--shaped nature of Figure \ref{fig:Vmove}, we call such an exchange a V--move. One may ask why such a move would ever be useful: given a Lefschetz fibration we are often only interested in the vanishing cycles and the way they depend on vanishing paths for a given critical point; if they happen to be isotopic for two different critical points then we still need to record them with multiplicity. This move will become relevant when we use the Lagrangian thimbles of a Lefschetz bifibration to describe vanishing cycles using matching paths, and in particular for the proof of Theorem \ref{thm:kr}.

This concludes the first part of the discussion on Weinstein Lefschetz fibrations, and we now proceed with the study of their vanishing cycles in terms of matching paths. In particular, we will describe Recipe \ref{dictionary}.

\subsection{Matching Paths}\label{ssec:match}
Consider a Weinstein Lefschetz fibration $\rho:(F,\la,\p)\lr\C$, and let $F_\rho := \rho^{-1}(0) \sse F$ be the generic fiber. Suppose we have two critical values $v_1,v_2\in\C$ with vanishing thimbles $\Delta_{\gamma_1},\Delta_{\gamma_2}\sse F$, which give the same vanishing cycle $\partial\Delta_{\gamma_1}=\partial\Delta_{\gamma_2}\sse F_\rho$. Then we can concatenate the two Lagrangian vanishing thimbles into an exact Lagrangian sphere lying over the concatenated embedded path $\gamma=\gamma_1\cup\overline{\gamma_2}$; we denote this sphere by
$$S_{\gamma}=\Delta_{\gamma_1}\cup_\partial\Delta_{\gamma_2}\sse (F,\la,\p).$$
The path $\gamma\sse\C$ between critical points is said to be a matching path for the exact Lagrangian sphere $S_\gamma$. The Lagrangian sphere $S_\gamma$ is determined by $\gamma$, and the only constraint in order to obtain a Lagrangian sphere with this method is that the vanishing cycles near the endpoints of $\gamma$ coincide up to Hamiltonian isotopy under symplectic parallel transport along $\gamma$. 

Given an embedded path $\gamma\sse\C$ in the plane, the \emph{half twist} is a symplectomorphism $\tau_\gamma$ of the plane which is compactly supported in an arbitrarily small neighborhood $\Op(\gamma)\sse\C$. It is determined by the action on a transverse curve $\alpha$ which is described in Figure \ref{fig:halftwist}. Up to isotopy among symplectomorphisms of the plane fixing the endpoints of $\gamma$, the symplectomorphism $\tau_\gamma$ is determined by the isotopy class of the path $\gamma$ \cite{Do}.

\begin{figure}[h!]
\centering
  \includegraphics[scale=0.45]{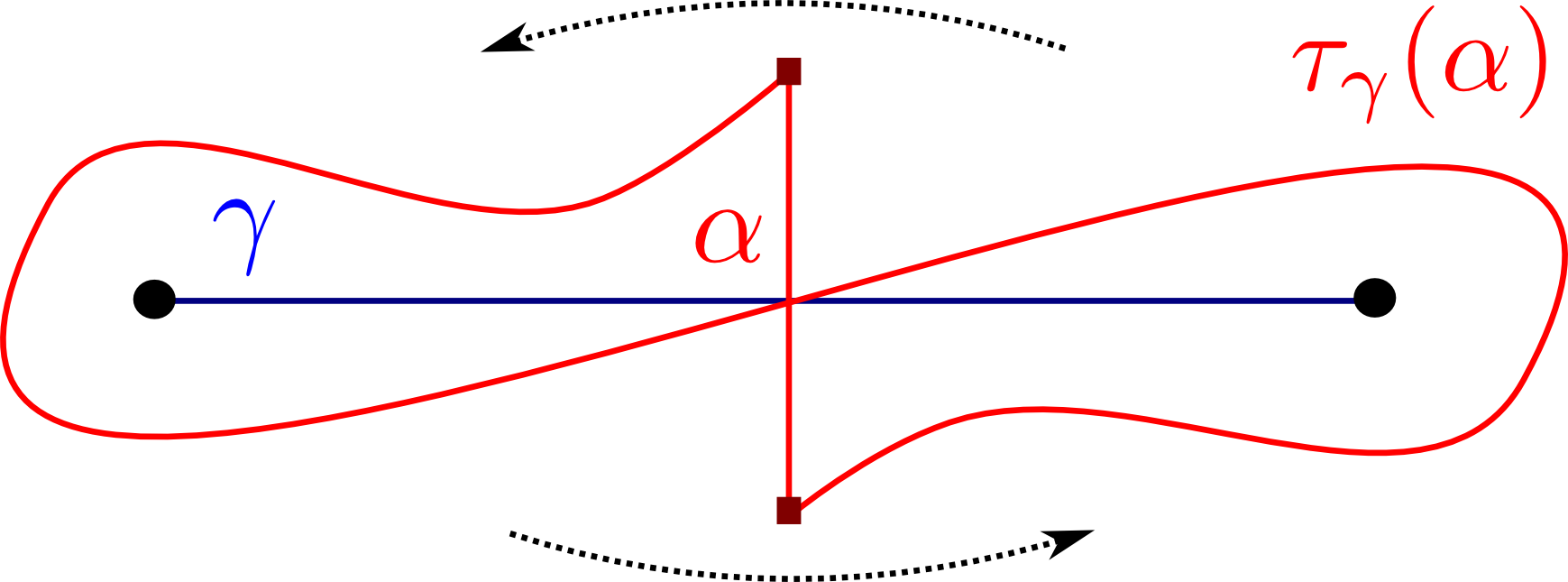}
  \caption{Half twist on the plane.}
  \label{fig:halftwist}
\end{figure}

In this moment, given an embedded path $\gamma$ between two critical values we have a diffeomorphism $\tau_\gamma$ of the complex plane $\C$, and an exact Lagrangian sphere $S_\gamma\sse (F,\la,\p)$ which gives rise to a compactly supported symplectomorphism $\tau_{S_\gamma}$ of the Weinstein fiber $F$. The essential ingredient that allows us to strictly operate with curves in the plane is the relation between the half--twist in the plane and the Dehn twist inside $F$, which we now state the following lemma; details of the proof can be found in \cite[Section 14.2]{Do} for the surface case, and \cite[Lemma 7.1]{MS} for the general case:

\begin{lemma}
Let $\gamma,\alpha$ be two matching pats for a Lefschetz fibration $\rho:(F,\la)\lr\C$.\\
Then the Lagrangian sphere $S_{\tau_{\gamma}(\alpha)}$ is Lagrangian isotopic to the Lagrangian sphere $\tau_{S_\gamma}(S_\alpha)$.
\end{lemma}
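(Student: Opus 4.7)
The plan is to construct a compactly supported symplectomorphism $\Phi\in\Symp^c(F)$ satisfying two properties: (i) $\rho\circ\Phi=\tau_\gamma\circ\rho$, so that $\Phi$ lifts the planar half twist, and (ii) $\Phi$ is Hamiltonian isotopic to the Dehn twist $\tau_{S_\gamma}$. Granting both, property (i) implies that $\Phi$ sends each vanishing thimble of $\rho$ over a subarc of $\alpha$ to a vanishing thimble of $\rho$ over the corresponding subarc of $\tau_\gamma(\alpha)$, and concatenating the two images yields $\Phi(S_\alpha)\simeq S_{\tau_\gamma(\alpha)}$ as a Lagrangian sphere. Property (ii) gives $\Phi(S_\alpha)\simeq\tau_{S_\gamma}(S_\alpha)$, and combining the two isotopies produces the stated Lagrangian isotopy.

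To build $\Phi$, first localize near $\gamma$: pick a closed disk $D\sse\C$ containing $\gamma$ in its interior, containing no critical value of $\rho$ other than the endpoints $v_1,v_2$ of $\gamma$, and large enough that $\tau_\gamma$ is supported in $D$. The restriction $\rho:\rho^{-1}(D)\to D$ is then a Weinstein Lefschetz fibration with exactly two critical values and a single vanishing cycle (up to isotopy), whose total space is a Weinstein neighborhood of the Lagrangian sphere $S_\gamma$. By the Weinstein neighborhood theorem applied to $S_\gamma$, together with the uniqueness of Lefschetz fibrations having a prescribed vanishing cycle on a disk, one identifies $\rho^{-1}(D)\to D$ symplectically with a neighborhood of the zero section of $(T^*S^n,\la_\st)$ equipped with its standard $A_1$ matching fibration. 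Now isotope $\tau_\gamma$ rel $\partial D$ to the time--one map of a Hamiltonian $H_t$ on $D$ compactly supported in the interior, and lift $H_t$ to $\rho^{-1}(D)$ by the symplectic orthogonal connection of $\rho$. The time--one map $\Phi$ of the lifted flow is compactly supported in $\rho^{-1}(D)$, equals the identity near the critical points (after cutting off), and satisfies $\rho\circ\Phi=\tau_\gamma\circ\rho$; extending by the identity produces the desired element of $\Symp^c(F)$ satisfying (i).

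The main obstacle is property (ii), identifying $\Phi$ with $\tau_{S_\gamma}$ up to Hamiltonian isotopy; this is where I expect most of the work to lie. I would argue within the standard $A_1$ local model via an explicit parallel transport calculation, of the type carried out for instance in \cite[Section 16]{Se08} or \cite[Lemma 7.1]{MS} in the higher dimensional setting. The geometric input is that $\tau_\gamma^2$ is isotopic rel $\partial D$ to a full counterclockwise rotation of $D$ encircling $\{v_1,v_2\}$; horizontally lifting that rotation produces the symplectic monodromy of $\rho$ around $\partial D$, which in the $T^*S^n$ model is the squared model Dehn twist $\tau_{S_\gamma}^2$. A direct inspection of the flow on a cotangent fiber, combined with the facts that both $\Phi$ and $\tau_{S_\gamma}$ act trivially on homology and agree outside an arbitrarily small neighborhood of $S_\gamma$, upgrades the square--root identification $\Phi^2\simeq\tau_{S_\gamma}^2$ to $\Phi\simeq\tau_{S_\gamma}$ in $\Symp^c(F)$. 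Once (ii) is established the thimble argument of the first paragraph completes the proof.
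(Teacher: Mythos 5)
Your overall strategy --- realize $\tau_{S_\gamma}$ up to isotopy by a symplectomorphism $\Phi$ fibered over the half twist $\tau_\gamma$, and then transport thimbles --- is exactly the strategy of the references the paper cites for this lemma (\cite[Section 14.2]{Do}, \cite[Lemma 7.1]{MS}); the paper itself offers no proof beyond those citations. However, two steps of your outline contain genuine gaps. The first is the construction of $\Phi$. The half twist $\tau_\gamma$ interchanges the two endpoints $v_1,v_2$ of $\gamma$, which are critical values of $\rho$, so any isotopy of $D$ generating it must move the critical values; the symplectic connection of $\rho$ is undefined on the singular fibers and its parallel transport degenerates near the critical points. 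Consequently your $\Phi$ cannot both ``equal the identity near the critical points'' and satisfy $\rho\circ\Phi=\tau_\gamma\circ\rho$: being the identity near the critical point $p_1$ over $v_1$ forces $\rho(\Phi(p_1))=v_1$, while the intertwining relation forces $\rho(\Phi(p_1))=\tau_\gamma(v_1)=v_2$. Extending the lift across the singular fibers is precisely where the local model $\{q(x)=z\}$ of a Lefschetz critical point must be used --- for instance via the two--critical--point model $\{q(x)=z^2-1\}$, where the involution $(z,x)\mapsto(-z,-x)$ gives an explicit lift of the half twist $z\mapsto -z$; a cut--off horizontal lift does not suffice.

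The second and more serious gap is the passage from $\Phi^2\simeq\tau_{S_\gamma}^2$ to $\Phi\simeq\tau_{S_\gamma}$. Square roots in $\pi_0\Symp^c$ are not unique, and the properties you invoke (trivial action on homology, agreement outside a small neighborhood of $S_\gamma$) do not determine an isotopy class: the discrepancy $\Phi\circ\tau_{S_\gamma}^{-1}$ is a compactly supported symplectomorphism of a Weinstein neighborhood of $S_\gamma$, i.e.~of a disk cotangent bundle of a sphere, acting trivially on homology and with square isotopic to the identity, and ruling out a nontrivial such element is exactly the hard part (already for $T^*S^2$ this requires Seidel's computation of $\pi_0\Symp^c$, and in higher dimensions no such computation is available). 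Note also that the monodromy around $\partial D$ is a symplectomorphism of the fiber $F_\rho$ --- namely $\tau_V^2$ for the common vanishing cycle $V$ --- not of the total space $\rho^{-1}(D)$, so even the squared identification needs to be formulated with care. The cited proofs avoid all of this by constructing the isotopy between the fibered lift and the model Dehn twist directly inside the local model, rather than by comparing squares; to make your argument rigorous you should do the same.
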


\subsection{The recipe}\label{ssec:recipe}

In Section \ref{sec:kirby}, Proposition \ref{prop:stacking} provides an algorithm for drawing the front projection of Legendrian lifts of Lagrangians, which are given by words in Dehn twists of Lagrangians we understand. Therefore, we would like to express given Lagrangians in this way. Taking the above lemma into account, if we have a Lagrangian sphere which is given as a matching path, expressing it in this way becomes the combinatorial problem of expressing the matching path in terms of half--twists on some basis of curves. This tends to be a feasible task with the appropriate use of Hurwitz moves, V-moves and destabilizations. In conjuction, the recipe translating from a given Weinstein manifold to an explicit Legendrian handlebody can be summarized as follows.

\begin{recipe}\label{dictionary}
Suppose we are given a Weinstein manifold expressed as a Lefschetz fibration $(W,\la,\p)=\lf(F;V)$ with the symplectic topology of $(F,\la,\p)$ well understood, such as the case in which the Weinstein fiber $(F,\la,\p)=(F_T,\la_T,\p_T)$ is a plumbing of spheres along a tree $T$. Expressing $(W,\la,\p)$ explicitly as a Weinstein handlebody amounts to drawing the Legendrian front projection of the Legendrian lifts of the vanishing cycles in the set $V$, inside the contact manifold $(\dd(F \x D^2),\la+\la_\st)$. For this, we proceed in the following manner:

\begin{itemize}
\item[1.] Choose a set of exact Lagrangian spheres $\L=\{L_1,\ldots,L_r\}$ in the fiber $(F,\la,\p)$.\\

\noindent This must be a set that we understand and it often, but not strictly necessarily, forms a Lagrangian skeleton of $F$. The essential property of these Lagrangian spheres is that we know how to draw their Legendrian lifts in the front projection of the contact boundary $(\dd(F \x D^2),\la+\la_\st)$; Proposition \ref{prop:LF} describes how to do this for Lagrangian skeletons of plumbings of spheres.\\

\item[2.] Choose a Weinstein Lefschetz fibration $\rho:(F,\la)\lr\C$ and express the Lagrangian spheres in the set $\L$ as matching paths $\Gamma=\{\gamma_1,\ldots,\gamma_r\}$ of $\rho$.\\

\item[3.] Given a vanishing cycle $V_i\sse (F,\la)$, draw the embedded path $\rho(V_i) = \vartheta_i \sse\C$.\\

\item[4.] Express each matching path $\vartheta_i$ as a word in half--twists along the arcs in $\Gamma$.\\

\item[5.] Once each vanishing cycle $V_i\sse (F,\la)$ is expressed as a word in Dehn twists with the Lagrangian spheres in the set $\L$, we apply Proposition \ref{prop:stacking} to draw the front projection of their Legendrian lifts $\Lambda_i \sse (\dd(F \x D^2),\la+\la_\st)$.\\

\item[6.] Then we consider the Legendrian link $\bigcup_i\Lambda_i$ determined by the cyclic ordering of the indices $i$: we push the Legendrian component $\Lambda_i$ in the Reeb direction by height equal to its index $i$, and this gives a well-defined link.\\

\item[7.] Simplify the Legendrian front projection of the link in Step 6 by applying Reidemeister moves and Legendrian handleslides from Subsections \ref{ssec:highD} and \ref{ssec:Wmoves}.\hfill$\Box$
\end{itemize}

\end{recipe}

\begin{remark}
The first four steps of Recipe \ref{dictionary} belong to the theory of Lefschetz bifibrations and have been used for a number of results, see for example \cite{Au00, AS, May, MS, Se08, SeMem, Se15}, whereas steps five to seven are original content of this paper. It might be interesting to point out that, while stopping at the fourth step gives a combinatorial description of the symplectic topology of the Weinstein manifold, it seems unlikely that many of the results in this paper such as Theorems \ref{thm:Xab}(a) and \ref{thm:kr} could be proven by considering only Lefschetz fibrations and not Legendrian fronts.\hfill$\Box$
\end{remark}

Recipe \ref{dictionary} becomes particularly productive if Steps 1 and 2 are systematized for each given Weinstein fiber $(F,\la,\p)$. That is, given a Weinstein fiber $(F,\la,\p)$ we can fix the auxiliary Lefschetz bifibration $\rho:(F,\la,\p)\lr\C$ and a set $\L$ and then become skilled with the combinatorics of its set $\Gamma$ of matching paths. Then in order to draw a Legendrian handlebody for the Weinstein manifold $(W,\la,\p)=\lf(F;V)$ we focus directly on the later steps of the recipe. The following subsection introduces two families of Weinstein manifolds, the Milnor fibers of the $A_k$ and $D_k$ singularities, and carries out Steps 1 and 2 for these manifolds for later use.

Let us explain in more detail how to perform Step 3 in the case where our initial Weinstein manifold $(W,\la,\p)$ is an affine variety
$$W = \{f_1(z) = \ldots = \ldots f_k(z) = 0 \} \sse \C^{n+k}.$$
First, we choose a generic complex linear function $\pi: \C^{n+k} \lr \C$, consider its regular fiber $(F,\la) = (F_\pi,\la) = (\pi|_W^{-1}(0),\la_\st)$ and compute the critical values $\{v_i\} \sse \C$ of the restriction $\pi|_W:W\lr\C$. Let us also denote $F_t = \pi|_W^{-1}(t)$ for $t \in \C$.

Deducing the symplectic topology of fiber $(F,\la)$ is generally difficult, but in practice it can often be chosen to be something well understood. In this paper $(F,\la)$ will always be one of the manifolds from Section \ref{ssec:AD_bifiber}, and in general its complexity is directly tied to the complexity of the defining polynomials $\{f_j\}$, but with the advantage that we have the linear constraint $\pi(z_1,\ldots,z_{n+1}) = 0$, which we can choose to make $(F,\la)$ as simple as possible.

Then we perform Step 2, in which we choose a function $\rho: \C^{n+k}\lr\C$ compatible with our understanding of the fiber $(F,\la)$ i.e.~ such that we know the vanishing cycles for each critical value of the restriction $\rho|_F:F\lr\C$. Then the restrictions $\rho|_{F_t}:F_t \lr \C$ are Lefschetz fibrations for generic values of $t\in\C$, and their critical values $\{u_i\}\sse\C$ vary continuously for those values of $t\in\C$.

Now, let us take a linear path from the origin $t = 0$ to the critical value $t = v_i$, and see how the critical values of the Lefschetz fibration $\rho|_{F_t}$ change along this path: as $t$ approaches the critical value $v_i$, two of the critical values $\{u_i\}$ of the auxiliary fibration $\rho|_{F_t}$ will collide, and let us assume that no other collisions will occur. Thus there will be two critical values, say $u_{i_1},u_{i_2}$ of the fibration $\rho|_F$ which move in the plane $\C=\im(\rho|_F)$ as $t$ changes and eventually collide at $t = v_i$; all the other critical values in $\{u_i\}$ also vary with the parameter $t$ but they remain distinct by the genericity of the initial Lefschetz fibration $\pi$. The trace of this collision therefore gives a path $\gamma_i \sse \C$ which connects $u_1$ and $u_2$ and is disjoint from all other critical values: this path $\gamma_i$ is exactly the matching path for the vanishing cycle of the critical point $v_i$ since the value where the critical values $u_1$ and $u_2$ collide is the value of the fibration $\rho$ at the critical point of $\pi|_W$ corresponding to the critical value $v_i$.

Since the calculation of the critical values of $\pi|_W$ and $\rho|_{F_t}$ relies on finding roots of polynomials, generically this does require the use of a computer for numerical approximation. Fortunately, since we are only interested in the path $\gamma_i$ as a plane curve up to isotopy, and this information is robust up to $C^0$--error, numerical approximation completeley suffices for our purposes. For the applications in Section \ref{sec:app} we will start with polynomials and write down sets of vanishing paths with little comment, but in all cases we have verified the results by using computer algorithms to do the computations. This concludes the digression on Step 3 of our Recipe \ref{dictionary}.

The remaining Steps 4, 5, 6 and 7 are combinatorial in nature. Steps 5 and 6, while visually complicated, are completely mechanical with practice; Steps 4 and 7 are the ones that require actual effort and vary substantially according to each case. There is a recorded talk \cite{CMIAS} by the authors at the Institute of Advanced Study where Recipe \ref{dictionary} is strictly followed: the reader might benefit from listening to the first seventeen minutes where the recipe is applied to the Weinstein six--fold $X_{2,3}=\{x^2y^3+z^2+w^2=1\}\sse\C^4$. This manifold is also discussed in detail in Section \ref{sec:app} below.

Let us now introduce the two models that we will be using for Steps 1 and 2; as explained above, the computations in these steps only need to be performed once for each Weinstein fiber $(F,\la)$, and thus the upcoming results in Subsection \ref{ssec:AD_bifiber} allows us in practice to start Recipe \ref{dictionary} at Step 3.

\subsection{$A_k$ and $D_k$--bifibers}\label{ssec:AD_bifiber}

We now present explicit Lefschetz fibrations for two families of $2n$--dimensional Weinstein manifolds $(F_\pi,\la_\st|_{F_\pi})$: these two families are defined by the $A_k$ and $D_k$--plumbing diagrams as depicted in Figure \ref{fig:ADdiagram}. These plumbing intersection diagrams are precisely the trees $T$ used in Section \ref{sec:kirby}.\

In context, $A_k$ and $D_k$ fibers appear prominently in the applications of presented in this article, in particular Theorem \ref{thm:Xab}, Theorem \ref{thm:subflex} and Section \ref{ssec:mirror2} are of $A_k$--type, and $D_k$--fibers are used in Theorem \ref{thm:kr} and Section \ref{ssec:mirror1}. In both cases the Lefschetz fibrations are non--generic linear sections and the set $\L$ forms a Lagrangian skeleton to which the total Weinstein manifolds retract. In consequence, an a priori knowledge of a fibration for these Weinstein fibers increases the efficiency of Recipe \ref{dictionary} being applied.

\begin{figure}[h!]
\centering
  \includegraphics[scale=0.65]{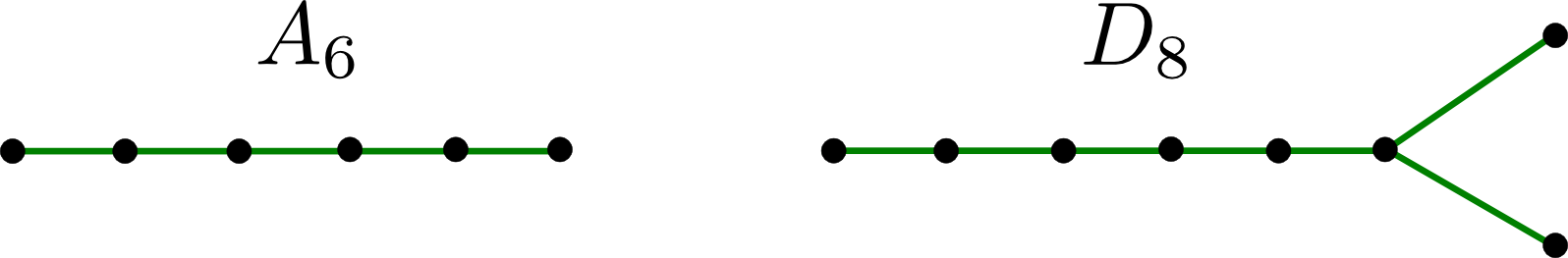}
  \caption{Two instances of plumbing trees.}
  \label{fig:ADdiagram}
\end{figure}

Let us discuss the $A_k$--case first. The Weinstein manifold
$$A^{2n-2}_{k}=\{(x,\underline{z}):x^{k+1}+\sum_{i=1}^{n-1}{z^2_i}=1\}\sse\C^{n}[x,z_1,\ldots,z_{n-1}]$$
is the affine Milnor fiber of the cyclic singularity, and it admits the non--generic Weinstein Lefschetz fibration
$$\rho:A_k\lr\C,\qquad \rho(x,z_1,\ldots,z_{n-1})=x,$$
whose fiber is the standard affine conic $(F_\rho,\la)=(A^{2n-4}_1,\la_\st|_{F_\rho})\cong (T^*S^{n-2},\la_\st)$ and $(k+1)$ critical points $\{p_1,\ldots,p_{k+1}\}$; the set of vanishing cycles consists of $(k+1)$ copies of the sphere zero section $Z\cong S^{n-2}\sse (T^*S^{n-2},\la_\st)\cong (F_\rho,\la)$ of the cotangent bundle. The critical values will be drawn in the real line, as in Figure \ref{fig:A4_diagram}, although strictly they are distributed in a circle as $(k+1)$th roots of unity.

\begin{figure}[h!]
\centering
  \includegraphics[scale=0.35]{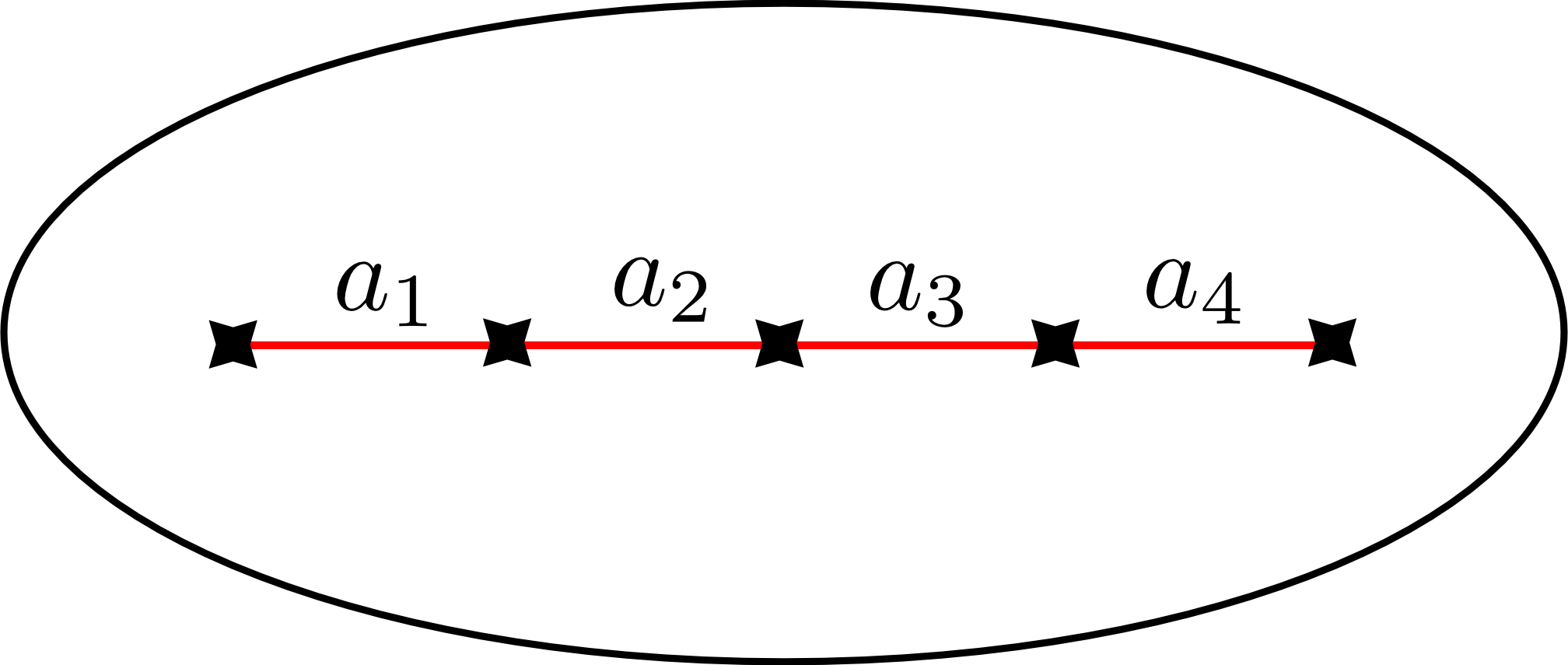}
  \caption{$A^{2n-2}_4=\lf(T^*Z,\{Z,Z,Z,Z\})$ and basis $(a_1,a_2,a_3,a_4)$.}
  \label{fig:A4_diagram}
\end{figure}

We can describe the $k$ spheres $\L=\{a_1,\ldots,a_k\}$ conforming the Lagrangian $A_k$--skeleton as $k$ homonymous matching paths $\Gamma=\{a_1,\ldots,a_k\}$ of the fibration $\rho$; the simplest choice is to consider the straight matching paths $a_i$ connecting the critical values $\rho(p_i)$ and $\rho(p_{i+1})$, for $0\leq i\leq k+1$. The intersection pattern is that of an $A_k$--diagram and the set $\L$ will be taken as a cyclically ordered Lagrangian basis for the half--dimensional homology group $H_{n-1}(A^{2n-2}_k,\Z)\cong\Z^k$.

\begin{remark}
These $A_k$--diagrams suffice in the study of Weinstein $4$--manifolds since any surface can be stabilized to an $A_k$--Milnor fiber. Nevertheless, even in the case of $4$--manifolds it is beneficial at times to use other fibers.\hfill$\Box$
\end{remark}

Let us now study the $D_k$--case. The affine Milnor fiber of the dihedral singularity
$$D^{2n-2}_k=\{(x,y,\underline{z}):x^{k-1}+xy^2+\sum_{i=1}^{n-2}{z^2_i}=1\}\sse\C^{n}[x,y,z_1,\ldots,z_{n-2}]$$
can be endowed with the Weinstein Lefschetz fibration
$$\rho:D_k\lr\C,\qquad\rho(x,y,z_1,\ldots,z_{n-2})=x+y$$
with fiber equal to $(F_\rho,\la) = (A^{2n-4}_{k-2},\la)$ and $2(k-1)$ critical points $\{p_1,\ldots,p_{2(k-1)}\}$. In this case, the vanishing cycles of the associated distinct critical values, which we can draw as $2(k-1)$th roots of unity, are
$$V_i=a_i,\quad V_{i+k-1}=a_i, \quad 1\leq i\leq k-2,$$
$$V_{k-1}=\tau_{a_2}\circ\cdots\circ\tau_{a_{k-2}}(a_1),\quad V_{2(k-1)}=\tau_{a_2}\circ\cdots\circ\tau_{a_{k-2}}(a_1),$$
where there sequence of spheres $(a_1,\ldots,a_{k-2})$ constitute the linear $A_{k-2}$--skeleton of the fiber $(F_\rho,\la)$. In order to run Step 2 in Recipe \ref{dictionary} it suffices to describe a Lagrangian $D_k$--basis $\L$ in terms of matching paths $\Gamma$; this can be done as follows. First, consider the three matching paths $(\alpha,\gamma,\delta)$ as in Figure \ref{fig:D6_diagram}:

\begin{figure}[h!]
  \centering
  \includegraphics[scale=0.35]{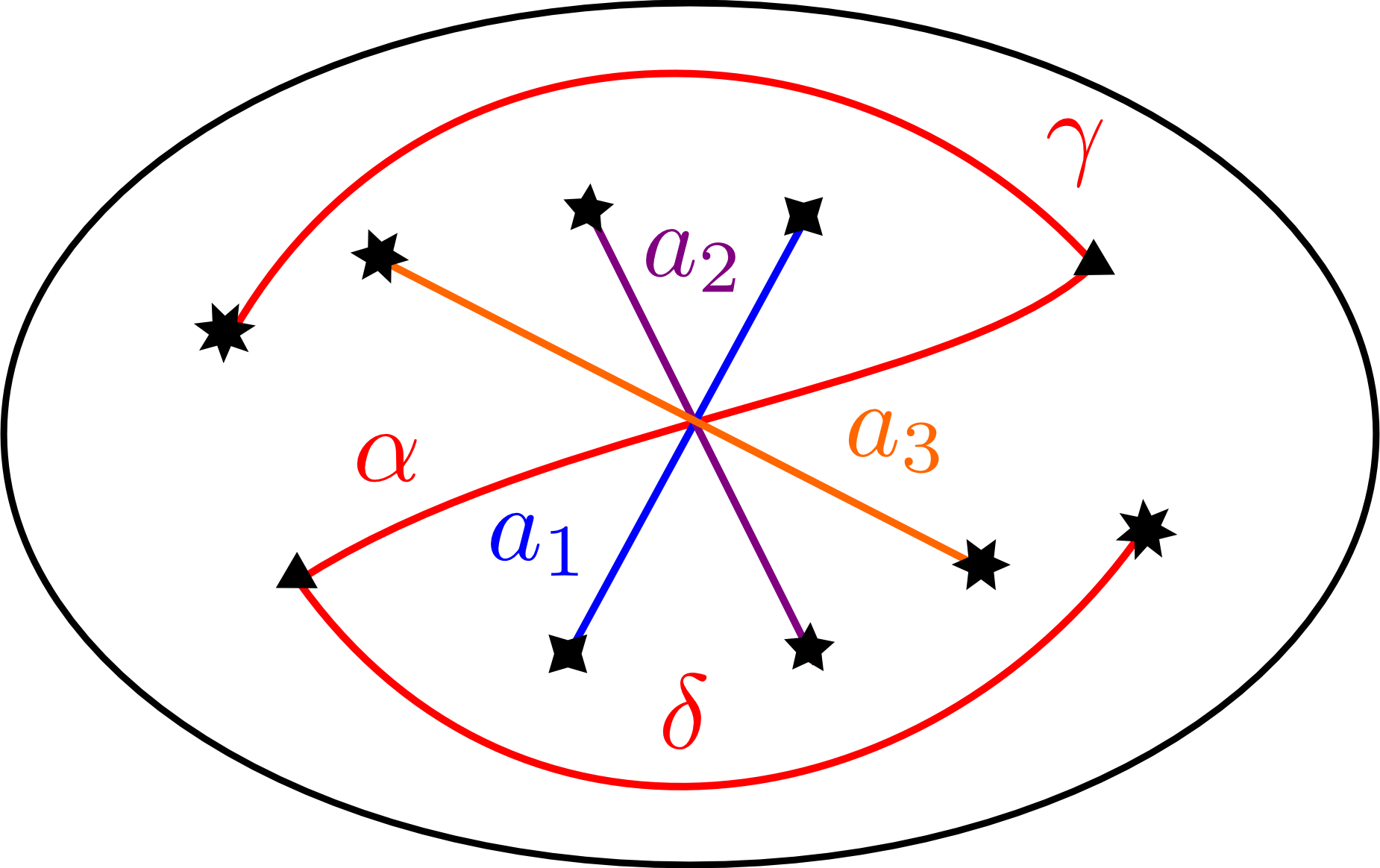}
  \caption{$D^{2n-2}_6=\lf(A_4,V)$ and basis $(\alpha,\gamma,\delta;a_1,a_2,a_3)$.}
\label{fig:D6_diagram}
\end{figure}

The description of the matching paths $(\alpha,\gamma,\delta)$ is the following:

\begin{itemize}
 \item[-] The matching path $\alpha$ is the linear path connecting the critical point $p_1$ with the critical point $p_k$. The vanishing cycles associated to $p_1$ and $p_k$ are Lagrangian isotopic to the zero section $a_1$, and thus $\alpha$ is a matching path.\\
 
 \item[-] The matching path $\delta$ is a path connecting the critical point $p_1$ with the critical point $p_{k-1}$ whose interior is contained in the unbounded component $\{z\in\C:|z|>1\}$. The vanishing cycles associated to $p_1$ and $p_{k-1}$ are the zero section $V_1=a_1$ and the Lagrangian sphere $V_{k-1}=\tau_{a_2}\circ\cdots\circ\tau_{a_{k-2}}(a_1)$ respectively. Then $\delta$ is a matching path because the symplectic parallel transport of the vanishing cycle $a_1$ along the path $\delta$ accumulates the monodromy of the critical values from $a_2$ to $a_{k-2}$ thus yielding the vanishing cycle $\tau_{a_2}\circ\cdots\circ\tau_{a_{k-2}}(a_1)$ when considered near the critical point $p_{k-1}$, which then coincides with $V_{k-1}$.\\
 
 \item[-] The matching path $\gamma$ connects the critical point $p_k$ with the critical point $p_{2(k-1)}$ with its interior being contained in the unbounded component $\{z\in\C:|z|>1\}$. The path $\gamma$ is a matching path for the Lefschetz fibration $\rho:D_k\lr\C$ for the same reasons as in the previous item showing that the path $\delta$ is a matching path.
\end{itemize}

Second, consider the additional $(k-3)$ matching paths $(a_2,\ldots,a_{k-2})$, where $a_i$ is the linear path connecting the critical points $p_i$ and $p_{i+k-1}$. These are matching paths for the fibration $\rho:D_k\lr\C$ since the corresponding vanishing cycles $V_i$ and $V_{i+k-1}$ coincide: they are both Lagrangian isotopic to the Lagrangian sphere $a_i\sse (F_\rho,\la)$, which also explains the choice of notation for these matching path. Notice that in this notation we could also add the matching path $a_1=\alpha$.

The set of Lagrangian spheres obtained from both sets of matching paths together conform to a $D_k$--configuration. Indeed, the Lagrangian spheres over the matching paths $(\alpha,a_2,\ldots,a_{k-2})$ form a linear chain of length $(k-2)$ and the Lagrangian spheres associated to the matchings paths $\delta$ and $\gamma$ are mutually disjoint and only intersect the linear chain at the Lagrangian sphere $\alpha$ once each; this exhibits the $D_k$--plumbing diagram we needed. This configuration is depicted in Figure \ref{fig:D6_diagram} in the case $k=6$.

\begin{remark}
The $D_4$--diagram has already appeared in Subsection \ref{ssec:Tpqr} and it is essential in the study of cubic polynomials since a generic elliptic curve in the projective plane intersects an affine chart at a thrice punctured torus. It also features in the ongoing study of the symplectic topology of Dieck--Petri surfaces \cite{Zai} and the Milnor fibers of non--isolated singularities \cite{Si,Si2}.\hfill$\Box$
\end{remark}

This concludes our discussion on Lefschetz bifibration diagrams: at this point we have established Recipe \ref{dictionary}, that we can systematically apply, and performed the computations that we can directly use in case the fibers in the Lefschetz fibrations are $A_k$ or $D_k$--plumbings. However, before proving the theorems stated in Section \ref{sec:intro}, we will prove a result that we find both interesting on its own and illustrative of the techniques presented thus far. This is the content of the following subsection.

\subsection{The Eye: a subflexible Weinstein 6--fold}\label{ssec:subflex}
In this subsection we present an example of a non--flexible subflexible Weinstein manifold using an $A_2$--bifibration diagram, this manifold was first considered by M.~Maydanskiy in \cite{May}, and later by R.~Harris \cite{Ha}, and revisited in the recent articles \cite{MuSi, Siegel}: we treat it here from the Legendrian perspective, which clarifies further the interesting symplectic topology of this Weinstein manifold.

Consider the Weinstein 6--fold $(E,\la,\p)$ described as the affine hypersurface
$$E=\{(x,y,z,w):x(xy-1)=z^2+w^2\}\sse\C^4.$$
This is a variation on the symplectic manifold considered in \cite[Example 1.6]{Se15}. We can use the Lefschetz fibration
$$\pi:E\lr\C,\quad\pi(x,y,z,w)=x+y,$$
which has two critical points and regular fibre $F_\pi\cong\{x(x^2-1)+z^2+w^2=\ln2\}\sse\C^3[x,z,w]$ symplectomorphic to the $A_2$--plumbing of two cotangent bundles $(T^*S^2,\la_\st,\p_\st)$, for it is the 4--dimensional Milnor fiber of the $\Z_3$--cyclic singularity. The fibration $\pi$ has two critical points, and we denote the corresponding vanishing cycles by $V_1$ and $V_2$: thus, in the notation above, $(E,\la,\p)=\lf(A_2,\{V_1,V_2\})$.
\begin{figure}[h!]
\includegraphics[scale=0.4]{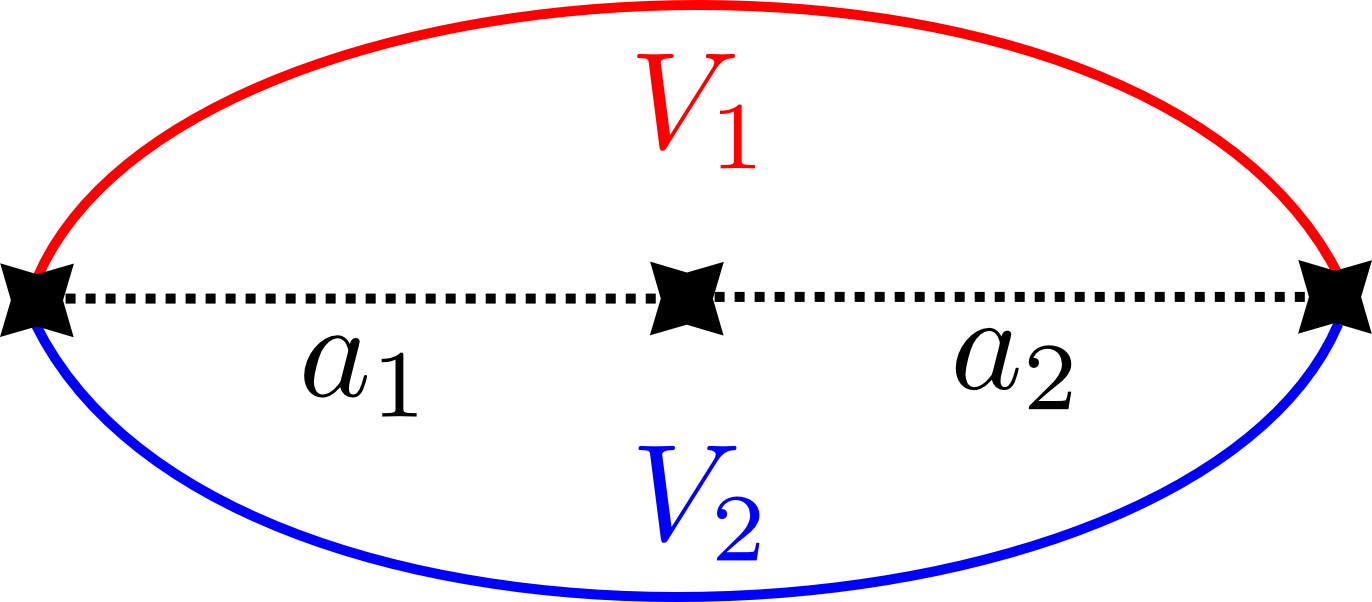}
\caption{Lefschetz bifibration diagram for $(E,\la,\p)$.}\label{fig:A2_harris}
\end{figure}
Following the results in Subsection \ref{ssec:AD_bifiber} we can choose the bifibration $\rho:F_\pi\lr\C$ given by the projection $\rho(x,z)=x$, which describes the Weinstein fiber $(F_\pi,\la)=(A_2,\la_\st)$ as the total space of a Lefschetz fibration with regular fiber the cotangent bundle $F_\rho=\{z^2+w^2=\ln 2\}\cong(T^*S^1,\la_\st,\p_\st)$ and three critical points. And indeed, the vanishing cycles of these three critical points are Lagrangian isotopic to the Lagrangian zero section $S^1\sse(T^*S^1,\la_\st,\p_\st)$. Following Recipe \ref{dictionary}, we need to depict the two vanishing cycles in $V_\pi=\{V_1,V_2\}$ in the complex plane $\C=\im\rho$ and express them as half--twists in terms of a basis for $(F_\pi,\la)$ given by matching paths.

The three critical points of the fibration $\rho$ lie in the real line and for pictorial purposes we assume them to be $(-1,0)$,$(0,0)$ and $(1,0)$: the matching path basis we choose is the linear $A_2$--basis given by the two segments $a_1=[-1,0]\times\{0\}$ and $a_2=[0,1]\times\{0\}$. Figure \ref{fig:A2_harris} depicts this and the two vanishing cycles of the initial fibration $\pi$, which algebraically are given by:
$$V_1=\tau_{a_2}a_1,\quad V_2=\tau_{a_1}a_2.$$
The bifibration diagram in Figure \ref{fig:A2_harris} is referred to as the {\it eye}, due to its shape, and it contains all the Weinstein information of $(E,\la,\p)$. The fourth step in Recipe \ref{dictionary} is to draw the Legendrian front associated to the bifibration diagram: using Proposition \ref{prop:stacking} we obtain the picture in Figure \ref{fig:A2_harrisFront}.

\begin{figure}[h!]
\includegraphics[scale=0.5]{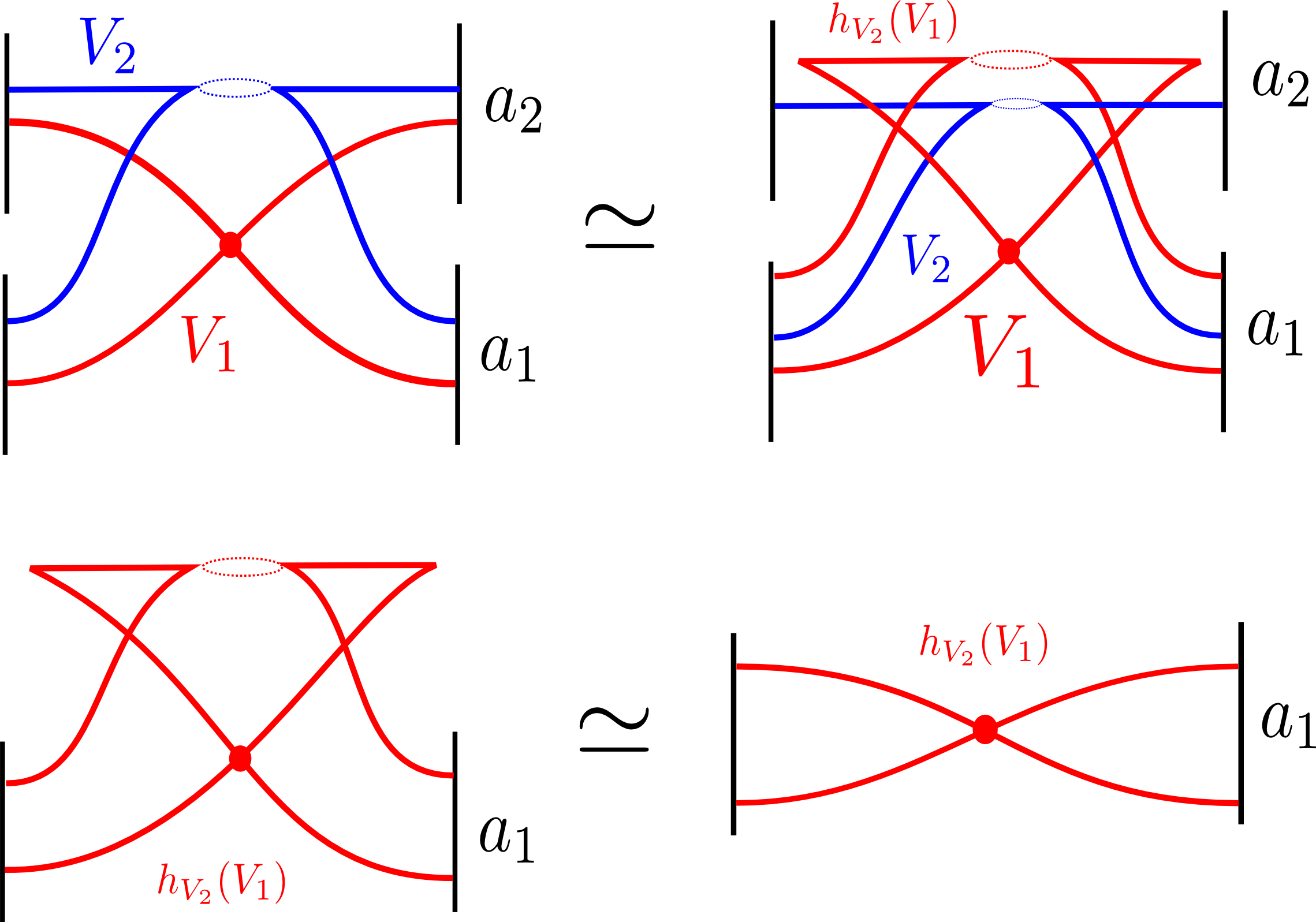}
\caption{The Legendrian Front for the Eye}\label{fig:A2_harrisFront}
\end{figure}

The reason the Legendrian front in Figure \ref{fig:A2_harrisFront} is relevant, and thus $(E,\la,\p)$ is an interesting Weinstein manifold, is the following result:

\begin{thm}[\cite{Ha,May,MuSi, Siegel}]\label{thm:subflex}
The Weinstein 6--fold $(E,\la,\p)$ is not flexible, but it embeds as a Weinstein sublevel set into the unique flexible Weinstein structure $(T^*S^3,\la_f,\p_f)$.
\end{thm}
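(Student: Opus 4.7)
The plan is to prove the two halves separately, using the Legendrian handlebody in Figure \ref{fig:A2_harrisFront} and the bifibration diagram in Figure \ref{fig:A2_harris} as the primary tools.

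For the non-flexibility half, the strategy is to produce a rigid symplectic invariant of $(E,\la,\p)$ which must vanish for any flexible Weinstein 6--manifold. There are two natural candidates. First, one may try to exhibit a closed exact Lagrangian $L \cong S^3 \sse E$: in the eye diagram of Figure \ref{fig:A2_harris}, one uses Hurwitz moves together with the V--move of Subsection \ref{ssec:lef} to rearrange the vanishing cycles $V_1 = \tau_{a_2}(a_1)$ and $V_2 = \tau_{a_1}(a_2)$, and looks for a matching path in the base of $\pi$ connecting the two critical values $\pm 2$ whose endpoints lift to Hamiltonian isotopic exact Lagrangian spheres in $F_\pi = A_2$. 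If such a matching path exists, concatenating the associated vanishing thimbles gives an exact Lagrangian $S^3 \sse E$, which is forbidden in any flexible Weinstein structure of dimension $\ges 6$. The alternative is to compute $\sh^*(E) \neq 0$ directly from the Legendrian front in Figure \ref{fig:A2_harrisFront}, using the Legendrian contact DGA and the surgery exact sequence cited in Application A of the introduction; since flexible Weinstein manifolds satisfy $\sh^* = 0$, this again forces non--flexibility.

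For the embedding half, the plan is to attach a Weinstein $3$--handle to $(E,\la,\p)$ along a Legendrian $2$--sphere in $\dd E$ chosen so that the enlarged Weinstein $6$--manifold $(W,\wt\la,\wt\p)$ is explicitly flexible in the sense of Definition \ref{def:flexible}. Concretely, the new Legendrian should be positioned so that its front intersects each of $\La_1, \La_2$ in a vertical disk along a stabilized arc as in Proposition \ref{prop:loose slice}, thereby producing a loose chart for each of the three critical attaching Legendrians in the complement of the others. Smoothly, the total space $W = E \cup h^3$ is diffeomorphic to $T^*S^3 \cong S^3 \x \R^3$, which can be verified by counting and tracking handle indices through a Kirby calculus computation; its almost complex structure agrees with the standard one inherited from $\C^3$. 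Theorem \ref{thm:CE flex} then identifies $(W,\wt\la,\wt\p)$ with the unique flexible Weinstein structure $(T^*S^3,\la_f,\p_f)$, and by construction $E$ sits inside $W$ as the sublevel set $\{\wt\p \les c\}$ for an appropriate regular value $c \in \R$, yielding the desired Weinstein sublevel embedding.

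The main obstacle is the embedding half: the attaching Legendrian of the new $3$--handle must be chosen so that, simultaneously, (i) the smooth and almost complex type of $W$ match those of $T^*S^3$, and (ii) the front displays loose charts for all three critical attaching Legendrians. This requires a careful Legendrian manipulation of Figure \ref{fig:A2_harrisFront} together with a Kirby calculus check on $W$, and is the heart of the subflexibility argument. For the non--flexibility half, the principal technical work depends on the chosen route: the Lagrangian approach reduces to matching vanishing cycles after Hurwitz and V--moves in the $A_2$ bifibration, whereas the $\sh^*$ approach reduces to the combinatorial DGA computation for the front, along the lines of Remark \ref{rmk:sh}.
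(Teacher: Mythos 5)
Your proposal for the non--flexibility half contains a genuine gap: both invariants you propose to use actually \emph{vanish} for $(E,\la,\p)$, which is precisely what makes this example interesting. Since $E$ embeds as a Weinstein sublevel set of a flexible manifold (the second half of the theorem), the unital Viterbo transfer map $\sh^*(T^*S^3,\la_f)\to\sh^*(E)$ forces $\sh^*(E)=0$, because flexible Weinstein manifolds have vanishing symplectic cohomology; this is also proved directly in \cite{May,MS,MuSi}. Consequently $E$ contains no closed exact Lagrangian at all (the introduction of the paper states this explicitly for Subsection \ref{ssec:subflex}), so neither the ``find an exact Lagrangian $S^3$'' route nor the ``compute $\sh^*(E)\neq 0$ from the Legendrian DGA'' route can succeed. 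In the bifibration picture the failure of your first route is visible: $V_1=\tau_{a_2}(a_1)$ and $V_2=\tau_{a_1}(a_2)$ are \emph{not} Hamiltonian isotopic in the exact category, so no matching path for $\pi$ joining the two critical values exists; they become isotopic only after deforming the symplectic form of the fiber by a non--exact $2$--form. This is exactly how the paper argues (and how \cite{Ha,MuSi,Siegel} argue rigorously): a non--exact deformation of $(E,d\la)$ is symplectomorphic to a non--exact deformation of $T^*S^3\cup h_2$, which still carries an exact Lagrangian $3$--sphere, whereas no non--exact deformation of a flexible manifold can. Any correct proof of non--flexibility here must use such \emph{deformed} invariants; undeformed pseudoholomorphic invariants cannot distinguish $E$ from a flexible manifold.

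Your embedding argument is essentially the paper's: the paper enlarges the eye to the ``Andalusian dog'' of Figure \ref{fig:A2_DogsLF} by adding the third vanishing cycle $V_3=a_1$, i.e.\ attaches one critical Weinstein $3$--handle to $E$, and then runs Recipe \ref{dictionary} to simplify the three--component attaching link of Figure \ref{fig:A2_DogsFront} down to a single loose Legendrian unknot. That single simplification both certifies explicit flexibility and identifies the enlarged manifold with $(T^*S^3,\la_f,\p_f)$, so the separate Kirby--calculus and almost--complex--structure check followed by Theorem \ref{thm:CE flex} in your version is not needed, though it would also work once you have exhibited loose charts for all three attaching spheres in the complements of one another.
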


\begin{proof}
The contribution in this article is a direct diagramatic proof of the second part of the statement. The first part has been proven in the articles \cite{Ha,MuSi, Siegel} with the use of pseudoholomorphic curves; let us however give the intuitive idea behind the fact that the Lefschetz bifibration diagram in Figure \ref{fig:A2_harris} yields a non--flexible Weinstein manifold. Consider the pair of vanishing cycles $(V_1,V_2')=(V_1,V_1)$ in Figure \ref{fig:A2_harrisnotflex}, the third critical value of the bifibration $\rho$ does not interact with the vanishing cycles and thus the resulting Weinstein 6--fold can be constructed as the Weinstein 6--fold associated to the right hand side of Figure \ref{fig:A2_harrisnotflex}, with two critical points and two vanishing cycles, and a subcritical 2--handle. This manifold is $(T^*S^3,\la_\st,\p_\st)$ with a subcritical handle attached to it, and thus a non--flexible Weinstein manifold since it contains the exact Lagrangian 3--sphere zero section.

\begin{figure}[h!]
\includegraphics[scale=0.6]{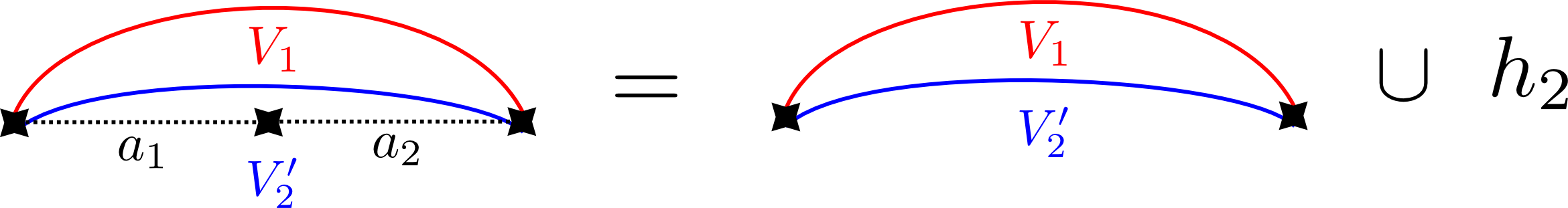}
\caption{Adding a critical value for the bifibration $\rho$ away from the vanishing cycles is tantamount to a Weinstein handle subcritical attachment.}\label{fig:A2_harrisnotflex}
\end{figure}

The vanishing cycle $V_2'$ is not isotopic to $V_2$, but it \emph{is} Hamiltonian isotopic to it after deforming the symplectic structure by a non-exact $2$--form. This shows that a non--exact deformation of $(E,d\la)$ is symplectomorphic to a non--exact deformation of $(T^*S^3 \cup h_2,d\la_\st)$. In the symplectic six--fold $(T^*S^3 \cup h_2,d\la_\st)$ the $2$--dimensional homology class can be made disjoint from the zero section $S^3\sse T^*S^3$ and thus we conclude that the deformed manifold still has an exact Lagrangian $3$-sphere. This implies non--flexibility of $(E,\la)$ for a flexible manifold should have no Lagrangian spheres even in its non--exact deformations \footnote{This is an intuitive geometric argument yet not rigorously established: see the article \cite{Siegel, MuSi} for an alternative argument which is rigorous and morally the same.}.

\begin{figure}[h!]
  \includegraphics[scale=0.4]{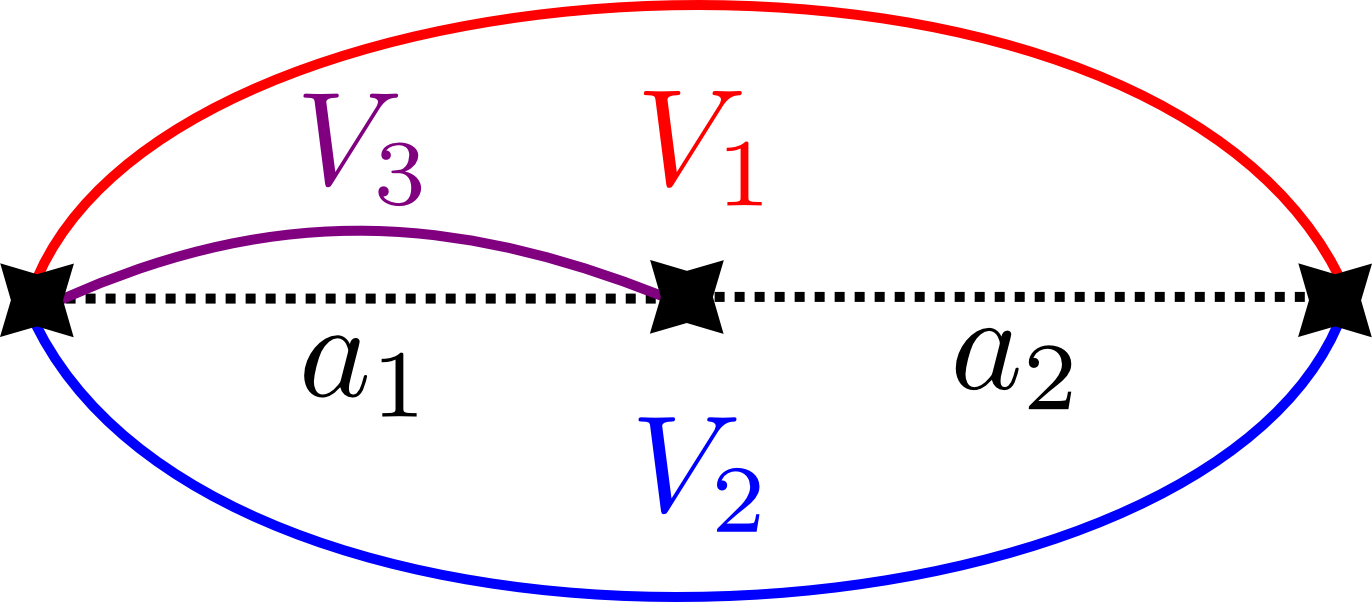}
\caption{Un Chien Andalou.}\label{fig:A2_DogsLF}
\end{figure}

Let us now prove that $(E,\la,\p)$ embeds into the flexible Weinstein manifold $(T^*S^3,\la_f,\p_f)$ using Figure \ref{fig:A2_harrisFront}. Consider the Lefschetz bifibration diagram in Figure \ref{fig:A2_DogsLF}, referred to as the Andalusian dog \cite{BD}, which is obtained from the bifibration diagram in Figure \ref{fig:A2_harris} by adding the third vanishing cycle $V_3=a_1$. The associated Weinstein 6--fold $(\wt E,\wt\la,\wt\p)$ is obtained from $(E,\la,\p)$ by adding a critical Weinstein 3--handle along the Legendrian lift of the Lagrangian vanishing cycle $V_3=a_1$, and we claim that the Andalusian dog is a flexible Weinstein 6--fold. Indeed, using Recipe \ref{dictionary} again we obtain the Legendrian attaching link associated to $(\wt E,\wt\la,\wt\p)$ as depicted in \ref{fig:A2_DogsFront}, which simplifies to the loose Legendrian unknot, and thus exhibiting an explicitly flexible Legendrian handlebody for $(\wt E,\wt\la,\wt\p)$.

\begin{figure}[h!]
\includegraphics[scale=0.55]{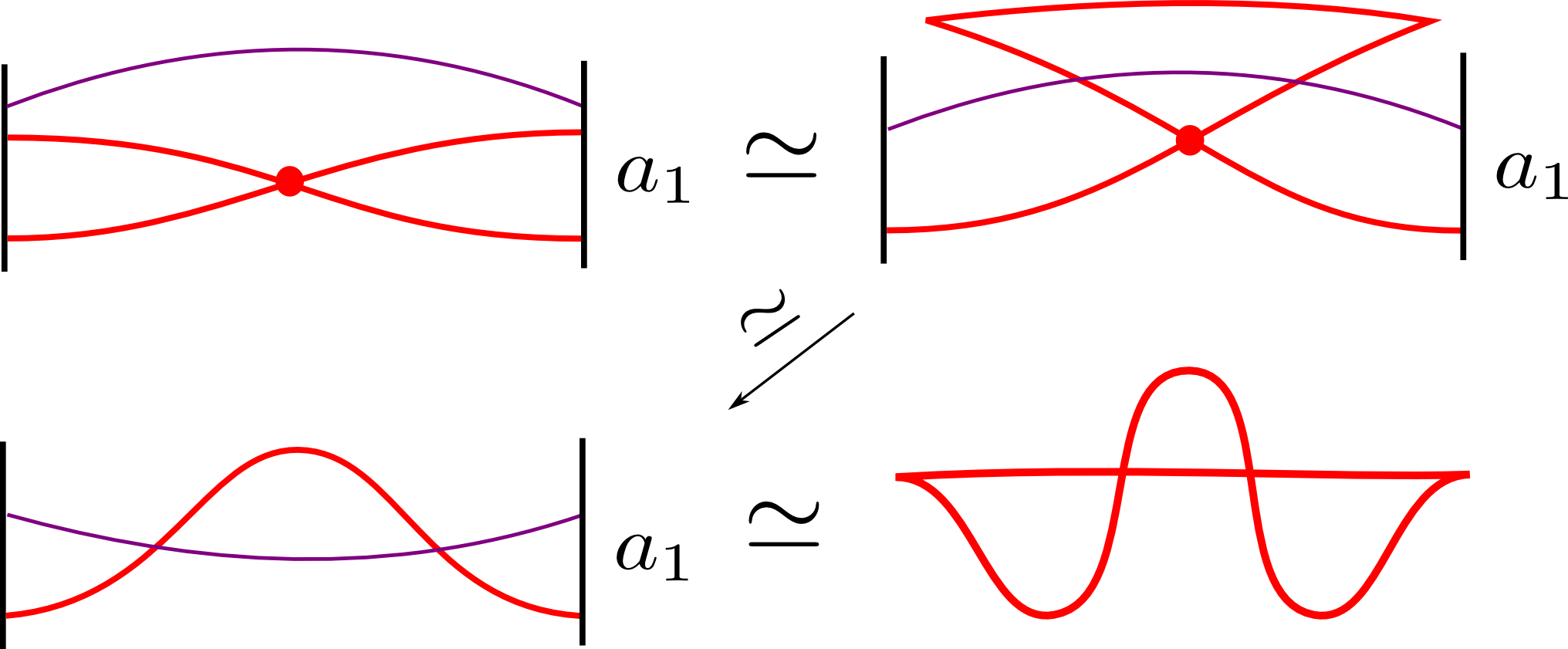}
\caption{Front for the Andalusian Dog}\label{fig:A2_DogsFront}
\end{figure}

The Weinstein 6--fold described by the Legendrian handlebody in Figure \ref{fig:A2_DogsFront} is the flexible Weinstein structure $(\wt E,\wt\la,\wt\p)\cong(T^*S^3,\la_f,\p_f)$, and since it is obtained from $(E,\la,\p)$ by a Weinstein handle attachment there exists a Weinstein embedding of our initial Weinstein manifold $(E,\la,\p)$ into the flexible Weinstein $(T^*S^3,\la_f,\p_f)$\footnote{In fact, $(T^*S^3, \la_f)$ admits a Liouville embedding into $(\C^3, \la_\std)$ \cite{EM}, so $(E, \la)$ has a Liouville embedding into $\C^3_\std$ as well; however one can see that this embedding cannot be as a Weinstein sublevel set for topological reasons.}, which proves the statement.
\end{proof}

The machinery constituting Recipe \ref{dictionary} being established and with the practice of applying it in this subsection, we now proceed to the proof of the more elaborate results stated in Section \ref{sec:intro}.

\section{Applications}\label{sec:app}

In this section we apply Recipe \ref{dictionary} to prove the results stated in Section \ref{sec:intro}, also the results will be proven in an increasing order to difficulty regarding the use of Recipe \ref{dictionary}.

First, in Subsection \ref{ssec:mirror1} we study the Weinstein manifold $(Y,\la,\p)$ which features in Theorem \ref{thm:mirror}. In this case, Recipe \ref{dictionary} will be applied in complete detail and the Legendrian handlebody for $(Y,\la,\p)$ shall appear effortlessly. Its relation with mirror symmetry, and thus the conclusion of Theorem \ref{thm:mirror}, is deferred until Subsection \ref{ssec:mirror2} in order to emphasize the construction of Legendrian handlebodies first.

Second, Theorem \ref{thm:Xab1} is proven in Subsection \ref{ssec:Xab} where Recipe \ref{dictionary} will be applied with a non--linear choice of basis of matching paths: although the Weinstein bifiber is symplectomorphic to an $A_k$--plumbing, the non--linear basis is more efficient and being able to use it also illustrates the flexibility allowed by Recipe \ref{dictionary}.

Third, the Koras--Russell cubic is proven to be Stein deformation equivalent to $(\C^3,\la_\st,\p_\st)$ in Subsection \ref{ssec:kr}, thus proving Theorem \ref{thm:kr1} as promised. In this case, the Weinstein bifiber is not a linear plumbing, but a $D_4$--plumbing, and there are words in Dehn twists which are not linear either. Fortunately, our work in Section \ref{sec:kirby} and \ref{sec:lef} will prove valuable and Recipe \ref{dictionary} shall provide the desired Legendrian handlebody.

Fourth, Subsection \ref{ssec:mirror2} studies the Weinstein manifolds in Theorem \ref{thm:torusIntro}, exhibiting the exact Lagrangian $S^1\times S^{n-1}$, and in particular the Weinstein 4--fold $(X,\la_\st,\p_\st)$ featuring Theorem \ref{thm:mirror}. The discussion then continues with the connection between the symplectic field theory of these Legendrian handlebodies and mirror symmetry, serving as the open--ended experimental evidence with which we finish this article.

\subsection{Theorem \ref{thm:mirror}: Part I}\label{ssec:mirror1} In this subsection we use Recipe \ref{dictionary} to study the symplectic topology of the affine surface
$$(Y,\la,\p) = \{(x,y,z):xyz + x + z +1 = 0\} \sse (\C^3,\la_\st,\p_\st).$$
First, we endow it with a Lefschetz fibration, in this case given by a generic linear section:
$$\pi:Y\lr\C,\quad\pi(x,y,z)=-3x-2z+y.$$
The regular function $\pi$ is a Lefschetz fibration with five critical values and $0$ as a regular value. Thus $(Y,\la,\p)$ is constructed from the subcritical skeleton $(F_\pi,\la)=(\pi^{-1}(0),\la_\st|_{\pi^{-1}(0)})$ by attaching five 2--handles along the contact boundary of $(F_\pi\times D^2,\la+\la_\st,\p+\p_\st)$.

The regular fiber $(F_\pi,\la)\sse\C^2[x,z]$ is a smooth algebraic curve embedded in the affine plane and, before performing the second step, let us notice that in such cases we can understand the topology of the curve by classical algebraic geometry methods. Indeed, the topology is determined by the number of boundary components $\pi_0(\dd F_\pi)$ and the genus of the algebraic curve $F_\pi$. The former can be obtained by computing the intersection of the projectivization $\ol{F_\pi}$ of the affine curve $F_\pi = \{xz(3x+2z)+x+z\} \sse \C^2$ to the projective plane $\P^2[x,z,w]$ with the line at infinity $\{w=0\}\cong\P^1_\infty\sse\P^2$. The latter is just the dimension $h^0(F_\pi,\Omega^1)$ of the vector space of holomorphic 1--forms and thus it can be computed as the number of integral lattice points in the interior of the Newton polytope of a defining polynomial for $F_\pi$. In our case we have that
$$|\dd F_\pi|=|\ol{F_\pi}\cap\P^1_\infty|=|\{xz(3x+2z)+xw^2+zw^2=w^3\}\cap\{w=0\}|=3,$$
$$g(F_\pi)=|\langle (2,1),(1,2),(1,0),(0,1),(0,0) \rangle\cap\Z^2|=1,$$
where $\langle P_1,P_2,P_3,P_4,P_5\rangle$ denotes the interior of the convex hull of the five points $P_i\in\R^2$, for $1\leq i\leq5$. In consequence, the algebraic curve $F_\pi$ is a thrice punctured torus, and the set $\L$ in the first step of Recipe \ref{dictionary} can be taken to be the set $\{\a,\beta,\gamma,\delta\}$ of four curves as depicted in Figure \ref{fig:Tpqr}. The curves in this figure were denoted by $T,P,Q,R$ in Subsection \ref{ssec:Tpqr}, and we rename them $\alpha=T$, $\beta=P$, $\gamma=Q$ and $\delta=R$.

Following Step 2 in Recipe \ref{dictionary}, we next endow the Weinstein fiber $(F_\pi,\la)$ with the Lefschetz fibration $\rho:F_\pi\lr\C$ given by $\rho(x,z)=3x+z$,
which expresses $(F_\pi,\la)$ as a regular $3$--fold branched cover of $\C$ with six branch points. The regular fiber $\Sigma_\rho=\rho^{-1}(0)$ of this bifibration consists of three points that we can express as $a\vee b$, where $a$ and $b$ are basepointed copies of the 0--sphere $S^0$. In this step we also need to express the set of Lagrangian curves $\L=\{\a,\beta,\gamma,\delta\}$ in terms of matching paths for the Lefschetz bifibration $\rho$. In order to achieve so we need the information of the branched cover at the critical values, i.e.~which of the three possible $0$--spheres $a,b$ or $\tau_b(a)$ are the vanishing cycles at each branch point.

We can depict the six critical values of $\rho$ as the sixth roots of unity $\{\zeta^k_6\}_{k=1,\ldots,6}$ and the corresponding vanishing cycles $\{\tau_b(a),a,b,\tau_b(a),a,b\}$ in their corresponding cyclic order. This information allows us to find four matching paths in $\C=\im(\rho)$ which realize the four curves in the set $\L\sse\Sigma_\rho$ in the $D_4$--intersection pattern. Figure \ref{fig:LFtrefoil} depicts these critical values with the information of the branching and a possible choice of matching paths.

\begin{figure}[h!]
\centering
  \includegraphics[scale=0.35]{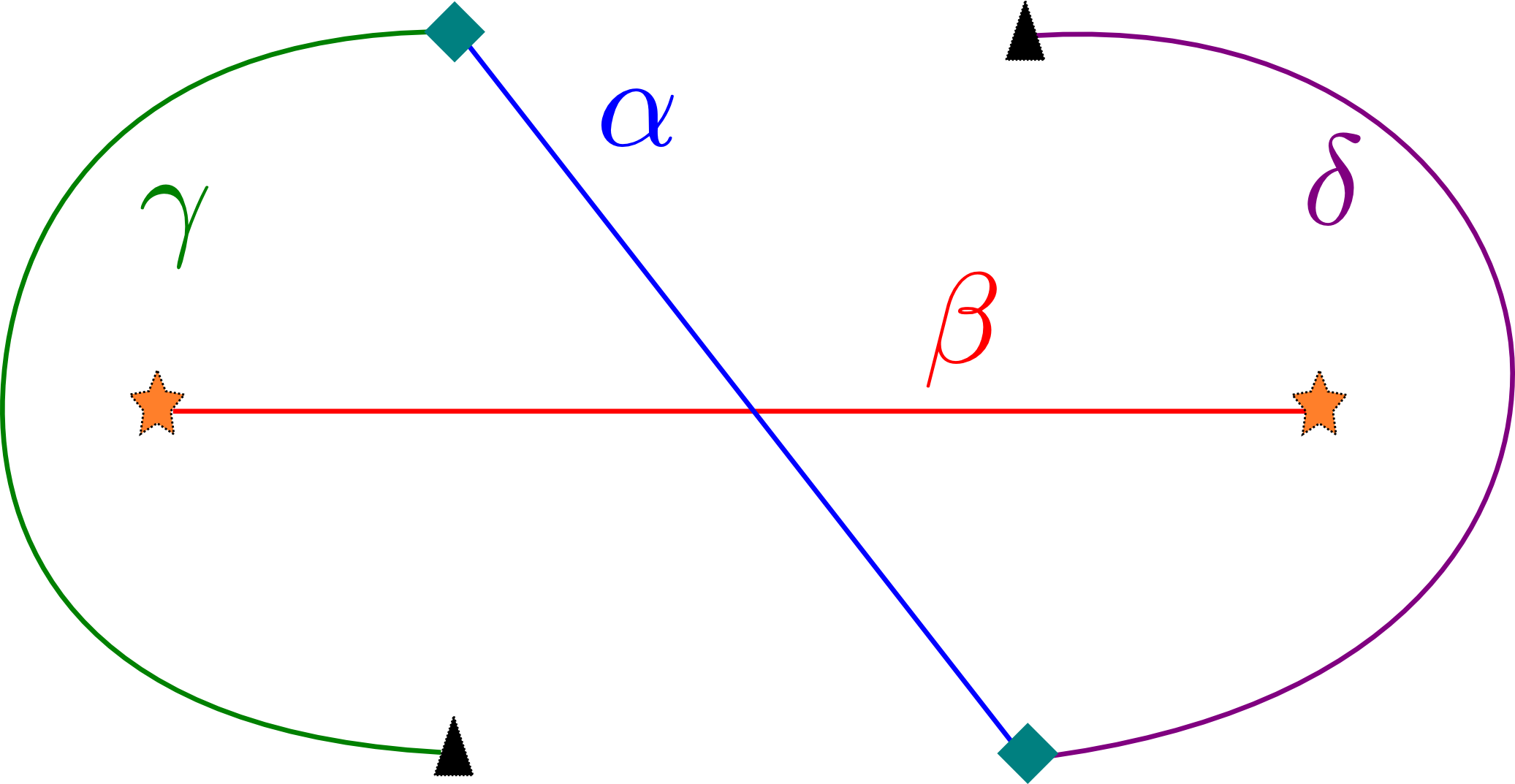}
  \caption{The image of $\rho$ with its critical values: the triangle indicates the vanishing cycle $a$, the square indicates the vanishing cycle $b$ and the five--pointed star indicates the vanishing cycle $\tau_b(a)$. In addition, we have drawn a set of matching paths realizing the $D_4$--intersection pattern present in the 1--skeleton of $F_\pi$.}
  \label{fig:LFtrefoil}
\end{figure}

\begin{remark}
It might appear unnecessary to think of $0$--spheres, but the strong advantage of being this systematic is that this process also gives a Legendrian handlebody for the higher--dimensional manifolds
$$\{(x,y,z,\underline{u}):xyz+x+z+\sum_{i=1}^{n-2}u_i^2=1\}\sse\C^{n+1},$$
for any $n\geq0$, just by rotation symmetry. In these higher dimensional cases however there will be no relations such as $\tau^2_a=\tau^2_b=\id$, and thus is it worth keeping track of the correct words in Dehn twists and Figures \ref{fig:LFtrefoil} and \ref{fig:VCtrefoil} are still valid.\hfill$\Box$
\end{remark}

The third step in Recipe \ref{dictionary} is to draw the image of the vanishing cycles of the fibration $\pi$ by the bifibration $\rho$. In practice, this is the study of the critical values of the bifibration $\rho(x,y)=3x+z$ on the varying domains $F_t=\{xz(t+3x+2z)+x+z=1\}$, for $t\in\C$. In case the value of $t\in\C$ is critical for the map $\pi$ the fiber $F_t$ is singular, which occurs in five occasions, and otherwise we have a symplectomorphism $(F_t,\la_\st)\cong (F_\pi,\la)$ provided by the parallel transport of a symplectic connection. Tracing these critical values of the fibration $\rho$ on the fiber $(F_t,\la_\st)$ as the complex variable $t$ goes from the origin to each of the critical values of $\pi$ produces the images under $\rho$ of the vanishing cycles associated to each of the the critical points of $\pi$. In our case, the resulting matching paths describing these vanishing cycles in $V_\pi$ are depicted in Figure \ref{fig:VCtrefoil}.

\begin{figure}[h!]
\centering
  \includegraphics[scale=0.4]{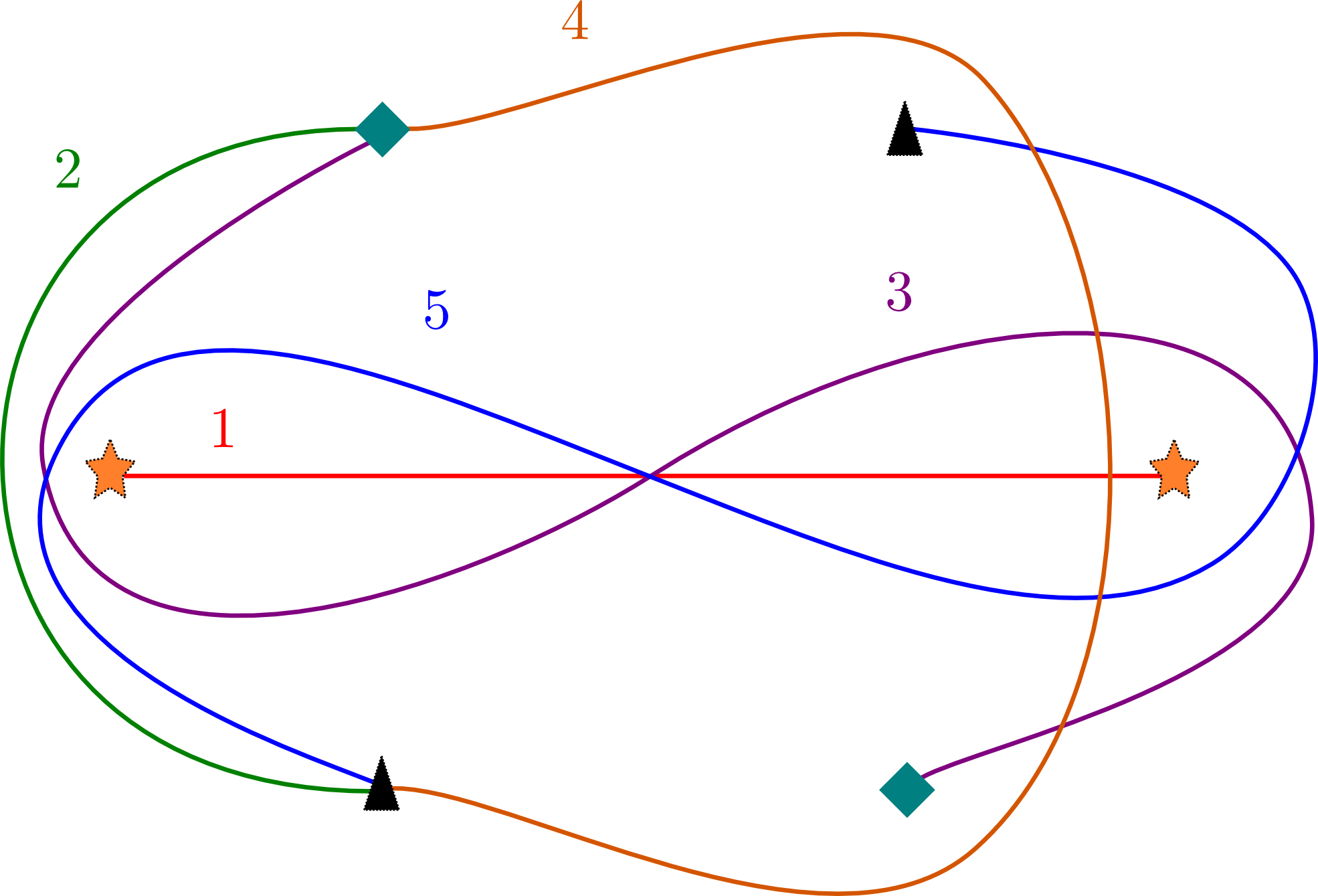}
  \caption{The matchings paths corresponding to the five vanishing cycles in $V_\pi\sse(F_\pi,\la,\p)$. This is the picture of Step 3 in Recipe \ref{dictionary}.}
  \label{fig:VCtrefoil}
\end{figure}

The fourth step is to express these matching paths as half--twist on the given $D_4$--basis $\L=\{\alpha,\beta,\gamma,\delta\}$. In this case, the resulting expressions are
$$V_1=\beta,\quad V_2=\gamma,\quad V_3=\tau_\beta(\alpha),\quad V_4=\tau_\alpha\tau_\beta\tau^{-1}_\delta\tau^{-1}_\gamma(\alpha),\quad V_5=\tau^{-1}_\delta\tau^{-1}_\gamma(\alpha).$$

The fifth step is to draw the Legendrian lifts of the five exact Lagrangians in $V_\pi$, according to Proposition \ref{prop:stacking}. Together with the sixth step, which just tells us to order the link by height, we obtain the diagram depicted in Figure \ref{fig:fronttrefoil}, which is the desired Weinstein handlebody for the affine manifold $(Y,\la,\p)$. Finally, the seventh step of Recipe \ref{dictionary} tells us to simply the diagram as much as possible: as seen in the sequence depiced it Figure \ref{fig:trefoil}, this simplification results in the maximal Thurston--Bennequin Legendrian right--handed trefoil.

\begin{figure}[h!]
\centering
  \includegraphics[scale=0.55]{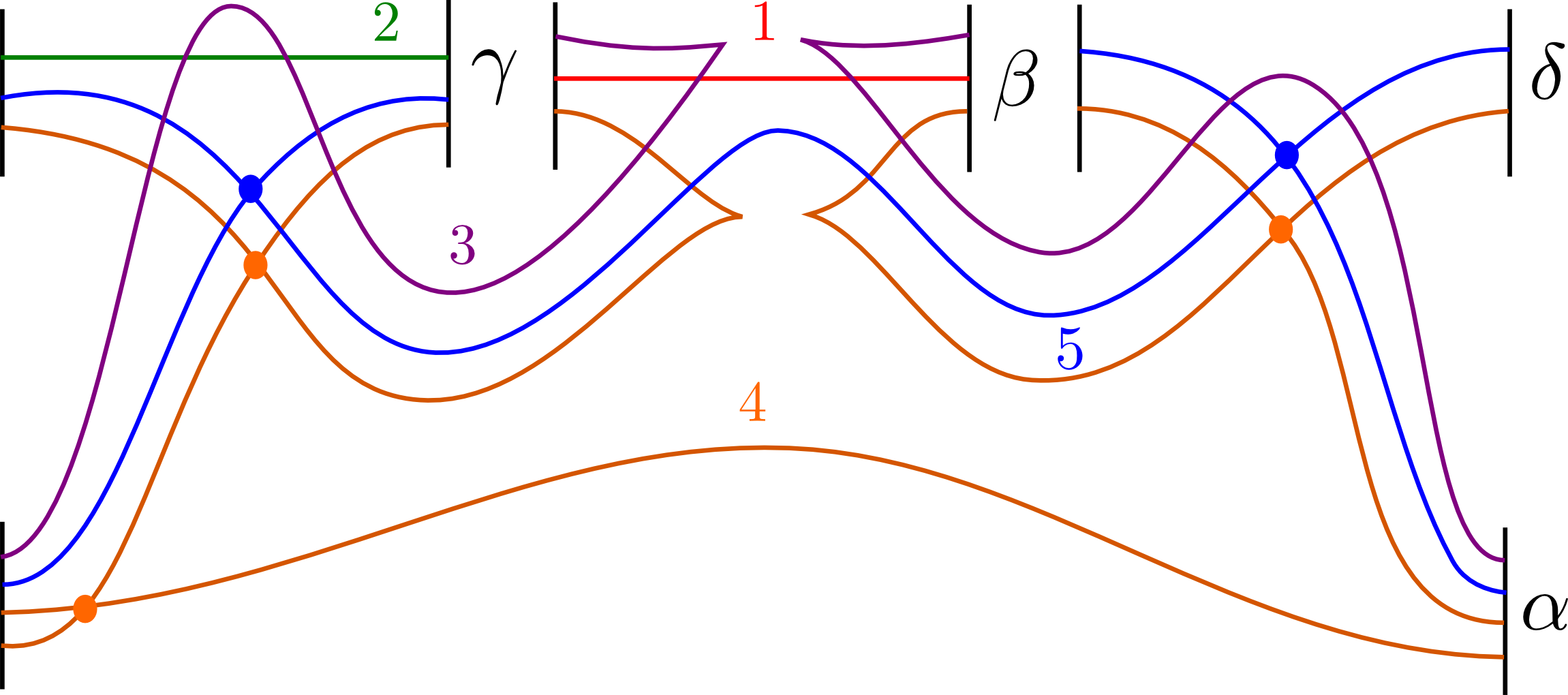}
  \caption{Front of the Legendrian lifts of the vanishing cycles in Figure \ref{fig:VCtrefoil}.}
  \label{fig:fronttrefoil}
\end{figure}

\begin{figure}[h!]
\centering
  \includegraphics[scale=0.75]{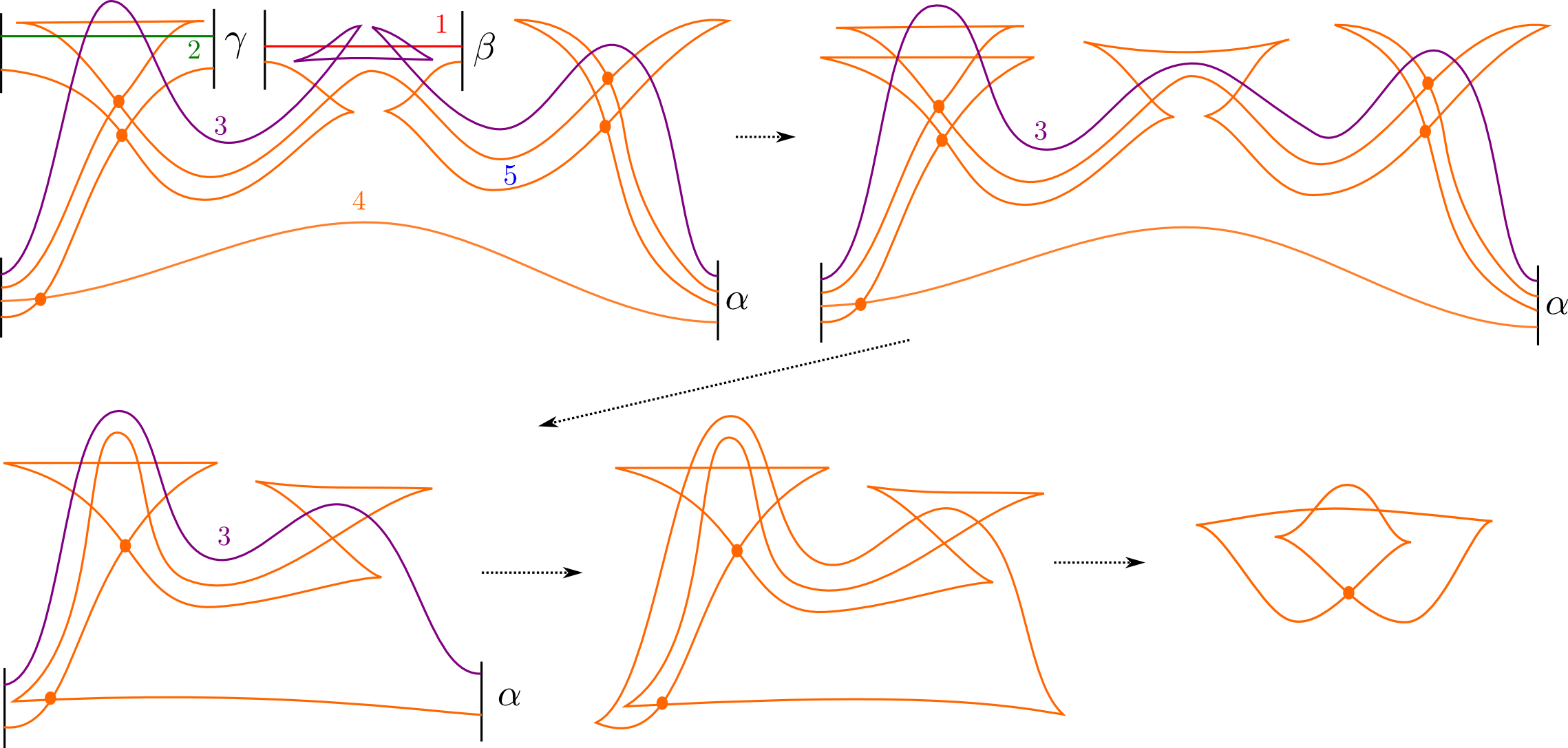}
  \caption{A sequence of Legendrian isotopies and Kirby moves, reducing the link in Figure \ref{fig:fronttrefoil} to the trefoil.}
  \label{fig:trefoil}
\end{figure}

In order to relate this result to mirror symmetry, it will suffice to compute the Legendrian differential graded algebra of the attaching Legendrian knot. Since this differential graded algebra has been computed in the literature \cite{EtNotes}, we just notice that it be given by the presentation 
$$\A=\{\langle a,b,x,y,z\rangle,|a|=|b|=2,|x|=|y|=|z|=1: da=1+x+z+xyz,\quad db=1+x+z+zyx\}.$$
The zeroth degree homology of the algebra $(\A,d)$ is the commutative polynomial algebra
$$R=\C[x,y,z]/(1+x+z+xyz),$$
since the commutators lie in the image of the differential:
$$d\left(a(1+yx) - (1+xy)b\right) = yx - xy, \quad d\left((1+zy)a - b(1+yz)\right) = zy - yz,$$
$$\text{and} \quad d(za-bz) = zx - xz + zxyz - zyxz,$$
and note that the spectrum of this commutative ring $R$ is the algebraic surface $Y\sse\C^3$ itself. Thus, the Legendrian trefoil $\La\sse(S^3,\xi_\st)$ is related to the equation
$$xyz + x + z + 1 = 0$$
in two distinct ways. The first is strictly symplectic topological: attaching a Weinstein handle to the Legendrian trefoil describes the symplectic manifold $\{xyz+x+z+1=0\} \sse \C^3$. The second is purely Floer-theoretic: the degree zeroth part of the homology of its Legendrian algebra is isomorphic to the algebra
$$\mbox{LCH}_0(\La) \cong \C[x,y,z]/(xyz+x+z+1).$$
This pattern should not be expected in general, and is due to the fact that the affine manifold $(Y,\la_\st,\p_\st)$ is self--mirror: we will continue the discussion on homological mirror symmetry in Subsection \ref{ssec:mirror2}. Thus far, we have obtained a Legendrian handlebody for the Weinstein manifold $(Y,\la_\st,\p_\st)$ whose symplectic topology, thanks to the fact that the Legendrian trefoil $\La\sse(S^3,\xi_\st)$ is well--studied, we now understand in depth.

\subsection{The Weinstein Manifolds $X_{a,b}$}\label{ssec:Xab}

In this subsection we study the interesting class of affine Stein manifolds introduced in Theorem \ref{thm:Xab1}. Given two coprime integers $a,b\in\N$, $a\leq b$, we aim to understand the symplectic topology of the Stein manifolds
$$X^n_{a,b}=\left\{(x,y,\underline{z}):x^ay^b+\sum_{i=1}^{n-1}z_i^2=1\right\}\sse\C^{n+1},$$
up to Stein deformation equivalence. This class of affine manifolds has been studied both in symplectic topology \cite{MS,Se15} and algebraic geometry \cite{DF,FM,Kr}. From the symplectic viewpoint, P.~Seidel computed their $q$--intersection numbers in \cite{Se15} and, joint with M.~Maydanskiy, they showed that the symplectic cohomology of the Weinstein manifolds $X_{1,b}$ vanishes. This will be readily implied by Corollary \ref{cor:Xab} below.

From the algebraic geometry perspective, the Danielewski surfaces $X^2_{1,b}$ were the first counter--examples to the Cancellation Conjecture for algebraic cylinders and variations on this class of affine varieties have been thoroughly studied \cite{Be,Du}; in particular, the affine complex 3--spheres $X^3_{1,b}$ have been shown to be algebraically distinct from the affine conic $X_{1,1} \cong T^*S^3$ \cite{DF}. This is also a consequence of our results which further imply that $X^3_{1,b}$ are all Stein deformation equivalent to each other for $b\geq2$.

The main result we need on the Weinstein manifolds $X^n_{a,b}$ is a description of a Legendrian handlebody, from which their symplectic topology can be better understood. This is the content of the following theorem:

\begin{thm}\label{thm:Xab}
Consider two coprimer integers $a,b\in\N$, $a\leq b$, and the Weinstein manifolds
$$X^n_{a,b}=\{(x,y,\underline{z}):x^ay^b+\sum_{i=1}^{n-1}z_i^2=1\}\sse\C^{n+1}.$$
Then the Weinstein manifold $(X^n_{a,b},\la_\st,\p_\st)$ is obtained by attaching a critical Weinstein handle to $(D^{2n},\la_\st,\p_\st)$ along a cusp $S^{n-2}$--spinning of the $(a,-b)$--Legendrian torus knot $\Lambda_{a,b}$ with $tb(\Lambda_{a,b})= -ab$ and rotation number $r(\Lambda_{a,b})=b-a$.
\end{thm}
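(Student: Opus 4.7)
The plan is to apply Recipe \ref{dictionary} in full detail to the Weinstein manifold $(X^n_{a,b},\la_\st,\p_\st)$. First, I would choose the linear Lefschetz fibration $\pi:X^n_{a,b}\to\C$ defined by $\pi(x,y,\underline z)=x+y$. A Lagrange-multiplier computation locates its critical points at $(x,bx/a,\underline 0)$ with $x^{a+b}=a^b/b^b$, yielding $(a+b)$ distinct critical values equally spaced on a circle, and the fiber $F_\pi=\pi^{-1}(0)=\{(-1)^b x^{a+b}+\sum z_i^2=1\}$ is, after a root-of-unity coordinate change in $x$, the Milnor fiber $A^{2n-2}_{a+b-1}$ of Subsection \ref{ssec:AD_bifiber}.

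Second, I would equip $F_\pi$ with the auxiliary bifibration $\rho(x,\underline z)=x$, whose $A_{a+b-1}$-basis $\L=\{L_1,\ldots,L_{a+b-1}\}$ is the linear chain of zero-section spheres realized by the standard matching-path basis $\Gamma$. On the moving fiber $\pi_t=\{x^a(t-x)^b+\sum z_i^2=1\}$ the restriction $\rho|_{\pi_t}$ is again projection to $x$, whose $(a+b)$ critical values are the roots of $p_t(x)=x^a(t-x)^b=1$. Tracing these roots along a straight path from $t=0$ to each critical value $v_k$ of $\pi$, two of them collide at the point $x=av_k/(a+b)$, producing a matching path $\vartheta_k$ that represents the $k$-th vanishing cycle $V_k$ of $\pi$. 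Combined with the monodromy of $\pi$ at infinity (equivalently the chain relation in the mapping class group of $A_{a+b-1}$), an explicit tracing of the collisions expresses each $\vartheta_k$ as a half-twist word in $\Gamma$ and completes Step 4 of the recipe.

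Third, I would invoke Proposition \ref{prop:stacking} to draw the Legendrian fronts of the Legendrian lifts $\La_k$ of the $(a+b)$ vanishing cycles in the contact boundary of $(F_\pi\times D^2,\la+\la_\st)$, stacked in the Reeb direction according to the cyclic order of the critical values. The diagram then admits a chain of Legendrian handleslides and handle cancellations: using Proposition \ref{prop:handle cancel} iteratively I cancel each of the $(a+b-1)$ subcritical handles of the $A_{a+b-1}$-skeleton against $(a+b-1)$ of the $(a+b)$ critical handles, leaving a single critical handle attached to $(S^{2n-1},\xi_\st)$. By the drawing conventions of Subsection \ref{ssec:highD} the entire front carries the $S^{n-2}$-rotational symmetry inherited from the cotangent bifiber $T^*S^{n-2}$, so the residual Legendrian is visibly the cusp $S^{n-2}$-spinning of a $1$-dimensional Legendrian curve in a Darboux ball $(\R^3,\xi_\st)$.

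The main obstacle is arranging the cancellations so that the residual Legendrian curve is manifestly the maximal Thurston-Bennequin $(a,-b)$-Legendrian torus knot. The combinatorial picture is forced by the root structure of $p_t(x)=1$: $a$ of the vanishing cycles arise from collisions within the cluster of roots near $x=0$ and the other $b$ arise from the cluster near $x=t$, so after all the subcritical cancellations the diagram presents itself as the closure of an $(a+b)$-strand braid that collapses to the standard $(a,-b)$-torus braid. Once that identification is in place, $\tb(\La_{a,b})=-ab$ is the count of the $ab$ negative crossings and $r(\La_{a,b})=b-a$ is the signed cusp count in the resulting front, which completes the proof.
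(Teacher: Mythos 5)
Your outline coincides with the paper's: the same fibration--bifibration pair (a generic linear $\pi$ on $X^n_{a,b}$ with fiber $A^{2n-2}_{a+b-1}$, and $\rho(x,\underline z)=x$ with bifiber $T^*S^{n-2}$), the same collision computation locating the matching paths at $x=av_k/(a+b)$, and the same endgame of stacking lifts via Proposition \ref{prop:stacking} and cancelling all subcritical handles using coprimality. However, the two places where the actual content of the proof lives are asserted rather than carried out, and one of the assertions is wrong as stated. First, you never produce the words in Dehn twists for the $(a+b)$ vanishing cycles (Step 4); appealing to ``the chain relation'' does not substitute for the explicit formulas, and the paper's choice here matters: it deliberately abandons the linear $A_{a+b-1}$ chain you propose in favour of the radial basis $\L=\{R_1,\ldots,R_{a+b-1}\}$ of paths emanating from a single critical value, precisely because the vanishing cycles then take the short form $V_1=R_a$, $V_{b+1}=R_b$, $V_i=\tau_{R_{i+a-1}}(R_{i-1})$ for $2\le i\le b$, $V_i=\tau_{R_{i-1}}(R_{i-b-1})$ for $i\ge b+2$. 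Second, your claim that with the linear basis ``the entire front carries the $S^{n-2}$-rotational symmetry inherited from the cotangent bifiber'' contradicts the paper, which notes explicitly (see Example \ref{ex:X23 linear}) that the linear-basis front \emph{lacks} the global $S^{n-2}$-symmetry; that symmetry is exactly what the radial basis buys, and it is what makes the final Legendrian visibly a cusp $S^{n-2}$-spinning of a knot in a $2$-dimensional slice. With your basis the theorem is still provable (the paper does $(a,b)=(2,3)$ this way), but the identification of the spinning structure is no longer immediate and needs an argument you do not supply.

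The final identification $\Lambda\cong\Lambda_{a,b}$ is also not established by your ``cluster'' heuristic. The roots of $x^a(t-x)^b=1$ separate into $a$ roots near $0$ and $b$ roots near $t$ only when $|t|$ is large, whereas the critical values of $\pi$ have modulus of order one, where the $(a+b)$ roots are a perturbation of equally spaced points on a circle and do not cluster; so the claim that ``$a$ of the vanishing cycles arise from collisions within the cluster near $x=0$'' does not describe the actual combinatorics. The paper instead extracts the $(a,-b)$-torus braid by an explicit sequence of front moves: after collapsing the subcritical handles to cusps, only $V_1$ and $V_{b+1}$ meet the central axis, the cusps coming from the rays $R_i$ with $i\le a-1$ are isotoped to the right, their upper strands are disjoined from the main body, and the leftover left cusps are cancelled against the cusps from $R_i$, $i\ge b+1$, by Reidemeister~I moves, leaving the standard $(a,-b)$ braid with a ribbon closure. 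Reading off $\tb=-ab$ and $r=b-a$ only makes sense once that normal form is in hand. As written, your proposal is a correct plan with the two decisive computations missing.
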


\begin{remark}
In the statement of Theorem \ref{thm:Xab} we use a cusp $S^{n-2}$--spinning of a Legendrian front $\Lambda$ for a 1--dimensional Legendrian knot; this is the $n$--dimensional Legendrian submanifold described by the following front: choose an globally extremal cusp of $\Lambda$, rightmost or leftmost, and remove a small symmetric arc--neighborhood of the front containing the cusp singularity. Then consider a vertical axis through the two endpoints of the resulting arc--front and perform a $S^{n-2}$--spinning along that vertical axis. The cusp $S^{n-2}$--spinning of $\Lambda$ is the Legendrian submanifold represented by the resulting front; note that the cusp $S^0$--spinning is a connected sum of a Legendrian with a disjoint copy of itself, endowed with the reversed orientation.
\end{remark}

These explicit Legendrian handlebodies are genuinely useful to symplectic topology, for instance we can immediately conclude from Theorem \ref{thm:Xab} the following

\begin{cor}\label{cor:Xab}
The Weinstein manifolds $X^n_{1,b}$ are flexible for all $b\geq2$, $n\geq3$.\\
The Weinstein manifolds $X^2_{a,b}$ are not flexible for any $(a,b)\in\N\times\N$ with $a\geq2$.\\
The Weinstein manifolds $X^n_{2,b}$ contain exact Lagrangian Klein bottles $S^1\wt\times S^{n-1}$ for odd $b$.
\end{cor}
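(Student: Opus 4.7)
The plan is to extract each conclusion directly from the front presentation of Theorem \ref{thm:Xab}, using the loose--chart detection of Subsection \ref{ssec:loose} for part (a), the surgery formula relating contact and symplectic homology for part (b), and the Lagrangian cobordism machinery of Subsection \ref{ssec:lagcob} for part (c).

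For (a), the attaching Legendrian $\Lambda_{1,b}$ is the max--$tb$ $(1,-b)$ torus knot, i.e.\ a Legendrian unknot with $tb=-b$ and rotation number $r=b-1\geq 1$, whose standard front exhibits $b-1$ stabilizing zig--zags on one side. After cusp $S^{n-2}$--spinning, for $n\geq 3$ one can take a $2$--disk parallel to the vertical direction which passes through such a zig--zag and through the spun cusp edge: the intersection with the spun front has the shape of the stabilized arc of Figure \ref{fig:stabLeg}. By Proposition \ref{prop:loose slice} this exhibits a loose chart for the attaching Legendrian, so by Definition \ref{def:flexible} the Weinstein manifold $X^n_{1,b}$ is explicitly flexible.

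For (b), I would appeal to the Bourgeois--Ekholm--Eliashberg surgery exact sequence, which computes $\sh^*(X^2_{a,b})$ from the Chekanov--Eliashberg dga of the attaching Legendrian $\Lambda_{a,b}\sse(S^3,\xi_\st)$. For $a\geq 2$ the Legendrian $(a,-b)$ max--$tb$ torus knot has non--trivial linearized contact homology, by the combinatorial computation of \cite{EN} applied to its standard grid diagram; hence $\sh^*(X^2_{a,b})\neq 0$, which contradicts flexibility, since flexible Weinstein manifolds have vanishing symplectic cohomology.

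For (c), I would construct a Lagrangian filling of the cusp spun Legendrian and then close it up with the Lagrangian core of the critical handle. In the $1$--dimensional front, the $(2,-b)$ max--$tb$ Legendrian torus knot has $b$ crossings along the braid axis, and for $b$ odd one can perform $(b-1)/2$ pairs of Legendrian $0$--surgeries (pinch moves) on adjacent cusp pairs symmetric with respect to the spinning axis, each realized by an upward Lagrangian cobordism via Theorem \ref{thm:lagcob}. The terminal Legendrian is the standard unknot, which bounds a Lagrangian disk; concatenating the cobordism traces yields an exact Lagrangian M\"obius band filling of $\Lambda_{2,b}$ inside $D^4$. Carrying out the same sequence in the $S^{n-2}$--spun picture produces an exact Lagrangian $S^1\wt\times D^{n-1}$ filling the cusp spun Legendrian, and capping with the $n$--disk core of the attached critical Weinstein handle gives the closed exact Lagrangian $S^1\wt\times S^{n-1}$ in $X^n_{2,b}$.

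The main obstacle is part (c): one must check that the pinch moves furnishing the M\"obius band in dimension $3$ can be arranged to be equivariant under the rotational symmetry of the cusp spinning, and keep careful track of the non--orientable twist to confirm that the capped--off Lagrangian is indeed the twisted sphere bundle $S^1\wt\times S^{n-1}$ rather than the trivial $S^1\times S^{n-1}$. Both facts ultimately reduce to the monodromy being an orientation--reversing diffeomorphism of the $(n-1)$--disk fiber, which is in turn dictated by the parity of $b$.
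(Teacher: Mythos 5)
Your overall strategy coincides with the paper's for all three parts, and your argument for flexibility of $X^n_{1,b}$ is essentially the proof given there: $\Lambda_{1,b}$ is a stabilized unknot, and the zig--zag survives the cusp $S^{n-2}$--spinning to give a loose chart via Proposition \ref{prop:loose slice}. The other two parts, however, each contain a genuine gap. For part (b), you invoke the \emph{linearized} contact homology of $\Lambda_{a,b}$, but linearization presupposes an augmentation of the Chekanov--Eliashberg algebra, and maximal--$tb$ negative torus knots need not admit one (graded augmentations are already obstructed by the nonzero rotation number $b-a$). The paper instead cites Sivek \cite{Siv}, whose point is precisely that these dgas admit either augmentations or $2$--dimensional representations; either suffices to show the dga is not acyclic, which combined with the surgery exact sequence of \cite{BEE} gives $\sh^*(X^2_{a,b})\neq 0$. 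Your citation of \cite{EN} is also misplaced here: that reference computes Legendrian contact homology in boundaries of subcritical Weinstein $4$--manifolds, not via grid diagrams in $(S^3,\xi_\st)$.

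For part (c) the gap is quantitative and it matters. A Lagrangian filling of $\Lambda_{2,b}$ assembled from one disk and $k$ saddle cobordisms ending at a single unknot has Euler characteristic $1-k$, so a M\"obius band ($\chi=0$) requires exactly \emph{one} saddle; your $(b-1)/2$ pairs of pinch moves contribute $b-1$ saddles and produce a surface of the wrong topology (or at least a topology you have not identified), after which the claim that the capped--off Lagrangian is $S^1\wt\times S^{n-1}$ no longer follows. The paper's construction is a single \emph{non--oriented} Legendrian ambient surgery (Definition \ref{def:legsurgery}, Theorem \ref{thm:lagcob}) performed on the standard unknot and producing $\Lambda_{2,b}$ directly, with the surgery locus placed on the spinning axis; this yields a M\"obius band filling $M$ in one step, makes the equivariance issue you worry about moot, and identifies the spun filling explicitly as $(M\times S^{n-2})\cup h_{n-1}\cong (S^1\wt\times S^{n-1})\sm D^n$, so that attaching the critical handle closes it up to the Lagrangian Klein bottle. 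Note also the directionality: in the paper's formalism the upward cobordism goes from the unknot at the bottom to $\Lambda_{2,b}$ at the top, so one performs the surgery \emph{on the unknot}, not on $\Lambda_{2,b}$.
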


To our knowledge none of these results were known or expected; the following two observations might help appreciate Corollary \ref{cor:Xab}.

First, the Weinstein manifolds $X^n_{1,b}$ are the first examples of affine varieties which are flexible as Stein manifolds, and Weinstein flexibility of $X^n_{1,b}$ reproves the vanishing of their symplectic cohomology \cite{MS}.

Second, from the singularity theory standpoint the defining polynomials $xy^b-1$ give smooth deformations of the non--isolated singularities $\{(x,y)\in\C^2:xy^b=0\}$. This is interesting for the following reason: given a stabilized isolated plane singularity $$p(x,y)+z_1^2+\ldots+z_{n-1}^2=0,$$
the Stein structure on its Milnor fiber
$$p(x,y)+z_1^2+\ldots+z_{n-1}^2=1$$
contains exact Lagrangian spheres coming from the Morsification, and it is in particular not flexible. In contrast, Corollary \ref{cor:Xab} shows that by considering the simplest non--isolated plane singularity $\{(x,y)\in\C^2:xy^b=0\}$, the Stein structure on the Milnor fiber
$$\{(x,y,z_1,\ldots,z_{n-1}):xy^b+z_1^2+\ldots+z_{n-1}^2=1\}\sse\C^{n+1}$$
is flexible, and thus contains no exact Lagrangian spheres.

Third, Theorem \ref{thm:Xab} also implies that the Weinstein six--folds $X^3_{1,b}$ are all diffeomorphic to $T^*S^3$ since it only consists of a single handle attachment and $\R^3$--bundles over the 3--sphere $S^3$ are smoothly trivial. Further, Corollary \ref{cor:Xab} not only exhibits the standard symplectomorphism $X^3_{1,1}\cong (T^*S^3,\la_\st,\p_\st)$ but shows the unexpected fact that $X^3_{1,b}$ are all Stein deformation equivalent to $X^3_{1,2}$ for any value $b\geq2$.

\begin{remark}
In addition, note that there is no combinatorial realization of this Stein deformation by using Hurwitz moves in an $A_k$--Lefschetz fibration, and that $X^3_{1,b}$ are pairwise not algebraically isomorphic \cite{DF}.\hfill$\Box$
\end{remark}

Let us first prove Corollary \ref{cor:Xab}, and then we will proceed with Theorem \ref{thm:Xab}.

\begin{proof}[Proof of Corollary \ref{cor:Xab}]
To conclude the flexibility of the Weinstein manifolds $X_{1,b}$, for the values $b \geq 2$, note that the Legendrian knot $\Lambda_{1,b}\sse(S^3,\xi_\st)$ is a stabilized unknot for these $b\geq2$. Then, the results in Subsection \ref{ssec:loose} imply that any cusp spinning of the loose Legendrian $\Lambda_{1,b}$ will be a loose Legendrian submanifold in $(S^{2n-1},\xi_\st)=\dd(D^{2n},\la_\st)$, which implies that $X_{1,b}$ is a flexible Weinstein manifold.

The non--vanishing $\mbox{SH}^*(X^2_{a,b}) \neq 0$ of symplectic cohomology for any $a,b \geq 2$ is obtained by applying the Legendrian surgery exact sequence \cite{BEE} and using the fact that the Legendrian knot $\Lambda_{a,b}\sse(S^3,\xi_\st)$ has non--vanishing Legendrian contact homology. This latter fact holds because these Legendrian differential graded algebras have either augmentations or $2$--dimensional representations \cite{Siv}.

\begin{figure}[h!]
  \centering
  \includegraphics[scale=0.85]{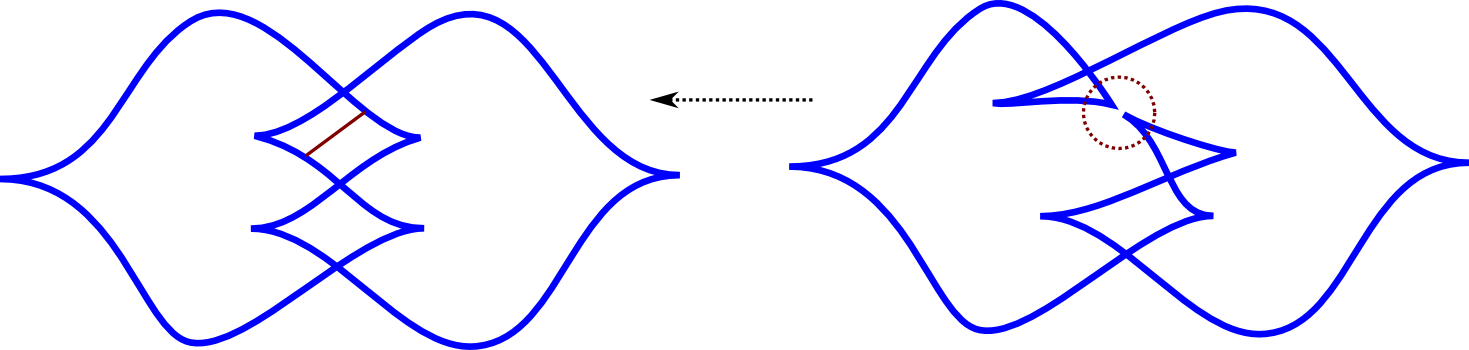}
  \caption{A M\"obius strip Lagrangian filling of $\Lambda_{2,b}$.}
\label{fig:mobfill}
\end{figure}

In order to proof the third statement of the corollary, let us now show that the Weinstein manifolds $X^n_{2,b}$ have each an exact Lagrangian $S^1\wt\x S^{n-1}$ for any odd $b\in\N$. For that, observe that each Legendrian knot $\Lambda_{2,b}\sse(S^3,\xi_\st)$ is the boundary of an exact Lagrangian M\"obius strip: indeed, we can construct these Legendrian knots from the Legendrian unknot by using the single non--oriented Legendrian ambient surgery indicated in Figure \ref{fig:mobfill}. In consequence, the cusp $S^{n-2}$--spinning of the Legendrian knot $\Lambda_{2,b}$ is the boundary of an exact Lagrangian $L\sse(D^{2n},\la_\st)$ whose smooth topology is $L \cong (M \x S^{n-2}) \cup h_{n-1}$, where $M$ is a M\"obius strip and $h_{n-1}$ is an $(n-1)$--handle attached along $\{\mbox{pt}\} \x S^{n-2} \sse \dd M \x S^{n-2}$. Thus, the exact Lagrangian filling $L$ of the Legendrian $\dd L\cong S^{n-1}$ is diffeomorphic to the punctured product $(S^1 \x S^{n-1}) \sm D^n\sse(D^{2n},\la_\st)$, which is then completed to the exact Lagrangian $S^1\wt\times S^n$ in $X^n_{2,b}$ when the Weinstein handle along $\dd L$ is attached.
\end{proof}

\begin{remark}
The assumption that the integers $a$ and $b$ are coprime is not essential, though the conclusions will be different for distinct pairs of integers. For example, the affine manifold
$$(X^n_{2,2},\la,\p) = \{(x,y,\underline{z}):x^2y^2 + \sum_{i=1}^{n-1} z_i^2 = 1\}\sse(\C^{n+1},\la_\st,\p_\st)$$
contains an exact Lagrangian $S^1 \x S^{n-1}$: we encourage the reader to prove this and explore other examples.\hfill$\Box$
\end{remark}

Let us now prove Theorem \ref{thm:Xab}.

\begin{proof}[Proof of Theorem \ref{thm:Xab}]
In order to apply Recipe \ref{dictionary} we first need to endow the Weinstein manifold $(X^n_{a,b},\la_\st,\p_\st)$ with a Weinstein Lefschetz fibration: we consider the regular map
$$\pi:X^n_{a,b}=\{(x,y,\underline{z}):x^ay^b+\sum_{i=1}^{n-1}z_i^2=1\}\lr\C,\quad \pi(x,y,z_1,\ldots,z_{n-1})=ax+by.$$
This Lefschetz fibration has $(a+b)$ critical points and Weinstein fiber
$$(F_\pi,\la,\p)\cong (A^{2n-2}_{a+b-1},\la_\st,\p_\st).$$
For the second step in Recipe \ref{dictionary} we shall use the auxiliary Lefschetz bifibration
$$\rho:(F_\pi,\la)\longrightarrow\C,\quad \pi(x,z_1,\ldots,z_{n-1})=x,$$
discussed in Subsection \ref{ssec:AD_bifiber}. The fibration $\rho$ has also $(a+b)$ critical points which can be assumed to be located at the $(a+b)$--roots of unity. The Weinstein bifiber is the affine conic $(F_\rho,\la,\p) \cong (T^*S^{n-2},\la_\st,\p_\st)$ and, in order to execute the third and fourth steps, we must describe the cyclically ordered set of vanishing cycles
$$V_\pi=\{V_1,\ldots,V_{a+b}\}$$
in terms of matching paths for $\rho:F_\pi\lr\C$, which is done as follows. Fix an $(a+b)$--root of unity $\zeta\in\C$ and denote by $R_i$ a straight path joining $\zeta$ with the $i$th $(a+b)$--root of unity, where we are ordering the roots counterclockwise and $1\leq i\leq a+b$. Then we have the following algebraic description:

$$ V_i =
  \begin{cases}
    R_a       & \quad \text{if } i=1,\\
    \tau_{R_{i+a-1}}(R_{i-1})       & \quad \text{if } 2\leq i \leq b,\\
    R_b       & \quad \text{if } i=b+1,\\
    \tau_{R_{i-1}}(R_{i-b-1}) & \quad \text{if } b+2 \leq i.\\
  \end{cases}
$$

\begin{figure}[h!]
  \centering
  \includegraphics[scale=0.45]{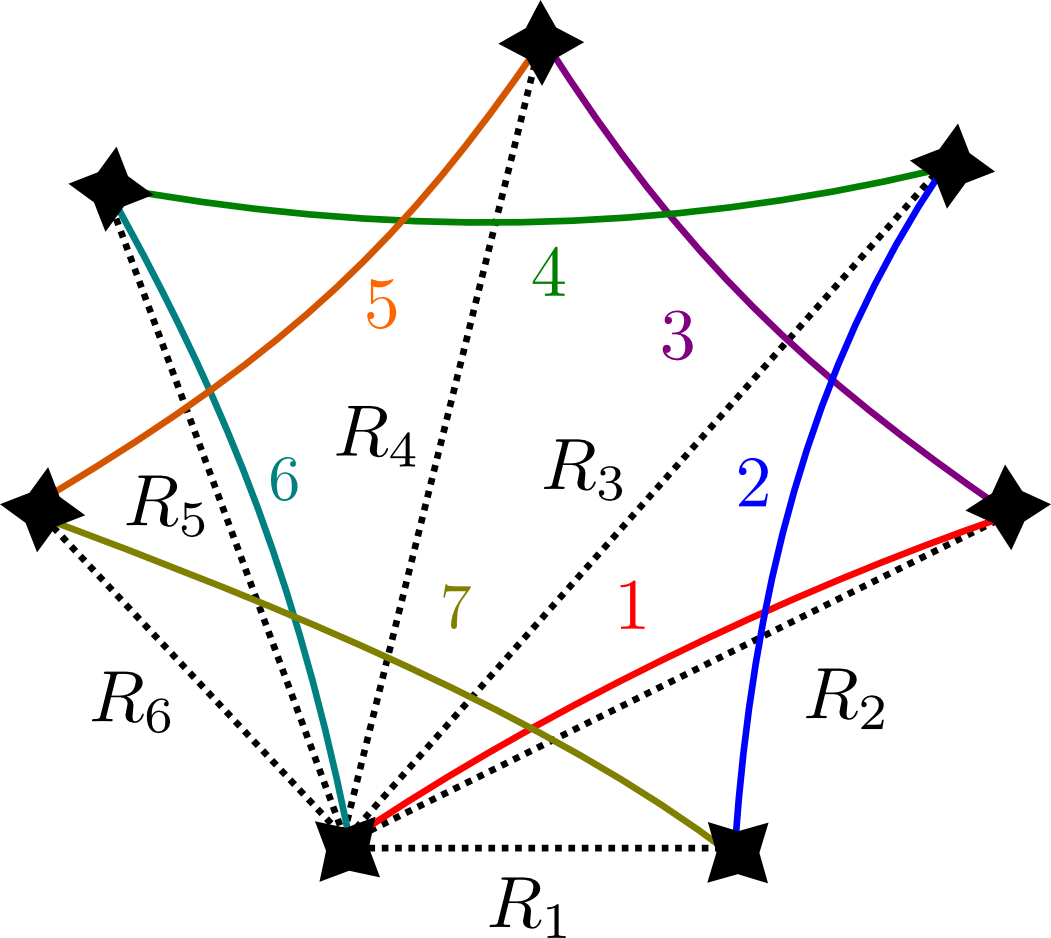}
  \caption{The Lefschetz bifibration diagram for the Weinstein manifold $X_{2,5}$, and the Lagrangian spheres $\{R_j\}$ used to describe the vanishing cycles.}
\label{fig:X25LF}
\end{figure}

\begin{figure}[h!]
  \centering
  \includegraphics[scale=0.45]{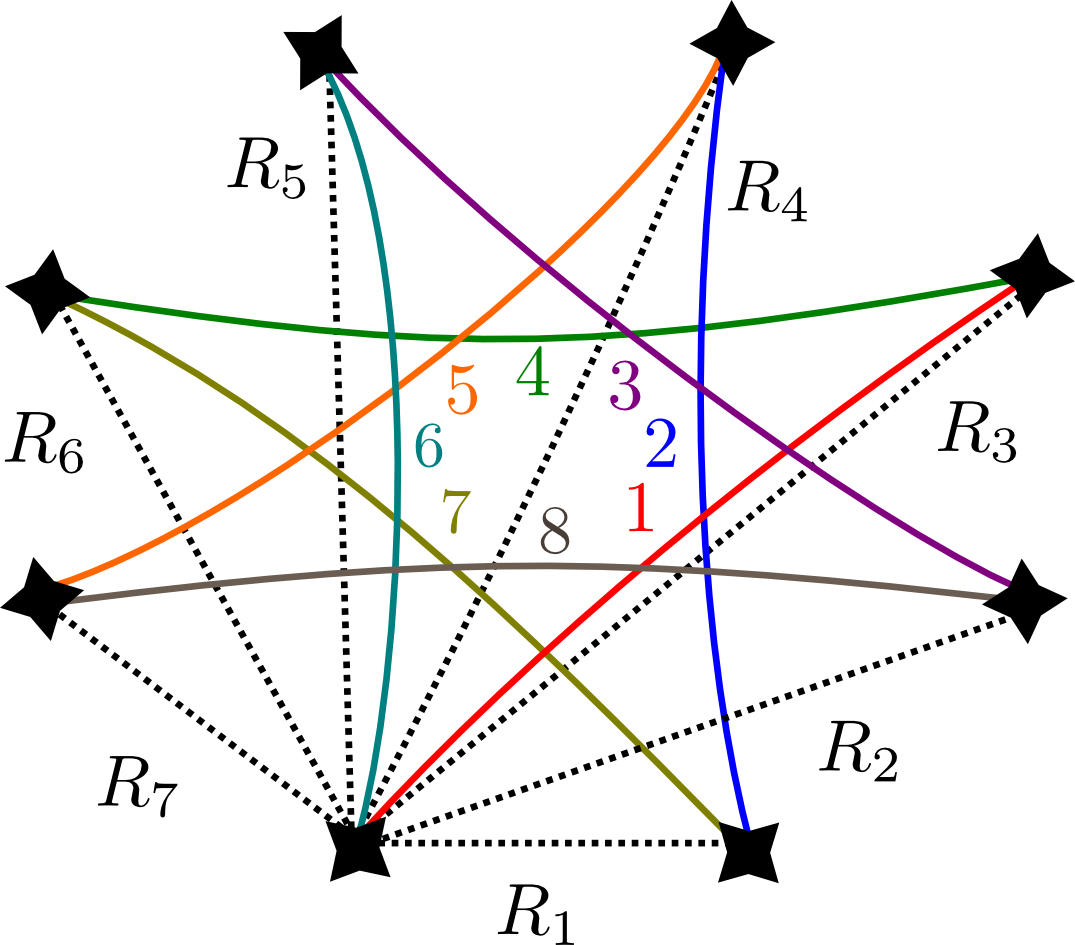}
  \caption{The equivalent to Figure \ref{fig:X25LF} for the Weinstein manifold $X_{3,5}$.}
\label{fig:X35LF}
\end{figure}

Both Figures \ref{fig:X25LF} and \ref{fig:X35LF} depict these matching paths for the vanishing cycles of the fibration $\pi$ in the cases $(a,b)=(2,5)$ and $(3,5)$; this part being solved, we can thus proceed with the fifth step. In order to draw a Legendrian handlebody of $X_{a,b}$, it suffices to gather the above information and apply Proposition \ref{prop:stacking} by using an $A_{a+b-1}$--linear basis as the set $\L$ in the first step of Recipe \ref{dictionary}: we could proceed in this manner, and we refer to Example \ref{ex:X23 linear} below for the results. Instead, we shall use a non--linear basis as the set $\L$ given by the set of exact Lagrangian spheres $\L=\{R_1,\ldots,R_{a+b-1}\}$; the reason being that the vanishing cycles $V_i$ have a simpler expression in terms of these Lagrangian spheres and thus can be managed more efficiently in order to conclude Theorem \ref{thm:Xab}.

We draw the subcritical handles associated to the Lagrangian spheres $\L$ starting at the first ray $R_1$ and going straight up to the last ray $R_{a+b-1}$: these correspond to the Legendrian skeleton depicted in Figure \ref{fig:Xab skel}. In detail, Figure \ref{fig:Xab skel} is the Legendrian lift of a Lagrangian skeleton of the Weinstein fiber $(F_\pi,\la,\p)\cong(A_{a+b-1},\la_\st,\p_\st)$, note though that this Legendrian skeleton is not a tree plumbing of spheres since these all intersect in one point; this is however a valid choice for a set $\L = \{R_i\}$ and we proceed with it.

Proposition \ref{prop:stacking} applies with this choice of Legendrian skeleton and we obtain the Legendrian front for the Legendrian attaching link depicted in Figure \ref{fig:X25front}. Note that with the choice of $\L$ and these vanishing cycles, the Legendrian front becomes $S^{n-2}$--rotationally symmetric along the central axis and thus it suffices to consider a 2--dimensional front slice. \\

\begin{figure}[h!]
  \centering
  \includegraphics[scale=0.55]{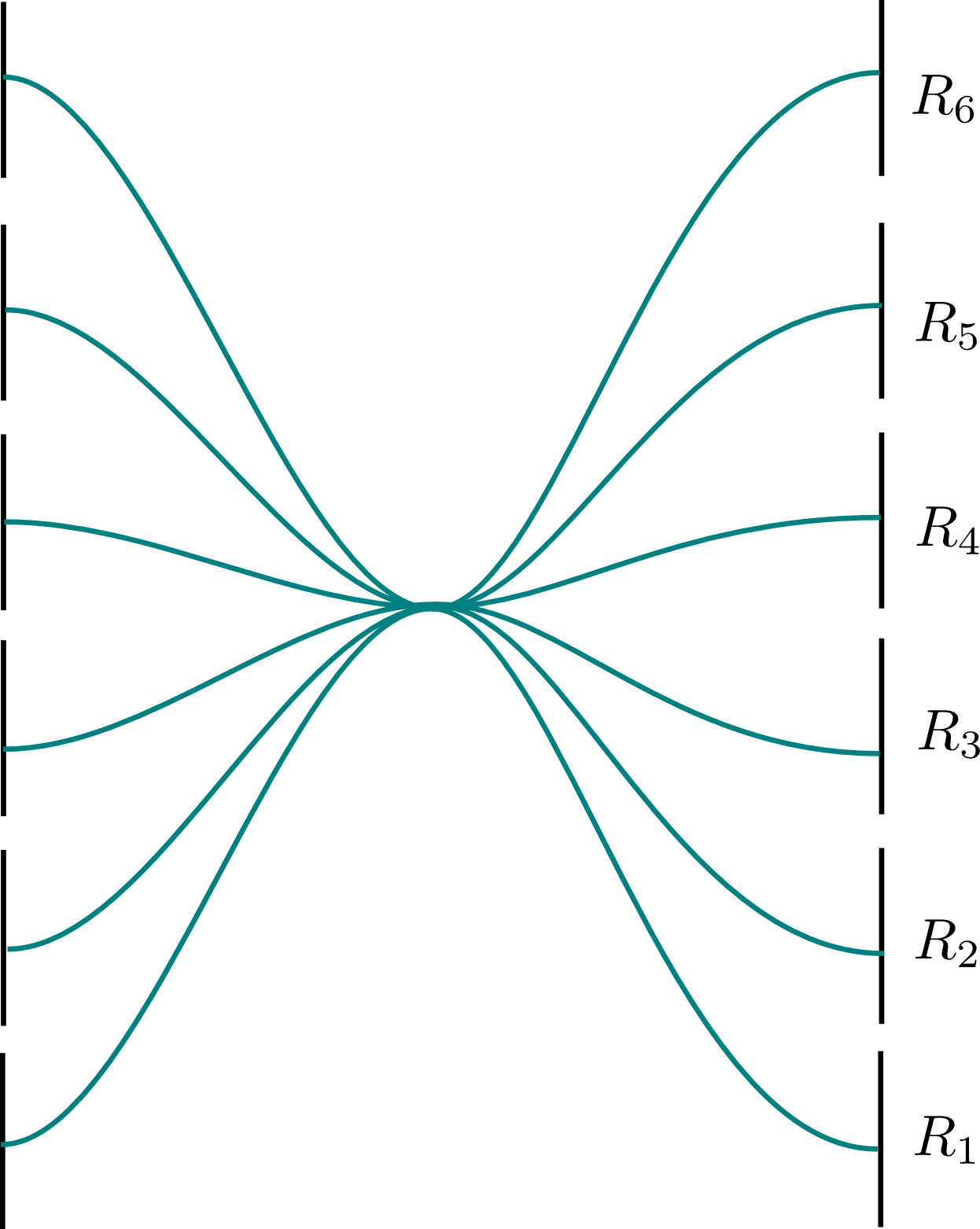}
  \caption{The Legendrian skeleton defined by $\L$.}
\label{fig:Xab skel}
\end{figure}

\begin{figure}[h!]
  \centering
  \includegraphics[scale=0.55]{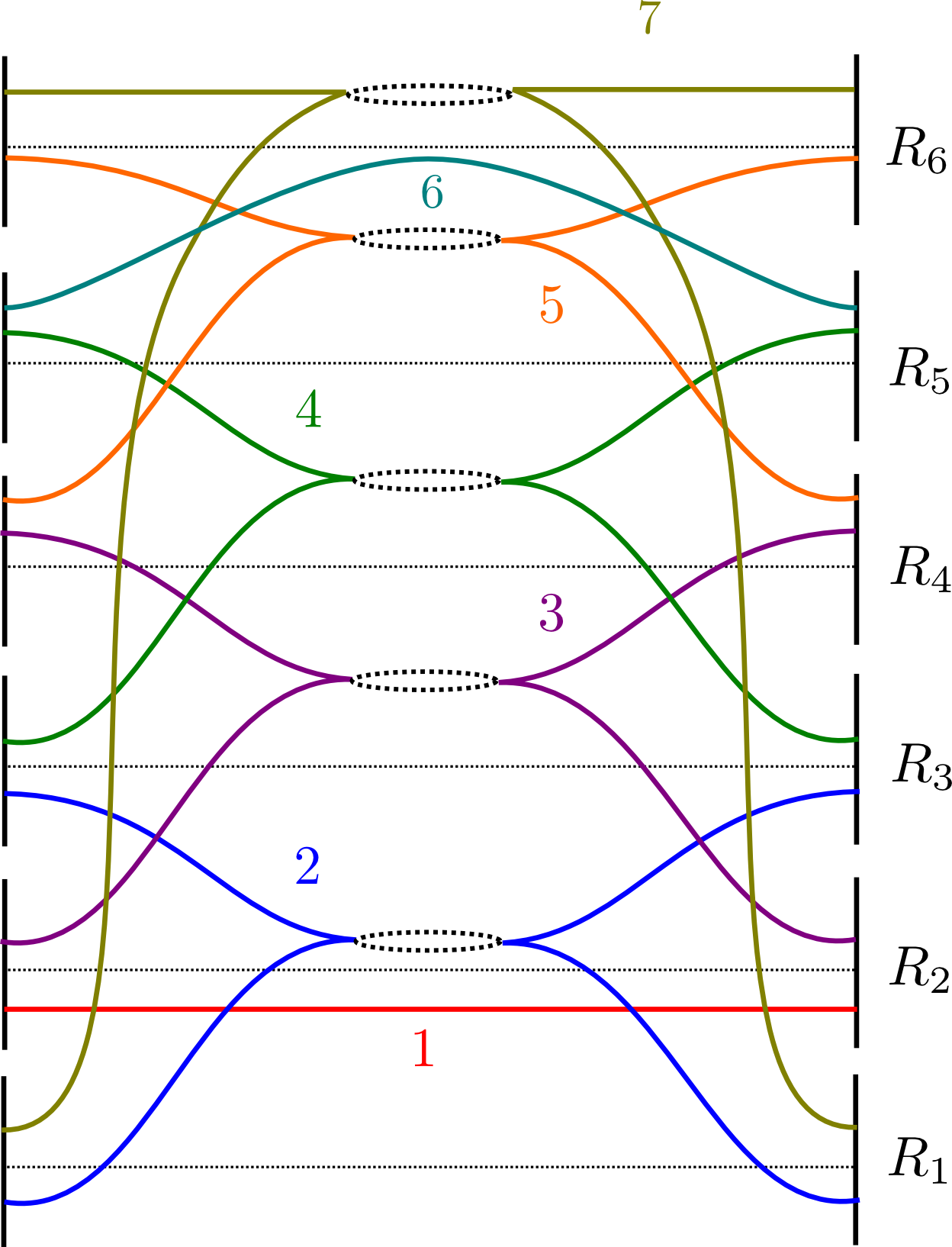}
  \caption{The Legendrian handlebody description of $X_{a,b}$ for $(a,b) = (2,5)$.}
\label{fig:X25front}
\end{figure}

\begin{figure}[h!]
  \centering
  \includegraphics[scale=0.45]{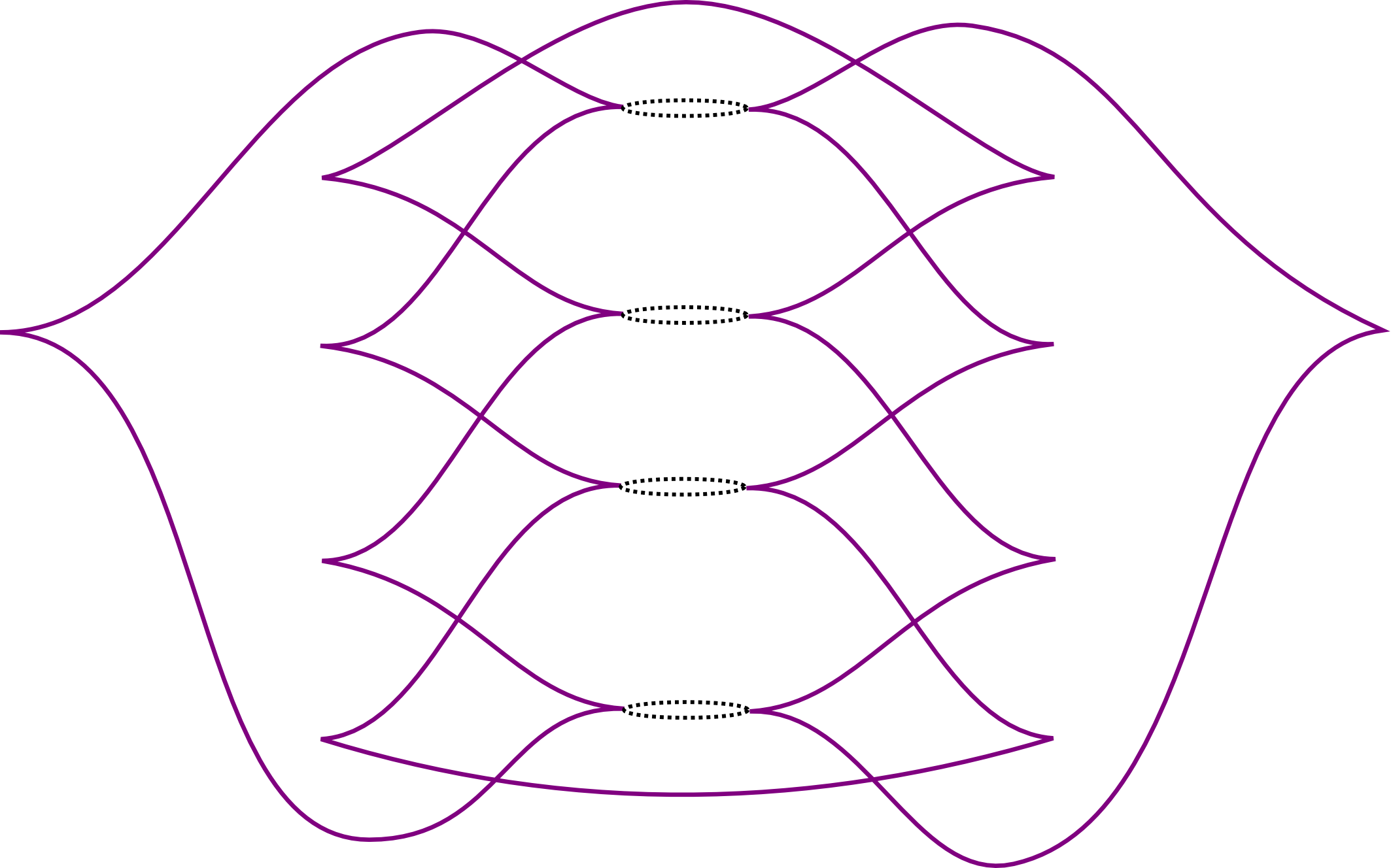}
  \caption{Figure \ref{fig:X25front} after applying handle cancellations.}
\label{fig:X25knot}
\end{figure}

Let us now simplify this Legendrian front, while reading the following lines it might be a healthy practice to keep Figures \ref{fig:X25front} and \ref{fig:X35front} in mind. First, notice that the coprimality of the integers $a$ and $b$ allows us to cancel each of the subcritical handles with some Legendrian handle. Since each subcritical handle intersects only two Legendrian strands, we can use the cancellation move in Proposition \ref{prop:handle cancel} and cancellation corresponds diagramatically to collapsing all subcritical handles to Legendrian cusps, see Figures \ref{fig:X25knot} and \ref{fig:X35front}. Second, the only two vanishing cycles in our collection $V=\{V_1,\ldots,V_{a+b}\}$ that will have fronts intersecting the central axis are $V_1$ and $V_{b+1}$ which, along with the global $S^{n-2}$--symmetry of this Legendrian front, shows that the resulting Legendrian attaching sphere is a cusp $S^{n-2}$--spinning of a Legendrian knot $\Lambda\sse(S^3,\xi_\st)$, which is defined to be the right hand side of Figure \ref{fig:X25knot}. In order to conclude Theorem \ref{thm:Xab} we must exhibit the Legendrian isotopy $\Lambda \cong \Lambda_{a,b}$.

\begin{figure}[h!]
  \centering
  \includegraphics[scale=0.65]{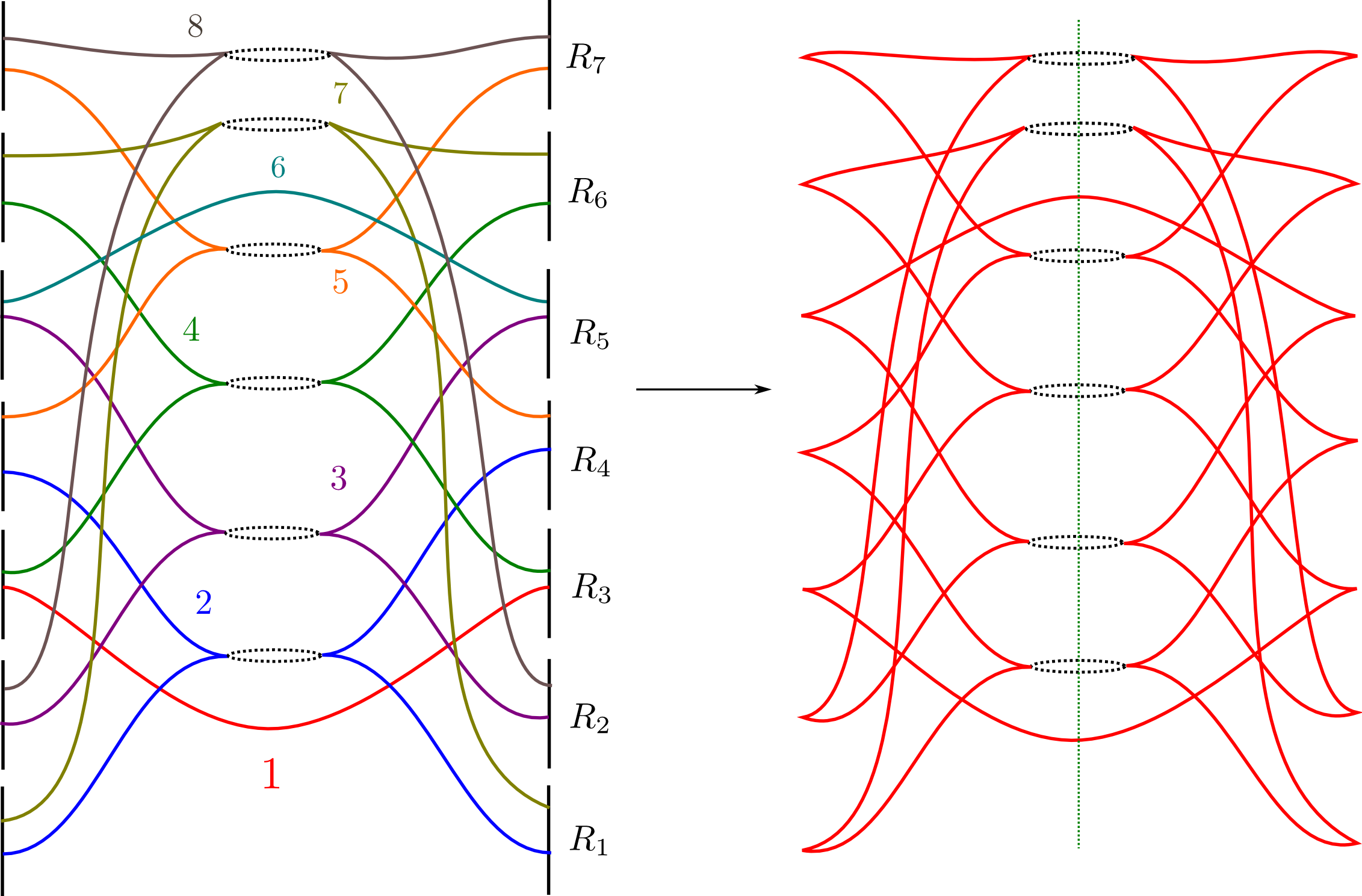}
  \caption{The Legendrian handlebody description of $X_{a,b}$ for $(a,b) = (3,5)$, before and after handle cancellations.}
\label{fig:X35front}
\end{figure}

\begin{figure}[h!]
  \centering
  \includegraphics[scale=0.65]{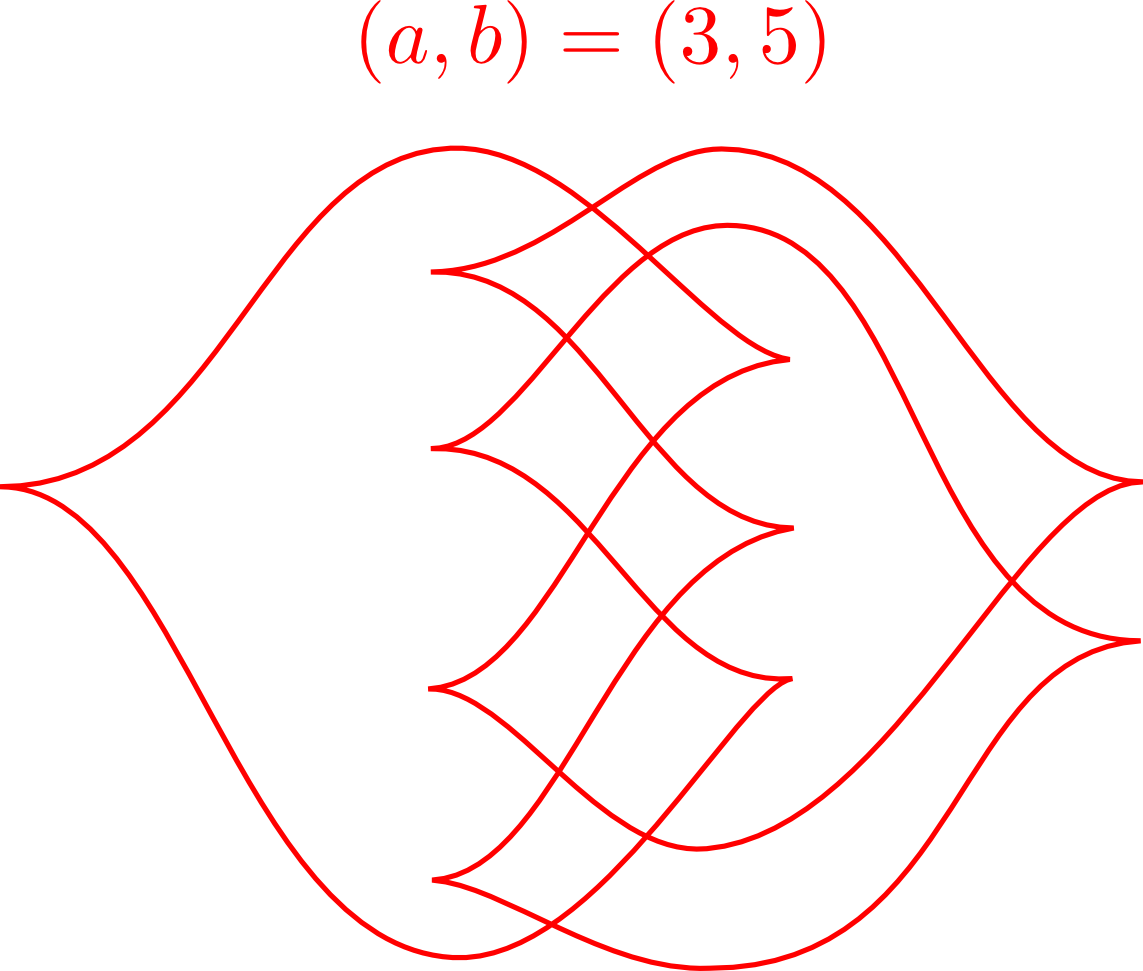}
  \caption{The left handed Legendrian torus knot $\Lambda_{a,b}$.}
\label{fig:X35knot}
\end{figure}

To construct this Legendrian isotopy, consider the Legendrian cusps coming from the rays $R_i$ with $i\leq a-1$, i.e.~ the cusps below the portion of $\Lambda$ coming from the Legendrian lift of $V_1$, and Legendrian isotope them to the right. The Legendrian strands on the upper branch of these cusps, that is, the steepest strands in Figures \ref{fig:X25front} and \ref{fig:X35front}, can also be Legendrian isotoped to the right, thus disjoining them from the main body of the Legendrian front. Then the left cusps on these strands can be cancelled with the cusps corresponding to the rays $R_i$ with $b+1\leq i$ by using Reidemeister I moves. The remaining Legendrian front is that of the standard Legendrian $(a,-b)$ braid together with a Legendrian ribbon on the right connecting $(a-1)$ strands on top to the bottom, and a single strand to the left of the braid corresponding to the rotational axis; the results are depicted in Figures \ref{fig:X25knot} and \ref{fig:X35knot} in the cases $(a,b)=(2,5)$ and $(3,5)$. This resulting Legendrian front is a Legendrian front for the Legendrian knot $\Lambda_{a,b}$, which proves Theorem \ref{thm:Xab}. 
\end{proof}

\begin{remark}
Note that the rotation class $r(\Lambda_{a,b}\#\overline{\Lambda_{a,b}})=0$ vanishes, as it necessarily should. Indeed, on the one hand the first Chern class $c_1(X^2_{a,b})$ is Poincar\'e dual to $r(\Lambda_{a,b}\#\overline{\Lambda_{a,b}})$ times the cocore of the handle, and on the other the tangent bundle of an affine complete intersection is (even holomorphically) trivial, and thus the first Chern class must satisfy $c_1(X^2_{a,b})=0$.

It is also relevant to remark that Legendrian torus knots in $(S^3,\xi_0)$ are simple Legendrian knots \cite[Theorem 4.3]{EH}, and the rotation number of a maximal Thurston--Bennequin Legendrian $(a,-b)$--torus knot belongs to the finite set
$$\left\{|b|-|a|-2ak:\quad k\in\left[0,\frac{|b|-|a|}{|a|}\right)\cap\N\right\}.$$
The Legendrian knot $\Lambda_{a,b}$ appearing in the statement of Theorem \ref{thm:Xab} corresponds to the unique maximal Thurston--Bennequin Legendrian $(a,-b)$--torus knot with rotation number equal to $|b|-|a|$.$\hfill\Box$
\end{remark}

In the proof of Theorem \ref{thm:Xab} we have used a non-generic basis $\L$ for the middle homology of the Weinstein fiber $(F_\pi,\la)$, the statement can also be proven using a standard $A_{a+b-1}$--linear basis as the following example shows; this choice however produces a Legendrian front that lacks the global $S^{n-2}$--symmetry.

\begin{figure}[h!]
  \centering
  \includegraphics[scale=0.5]{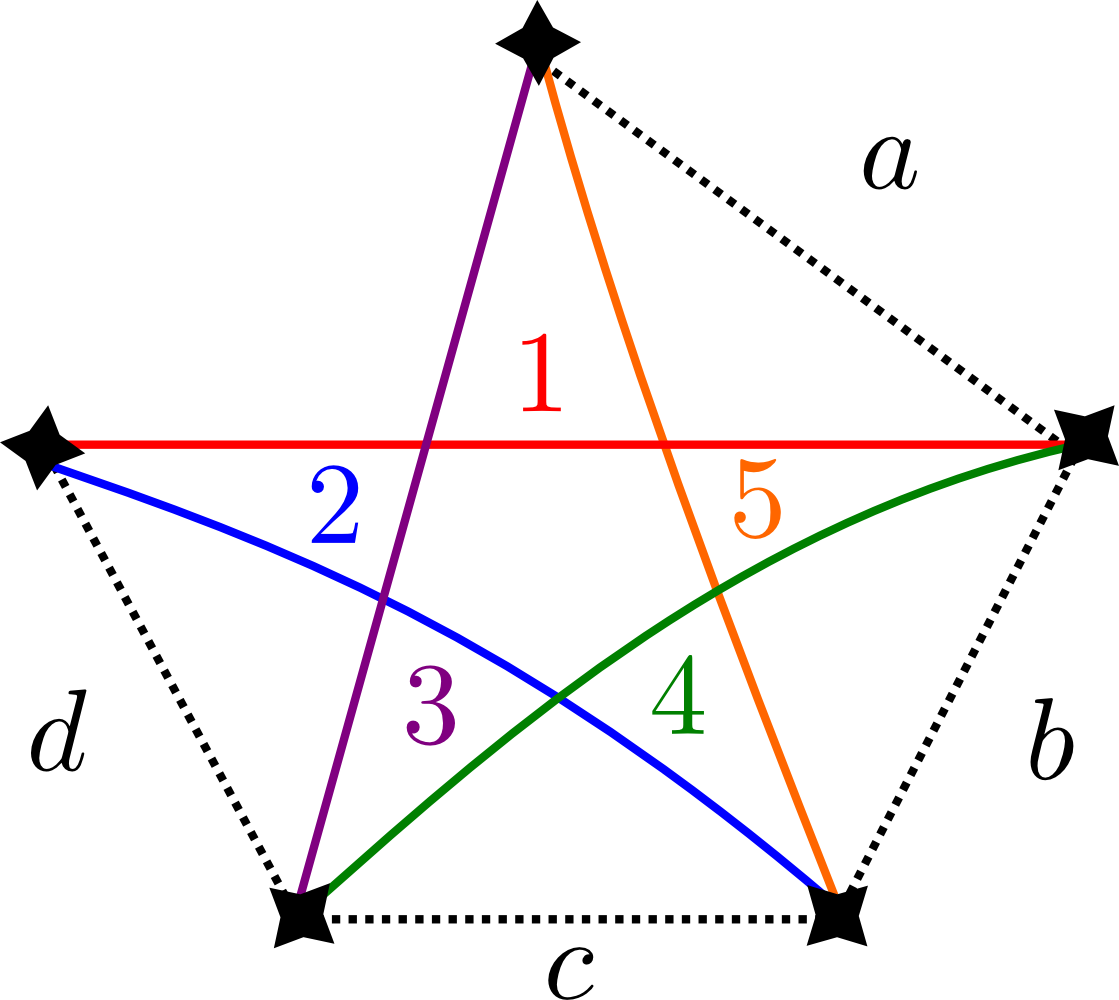}
  \caption{The $2+3=5$ vanishing cycles for $X_{2,3}$ and an $A_4$--basis $\{a,b,c,d\}$.}
\label{fig:X23LF}
\end{figure}

\begin{example} \label{ex:X23 linear} Let us obtain the Legendrian handlebody for the Weinstein manifold
$$X^2_{2,3}=\{(x,y,z):	x^2y^3+z^2=1\}\sse\C^3$$
using the linear basis $A_4$--basis $\L=\{a,b,c,d\}$ indicated in Figure \ref{fig:X23LF}, which represents the Lefschetz bifibration $\rho:F_\pi\lr\C$ given by $\rho(x,z)=x$. In this case it is a two--fold branch covering of the plane, given by the two choices of square root for the $z$--coordinate, with five branch points, whose critical values are depicted in the four--pointed stars. Thus the vanishing cycles of $\rho$ coincide with the zero section $Z \sse T^*S^{n-2} \cong F_\rho$ and Figure \ref{fig:X23LF} contains the necessary information to proceed with Recipe \ref{dictionary}.

\begin{figure}[h!]
  \centering
  \includegraphics[scale=0.55]{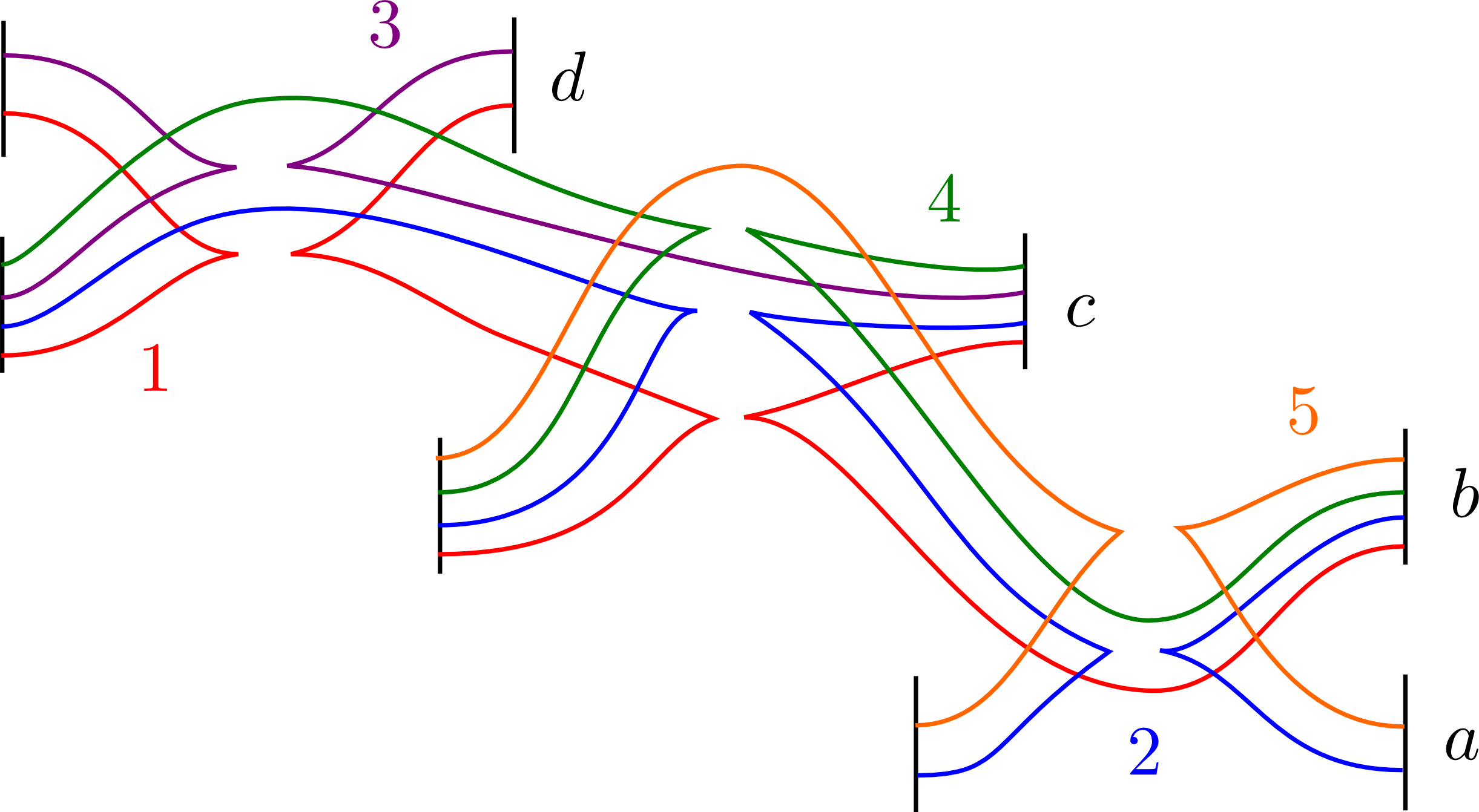}
  \caption{Legendrian handlebody for $X_{2,3}$.}
\label{fig:X23Front1}
\end{figure}

The five vanishing cycles expressed in this $A_4$--basis are
\begin{align*}
V_1=\tau_d\tau_c(b),\quad V_2=\tau_c\tau_b(a),\quad V_3=\tau_d(c),\\
V_4=\tau_c(b),\quad V_5=\tau_b(a).\\
\end{align*}
Then we proceed with the fifth step of Recipe \ref{dictionary} by using Proposition \ref{prop:stacking} in order to produce the front projection of the Legendrian attaching link: the Legendrian lifts of the five vanishing cycles are depicted in Figure \ref{fig:X23Front1}, which can be readily simplified to Figure \ref{fig:X23Front2}. The vertical dotted segment in Figure \ref{fig:X23Front2} indicates the connected sum decomposition of the Legendrian knot and it can be traced from the original front in Figure \ref{fig:X23Front1} at the vertical arc connecting the basis elements $b$ and $c$. From this simplified Legendrian front we also obtain the statement of Theorem \ref{thm:Xab} in this case.\hfill$\Box$

\begin{figure}[h!]
  \centering
  \includegraphics[scale=0.45]{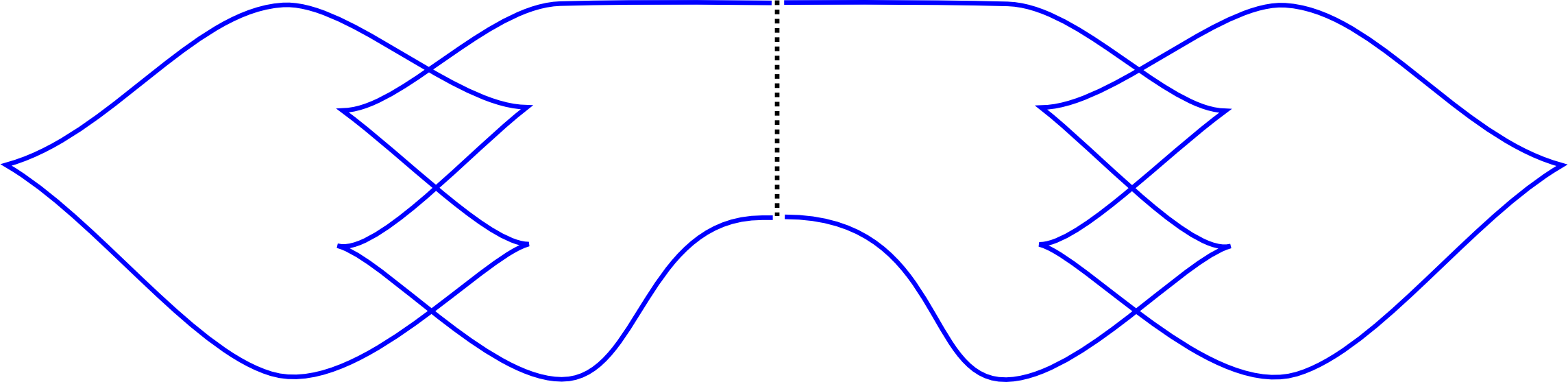}
  \caption{Simplification of the Legendrian Front in Figure \ref{fig:X23Front1}.}
\label{fig:X23Front2}
\end{figure}
\end{example}

This concludes the proof of Theorem \ref{thm:Xab}, Corollary \ref{cor:Xab} and our study of the Weinstein manifolds $(X^n_{a,b},\la,\p)$. Now proceed to the study of the symplectic topology of an exotic affine six--manifold: interestingly, the application of Recipe \ref{dictionary} differs in two aspects with respect to the calculations in Subsection \ref{ssec:mirror1} and the proof of Theorem \ref{thm:Xab}, but it still succeeds in producing the desired Legendrian handlebody.

\subsection{The Koras--Russell Cubic}\label{ssec:kr}

The characterization of the algebraic isomorphism type of the affine space $\C^n$ is a core problem in affine algebraic geometry. In the seminal article \cite{Ram}, C.P.~Ramanujam proved that a smooth contractible algebraic surface simply--connected at infinity is algebraically isomorphic to $\C^2$; this is however no longer true in higher dimensions. A beloved counter--example is the Koras--Russell cubic, a smooth affine 3--fold diffeomorphic to $\C^3$ but not algebraically isomorphic to it. It is defined by the following equation:

$$\SC:=\{(x,y,z,w):x+x^2y+w^3+z^2=0\}\sse\C^4.$$

The Koras--Russell cubic belongs to a family of smooth contractible 3--folds introduced in \cite{KR}, which were proven to be algebraically exotic $\C^3$ in \cite{KML} by finding non--constant regular functions annihilated by every locally nilpotent derivation. To our knowledge, it was not known whether $(\SC,\la,\p)$ is symplectomorphic to the standard affine 3--space $(\C^3,\la_\st,\p_\st)$, in this section we prove this.

\begin{thm}\label{thm:kr}
The Koras--Russell cubic $(\SC,\la,\p)$ is Weinstein equivalent to $(\C^3,\la_\st,\p_\st)$.
\end{thm}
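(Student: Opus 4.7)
The plan is to apply Recipe \ref{dictionary} to the Koras--Russell cubic $\SC$, simplify the resulting Legendrian handlebody via the calculus developed in Sections \ref{sec:kirby} and \ref{sec:lef}, and exhibit it as the trivial handlebody of $(\C^3,\la_\st,\p_\st)$, which has no critical Weinstein handles.

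The first step is to endow $\SC$ with a Weinstein Lefschetz fibration $\pi:\SC\lr\C$ whose fiber $(F_\pi,\la)$ is the $4$--dimensional $D_4$ Milnor fiber; this matches the hint in the introduction that, for the Koras--Russell cubic, the Weinstein bifiber is a $D_4$--plumbing rather than a linear one. A natural candidate is a generic linear combination such as $\pi(x,y,z,w) = y + \epsilon w$ for small $\epsilon \in \C^*$: a direct computation shows that the critical set on $\SC$ reduces to $\{z=0,\,1+2xy=0,\,3w^2=\epsilon x^2\}$, producing four non--degenerate Lefschetz critical points, and an analysis of the fiber $\{x - \epsilon w x^2 + w^3 + z^2 = 0\}$ at $\pi=0$ identifies it with $D^4_4$ after an affine change of variables. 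Linear functions in a single coordinate are ruled out because, as one readily verifies from the gradient of the defining polynomial $f=x+x^2y+w^3+z^2$, they have no critical points on $\SC$.

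The second step follows Subsection \ref{ssec:AD_bifiber}: I would equip $(F_\pi,\la)\cong (D^4_4,\la_\st)$ with the bifibration $\rho$ described there and fix the non--linear $D_4$--basis $\{\alpha,\gamma,\delta,a_2\}$ of matching paths exemplified in Figure \ref{fig:D6_diagram}. Each of the four vanishing cycles of $\pi$ is then rendered as a matching path in $\im(\rho)$ by numerically tracking how the critical values of $\rho|_{F_t}$ collide as $t$ traverses a linear ray from the origin to each critical value of $\pi$. These matching paths are subsequently expressed as words in the half--twists along the four basis arcs, repeatedly invoking the V--move of Subsection \ref{ssec:lef} together with the braid relation $\tau_S L\simeq\tau_L^{-1}S$ to bring them into a form amenable to Proposition \ref{prop:stacking}. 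Proposition \ref{prop:stacking} then translates these Dehn twist words into an explicit Legendrian front for the attaching link in the contact boundary $\dd(F_\pi\times D^2,\la+\la_\st)$.

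The third step is to simplify the resulting diagram using Legendrian Reidemeister moves (Subsection \ref{ssec:highD}), handleslides (Proposition \ref{prop:slide}), and handle cancellations (Propositions \ref{prop:basic cancel} and \ref{prop:handle cancel}). The goal is to successively eliminate each critical Legendrian $2$--sphere by pairing it with a subcritical handle from the $D_4$--skeleton, ultimately presenting $\SC$ as a Weinstein manifold with no handles of index $\geq 1$, which completes to $(\C^3,\la_\st,\p_\st)$. The main obstacle will be the combinatorial complexity of this simplification: with four critical vanishing cycles, each a non--trivial word in Dehn twists over a non--linear $D_4$--basis, the initial Legendrian front contains a substantial number of cone singularities (cf. Table \ref{table}) and the handleslides must be chained with care to avoid introducing new obstructions. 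Should direct cancellation prove too involved, a fallback is to verify that each residual critical Legendrian sphere exhibits a loose chart in the complement of the other components; since $\SC$ is diffeomorphic to $\R^6$ and its symplectic tangent bundle is trivial, the $h$--principle of Theorem \ref{thm:CE flex} would then deliver the Stein equivalence $\SC\simeq(\C^3,\la_\st,\p_\st)$.
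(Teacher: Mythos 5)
Your proposal follows the paper's proof essentially step for step: a generic linear Lefschetz fibration with four critical points and $D_4$--type fiber (the paper uses $\pi(x,y,w,z)=-2x+y+2w$ and first reduces to the $4$--fold $\SC'=\{x+x^2y+w^3=0\}$, recovering $\SC$ by the rotational conventions of Subsection \ref{ssec:highD}), an $A_2$--bifibration on the fiber, V--moves to reach a $D_4$--basis, Proposition \ref{prop:stacking} to draw the front, and then simplification. The one substantive remark is that what you present as a ``fallback'' is in fact the essential mechanism of the paper's argument, not an optional escape route: after cancelling three of the four critical handles against subcritical ones, the remaining Legendrian component is \emph{not} brought to the standard cancelling position by front moves alone; instead the paper exhibits a zig--zag, invokes Proposition \ref{prop:loose slice} to conclude looseness, and then uses Theorem \ref{thm: c0 loose} to cancel the last critical/subcritical pair, since the loose Legendrian is smoothly isotopic to the standard cancelling sphere. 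Your alternative ending via Theorem \ref{thm:CE flex} (all attaching spheres loose, $\SC\cong\R^6$ with trivial symplectic tangent bundle, $\C^3_\st$ vacuously flexible) is equally valid and slightly more uniform, but either way you should not expect the purely combinatorial cancellation of your ``primary plan'' to terminate.
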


\begin{proof} First, we consider the Lefschetz fibration
$$\pi:\SC\lr\C,\quad\pi(x,y,w,z)=-2x+y+2w,$$
which has four critical points, with different critical values; let us denote their four vanishing cycles by $V=\{V_1,V_2,V_3,V_4\}$. Since the defining equation of the Koras--Russell $\SC$ has the $z^2$ term and the Lefschetz fibration $\pi:\SC\lr\C$ does not interact with the $z$--coordinate, the Legendrian handlebody for $\SC$ can be obtained by exhibiting a Legendrian handlebody for the Weinstein 4--fold
$$\SC'=\{(x,y,w):x+x^2y+w^3=0\}\sse\C^3$$
and adding the corresponding local symmetries discussed in Subsection \ref{ssec:highD}. In consequence, we continue our work with the Weinstein manifold $\SC'$ and the Lefschetz fibration $\pi:\SC'\lr\C$ introduced above. The regular fiber
$$(F_\pi,\la)=\{x+x^2y+(x-y/2)^3=0\}\sse\C^2$$
is a smooth affine cubic curve which has the Weinstein type of the $D_4$ Milnor fiber.\\
Second, we use the linear projection
$$\rho:(F_\pi,\la)\lr\C,\quad\rho(x,y)=x+3.4\cdot y,$$
in order to proceed with the third step of Recipe \ref{dictionary}, i.e.~ describing the vanishing cycles of the fibration $\pi$ as matching paths for the Lefschetz bifibration $\rho$.

The regular bifiber $(F_\rho,\la,\p)$ is the standard $A_2$ Milnor fiber and we can use Subsection \ref{ssec:AD_bifiber} to conclude that the linear projection $\rho$ has six non--degenerate critical points with different critical values. The first exact Lagrangian circle $S_1 \sse F_\rho$ is the vanishing cycle for three of these critical values, whereas the exact Lagrangian, $S_2 \sse F_\rho$, is the vanishing cycle for the remaining three critical values. See Figure \ref{fig:KR_LFSquare}(A), which depicts the three critical points whose vanishing cycles are $S_1$ with ochre pentagons, and the critical points whose vanishing cycles are $S_2$ with black triangles.

\begin{figure}[h!]
\centering
\begin{subfigure}{.5\textwidth}
  \centering
  \includegraphics[scale=0.3]{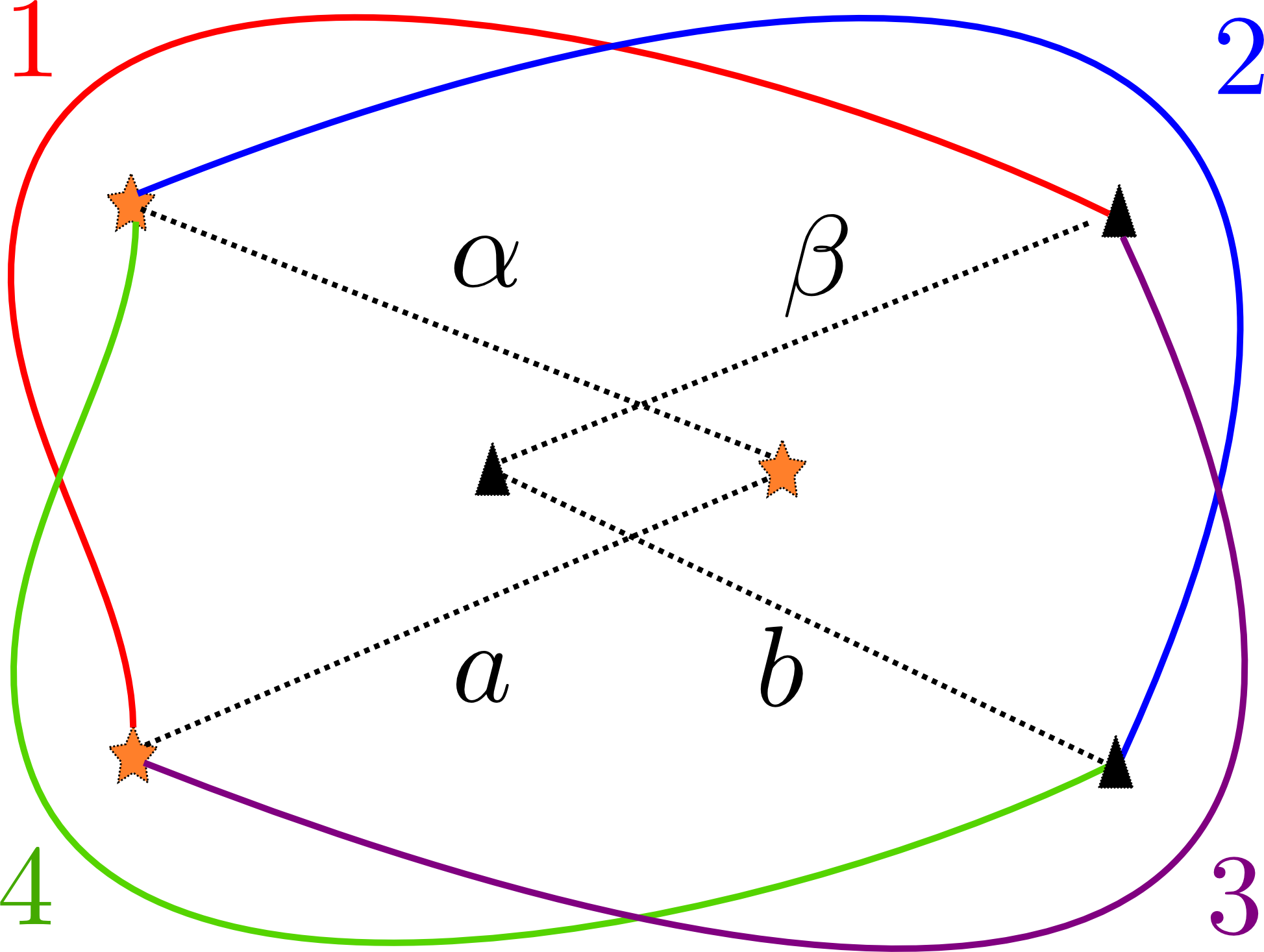}
  \caption{Matching paths for the pair $(\pi,\rho)$.}
\end{subfigure}%
\begin{subfigure}{.5\textwidth}
  \centering
  \includegraphics[scale=0.3]{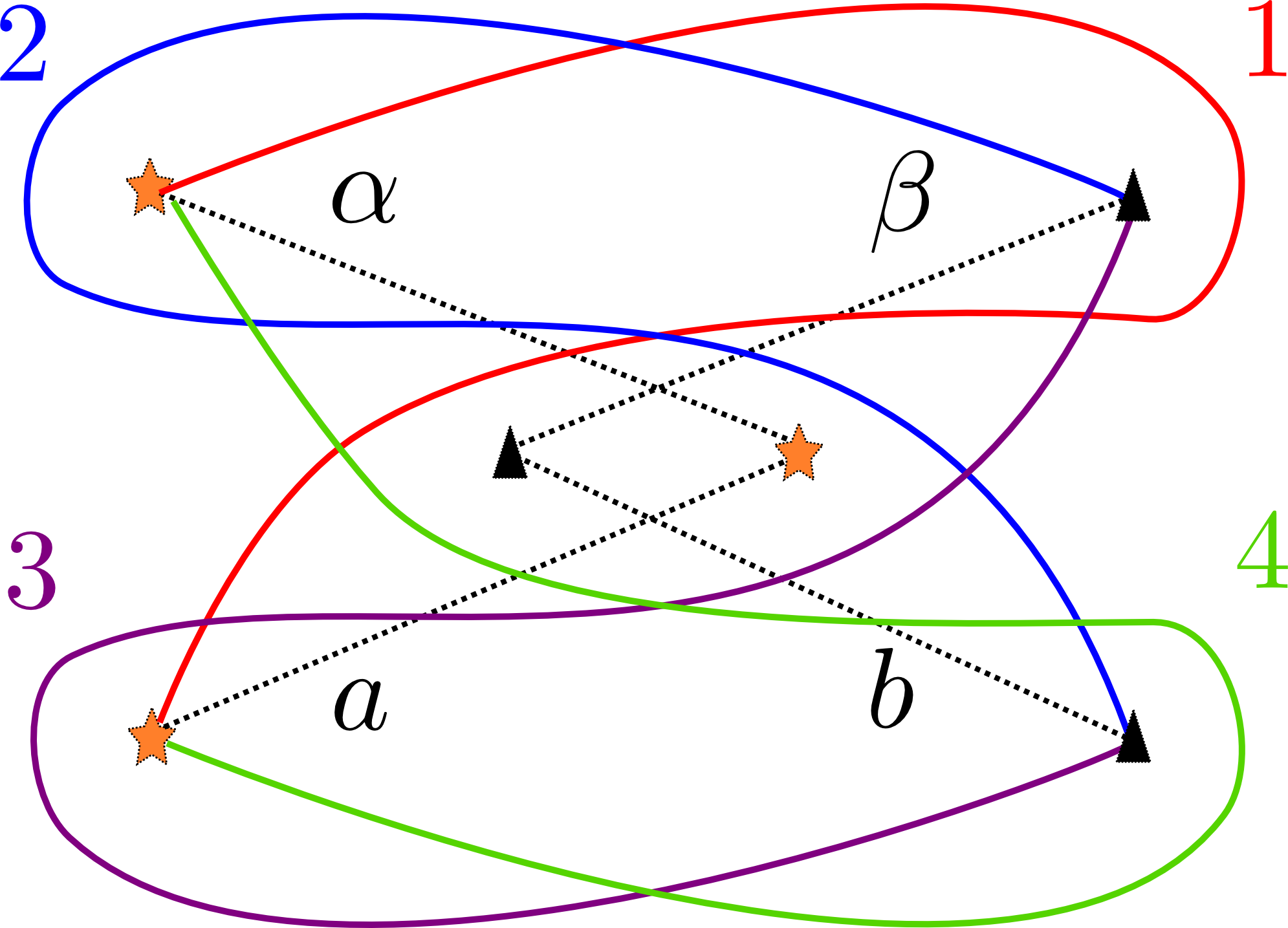}
  \caption{Configuration after four V--moves.}
\end{subfigure}
\caption{Vanishing cycles for the Koras--Russell cubic.}
\label{fig:KR_LFSquare}
\end{figure}

Figure \ref{fig:KR_LFSquare}(A) also exhibits the four vanishing cycles for the initial Lefschetz fibration $\pi$ as the four coloured matching paths for this Lefschetz bifibration $\rho$. It is our next task to describe these four matching paths as words in Dehn twists for a Lagrangian $D_4$--basis of the Weinstein fiber $(F_\pi,\la)$. Instead of considering a $D_4$--basis from the start, we first consider the four spheres $\{a,b,\alpha,\beta\}$ described by the homonymous matching paths in Figure \ref{fig:KR_LFSquare}(A) and perform one V--move for each vanishing cycle, as described in Subsection \ref{ssec:lef}. This results in the configuration illustrated in Figure \ref{fig:KR_LFSquare}(B).

\begin{figure}[h!]
\includegraphics[scale=0.3]{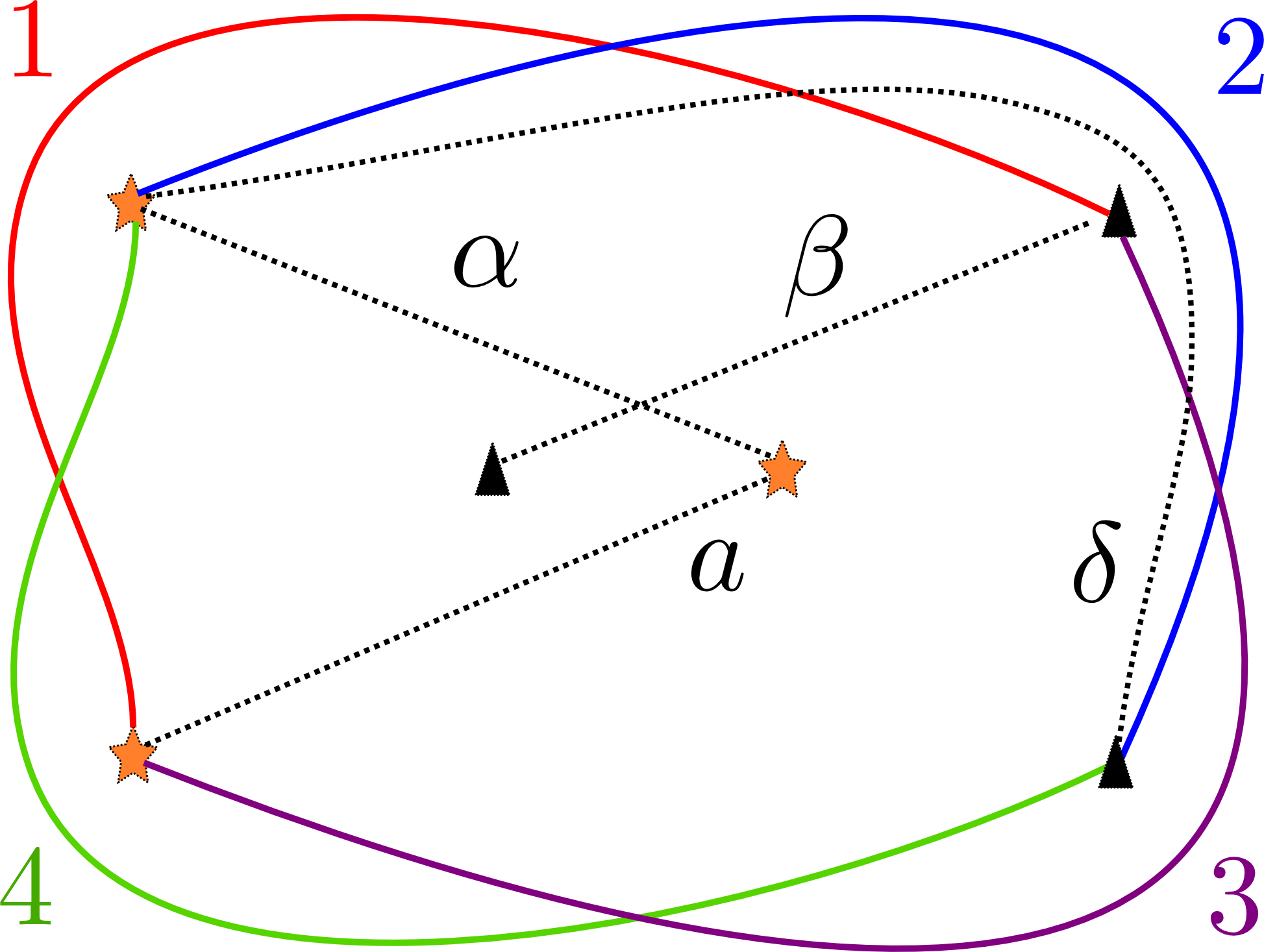}
\caption{$D_4$--basis for $F_\pi$.}\label{fig:KR_LFD4}
\end{figure}

We can describe each matching path in terms of the four spheres $\{a,\alpha;b,\beta\}$ as follows:
$$V_1=\tau_\beta^{-1}\tau_a(\alpha),\quad V_2=\tau_\alpha\tau_\beta(b)$$
$$V_3=\tau_a^{-1}\tau_\beta(b),\quad V_4=\tau_b\tau_a(\alpha)$$
This allows us to readily express the four matching paths in the $D_4$--basis. Indeed, the four Lagrangian spheres $\{a,\alpha,\delta;\beta\}$ depicted in Figure \ref{fig:KR_LFD4} form a $D_4$ intersection pattern, and they relate to the previous Lagrangian spheres via the equality $b=\tau_\beta^{-1}\tau_\alpha^{-1}(\delta)$. Therefore the vanishing cycles can be expressed as
$$V_1=\tau_\beta^{-1}\tau_a(\alpha),\quad V_2=\tau_\alpha\tau_\beta(b)=\delta$$
$$V_3=\tau_a^{-1}\tau_\beta(b)=\tau_a^{-1}\tau_\delta(\alpha),\quad V_4=\tau_b\tau_a(\alpha)=\tau_\alpha^{-1}\tau_a^{-1}\tau_\beta^{-1}\tau_\delta(\alpha)$$
Proposition \ref{prop:stacking} can now be applied to obtain a Legendrian front presentation for the Koras--Russell cubic $\SC\sse\C^4[x,y,z,w]$ which is depicted in Figure \ref{fig:KR_FrontInitial}.

\begin{figure}[h!]
\includegraphics[scale=0.5]{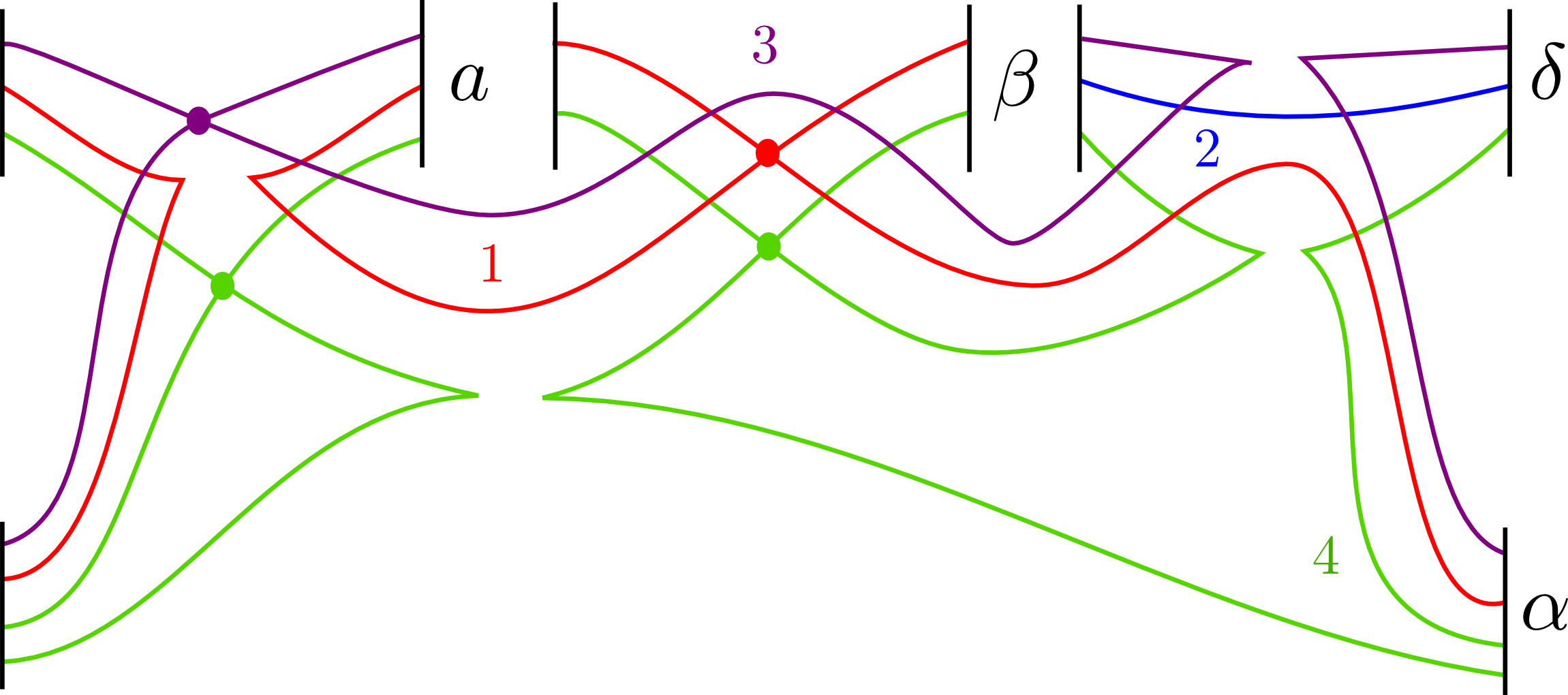}
\caption{Legendrian front for the Koras--Russell cubic}\label{fig:KR_FrontInitial}
\end{figure}

\begin{figure}[h!]
\includegraphics[scale=0.75]{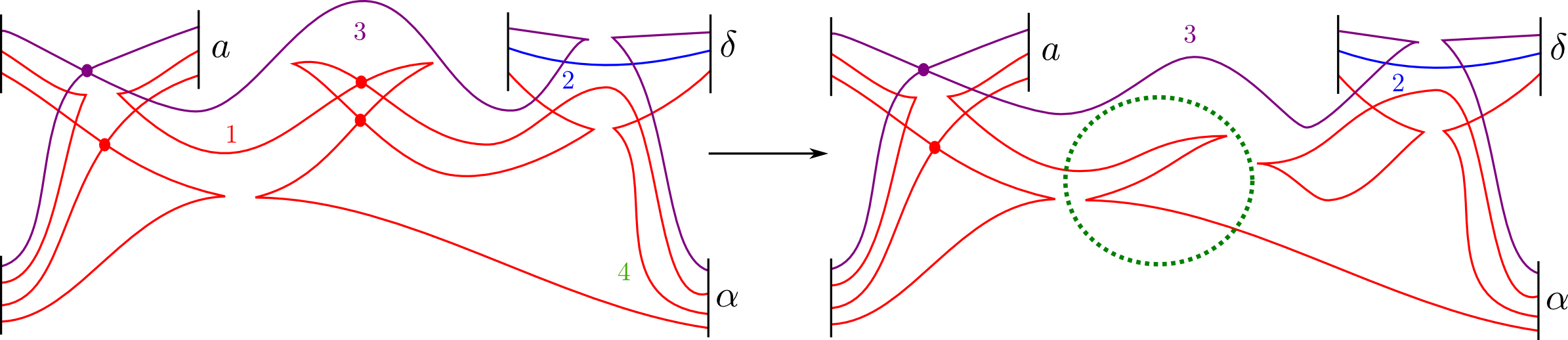}
\caption{A simplification of the Koras--Russell cubic, showing it is flexible.}\label{fig:KR_FrontSimplify}
\end{figure}

In order to conclude Theorem \ref{thm:kr}, it suffices to show that the Legendrian handlebody in Figure \ref{fig:KR_FrontInitial} is flexible: that is, we need to exhibit a loose chart for each component of the Legendrian link in the complement of the remaining components. This is achieved by simplifying the Legendrian front with the moves introduced in Subsection \ref{ssec:highD} and then applying the results in Subsection \ref{ssec:loose}. First, we cancel the subcritical handle $\beta$ with either of the Legendrian lifts of the vanishing cycle $V_1$, and denote by $V_{1\cup 4}$ the resulting new component. Then apply the Reidemeister move that removes the two cone singularities, depicted in Figure \ref{fig:highDReid}, to this Legendrian component $V_{1\cup 4}$; this simplification is shown in Figure \ref{fig:KR_FrontSimplify}. We can then see that the Legendrian component $V_{1\cup 4}$ is a loose Legendrian since a zig--zag visibly appears in the diagram and we can apply Proposition \ref{prop:loose slice}. The remaining two components, corresponding to the Legendrian lifts of the vanishing cycles $V_2$ and $V_3$, are in cancelling position with the subcritical handles $\delta$ and $a$ respectively. After performing these cancellations we are left with only the subcritical handle $\alpha$, and a Legendrian with one component, which is still loose. Since this Legendrian is loose and it cancels with the subcritical handle $\alpha$ up to smooth isotopy, Theorem \ref{thm: c0 loose} implies that it also cancels symplectically. 
\end{proof}

\begin{remark}
Consider the polynomial $p(x,y,z,w)=x+x^2y+z^2+w^3$, then the Koras--Russell cubic is the fiber $\SC=p^{-1}(0)$. The regular fibers $p^{-1}(c)$, $c\neq0$, are all algebraically isomorphic to the affine variety $p^{-1}(1)$ by rescaling, and thus Stein deformation equivalent to the smooth hypersurface $\{x^2y+z^2+w^3=1\}\sse\C^4$, which is the example we presented at the beginning of the article. Note that these fibers are no longer diffeomorphic to $\C^3$ since their Euler characteristic is three.

\begin{figure}[h!]
\centering
\begin{subfigure}{.5\textwidth}
  \centering
  \includegraphics[scale=0.3]{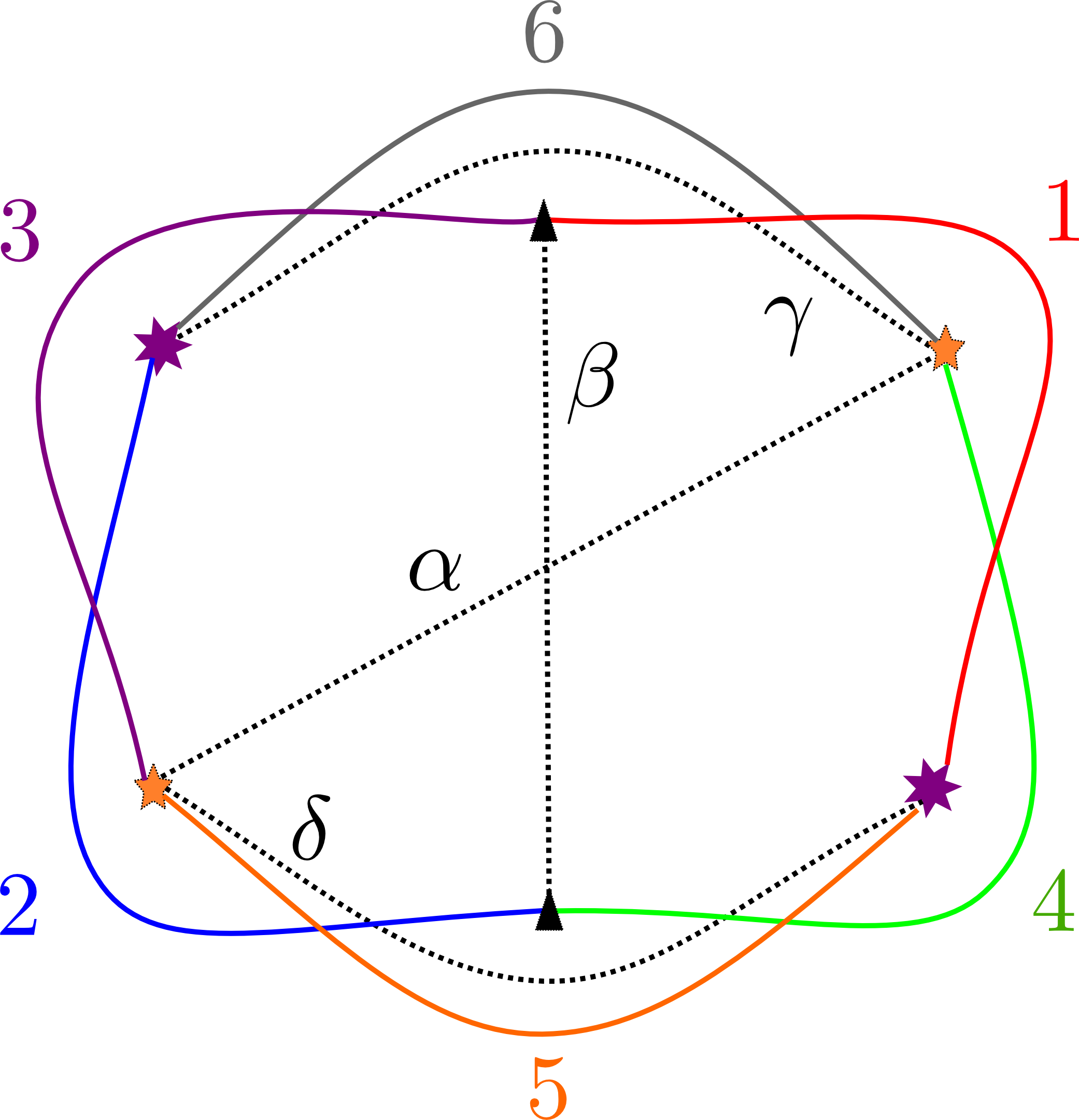}
  \caption{Vanishing cycles for a $D_4$--bifibration.}
\end{subfigure}%
\begin{subfigure}{.5\textwidth}
  \centering
  \includegraphics[scale=0.4]{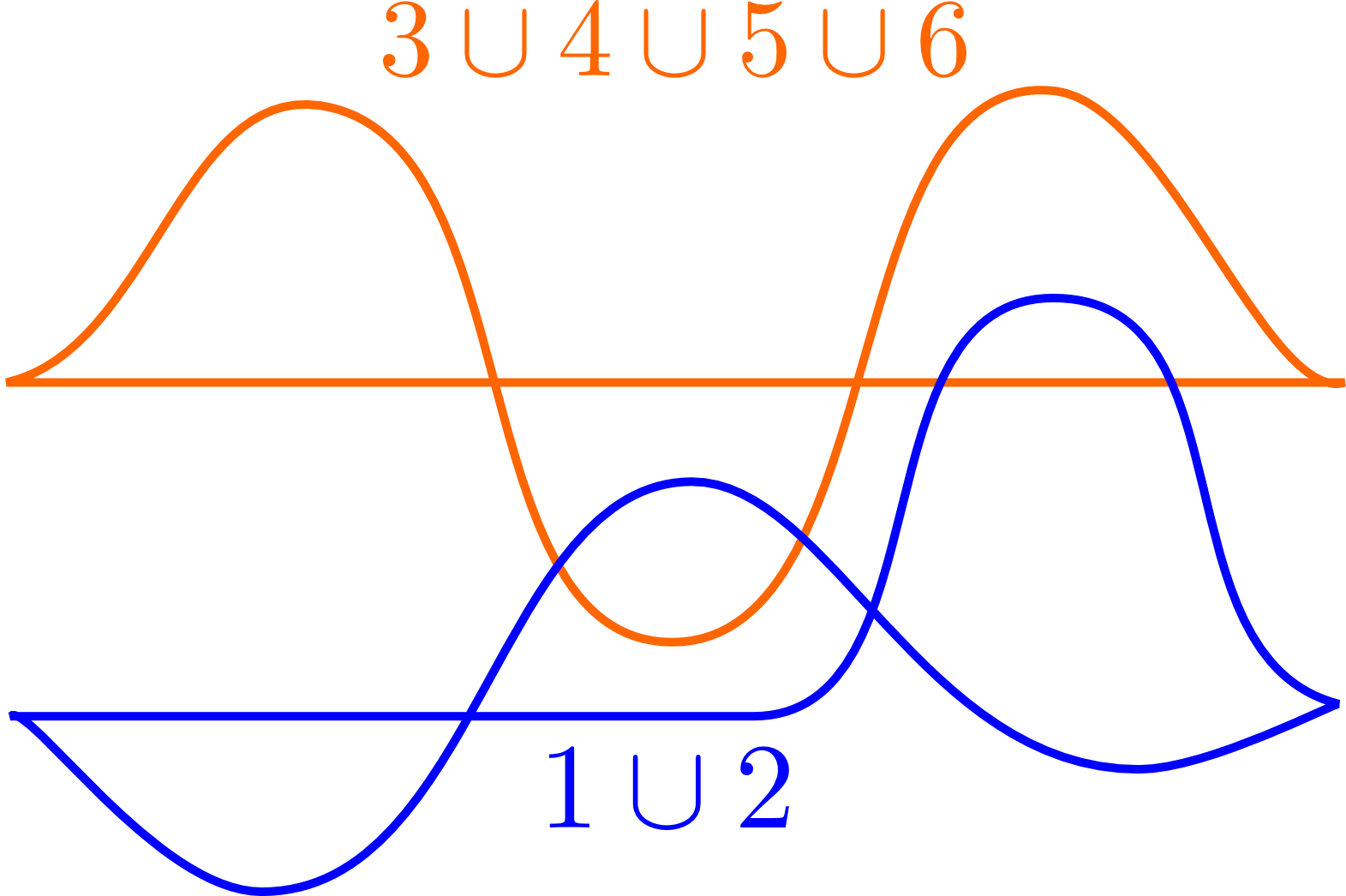}
  \caption{Resulting Legendrian link after cancellation: the initial six 3--handles coming from the vanishing cycles cancel the initial four 2--handles and result in this two--component link.}
\end{subfigure}
\caption{The Stein manifold $\{xy^2+z^2+w^3=1\}$.}
\label{fig:genericKR}
\end{figure}

Regardless, the proof for Theorem \ref{thm:kr} can be modified in order to show that these fibers are flexible as well: at this point the reader will hopefully be able to readily verify this statement starting from the data in Figure \ref{fig:genericKR}(A), which uses the $D_4$--basis $(\delta,\alpha,\gamma;\beta)$ and where the six vanishing cycles $\{V_1,V_2,V_3,V_4,V_5,V_6\}$ read:
$$1=\tau_\beta\tau_\alpha(\delta),\quad 3=\tau_\alpha(\gamma),\quad 5=\delta,$$
$$2=\tau_\beta\tau_\alpha(\gamma),\quad 4=\tau_\alpha(\delta),\quad 6=\gamma.$$
The six critical values in this Weinstein $(D^4_4,A^2_2)$--bifibration described in Figure \ref{fig:genericKR}(A) have vanishing cycles $S_1$, if they are depicted with the ochre star, $S_2$, if depicted with the black triangle, and $\tau_{S_2}(S_1)$, if depicted with the purple star. The resulting Stein flexible $6$--manifold is described by the Legendrian surface front in Figure \ref{fig:genericKR}(B), which is indeed a loose link.$\hfill\Box$
\end{remark}

This concludes our proof that the Koras--Russell cubic is Stein deformation equivalent to $(\C^3,\la_\st,\p_\st)$. In the following subsection we continue our proof of Theorem \ref{thm:mirror} by studying the Weinstein manifolds $M^n_b$ featuring in Theorem \ref{thm:torusIntro} and discussing the connection between Legendrian handlebodies and homological mirror symmetry.

\subsection{Theorem \ref{thm:mirror}: Part II}\label{ssec:mirror2} In this subsection we prove Theorem \ref{thm:torusIntro} and Theorem \ref{thm:mirror}. The former will be an application of Recipe \ref{dictionary} and the result in Subsection \ref{ssec:lagcob}, whereas the latter requires a discussion on symplectic field theory and mirror symmetry. 

Let us first proceed with this discussion and briefly give a simplified view of mirror symmetry, which aims at explaining why symplectic field theoretic invariants of Legendrian submanifolds can be useful for homological mirror symmetry.

\begin{remark}
The content in this subsection related to Legendrian handlebodies is rigorously based in the results presented thus far, in contrast we remark that the ideas we propose in relation to mirror symmetry are only interpretive and therefore the computations below should be seen as insinuative experiments rather than rigorous verifications.\hfill$\Box$
\end{remark}

The mirror symmetry functor maps
$$D^b(\wfuk(X))\lr D^b(\coh(\check X)),$$
which we can be describe in a naive form as follows. Both categories have natural operations, including taking direct sums, cones, and grading shifts, which allow us to restrict ourselves to sets which generate the category under these operations.

Let us focus on the symplectic side, where we can use the fact that the Liouville manifolds we consider are Weinstein manifolds, and furthermore Weinstein manifolds constructed with a unique critical Weinstein handle. In the study of the Weinstein case, S.~Ganatra and M.~Maydanskiy show in the Appendix of the article \cite{BEE}, that the category $D^b(\wfuk(X))$ is generated by the Lagrangian disks which are the cocores of the critical index handles. Based on the work in the main body of the articles \cite{BEE,BEE2}, it is expected that the chain level isomorphism
$$\mbox{LCC}_*(\Lambda) \cong \mbox{WC}_*(L)$$
holds in this case, where $\Lambda$ denotes the Legendrian attaching sphere of a critical Weinstein handle, $L$ denotes the Lagrangian cocore of this handle, $\mbox{LCC}_*$ denotes the Legendrian contact homology differential graded algebra, $\mbox{WC}_*$ denotes the wrapped Floer homology $A_\infty$--algebra, and the isomorphism is an $A_\infty$ quasi--isomorphism; here the product on the Legendrian chains $\mbox{LCC}_*$ is formal concatenation and all higher products vanish. This exact statement is not quite stated in the articles \cite{BEE,BEE2}, but it can be proven using similar ingredients than those featuring in these articles and it is much in line with the numerous isomorphisms appearing there, particularly \cite[Theorem 5.8]{BEE}.

In consequence, if we restrict to the case where only one Weinstein critical handle is attached, the mirror symmetry functor should take chain complexes of modules over the algebra $\mbox{LCC}_*(\Lambda)$ to chain complexes of modules over a certain ring $R$. The best case scenario here would be that the Legendrian homology $\mbox{LCH}_*(\Lambda)$ is a commutative module, supported in grading zero, and actually isomorphic to the ring $R$, rather than just Morita equivalent. In case we are in this situation, we can simply write $\check X = \op{Spec}(\mbox{LCH}_0(\Lambda))$. This has been the case for the Legendrian handlebody obtained in Subsection \ref{ssec:mirror1} and it will also hold in the example presented below.

\begin{remark}
These assumptions do not necessarily hold in general but the discussion above shows that Recipe \ref{dictionary}, combined with computations of the Legendrian homology $\mbox{LCH}_*$ of the resultin Legendrian handlebody and the Legendrian surgery isomorphisms from \cite{BEE}, gives a powerful method to compute the wrapped Fukaya category $\wfuk(X)$ for Stein manifolds $(X,\la_\st,\p_\st)$ presented as affine varieties.

To establish more generic mirror symmetry results we should calculate $D(\wfuk(X))$ from this information and construct an algebra $R$ which is Morita equivalent to it.  In short, these discussion highlights the fact that Recipe \ref{dictionary} can be used as a tool to calculate wrapped Fukaya categories, and the reason we focus on examples related to mirror symmetry is that these manifolds are well--studied in the literature and this might help readers to both appreciate and start using the Legendrian viewpoint.\hfill$\Box$
\end{remark}

This concludes the discussion on the interaction between Legendrian invariants and homological mirror symmetry, and we now proceed to prove Theorem \ref{thm:torusIntro} and Theorem \ref{thm:mirror}.

Let us apply Recipe \ref{dictionary} to the Weinstein $4$--fold
$$(X,\la,\p)=\{(x,y,z):x(xy^2-1)+z^2=0\}\sse(\C^3,\la_\st,\p_\st)$$
featuring in the statement of Theorem \ref{thm:mirror}. In comparison to the previous computation in Subsection \ref{ssec:mirror1} above, the resulting Legendrian link in the contact boundary $\#^3(S^1\times S^2,\xi_\st)$ does not cancel with all the subcritical handles, and consequently the computation of the Legendrian contact homology is more elaborate. Still, we perform such computation and the degree zero part shall provide the algebra of functions of the algebraic mirror of the symplectic manifold $(X,\la,\p)$ as dictated by our previous discussion.

Let us in fact consider the more general class of Weinstein manifolds
$$(M^n_{b},\la,\p)=\left\{(x,y,\underline{z}):x(xy^b-1)+\sum_{i=1}^{n-1}z_i^2=0\right\}\sse\C^{n+1},$$
and prove the following theorem:

\begin{thm}\label{thm:Xabprime}
For any $b\geq1$, the Weinstein manifold
$$(M^n_{b},\la,\p)=\left\{(x,y,\underline{z}):x(xy^b-1)+\sum_{i=1}^{n-1}z_i^2=0\right\}\sse\C^{n+1},$$
has a Legendrian handlebody presentation as given in Figure \ref{fig:Mb front}.
\end{thm}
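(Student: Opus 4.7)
My plan is to apply Recipe \ref{dictionary} directly, following the template established by the proof of Theorem \ref{thm:Xab} for the closely related family $X^n_{a,b}$. The extra linear term $-x$ in the defining polynomial of $M^n_b$ is what distinguishes the two families, and I expect it to be responsible for the fact that, after simplification, not all subcritical handles cancel against critical ones, yielding the nontrivial diagram in Figure \ref{fig:Mb front}.

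First I would equip $M^n_b$ with the Weinstein Lefschetz fibration
\[
\pi\colon M^n_b \lr \C, \qquad \pi(x, y, \underline{z}) = x + \alpha y,
\]
for a generic small $\alpha\in\C^*$. Imposing $\nabla f \parallel (1,\alpha,0,\ldots,0)$ together with the defining equation shows that the critical points lie on the branch $xy^b=1$ and are determined by $x^{b+1} = \alpha^b/b^b$, giving $b+1$ distinct critical values. Substituting $x = t - \alpha y$ in the defining equation produces
\[
\alpha^2 y^{b+2} - 2t\alpha y^{b+1} + t^2 y^b + \alpha y - t + \sum z_i^2 = 0,
\]
a polynomial of degree $b+2$ in $y$ stabilized by the quadratic form in $\underline{z}$, so the regular fiber $(F_\pi,\la)$ is symplectomorphic to the $A_{b+1}$--Milnor fiber $(A^{2n-2}_{b+1},\la_\st)$, a linear plumbing of $b+1$ cotangent bundles $T^*S^{n-1}$.

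Second, following Subsection \ref{ssec:AD_bifiber}, I would equip $F_\pi$ with the auxiliary bifibration $\rho(x, z_1, \ldots, z_{n-1}) = x$, whose bifiber is symplectomorphic to $T^*S^{n-2}$, and use the standard $A_{b+1}$--basis $\{a_1, \ldots, a_{b+1}\}$ of linear matching paths between the consecutive critical values of $\rho$. For each critical value $v_i$ of $\pi$, tracing the motion of the critical values of $\rho|_{F_t}$ as $t$ moves linearly from $0$ to $v_i$ produces a matching path $\vartheta_i \sse \C$ for the vanishing cycle $V_i \sse F_\pi$ of $\pi$, computable numerically as outlined in Subsection \ref{ssec:recipe}. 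I would then express each $\vartheta_i$ as a word in half--twists along $\{a_1,\ldots,a_{b+1}\}$, apply Proposition \ref{prop:stacking} iteratively to produce the Legendrian lift of each vanishing cycle in $\dd(F_\pi \times D^2)$, and order them in the Reeb direction by the cyclic index.

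The main obstacle lies in the last step: simplifying the resulting diagram via the higher--dimensional Reidemeister moves of Subsection \ref{ssec:highD} and the Kirby calculus of Subsection \ref{ssec:Wmoves} while preserving the $S^{n-2}$--spherical symmetry that allows the diagram to be drawn in a two--dimensional slice. In contrast to the $X^n_{a,b}$ computation, I expect the linear term $-x$ to prevent the full collapse of subcritical handles into cusps, so the final diagram should retain nontrivial interaction between a Legendrian component and at least one of the remaining subcritical handles. Since the subsequent Theorem \ref{thm:torusIntro} will construct the exact Lagrangian $S^1 \times S^{n-1}$ for even $b$ as a filling built from this diagram, I would additionally aim to present the simplification so that an unoriented Lagrangian filling by a M\"obius band spun up by the $S^{n-2}$--symmetry (analogous to Figure \ref{fig:mobfill}) becomes directly visible, making the application to Theorem \ref{thm:torusIntro} immediate.
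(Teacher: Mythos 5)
Your strategy is the same as the paper's in all but one respect: the same linear Lefschetz fibration $\pi(x,y,\underline{z})=x+\alpha y$ with its $b+1$ critical points on the branch $xy^b=1$, the same identification of the regular fiber as an $A$--type Milnor fiber (your count of $b+2$ critical values for the auxiliary bifibration $\rho=x$ agrees with the paper's ``the $(b+1)$st roots of unity and the origin''), and the same plan of running Recipe \ref{dictionary} through Proposition \ref{prop:stacking}. The substantive divergence is the choice of basis of matching paths, and it is precisely where your plan stalls. You take the linear $A$--basis of consecutive segments and defer the vanishing cycles to a numerical computation; the paper instead takes the \emph{radial} basis $\{R_0,\dots,R_b\}$ of paths joining the origin --- the extra critical value of $\rho$ created by the linear term $-x$ --- to the roots of unity, for which the vanishing cycles have the uniform closed form $V_j=\tau_{R_{j+1}}(R_j)$ with indices mod $b+1$. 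That choice is the entire point of deforming $X^n_{1,b}$ to $M^n_b$: it renders the front globally $S^{n-2}$--rotationally symmetric about the axis through the origin handle, so the ``main obstacle'' you flag (carrying the spherical symmetry through the simplification) never arises, and all subcritical handles except the lowest cancel at once to give Figure \ref{fig:Mb front}. With the linear basis you would be in the situation of Example \ref{ex:X23 linear}: the computation can be pushed through, but the half--twist words are longer and the symmetry is hidden, so you would still owe the explicit words and the explicit simplification --- which is the actual content of a theorem whose conclusion is a specific figure. Your closing remark about making the M\"obius--band filling visible belongs to Theorem \ref{thm:torusIntro} rather than to this proof, but it is consistent with how the paper uses the resulting diagram.
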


These Weinstein manifolds $(M^n_{b},\la,\p)$ are a variation on the Weinstein manifolds $(X^n_{1,b},\la,\p)$, and first appear in the work of P.~Seidel \cite{Se15}, to whom we are grateful for useful discussions on the symplectic topology of these manifolds and their relation to mirror symmetry. The variation consists in modifying the defining polynomial by changing the degree--$0$ constant $1$ to a linear term on $x$. The purpose of such an exchange is to introduce an additional critical value for the Lefschetz bifibration on the Weinstein bifiber in such a manner that all the vanishing cycles can be described by matching paths disjoint from that critical value. The eager reader can refer to \cite[Figure 5]{AKO} for a pictorial inception of our intentions.

\begin{proof}[Proof of Theorem \ref{thm:Xabprime}]
Consider the linear Lefschetz fibration
$$\pi:M^n_b\lr\C,\quad \pi(x,y,z)=x+by$$
with Weinstein fiber $(F_\pi,\la,\p)\cong (A^{2n-2}_b,\la_\st,\p_\st)$; projecting this fiber onto the complex plane via the auxiliary bifibration $\rho(x,z)=x$, we obtain that the critical points of the Lefschetz fibration $\rho$ are the $(b+1)$--roots of unity and the origin. In terms of a radial basis $\{R_0, R_1,\ldots, R_b\}$ depicted in Figure \ref{fig:Mb Lef}, the $(b+1)$ vanishing cycles $V$ of $\pi$ are described by the following collection of matching paths:
$$V_j=\tau_{R_{j+1}}(R_j),\mbox{ where }0\leq j\leq b,$$
where the indices are written modulo $b+1$. Figure \ref{fig:Mb Lef} shows this configuration of matchings paths in the cases $b=3$ and $4$.\\

\begin{figure}[h!]
\includegraphics[scale=0.6]{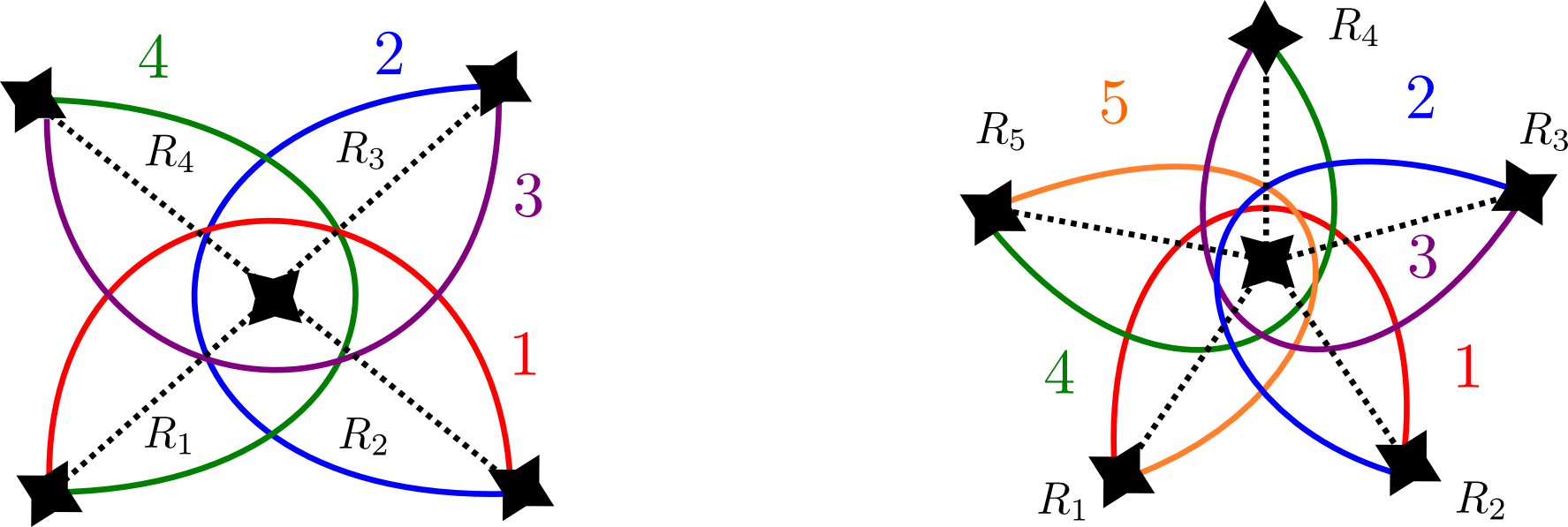}
\caption{A Lefschetz bifibration for $M^n_b$ for $b=3$ and $b=4$.}\label{fig:Mb Lef}
\end{figure}

\begin{figure}[h!]
\includegraphics[scale=0.6]{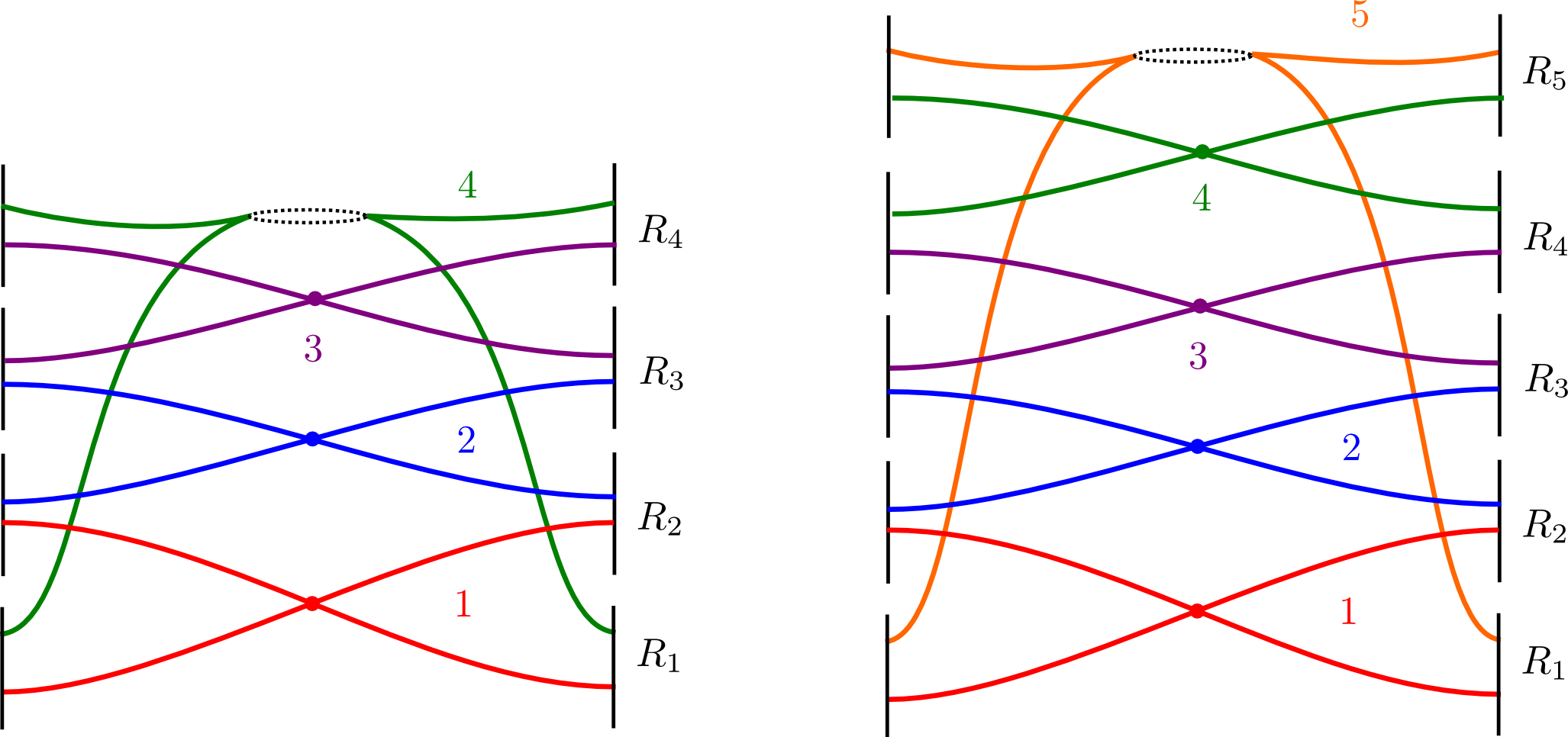}
\caption{A Weinstein handle diagram for $M^n_b$, for $b=3$ and $b=4$.}\label{fig:Mb front1}
\end{figure}

\begin{figure}[h!]
\includegraphics[scale=0.9]{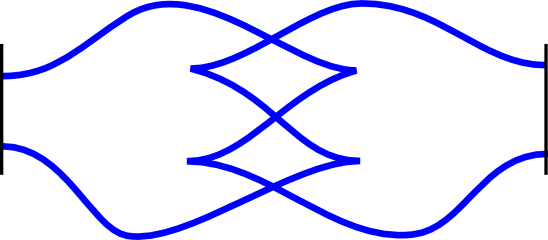}
\caption{A simplified Weinstein handle diagram for $M^n_b$, for $b=4$.}\label{fig:Mb front}
\end{figure}

We can then apply Proposition \ref{prop:stacking} and obtain the handle decomposition depicted in Figure \ref{fig:Mb front1} which, after cancelling all the subcritical handles except for the lowest, yields the Legendrian handlebody presented in Figure \ref{fig:Mb front}.
\end{proof}

Theorem \ref{thm:Xabprime} readily implies Theorem \ref{thm:torusIntro}:

\begin{proof}[Proof of Theorem \ref{thm:torusIntro}]
The Legendrian front depicted Figure \ref{fig:Mb front} can be build from a Legendrian unknot by performing two ambient Legendrian surgeries of indices $1$ and $(n-1)$, and thus Subsection \ref{ssec:lagcob} constructs the required exact Lagrangian $S^1\times S^n\sse (M^n_b,\la,\p)$.
\end{proof}

Let us now focus on the Weinstein $4$--manifold $(M^2_2,\la,\p)$ and conclude Theorem \ref{thm:mirror}. For that, we apply Recipe \ref{dictionary} as in the proof of Theorem \ref{thm:Xabprime} and obtain the Legendrian knot $\Lambda\sse(S^1\times S^2,\xi_\st)$ depicted in Figure \ref{fig:C2ConicFront}.

\begin{figure}[h!]
\includegraphics[scale=0.4]{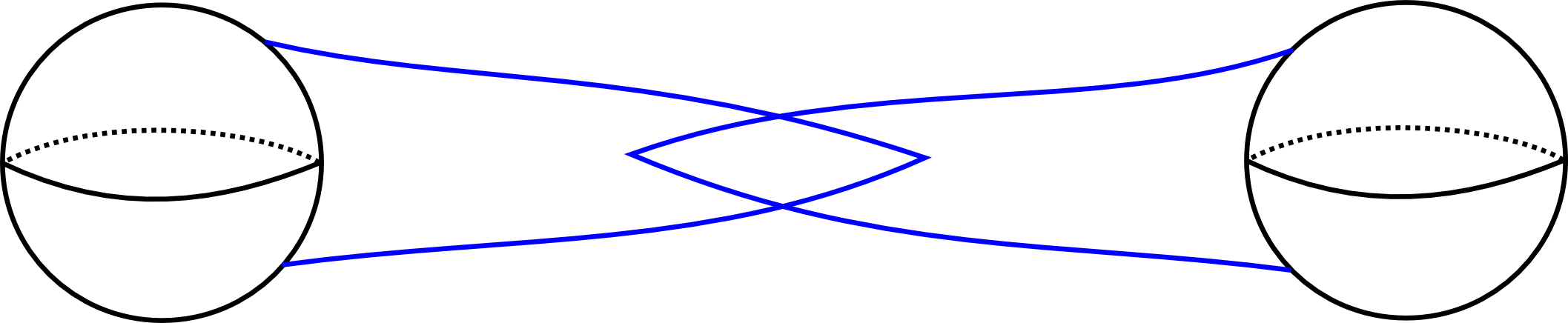}
\caption{Front diagram of $M_2^2=\{x(xy^2-1)=z^2\}\sse\C^2$.}\label{fig:C2ConicFront}
\end{figure}

This is a $\tb(\Lambda)=1$ Legendrian knot describing the Weinstein 4--fold $(X,\la,\p)=(M^2_2,\la,\p)$ from which we now compute the mirror affine surface $\check{X}$.\\

\begin{figure}[h!]
\includegraphics[scale=0.5]{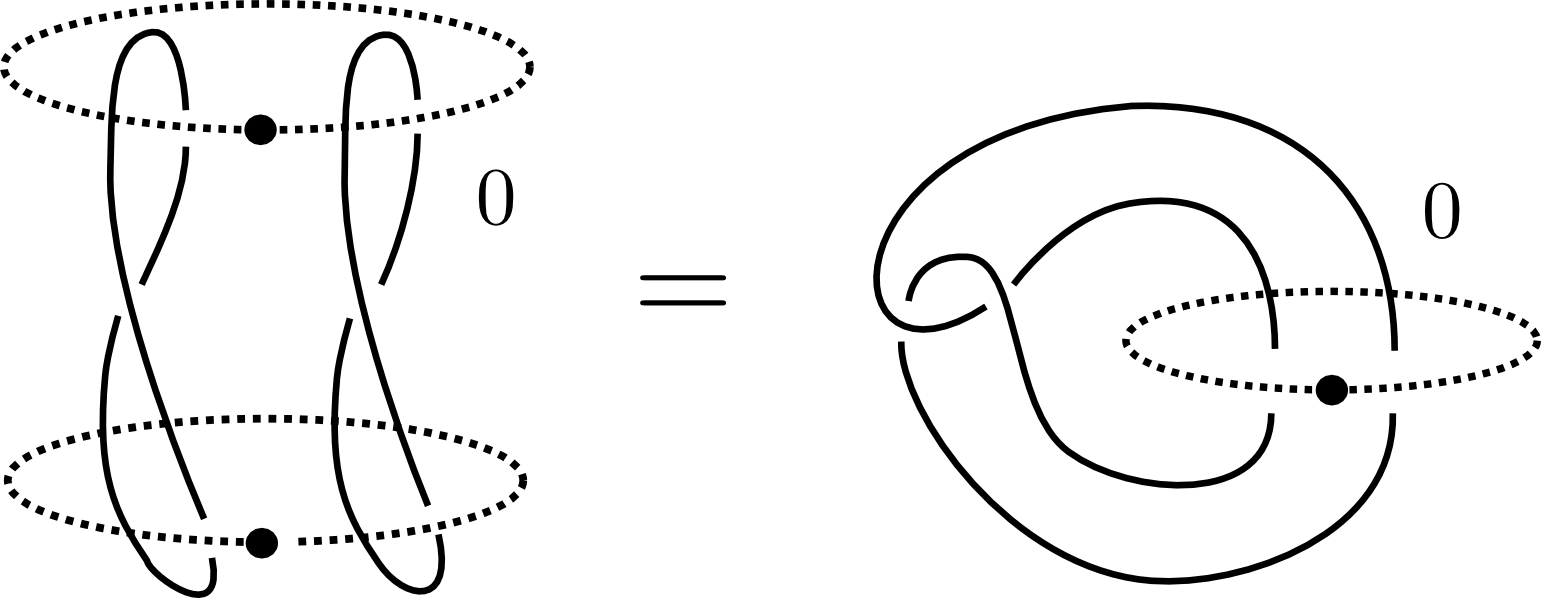}
\caption{Smooth handlebody for the complement of a smooth conic in $\R^4$: it is formed by two 1--handles and two 2--handles, respectively dual to the two 0--handles and 1--handle of the (affine conic) ribbon surface bounding the Hopf link $\{z_1^2+z_2^2=1\}\cap S^3\sse\C^2[z_1,z_2]$.}\label{fig:C2ConicKirby}
\end{figure}

\begin{remark}
Notice that the Weinstein 4--fold $(M^2_2,\la,\p)$ is the Stein complement of an affine smooth conic in $\C^2$ and thus, equivalently, the self--plumbing of $(T^*S^2,\la_\st,\p_\st)$. This can be readily seen in the smooth category from the Kirby diagrams in Figure \ref{fig:C2ConicKirby}.\hfill$\Box$
\end{remark}

It is known via the SYZ--duality \cite{SYZ} that the algebraic mirror of the symplectic complement of a smooth conic in $\C^2$ is the complement in $\P^2$ of the normal crossing divisor $\mathcal{O}(3)$ conformed by a projective conic and a projective line, i.e.~ the binodal cubic curve \cite{Au,Pa}; we will now recover this result from the Legendrian viewpoint. By the discussion above, this requires the computation of Legendrian invariants in order to construct the algebraic mirror variety $\check{X} = \op{Spec}(\mbox{LCH}_0(\Lambda))$. Thus to complete the proof of Theorem \ref{thm:mirror} it suffices the prove the following lemma.

\begin{lemma}\label{lem:LCH conic}
Let $\Lambda \sse (S^2 \x S^2, \xi_\st)$ be the Legendrian knot presented in Figure \ref{fig:C2ConicFront}. Then the Legendrian contact homology is zero in all negative degrees, and its degree $0$ part is the commutative algebra
$$LCH_0(\Lambda) \cong \C[x_1,x_2,(x_1x_2+1)^{-1}].$$
\end{lemma}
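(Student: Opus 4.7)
The plan is to apply the Ekholm--Ng combinatorial algorithm \cite{EN} directly to the front in Figure \ref{fig:C2ConicFront}, since $\La$ lives in the contact manifold $(S^1\x S^2,\xi_\st)$ obtained from $(S^3,\xi_\st)$ by attaching a single subcritical $1$--handle. First, I would enumerate the Reeb chord generators of the Chekanov--Eliashberg DGA $(\A,\dd)$. The two strands passing through the subcritical handle contribute two degree--zero generators which I label $x_1$ and $x_2$, one generator per bridge between the two $S^0$--attaching balls. A mixed crossing in the Darboux portion of the front contributes a third degree--zero generator $y$, and the cusp pattern above the handle forces one additional generator $a$ of degree $1$. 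After choosing a Maslov potential on $\La$ compatible with the rotation number of the front, every remaining Reeb chord lies in degree $\ges 1$, and none lies in negative degree. In particular $\A_i = 0$ for $i<0$, which immediately yields $LCH_i(\La)=0$ for all $i<0$.

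Next I would compute $\dd$ on the relevant generators. For degree reasons $\dd x_1 = \dd x_2 = \dd y = 0$, so the only nontrivial differential to determine is $\dd a$. The Ekholm--Ng algorithm identifies rigid disks with immersed polygons in the Lagrangian projection whose corners satisfy the usual convex/concave conventions at the Reeb chords, with an additional bookkeeping of how each boundary arc traverses the subcritical $1$--handle. Two such disks are expected to contribute: a ``short'' disk with a single positive puncture at $a$ and no negative punctures, and a ``long'' disk with a positive puncture at $a$ and negative punctures at $y$, $x_1$, $x_2$ in the cyclic order read off the diagram. After fixing a coherent orientation and checking signs, these two disks combine to give $\dd a = 1 - y(x_1 x_2 + 1)$, or equivalently $\dd a = y(x_1 x_2 + 1) - 1$. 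Higher--degree generators, having differentials landing in the ideal generated by $x_1,x_2,y,a$ and their brethren in positive degree, cannot introduce further relations in $LCH_0$.

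Finally I would read off $LCH_0$. The degree--zero generators $x_1, x_2, y$ commute with each other in homology: this is a standard consequence of the fact that generators sitting in degree $0$ at well--separated heights admit small bigons in the Lagrangian projection realizing each commutator as a boundary. Therefore
\[
LCH_0(\La) \;\cong\; \C[x_1,x_2,y]\,/\,(y(x_1 x_2 + 1)-1) \;\cong\; \C\bigl[x_1,x_2,(x_1 x_2+1)^{-1}\bigr],
\]
as required. The main obstacle is the disk count producing $\dd a$: because $\La$ traverses the $1$--handle and has nontrivial crossings, one must carefully unpack the Ekholm--Ng correspondence between rigid holomorphic disks and admissible immersed polygons in the front, and in particular verify that the two disks described above are the only contributions and that their orientation signs combine to the specific polynomial $y(x_1 x_2+1)-1$ rather than some inequivalent expression. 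The sign and multiplicity verification, rather than the identification of the generators or the abstract structure of the answer, is where essentially all the work lies.
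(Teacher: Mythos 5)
Your overall strategy --- run the Ekholm--Ng algorithm for a Legendrian knot in the boundary of a subcritical Weinstein domain --- is the same as the paper's, but the execution has genuine gaps. The most serious one is that you have omitted the \emph{internal} DGA of the $1$--handle entirely. In the Ekholm--Ng framework the algebra is generated not only by the Reeb chords of $\La$ in the Darboux part but also by the infinite family of internal generators $c^p_{ij}$ coming from the periodic geodesic flow in the handle, with $|c^p_{21}|=2p-2$, $|c^p_{11}|=|c^p_{22}|=2p-1$, $|c^p_{12}|=2p$. These are not optional bookkeeping: in the paper's computation the localization at $x_1x_2+1$ comes precisely from the internal relations $d(c^1_{11})=1-c^0_{12}c^1_{21}$ and $d(c^1_{22})=1-c^1_{21}c^0_{12}$, which force $c^1_{21}=(c^0_{12})^{-1}$, combined with the external relation $c^0_{12}=-1-x_1x_2$. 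Your substitute mechanism --- a single conjectured ``long disk'' giving $\dd a = 1 - y(x_1x_2+1)$ --- is a guess that you explicitly leave unverified, and it does not match what the algorithm actually produces: the correct external differentials are $d(a_1)=1+c^0_{12}+x_2x_1$ and $d(a_2)=1+t^{-1}c^0_{12}+x_1x_2$, a sum of three terms with \emph{two} external degree--one generators, not one generator with a product-type relation. Checking degrees of the $c^p_{ij}$ is also needed to justify the vanishing in negative degrees, which you assert only for the external chords.

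The second gap is the commutativity of $LCH_0$. You appeal to ``small bigons realizing each commutator,'' but this is not a standard fact --- $LCH_0$ of a Legendrian is noncommutative in general, and in this very paper (the trefoil computation in Subsection 4.1) commutativity has to be established by exhibiting explicit degree--one elements whose differentials are the commutators. Here the mechanism is that, after specializing the homology coefficient $t=1$, the difference $d(a_1)-d(a_2)=x_2x_1-x_1x_2$ kills the commutator of $x_1$ and $x_2$; with only one degree--one generator $a$ in your setup this argument is unavailable, so your presentation $\C[x_1,x_2,y]/(y(x_1x_2+1)-1)$ is asserted to be commutative without proof. The final answer you write down is the correct algebra, but the two steps that actually produce it --- the inversion of $x_1x_2+1$ and the commutativity --- are exactly the steps your argument does not establish.
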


\begin{proof}
We compute the Legendrian contact homology differential graded algebra $(\A, d)$ using the results in \cite{EN}, which apply since $\Lambda$ is a Legendrian knot in the boundary of the subcritical Stein manifold $(D^3\times S^1,\la_\st,\p_\st)$. Normalize the Legendrian front in Figure \ref{fig:C2ConicFront} to the Lagrangian projection depicted in Figure \ref{fig:C2ConicLag}, where we have assigned a Maslov potential to each strand, $m(1)=2$ and $m(2)=1$, and drawn the external algebra generators $x_1,x_2,a_1$ and $a_2$ in dotted blue. Though it is not necessary for our purposes, we also mark a purple point carrying the homology class, which can be used for computing with homology coefficients. The internal algebra generators created by the periodic geodesic flow in the subcritical 1--handle are denoted by $\{c^p_{ij}\}$ as in \cite[Section 2.3]{EN}.

\begin{figure}[h!]
\includegraphics[scale=0.3]{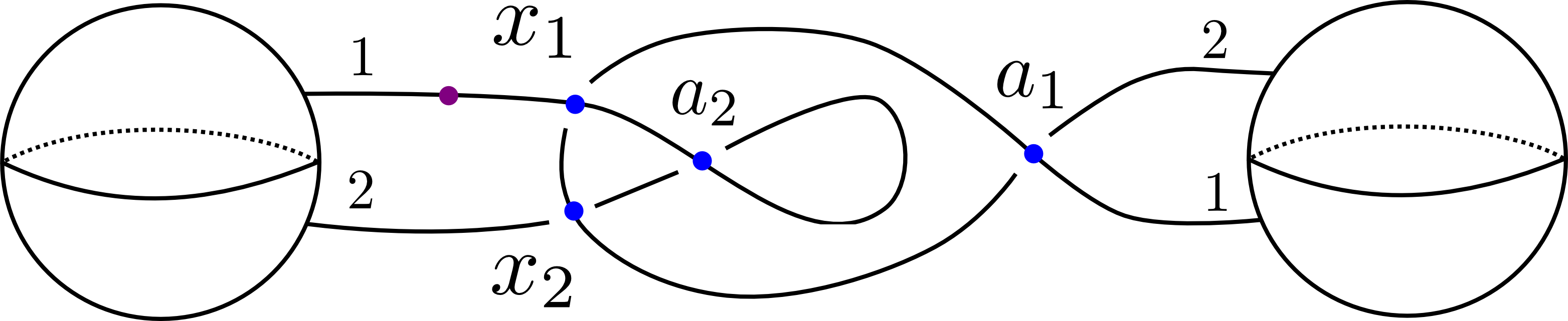}
\caption{Diagram of $\{x(xy^2-1)=z^2\}$}\label{fig:C2ConicLag}
\end{figure}

The internal algebra of $(\A, d)$ is the tensor algebra over the coefficient ring
$$\Z[H_1(\Lambda)]=\Z[t,t^{-1}]$$
generated by the variables $\{c ^0_{12},c^p_{12},c^p_{21},c^p_{11},c^p_{22}\}_{p\geq1}$. Since the rotation class $r(\Lambda)$ vanishes, we have a $\Z$--grading and the homological variables have degree $|t|=|t^{-1}|=0$. In consequence, generators of the algebra $\A$ in degree $0$ and $1$ are
$$|x_1|=|x_2|=|c^0_{12}|=|c^1_{21}|=|t|=|t^{-1}|=0,\qquad|a_1|=|a_2|=|c^1_{11}|=|c^1_{22}|=1.$$
The degrees of the remaining internal variables are
$$|c^p_{21}|=2p-2,\quad |c^p_{11}|=|c^p_{22}|=2p-1,\quad |c^p_{12}|=2p,$$
and we thus have two generators in the $k$th degree:
$$
\begin{cases}
    c^{(k+1)/2}_{11},\quad c^{(k+1)/2}_{22} & \quad \text{if $k$ is odd}\\
    c^{k/2}_{12},\quad c^{k/2+1}_{21}       & \quad \text{if $k$ is even}.\\
\end{cases}
$$
This describes the graded algebra. Since $\A$ is trivial in negative gradings, the elements in grading zero are closed, which implies that the subalgebra of exact grading zero elements forms an ideal inside of the grading zero part of $\A$. Thus to compute the exact elements it suffices to compute the differential of the four generators with grading one.
\begin{align*}
d(a_1) & = 1+c^0_{12}+x_2 x_1 \\
d(a_2)&=1+t^{-1}c^0_{12}+x_1 x_2\\
d(c^1_{11})&=1-c^0_{12}c^1_{21}\\
d(c^1_{22})&=1-c^1_{21}c^0_{12}.
\end{align*}
The higher graded terms of the differential for the internal variables are described in [Section 2.3]\cite{EN}, but since the internal algebra is standard beyond degree 2, we focus on the homology of $(\A_\La,d_\La)$ concentrated on degree $0$.
$$LCH_0(\Lambda;H_1\Lambda)=\frac{\langle x_1,x_2,c^1_{21},c^0_{12}\rangle}{\langle 1+c^0_{12}+x_2 x_1,1+t^{-1}c^0_{12}+x_1x_2,1-c^0_{12}c^1_{21},1-c^1_{21}c^0_{12}\rangle}.$$ 
Specializing at $t=1$ gives the algebra
$$LCH_0(\Lambda)=\frac{\langle x_1,x_2,c^1_{21},c^0_{12}\rangle}{\langle 1+c^0_{12}+x_2x_1,1+c^0_{12}+x_1x_2,1-c^0_{12}c^1_{21},1-c^1_{21}c^0_{12}\rangle}.$$
Subtracting the first two relations shows that $x_1$ and $x_2$ commute, the second two relations show that $c^1_{21} = (c^0_{12})^{-1}$. It follows that the algebra is commutative since $c^0_{12} = -1 - x_1x_2$, and furthermore this is equivalent to $-c^1_{21} = (x_1x_2 + 1)^{-1}$.
\end{proof}

\begin{remark}
In mirror symmetry, homology coefficients in the algebra --which are morally equivalent to nonexact deformations of the symplectic structure-- are related to non-commutative deformations of the mirror. The noncommutative algebra $LCH_0(\Lambda;H_1\Lambda)$ might have such an interpretation in these terms, but this remains to be explored.\hfill$\Box$
\end{remark}

\end{document}